\newcommand{\orbitscript}{}
\let\orbitscript=\mathcal
\renewcommand{\mathcal}{\CMcal}
\mathchardef\mhyphen="2D
\title{\vspace*{-.75cm}\huge{Strict quantization of coadjoint orbits}}
\author{
	\textbf{Philipp Schmitt}\thanks{Current address: Leibniz Universität Hannover, \texttt{schmitt@math.uni-hannover.de}}\\Department of Mathematical Sciences, University of Copenhagen, Denmark\vspace*{-.75cm}
}
\date{}
\renewcommand{\mathbb}[1]{\mathbbm{#1}}
\newcommand{\refitem}[1] {\textit{\ref{#1}.)}}
\numberwithin{equation}{section}
\renewcommand{\arraystretch}{1.2}
\let\originalleft\left
\let\originalright\right
\renewcommand{\left}{\mathopen{}\mathclose\bgroup\originalleft}
\renewcommand{\right}{\aftergroup\egroup\originalright}
\renewcommand{\cleardoublepage}{\clearpage\ifodd\c@page\else\vspace*{\fill}\thispagestyle{empty}\newpage\fi}
\newtheoremstyle{mytheoremstyle}{\item[\hskip\labelsep\normalfont\bfseries{##1\ ##2}]}{\item[\hskip\labelsep \normalfont{\bfseries ##1\ ##2} (##3)]}
\theoremstyle{mytheoremstyle}
\newtheorem{lemma}{Lemma}[section]
\newaliascnt{proposition}{lemma}
\newtheorem{proposition}[proposition]{Proposition}
\newaliascnt{thm}{lemma}
\newtheorem{theorem}[thm]{Theorem}
\newaliascnt{corollary}{lemma}
\newtheorem{corollary}[corollary]{Corollary}
\newaliascnt{definition}{lemma}
\newtheorem{definition}[definition]{Definition}
\newaliascnt{claim}{lemma}
\newaliascnt{example}{lemma}
\newtheorem{example}[example]{Example}
\newaliascnt{remark}{lemma}
\newtheorem{remark}[remark]{Remark}
\newaliascnt{convention}{lemma}
\newtheorem{maintheorem}{Main Theorem}
\def\theorem@checkbold{}
\theoremstyle{nonumberplain}
\newtheorem{proof}{Proof}
\newenvironment{lemmalist}{\begin{compactenum}[\itshape i.)]}{\end{compactenum}}
\newenvironment{propositionlist}{\begin{compactenum}[\itshape 
i.)]}{\end{compactenum}}
\newenvironment{definitionlist}{\begin{compactenum}[\itshape 
i.)]}{\end{compactenum}}
\newcommand{\I}              {\mathrm{i}}
\newcommand{\E}              {\mathrm{e}}
\newcommand{\D}              {\mathop{}\!\mathrm{d}}
\newcommand{\group}[1]        {\mathrm{#1}}
\newcommand{\algebra}[1]      {\mathscr{#1}}
\newcommand{\lie}[1]          {\mathfrak{#1}}
\DeclareMathOperator{\Pol}    {\mathrm{Pol}}
\DeclareMathOperator{\ad}     {\mathrm{ad}}
\DeclareMathOperator{\Ad}     {\mathrm{Ad}}
\newcommand{\Fun}[1][k]      {\mathscr{C}^{#1}}
\newcommand{\Cinfty}         {\Fun[\infty]}
\newcommand{\acts}            {\mathbin{\triangleright}}
\newcommand{\Sec}[1][k]      {\Gamma^{#1}}
\newcommand{\Secinfty}       {\Sec[\infty]}
\DeclareMathOperator{\Diffop}         {\mathrm{DiffOp}}
\newcommand{\at}[1]          {\big|_{#1}}
\newcommand{\At}[1]          {\Big|_{#1}}
\newcommand{\tensor}[1][{}]           
{\mathbin{\otimes_{\scriptscriptstyle{#1}}}}
\DeclarePairedDelimiter{\abs}{\lvert}{\rvert}
\DeclarePairedDelimiter{\norm}{\lVert}{\rVert}
\newcommand{\Sym}                     {\mathrm{S}}
\DeclareMathOperator{\diag}  {\mathrm{diag}}
\newcommand{\ev}             {\mathrm{ev}}
\newcommand{\id}             {\mathsf{id}}
\newcommand{\opp}            {\mathrm{opp}}
\newcommand{\racts}          {\mathbin{\triangleleft}}
\newcommand{\RE}             {\mathsf{Re}}
\newcommand{\Tensor}         {\mathrm{T}}
\DeclareMathOperator{\spann} {\mathrm{span}}
\theoremstyle{empty}
\newtheorem{proofof}{}
\newcommand{\anfa}{``}
\newcommand{\anfel}{''\ }
\newcommand{\textueber}[2]{\overset{\mathclap{\strut{#2}}}#1}
\newcommand{\textuebersign}[2]{\mathrel{\smash{\textueber{#1}{#2}}}}
\newcommand{\komma}{\, ,}
\newcommand{\punkt}{\, .}
\newcommand{\univ}{\algebra U}
\newcommand{\orb}[1]{\orbitscript O_{#1}}
\newcommand{\orbext}[1]{\hat{\orbitscript O}_{#1}}
\newcommand{\vanideal}[1]{\mathcal I(#1)}
\newcommand{\formParam}{\hbar}
\newcommand{\plus}[1]{{#1}^+}
\newcommand{\minus}[1]{{#1}^-}
\newcommand{\plusminus}[1]{{#1}^\pm}
\newcommand{\wordset}{W}
\DeclareMathOperator{\tensorhat}{\hat{\otimes}}
\newcommand{\formalstarhat}{\mathbin{\hat\star}}
\newcommand{\starhat}{\mathbin{\hat *_\hbar}}
\newcommand{\leftinv}[2][]{#2^{\mathrm{left}#1}}
\newcommand{\leftinvholo}[1]{#1^{\mathrm{left},(1,0)}}
\newcommand{\rightinv}[2][]{#2^{\mathrm{right}#1}}
\newcommand{\rightinvholo}[1]{#1^{\mathrm{right},(1,0)}}
\newcommand{\leftact}{\mathrm L}
\newcommand{\rightact}{\mathrm R}
\newcommand{\nDiffop}{\text{$k$-$\Diffop$}}
\newcommand{\analytics}{\mathcal A}
\newcommand{\Hol}{\mathrm{Hol}}
\newcommand{\holo}{\mathcal H}
\newcommand{\finites}[1]{\mathrm{Fin}^{#1}}
\newcommand{\argumentdot}{{}\cdot{}}
\newcommand{\invpi}{\pi_*}
\newcommand{\invpihat}{\hat\pi_*}
\newcommand{\formal}[1]{[\!\!\;[#1]\!\!\;]}
\newcommand{\Lzwei}{\mathrm{L}^{2}}
\newcommand{\SL}[1][n]{\group{SL}_{#1}(\mathbb C)}
\newcommand{\GL}[1][n]{\group{GL}_{#1}(\mathbb C)}
\newcommand{\liesl}[1][n]{\lie{sl}_{#1}(\mathbb C)}
\newcommand{\liegl}[1][n]{\lie{gl}_{#1}(\mathbb C)}
\newcommand{\SLR}[1][n]{\group{SL}_{#1}(\mathbb R)}
\newcommand{\GLR}[1][n]{\group{GL}_{#1}(\mathbb R)}
\newcommand{\Tangent}{\mathrm{T}}
\newcommand{\starhbar}{\mathbin{*_\hbar}}
\newcommand{\vertiii}[1]{{\left\vert\kern-0.25ex\left\vert\kern-0.25ex\left \vert #1\right\vert\kern-0.25ex\right\vert\kern-0.25ex\right\vert}}
\newcommand{\mynorm}[2]{\vertiii{#2}_{#1}}
\DeclareMathOperator{\trace}{tr}
\newcommand{\vermaacts}[1]{\mathbin{\acts_\lambda^{#1}}}
\newcommand{\vermaracts}{\mathbin{\racts_\lambda^*}}
\newcommand{\tildevermaacts}[1]{\mathbin{\tilde\acts_\lambda^{#1}}}
\newcommand{\tildevermaracts}{\mathbin{\tilde\racts_\lambda^*}}
\newcommand{\arxiv}[1]{\href{http://arxiv.org/#1}{arxiv: #1}}
\newcommand{\appendixref}[1]{\hyperref[#1]{Appendix~\ref*{#1}}}
\newsavebox{\mysecondimage}
\newcommand{\figurewithtwoimages}[4]{
 \begin{figure}
 	\centering
 	\savebox{\mysecondimage}{\hbox{#2}}
 	\hspace*{\fill}
 	\subfloat{\raisebox{\dimexpr.5\ht\mysecondimage-.5\height\relax}{#1}}
 	\hfill
 	\subfloat{\usebox{\mysecondimage}}
 	\hspace*{\fill}
 	\caption{#3}
 	\label{#4}
 \end{figure}
} 
\begin{document}
	\maketitle
	\begin{abstract}
		We obtain a family of strict $\hat G$-invariant products $\starhat$
		on the space of holomorphic functions on a semisimple coadjoint orbit $\orbext{}$ 
		of a complex connected semisimple Lie group $\hat G$.
		By restriction, we also obtain strict $G$-invariant products $\starhbar$
		on a space $\mathcal A(\orb{})$	of certain analytic functions on a semisimple coadjoint orbit $\orb{}$ 
		of a real connected semisimple Lie group $G$.
		The space $\mathcal A(\orb{})$ endowed with one of the products $\starhbar$
		is a Fr\'echet algebra,
		and the formal expansion of the products around $\hbar = 0$
		determines a formal deformation quantization of $\orb{}$,
		which is of Wick type if $G$ is compact.
		We study a generalization of a Wick rotation,
		which provides isomorphisms between the quantizations
		obtained for different real orbits with the same complexification. 
		Our construction relies on an explicit computation 
		of the canonical element of the Shapovalov pairing 
		between generalized Verma modules, 
		and complex analytic results on the extension of holomorphic functions.
	\end{abstract}
\noindent
{\small\textbf{Mathematics Subject Classification (2020)}: 53D55, 17B08, 32Q28, 22E46.\\[0.2cm]
\textbf{Key words}: Formal deformation quantization, strict quantization, coadjoint orbits, 
Verma modules, Shapovalov pairing, Stein manifolds.}

\tableofcontents

\section*{Introduction}
\addcontentsline{toc}{section}{Introduction}

The quantization problem in physics asks how to associate a quantum 
system to a classical mechanical system, 
such that the classical system can be 
recovered from the quantum system in a classical limit.
Since both systems can be studied by their observable algebras,
a first step is to quantize the classical observable algebra.
This algebra is usually the Poisson algebra $\Cinfty(M)$
of smooth functions on a Poisson manifold $M$.
The observable algebra of a quantum mechanical system 
is some non-commutative $^*$-algebra $\mathcal A$,
which in many cases is obtained from a C$^*$-algebra.
In a second step, 
the states of the quantum mechanical system can be obtained as 
normalized positive linear functionals on $\mathcal A$.
To define their superposition,
one has to represent $\mathcal A$ on a (pre) Hilbert space,
so that the superposition of two vector states can be defined as 
the vector state corresponding to the sum of the two vectors.

\emph{Formal deformation quantization}, as introduced in  
\cite{bayen.et.al:1978a}, has proven to be a  
fruitful theory for studying some aspects of the quantization problem.
One views Planck's constant $\hbar$
as a formal parameter $\formParam$ and tries to find so-called
\emph{formal star products} $\star$ on $\mathcal A = \Cinfty(M)\formal\formParam$,
which may be thought of as the infinite jet of a full solution to
the quantization problem at $\hbar = 0$. 
These star products are associative $\mathbb C \formal\formParam$-bilinear
products for which $1 \in \Cinfty(M)$ is a unit and 
which satisfy the correct classical limit.
To be more precise, if $f, g \in \Cinfty(M)$ and 
$f \star g = \sum_{r=0}^\infty \formParam^r C_r(f,g)$
with operators $C_r \colon \Cinfty(M) \times \Cinfty(M) \to \Cinfty(M)$,
then one requires $C_0$ to be the pointwise multiplication, $C_0(f,g) = f g$, 
and the quantization to be in the direction of the Poisson bracket, 
$C_1(f,g) - C_1(g,f)= \I \lbrace f , g \rbrace$.
Usually one also requires the $C_r$ to be  bidifferential operators,
so that $\star$ is local and can be restricted to open subsets of $M$.
Using formal power series means on the one hand that we 
cannot substitute $\formParam$ with the real value of Planck's constant
as required for direct physical applications, but on the other hand
that we can transfer the quantization problem to algebra by neglecting analytic aspects,
such as convergence of the power series.
Consequently, many powerful tools become available for its study,
and existence and classification results were obtained in
\cite{bertelson.cahen.gutt:1997a, dewilde.lecomte:1983b, fedosov:1994a, nest.tsygan:1995a}
for symplectic manifolds,
whereas in the more general case of Poisson manifolds
they follow from Kontsevich's formality theorem \cite{kontsevich:2003a}.
One can also study formal star products that are 
equivariant with respect to the action of a Lie group, where the classification 
follows for example from \cite{dolgushev:2005a}.

A complete solution of the quantization problem consists of a Hilbert space $H$
together with a quantization map that associates a quantum observable, 
usually a self-adjoint operator on $H$, to any classical observable.
This motivates the definition of a \emph{strict quantization}
\cite{landsman:1998a, natsume.nest:1999a, natsume.nest.peter:2003a, rieffel:1993a},
which is some field of \anfa nice\anfel $^*$-algebras $\mathcal A_\hbar$ 
(over $\mathbb C$) depending \anfa nicely\anfel on a parameter $\hbar$ 
ranging over some subset of $\mathbb C$, with $\mathcal A_0$ being a completion
of the classical observable algebra and the deformation being in the direction
of the Poisson bracket.
However, strict quantizations are much harder to understand than formal
deformation quantizations.
There are many examples of strict quantizations in different contexts,
and therefore there are several ways to formalize the above definition, 
i.e.\ specifying the parameter set and what \anfa nice\anfel actually means.
No general existence results are known,
and a classification seems completely hopeless due to the increased complexity.

There are two prominent constructions of strict quantizations. The first is due 
to Rieffel \cite{rieffel:1993a} who, using oscillatory integrals, deforms the 
product on a Fr\'echet algebra 
endowed 
with an isometric action of $\mathbb R^d$. If the 
original algebra is a C$^*$-algebra, then Rieffel constructs a 
C$^*$-algebraic quantization. A generalization to negatively curved K\"ahlerian Lie groups
can be found in \cite{bieliavsky.gayral:2015a}. 
The second construction, 
due to Natsume, Nest, and Peter \cite{natsume.nest.peter:2003a}, essentially 
glues convergent versions of the Weyl product 
on charts to obtain a C$^*$-algebraic quantization. However, both methods work 
only for some symplectic manifolds and fail for 
example for the 2-sphere with its $\group{SO}(3)$-invariant symplectic 
structure \cite{rieffel:1998a}. They also make crucial use of the finite 
dimensionality of the classical mechanical system, so it remains unclear 
how to 
apply them to quantum 
field theories, 
despite such field theories fitting into the framework of formal deformation 
quantization.

Another approach to strict quantization was proposed by Beiser and Waldmann in 
\cite{beiser.roemer.waldmann:2007a, beiser.waldmann:2014a, waldmann:2014a}. 
They start with formal deformation quantizations, which are well-understood,
and try to find subalgebras on which the formal power series converge.
Such subalgebras are usually defined using additional geometric structures,
and can be completed with respect to a topology in which the product is continuous.
This approach was carried out explicitly for star products of exponential
type on possibly infinite-dimensional vector spaces \cite{schoetz.waldmann:2018a}, 
for the linear Poisson structure on the dual of a Lie algebra 
\cite{esposito.stapor.waldmann:2017a}, and 
for the hyperbolic disc $\mathbb D^n$ using an invariant star product 
obtained via phase space reduction 
\cite{kraus.roth.schoetz.waldmann:2019a}.
See also \cite{waldmann:2019a} for a survey.
In this paper, we extend this approach to semisimple coadjoint orbits
of connected semisimple Lie groups,
which gives a much larger class of geometrically interesting examples.

Coadjoint orbits play an important role in different areas of mathematics.
In the representation theory of unitary Lie groups 
they appear e.g.\ in the Kirillov orbit method \cite{kirillov:1962a},
while in symplectic geometry they are related to momentum maps. 
Basic examples of coadjoint orbits are hyperbolic discs
and complex projective spaces, including the 2-sphere. 
Any coadjoint orbit $\orb{}$ of a Lie group $G$ has a canonical $G$-invariant symplectic form,
and if $\orb{}$ is semisimple and $G$ is compact, connected, and semisimple then there is a 
unique compatible $G$-invariant complex structure that makes $\orb{}$ a K\"ahler manifold.

Constructions of star products on coadjoint orbits are due to many authors 
\cite{alekseev.lachowska:2005a, 
	cahen.gutt.rawnsley:1990a, cahen.gutt.rawnsley:1993a, 
	cahen.gutt.rawnsley:1994a, cahen.gutt.rawnsley:1995a, 
	fioresi.lledo:2001a,
	karabegov:1998c, karabegov:1999a}. 
In this paper, we focus on semisimple coadjoint orbits of connected semisimple Lie groups,
and the algebraic construction of Alekseev--Lachowska \cite{alekseev.lachowska:2005a}.
The canonical element $F_\lambda$ of the Shapovalov pairing 
between certain generalized Verma modules 
satisfies an associativity equation generalizing that of a Drinfel'd twist.
This twist induces a formal product for holomorphic functions on a complex orbit
and a formal star product for smooth functions on a real orbit, 
and those products are compatible by restriction.
It is very convenient that we can pass from one setting to the other:
We will mainly work in the complex setting, 
which is more convenient for obtaining continuity estimates,
and restrict to the real setting only in the very end.

Our first result uses methods developed by Ostapenko \cite{ostapenko:1992a} 
to obtain an explicit formula for the canonical element of the Shapovalov pairing
for a semisimple Lie algebra $\lie g$:

\begin{maintheorem} \label{maintheo:coadj:i}
	The Shapovalov pairing
	$\langle\argumentdot,\argumentdot\rangle^\sim_{\lambda} \colon			
	\univ(\plus{\tilde{\lie n}}) \times \univ(\minus{\tilde{\lie n}})
	\to \mathbb C$
	is non-degenerate if $\lambda \in \tilde \Lambda$, and in this case its canonical element
	$F_\lambda \in 
	\univ(\plus{\tilde{\lie n}}) \tensorhat \univ(\minus{\tilde{\lie n}})$
	is given by
	\begin{equation} \label{eq:formula}
	F_\lambda 
	= \sum_{w\in\tilde\wordset} p^w_{\lambda}(\alpha_w)^{-1} \tilde\pi_\lambda^+(X_w) 
	\tensor \tilde\pi_\lambda^-(Y_w)
	\punkt
	\end{equation}
\end{maintheorem}
The notation is explained in detail in \autoref{sec:starProduct}.
For now, it suffices to mention that the Shapovalov pairing
is a pairing between the universal enveloping algebras of 
two nilpotent Lie subalgebras $\plusminus{\tilde{\lie n}}$ of $\lie g$,
depending on a parameter $\lambda \in \lie g^*$.
The sum is over a set of words $\tilde W$ related to the root system of $\lie g$, 
the $p_\lambda^w(\alpha_w)$ are non-zero coefficients which are defined by an explicit formula, 
$X_w$ and $Y_w$ are elements of $\univ \lie g$ and $\tilde \pi_\lambda^\pm$ maps these
elements to $\univ(\plusminus{\tilde{\lie n}})$. 
The element $F_\hbar$, which induces the star product, is obtained by rescaling $\lambda$,
and doing so the coefficients $p_{\I\lambda/\hbar}^w(\alpha_w)^{-1}$
will depend rationally on $\hbar$,
with a countable set of poles $P$ that accumulate only at $0$.
It seems as if explicit formulas for deformation quantizations received
special attention by various authors, and \eqref{eq:formula} provides such
a formula that works in great generality.

As mentioned above, the formal expansion of $F_\hbar$ induces formal products
in complex and real settings.
Furthermore, we also obtain a family of actual (non-formal) products
for holomorphic polynomial functions in the complex setting 
and for polynomial functions in the real setting,
parametrized by $\mathbb C \setminus P$, 
since only finitely many elements of the infinite sum defining $F_\hbar$ 
are non-zero on polynomials. 
All these products are $G$-invariant,
and under some conditions on the Cartan subalgebra
used in the construction they are also Hermitian, meaning that
$\overline{ f *_\hbar g } = \overline g *_{\overline \hbar} \overline f$.
In the real setting and for a compact semisimple connected Lie group $G$,
the formal star product is of Wick type \cite{karabegov:1996a} 
with respect to the K\"ahler complex structure on the coadjoint orbit, 
meaning that it derives the first argument only in holomorphic directions
and the second argument only in antiholomorphic directions.

The next major step after constructing the star product
is to use the explicit formulas to prove its continuity in the complex setting
with respect to the topology of locally uniform convergence.
This topology is locally convex and we can extend the product to a continuous 
product on the completion of the holomorphic polynomials. 
Using methods from analytic geometry 
we identify this completion with the space of holomorphic functions.

\begin{maintheorem} \label{maintheo:coadj:ii}
	For any semisimple coadjoint orbit $\hat{\orbitscript O}$ of a 
	connected semisimple complex Lie group $G$,
	there is a family of products
	$\starhat \colon \Hol(\orbext{}) \times \Hol(\orbext{}) \to \Hol(\orbext{})$
	for $\hbar \in \mathbb C\setminus P$, 
	where every product $\starhat$ is $G$-invariant and continuous
	with respect to the topology of locally uniform convergence.
	The dependence of $\starhat$ on $\hbar$ is holomorphic.
\end{maintheorem}
This result is certainly interesting in its own right.
However, as mentioned above, we can also restrict it to real coadjoint orbits 
$\orb{}\subseteq \smash{\orbext{}}$. 
Denote by $\analytics(\orb{})$ the class of functions on $\orb{}$
that extend to holomorphic functions on $\smash{\orbext{}}$
(if a function extends, its extension is unique),
which contains the polynomials. We define the topology of 
extended locally uniform convergence on $\analytics(\orb{})$ by saying that a sequence 
of functions in $\analytics(\orb{})$ converges if the corresponding sequence of 
extensions converges locally uniformly,
so that $\analytics(\orb{})$ is homeomorphic to $\smash{\Hol(\orbext{})}$. 

\begin{maintheorem} \label{maintheo:coadj:iii}
	For any semisimple coadjoint orbit $\orb{}$ of a connected semisimple 
	real Lie group $G$, there is a family of products
	$\starhbar \colon \analytics(\orb{}) \times \analytics(\orb{}) 
												\to \analytics(\orb{})$
	for $\hbar \in \mathbb C\setminus P$,
	where every product $\starhbar$ is $G$-invariant and continuous 
	with respect to the topology of extended locally uniform convergence.
	The dependence of $\starhbar$ on $\hbar$ is holomorphic.
	The formal expansion of $\starhbar$ around $0$ is a formal star product 
	deforming the $G$-invariant symplectic form of $\orb{}$.
\end{maintheorem}
For the hyperbolic disc the quantum algebra $(\analytics(\mathbb D^n), *_\hbar)$ 
agrees with the algebra obtained in \cite{kraus.roth.schoetz.waldmann:2019a} 
while for the 2-sphere, $(\analytics(\mathbb S^2), *_\hbar)$ is the algebra considered 
in \cite{me}.

Since we constructed a quantization of the holomorphic functions on a complex 
coadjoint orbit and the restriction $\smash{\Hol(\orbext{})} \to \analytics(\orb{})$
is an isomorphism,
the quantizations of different real orbits with the same complexification are related:

\begin{maintheorem} \label{maintheo:coadj:iv}
	If $\orb{}$ and $\orb{}'$ are coadjoint orbits of real semisimple connected
	Lie groups with the same complexification and through one common semisimple element,
	then the algebras $(\analytics(\orb{}), *_\hbar)$ and 
	$(\analytics(\orb{}'), *'_\hbar)$ are isomorphic.
\end{maintheorem}
This isomorphism generalizes the classical Wick rotation,
which can be interpreted as an isomorphism 
between the polynomial algebras $\Pol(\mathbb{CP}^n)$ and $\Pol(\mathbb D^n)$.
However, this isomorphism does not necessarily respect the star involutions
with which the algebras $\analytics(\orb{})$ are equipped.
In other words, the algebras $\analytics(\orb{})$ and $\analytics(\orb{}')$
are isomorphic as algebras, but not necessarily as $^*$-algebras.

In order to apply our quantization to physics, we should represent the 
Fr\'echet algebras $(\analytics(\orb{}), *_\hbar)$ on a Hilbert space. Given a 
positive linear functional we can use the GNS representation to do so. 
For a formal star product of Wick type all point evaluation functionals are 
formally positive. 
However, formal positivity means only that the first non-vanishing 
order is positive and therefore, as in this case, might not survive the passage 
to strict 
products (where the contribution of higher orders can dominate the contribution 
of the first order). For certain coadjoint 
orbits we will prove that point evaluations stay positive.

One aspect that we do not discuss in this work is the relation to geometric
or Berezin--Toeplitz quantization 
\cite{cahen.gutt.rawnsley:1990a, 
	cahen.gutt.rawnsley:1993a, 
	cahen.gutt.rawnsley:1994a, 
	cahen.gutt.rawnsley:1995a}.
These theories construct a quantization by studying holomorphic sections 
of a quantizing line bundle over the manifold $M$.
This line bundle needs to satisfy some integrality condition,
which for compact $M$ means that only countably many values of $\hbar$,
accumulating at $0$, are allowed.
The algebra $\Cinfty(M)$ is, in the limit $\hbar \to 0$, 
approximated by finite dimensional matrix algebras.
The construction of Alekseev--Lachowska coincides with
another more geometric construction of star products 
on semisimple coadjoint orbits by Karabegov \cite{me, karabegov:1999a},
if $\hbar$ is not a pole.
However, Karabegov's construction still makes sense at the poles,
where it coincides with (a variant of) the Berezin--Toeplitz quantization \cite{karabegov:1999a}.
In this sense our infinite dimensional Fr\'echet 
algebras $(\analytics(\orb{}), *_\hbar)$ interpolate between the finite 
dimensional Berezin--Toeplitz algebras.
It could be very interesting to study this in greater detail.

\subsubsection*{Contents}
In \autoref{sec:preliminaries} we recall some well-known facts about coadjoint 
orbits. This includes the realizability of coadjoint orbits as orbits of matrix Lie groups, 
and a characterization of invariant multidifferential operators on homogeneous spaces.
In \autoref{sec:starProduct} we introduce the Shapovalov pairing 
of (generalized) Verma modules and derive an explicit formula for its canonical 
element. From this, we obtain a product for holomorphic
polynomials on complex coadjoint orbits. In \autoref{sec:continuity} we show 
that this product is continuous with respect to the topology of locally uniform 
convergence, so that we can extend it to the completion, which consists of all 
holomorphic functions on the orbit. Finally, we restrict our results to real
coadjoint orbits in \autoref{sec:properties}. We will determine additional
properties of the star products obtained in this way (e.g.\ being of Wick type or
of standard ordered type), study positive linear functionals,
and investigate isomorphisms of the algebras obtained for 
different real forms of the same complex coadjoint orbit.
In \appendixref{appendices} we 
give some remaining proofs and more 
details on complex structures.

\subsubsection*{Notation}
In the whole paper $G$ is either a real or complex Lie group,
$\lie g$ denotes the Lie algebra of $G$, and $\univ \lie g$ denotes the universal enveloping
algebra of $\lie g$.
In \autoref{sec:starProduct} and \autoref{sec:continuity}, $G$ is always complex.
In \autoref{sec:properties}, $G$ refers to a real Lie group 
and $\hat G$ to a complexification of $G$.
$K$ denotes a compact real Lie group. 
Coadjoint orbits through $\lambda \in \lie g^*$ are denoted by $\orb\lambda$.

We write $\Cinfty(M)$ for the smooth complex-valued functions on a manifold $M$. 
If $M$ is a real manifold, $\Tangent M$ denotes its (real) tangent bundle 
(so sections of $\Tangent M$ are derivations of 
the algebra of real-valued smooth functions on $M$).
The complexification of $\Tangent M$ is denoted by $\Tangent^{\mathbb C} M$
(so sections of $\Tangent^{\mathbb C} M$ are derivations of $\Cinfty(M)$).
If $M$ is a complex manifold, then the holomorphic tangent
bundle is denoted by $\Tangent^{(1,0)} M$. 
\section{Preliminaries}
\label{sec:preliminaries}
In this section we summarize some results that are needed in the rest of this article:
We review the definition of coadjoint orbits and their realizability 
as orbits of matrix Lie groups in \autoref{subsec:generalities}.
In \autoref{subsec:diffopsOnHomogeneousSpaces} we introduce
invariant multidifferential operators on homogeneous spaces.

\subsection{Coadjoint orbits}
\label{subsec:generalities}

Let $G$ be a real or complex Lie group with Lie algebra $\lie g$.
We denote the \emph{adjoint action} of $G$ on $\lie g$
by $\Ad \colon G \to \group{End}(\lie g)$.
For any $g \in G$, $\Ad_g \coloneqq \Ad(g)$ is the tangent map of the conjugation 
$G \ni x \mapsto g x g^{-1} \in G$ by $g$.
Its differential $\ad \colon \lie g \to \lie{end}(\lie g)$ is given by the Lie bracket,
$\ad_X(Y) = [X,Y]$.
The \emph{coadjoint action} $\Ad^* \colon G \to \group{End}(\lie g^*)$
of $G$ on the dual $\lie g^*$ of $\lie g$
is defined by $\Ad^*_g \xi = \xi \circ \Ad_{g^{-1}}$ for $\xi \in \lie g^*$.

The \emph{coadjoint orbit} $\orb\lambda$ of $G$ 
through an element $\lambda \in \lie g^*$ is defined as 
\begin{equation}
\orb\lambda = \lbrace \xi \in \lie g^* \mid  \xi = \Ad^*_g \lambda 
\text{ for some $g \in G$}\rbrace \punkt
\end{equation}
It is well-known that $\orb\lambda \cong G/G_\xi$ where
$\xi \in \orb\lambda$ is any point on the coadjoint orbit and 
$G_\xi = \lbrace g \in G \mid \Ad_g^* \xi = \xi \rbrace$
is the \emph{stabilizer subgroup} of $\xi$.
If $G$ is a real (complex) Lie group, there is a unique 
smooth (complex) manifold structure on $G / G_\xi$
that makes the projection $\pi \colon G \to G / G_\xi$
a smooth (holomorphic) submersion, and we use it to define the structure
of a smooth (complex) manifold on $\orb\lambda$.
It does not depend on the choice of $\xi \in \orb\lambda$.

Fix a basis $e_1, \dots, e_n$ of $\lie g$ and let $\smash{C_{ij}^k}$ be the structure
constants with respect to this basis, i.e.\ $[e_i, e_j] = \sum_{k=1}^n C_{ij}^k e_k$.
Then 
$\lbrace f,g \rbrace (\xi) 
=
\sum_{i,j,k=1}^n C_{ij}^k \xi(e_k) 
	\frac{\partial f}{\partial e_i} 
	\frac{\partial g}{\partial e_j}$
defines a linear Poisson structure on $\lie g^*$,
where $f, g \in \Cinfty(\lie g^*)$ and the $e_i$ are viewed as global linear coordinates on $\lie g^*$.
The following proposition is well-known, see e.g.\ \cite[Example 1.1.3]{chari.pressley:1994a}.

\begin{proposition} \label{theo:coadjointOrbitsAreSymplecticLeaves}
	If the Lie group $G$ is connected, 
	then the coadjoint orbits of $G$ are precisely
	the symplectic leaves of this linear Poisson structure.
	In particular, all connected Lie groups with the same Lie algebra
	have the same coadjoint orbits.
\end{proposition}

\begin{corollary}
	If the Lie group $G$ is semisimple and connected,
	then $G$ and its image under $\Ad \colon G \to \group{End}(\lie g)$
	have the same coadjoint orbits.
\end{corollary}

\begin{proof}
	Since $\lie g$ is semisimple, it has trivial center and therefore
	$\ad \colon \lie g \to \lie{end}(\lie g)$ is injective.
	Consequently, $G$ and its image in $\group{End}(\lie g)$ have the
	same Lie algebra. Since both are connected, the result follows by
	applying the previous proposition.
\end{proof}
It is easy to show that not only $G$ and its image under $\Ad$ have the same coadjoint orbits,
but also $\Ad \colon G \to \group{End}(\lie g)$ intertwines the actions of $G$
and its image on the coadjoint orbits.
Since the image of $G$ under $\Ad$ is a matrix Lie group,
we can therefore, when studying coadjoint orbits of connected semisimple Lie groups,
assume without loss of generality that such a Lie group is a matrix Lie group.
Using the argument provided in \cite[Theorem 9]{goto:1950a}
we can even assume that $G$ is a closed matrix Lie group.

For $X \in \lie g$, denote the \emph{fundamental vector field} of $X$
for the coadjoint action by 
$\smash{X_{\orb\lambda} \at \xi} \coloneqq \frac{\D}{\D t}\at{t=0}\Ad^*_{\exp(-t X)} \xi$,
where $\xi \in \orb\lambda$.
Note that the map $\lie g / \lie g_\xi \to \Tangent_\xi \orb\lambda$,
$X \mapsto X_{\orb\lambda} \at \xi$ is an isomorphism, where $\lie g_\xi$
denotes the Lie algebra of $G_\xi$.
Consequently,\begin{equation} \label{eq:KKSform}
\omega_{\mathrm{KKS}} (X_{\orb\lambda}, Y_{\orb\lambda})\at\xi
= \xi([X, Y]) 
\end{equation}
determines a well-defined $2$-form on $\orb\lambda$,
which is called the Kirillov-Kostant-Souriau form.
One can show that $\omega_{\mathrm{KKS}}$ is symplectic
and $G$-invariant.
By symplectic we mean that $\omega_{\mathrm{KKS}}$ is closed and that
$\omega_{\mathrm{KKS}} \at \xi \colon 
\Tangent_\xi \orb\lambda \times \Tangent_\xi \orb\lambda \to \mathbb k$ 
is $\mathbb k$-bilinear, antisymmetric, and non-degenerate for all $\xi \in \orb\lambda$,
where $\mathbb k$ is either $\mathbb R$ or $\mathbb C$,
depending on whether $G$ is real or complex.

For a semisimple Lie algebra $\lie g$, the Killing form 
$B \colon \lie g \times \lie g \to \mathbb k$ is non-degenerate,
giving an isomorphism
${}^\flat \colon \lie g \to \lie g^*$, 
$X \mapsto X^\flat \coloneqq B(X, \argumentdot)$.
We denote its inverse by ${}^\sharp \colon \lie g^* \to \lie g$.
In the complex case we say that $\lambda \in \lie g^*$ is semisimple
if $\ad_{\lambda^\sharp} \in \lie{end}(\lie g)$ is diagonalisable
and in the real case $\lambda \in \lie g^*$ is semisimple
if the complex linear extension of $\lambda$
to the complexification of $\lie g$ is semisimple.
A coadjoint orbit $\orb\lambda$ is semisimple if $\lambda$ is semisimple.

\begin{proposition} \label{theo:stabilizerConnected}
	Let $G$ be a complex connected semisimple Lie group
	and $\lambda \in \lie g^*$ be semisimple.
Then $G_\lambda$ is connected.
\end{proposition}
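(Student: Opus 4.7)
The plan is to transport the problem via the Killing form to the adjoint action and then invoke the classical theorem that centralizers of tori in complex reductive Lie groups are connected. First I would use the $\Ad$-invariance of the Killing form: a short calculation gives $\Ad^*_g \lambda = (\Ad_g \lambda^\sharp)^\flat$ for every $g \in \CLG$, so setting $X := \lambda^\sharp \in \CLA$ (which is semisimple by hypothesis) identifies
\begin{equation*}
\CLG_\lambda = \lbrace g \in \CLG \mid \Ad_g X = X\rbrace,
\end{equation*}
the centralizer of $X$ under the adjoint action.

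Next I would trade $X$ for a subtorus. Since $X$ is semisimple it lies in some Cartan subalgebra $\lie h \subseteq \CLA$; let $H \subseteq \CLG$ be the corresponding (connected) Cartan subgroup and $T'$ the Zariski closure in $H$ of the one-parameter subgroup $\exp(\mathbb C X)$. Semisimplicity of $X$ makes $T'$ an algebraic subtorus of $H$. The equation $\Ad_g X = X$ is equivalent to $g$ commuting with $\exp(tX)$ for every $t \in \mathbb C$; since the centralizer of a fixed group element is Zariski-closed in $\CLG$, this is in turn equivalent to $g$ commuting with every element of $T'$. Hence $\CLG_\lambda$ coincides with the centralizer $Z_\CLG(T')$ of the subtorus $T'$.

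The proposition then reduces to the classical fact that in a connected complex reductive Lie group the centralizer of any torus is connected, a standard result in the structure theory of complex algebraic groups that in fact exhibits $Z_\CLG(T')$ as the Levi subgroup attached to the sub-root-system of roots vanishing on $\mathrm{Lie}(T')$. This connectedness theorem is the only nontrivial input; the preceding two steps are routine translations via the Killing form and via the standard dictionary between semisimple Lie-algebra elements and algebraic subtori of $\CLG$.
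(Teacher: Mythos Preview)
Your proposal is correct and follows essentially the same route as the paper: identify $\CLG_\lambda$ with the adjoint centralizer of $\lambda^\sharp$, replace the semisimple element by the torus it generates (the paper takes the closure of the one-parameter subgroup, you take the Zariski closure inside a Cartan subgroup), and then invoke the standard fact that centralizers of tori in connected complex reductive groups are connected. Your version is more explicit about why the closure is indeed a torus and why the centralizer of the element coincides with that of the torus, but the underlying argument is the same.
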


\begin{proof}
	The Lie algebra spanned by $\lambda^\sharp$ integrates
	to a connected commutative Lie subgroup $T'$ of $G$,
	and since $\lambda^\sharp$ is semisimple, all elements of $T'$
	are diagonalisable in the adjoint representation.
There is a smallest closed complex Lie group $T$ containing $T'$,
	that can be obtained as follows:
	Take the closure of $T'$ (which is a real Lie group),
	take the Lie algebra of this closure (which is a real Lie subalgebra of $\lie g$),
	take the complex Lie algebra spanned by it,
	integrate this Lie algebra to a connected Lie subgroup of $G$,
	and possibly repeat these steps.
$T$ is still connected and commutative, and all its elements are
	diagonalisable in the adjoint representation, so $T$ is a complex torus in $G$.
	Its centralizer is exactly $G_\lambda$, and centralizers of tori are connected.
\end{proof}
Note that the statement is also true for a real compact connected semisimple 
Lie group $K$, but might fail if the 
compactness assumption is dropped.

We denote the smooth functions on $G$ that are invariant under the action of 
$G_\lambda$ from the right by $\Cinfty(G)^{G_\lambda}$. That is, $f 
\in\Cinfty(G)^{G_\lambda}$ if and only if $f \in \Cinfty(G)$ and $f(g g') = 
f(g)$ for all $g \in G$ 
and $g'\in G_\lambda$. There is an algebra isomorphism 
\begin{equation}
\pi^* \colon \Cinfty(G / G_\lambda) \to \Cinfty(G)^{G_\lambda} \komma\quad 
f \mapsto \pi^* f \coloneqq f \circ \pi
\end{equation}
and for a complex Lie group, this isomorphism restricts
to an isomorphism on holomorphic functions.
We denote the inverse by
$\invpi \colon \Cinfty(G)^{G_\lambda} \to \Cinfty(G / G_\lambda)$.

\begin{remark}\label{theo:coadjointOrbitsAreClosed}
	This article is written mainly from a differential geometric perspective.
	Note however, that any complex connected semisimple Lie group $G$
	has a unique structure of an algebraic group,
	see Theorem 6.3 and the preceding corollary 
	in Chapter 1 of \cite{onishchik.vinberg:1994a}.
	Any holomorphic representation of $G$ is polynomial.
	Consequently, if $G$ is realized as a subgroup
	of $\GL[N]$ it is automatically closed.
The coadjoint action $G \times \lie g^* \to \lie g^*$
	is a morphism of algebraic varieties, 
and coadjoint orbits of $G$ are smooth subvarieties of $\lie g^*$.
A coadjoint orbit of $G$ is closed in the Zariski topology
	if and only if it is semisimple, see \cite[Theorem 5.4]{crooks:2018a}.
In particular, semisimple coadjoint orbits of complex
	connected semisimple Lie groups are affine algebraic varieties.
	
	Note however, that this is not necessarily true for real connected 
	semisimple Lie groups (not even if they are linear).
It is still true that real connected semisimple linear Lie groups
	and their coadjoint orbits are connected components
	(with respect to the usual topology) of affine algebraic varieties. 
\end{remark} 
\subsection[\texorpdfstring{Invariant holomorphic $k$-differential operators on homogeneous spaces}{Invariant holomorphic k-differential operators on homogeneous spaces}]{\texorpdfstring{Invariant holomorphic \boldmath$k$-differential operators on homogeneous spaces}{Invariant holomorphic k-differential operators on homogeneous spaces}}
\label{subsec:diffopsOnHomogeneousSpaces}

In the whole subsection $G$ is a complex Lie group, 
$H$ is a closed complex Lie subgroup of $G$,
and $k \geq 1$ is an integer.
We present some results on holomorphic
$G$-invariant $k$-differential operators 
on the homogeneous space $G / H$,
in particular we construct a bijection between the set
$((\univ \lie g/ \univ \lie g \cdot \lie h)^{\tensor k})^H$
and the set of such operators. 
The results seem to be well-known,
but proofs are hard to find in the literature. 

A $k$-differential operator $D$ 
(see \appendixref{appendix:diffops} for a short review of the definition)
on a manifold $M$ endowed with an action of a Lie group $G$
is said to be invariant under $G$ if  
$\smash{\phi_g^*} (D \smash{\vec f}) = D ((\phi_g^*)^{\times k} \smash{\vec f})$ 
for all $\vec f \in \Cinfty(M)^k$ and all $g \in G$.
Here $\phi_g \colon M \to M$ is the diffeomorphism of $M$ given by the action
of a fixed element $g \in G$,
and the upper star denotes the pullback.
We write $\smash{\nDiffop_\holo^{G}(M)}$ for the space of holomorphic
$G$-invariant $k$-differential operators on a complex manifold $M$.
A $k$-differential operator on $G$ is said to be left-invariant if it is 
invariant with respect to the left action $\leftact \colon G \times G \to G$, $(g,g') 
\mapsto g g' \eqqcolon \leftact_g(g')$. 

Let $M$ be a complex manifold with complex structure $I \colon \Tangent M \to \Tangent M$.
For a vector field $V \in \Secinfty(\Tangent M)$ its holomorphic part is
$V^{(1,0)} = \frac 1 2 (V - \I I V) \in \Secinfty(\Tangent^{(1,0)} M)$.
Let $\lie g$ be the Lie algebra of $G$. For any $X \in \lie g$ define the
\emph{left-invariant vector field}
\begin{equation}\label{eq:defHoloLeftInvVF}
\leftinv X \at{g} 
\coloneqq 
\frac{\D}{\D t}\at{t=0} g \exp(t X) 
\in \Secinfty(\Tangent G) \punkt
\end{equation}
Its holomorphic part
$\leftinvholo X 
= \frac 1 2 (\leftinv X - \I \leftinv{(\I X)}) 
\in \Secinfty(\Tangent^{(1,0)} G)$
induces a holomorphic left-invariant 
$1$-differential operator $f \mapsto \leftinvholo X f$ on $G$.
The map $\leftinvholo{(\argumentdot)} \colon \lie g \to \Secinfty(\Tangent^{(1,0)} G)$ 
is a Lie algebra homomorphism, inducing an algebra homomorphism 
$\leftinvholo{(\argumentdot)} \colon \univ \lie g \to \Diffop_\holo^{G}(G)$.
 
In the following we extend various maps to $k$-fold  
products and still denote them by the same symbol,
\begin{subequations}
\begin{align}
\label{eq:Ad:extended}
\Ad_g \colon (\univ \lie g)^{\tensor k} &\to (\univ\lie g)^{\tensor k} \komma &
u_1 \tensor \dots \tensor u_k &\mapsto \Ad_g u_1 \tensor \dots \tensor \Ad_g 
u_k \komma
\\ 
\pi^* \colon \Cinfty(G/H)^{k} &\to (\Cinfty(G)^{H})^{k} \komma &
(f_1, \dots, f_k) &\mapsto (\pi^* f_1, \dots, \pi^* f_k) \komma
\\ \notag
\leftinvholo{(\argumentdot)} \colon (\univ \lie g)^{\tensor k} &\to \nDiffop_\holo^{G}(G) 
\komma & & \\
&&&\!\!\!\!\!\!\!\!\!\!\!\!\!\!\!\!\!\!\!\!\!\!\!\!\!\!\!\!\!\!\!\!\!\!\!\!\!\!\!\!
\!\!\!\!\!\!\!\!\!\!\!\!\!\!\!\!\!\!\!\!\!\!\!\!\!\!\!\!\!\!\!\!\!\!\!\!\!\!\!\!
\!\!\!\!\!\!\!\!\!\!\!\!\!\!\!\!\!\!\!\!\!\!\!\!\!\!\!
u_1 \tensor \dots \tensor u_k \mapsto
\left((f_1, \dots, f_k) \mapsto
\leftinvholo{u_1} f_1 \cdot \ldots \cdot \leftinvholo{u_k} f_k \right) \punkt
\label{eq:extendingToTensors}
\end{align}
\end{subequations}

\begin{proposition}\label{theo:leftInvDiffOps:complex}
	$\!$The map
	$\leftinvholo{(\argumentdot)} \colon
	(\univ \lie g)^{\tensor k} \to \nDiffop_{\holo}^{G}(G)$
	is an isomorphism.
\end{proposition}
\begin{proof}
	See \appendixref{appendix:diffops}.
\end{proof}
Next, we want to describe holomorphic $G$-invariant
$k$-differential operators on the homogeneous space $G / H$.
Let $H$ be a closed Lie subgroup of $G$ with Lie algebra $\lie h$,
and let $\univ \lie g \cdot \lie h \subseteq \univ \lie g$ be the left ideal
generated by $\lie h$.
Note that $(\univ \lie g / \univ\lie g \cdot \lie h)^{\tensor k}$ is isomorphic to 
$(\univ\lie g)^{\tensor k}/I$ where $I = I_1 + \dots + I_k$ and
$I_i = (\univ \lie g)^{\tensor(i-1)} \tensor \univ \lie g 
\cdot \lie h \tensor (\univ \lie g)^{\tensor(k-i)}$ 
is a left ideal in $(\univ \lie g)^{\tensor k}$. 
Introduce the set
\begin{align} \notag
U_{\mathrm{inv}} &= \lbrace \vec u \in (\univ \lie g)^{\tensor k} \mid 
[\vec u] \in (\univ \lie g / \univ \lie g \cdot \lie h)^{\tensor k}
\text{ is $H$-invariant}\rbrace \\
&= \lbrace \vec u \in (\univ \lie g)^{\tensor k} \mid 
\Ad_h \vec u - \vec u \in I \text{ for all $h \in H$}\rbrace \punkt
\end{align}
Here the action of $H$ on $(\univ \lie g)^{\tensor k}$ is the diagonal action
defined in \eqref{eq:Ad:extended}.

\begin{lemma} \label{theo:diffops:Hinvariance}
	Let $\vec u \in U_{\mathrm{inv}}$, $\vec v \in I$,
	and $\vec f \in (\Cinfty(G)^{H})^{k}$.
	Then 
	\begin{equation}
	\leftinvholo{\vec v} \vec f = 0 
	\quad\text{ and }\quad
	\leftinvholo{\vec u}\vec f \in \Cinfty(G)^{H} \punkt
	\end{equation}
\end{lemma}

\begin{proof}
	Let $Y \in \lie h$ and $f\in \Cinfty(G)^H$. Then we compute
	\begin{equation*}
	(\leftinv{Y} f)(g) 
	=\frac{\D}{\D t}\At{t=0} f(g \exp(t Y)) 
	=\frac{\D}{\D t}\At{t=0} f(g) 
	= 0 \punkt
	\end{equation*}
	By using that
	$\leftinvholo Y = \frac 1 2 (\leftinv Y - \I \leftinv{(\I Y)})$
	this implies that $\leftinvholo{Y} f = 0$,
	and therefore also $\leftinvholo{\vec v} \vec f = 0$ for all $\vec v \in I$ and
	$\vec f \in (\Cinfty(G)^H)^k$.
	If $X \in \lie g$, then
	\begin{equation*}
	(\leftinv{X} f)(g h) 
	=\frac{\D}{\D t}\At{t=0} f(g h \exp(t X)) 
	=\frac{\D}{\D t}\At{t=0} f(g \exp(t \Ad_h X))
	= (\leftinv{(\Ad_h X)} f) (g)
	\end{equation*}
	for all $f \in \Cinfty(G)^H$, $g \in G$, and $h \in H$. Consequently, we obtain 
	$(\leftinvholo{X} f)(g h) = (\leftinvholo{(\Ad_h X)} f) (g)$,
	and extending to the universal enveloping algebra and to tensor products yields
	$(\leftinvholo{\vec u} \vec f)(g h) = (\leftinvholo{(\Ad_h \vec u)} \vec f) (g)$
	for all $\vec u \in (\univ\lie g)^{\tensor k}$ and $\vec f \in (\Cinfty(G)^H)^k$.
	If $\vec u \in U_{\mathrm{inv}}$, then together with the first part we obtain
	\begin{multline*}
	(\leftinvholo{\vec u}\vec f)(gh) 
	= (\leftinvholo{(\Ad_h \vec u)} \vec f)(g)
	= \\
	= (\leftinvholo{\vec u}\vec f)(g) + (\leftinvholo{(\Ad_h\vec u - \vec u)}\vec f)(g)
	= (\leftinvholo{\vec u}\vec f)(g) \punkt
	\end{multline*} 	
\end{proof}
Because of this lemma we can define
\begin{equation*}
\tilde \Psi \colon U_{\mathrm{inv}} \to 
\mathrm{Map}(\Cinfty(G/H)^k, \Cinfty(G/H)) \komma 
\quad
\tilde\Psi(\vec u) \vec f = \pi_* (\leftinvholo{\vec u} (\pi^*\vec f)) \punkt
\end{equation*}
Since $\pi^*$ and $\pi_*$ are algebra homomorphisms, it follows
that $\tilde \Psi(\vec u)$ and $\leftinvholo{\vec u}$ satisfy essentially the same commutation
relations with the operator that multiplies a component by a smooth function.
Consequently $\tilde \Psi(\vec u)$ is
$k$-differential and of the same order than $\leftinvholo{\vec u}$
(see the definition of $k$-differential operators given in
\autoref{def:nDifferentialOperators}).
Moreover, $\tilde \Psi(\vec u)$ is $G$-invariant,
because $\pi^*$ and $\pi_*$ are $G$-equivariant and 
$\leftinvholo{\vec u}$ is $G$-invariant.
Since $\pi \colon G \to G / H$ is a holomorphic map, 
it follows that $\tilde \Psi(\vec u)$ is holomorphic,
and $\tilde \Psi$ really maps into $\nDiffop^{G}_\holo(G/H)$.
The map $\tilde\Psi$ descends to a map
\begin{equation} \label{eq:diffops:bijection:real}
\Psi \colon 
((\univ \lie g / \univ \lie g \cdot \lie h)^{\tensor k})^{H} 
\to 
\nDiffop_\holo^{G}(G/H)
\end{equation}
because $\tilde \Psi(I) = 0$ according to the previous lemma.

\begin{proposition} \label{theo:diffops:bijection:real}
	The map $\Psi$ defined in \eqref{eq:diffops:bijection:real} is an 
	isomorphism.
\end{proposition}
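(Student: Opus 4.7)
The plan is to reduce the statement to the isomorphism of \autoref{theo:leftInvDiffOps:real} by evaluating both sides at the identity coset $[e]\in\RLG/\RLH$ and exploiting $\RLG$-invariance. First I would observe that $\Psi$ is well-defined: elements of $I$ carry a factor $\leftinv{Y}$ with $Y\in\CCA$ in some tensor slot, and any such factor annihilates $\Cinfty(\RLG)^{\RLH}$, so $\tilde\Psi$ vanishes on $I$; combined with \autoref{theo:diffops:Hinvariance:real} this shows $\Psi$ descends to the quotient.

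Next I would pick a vector space complement $\mathfrak{m}$ of $\RCA$ in $\RLA$ with basis $X_1,\dots,X_p$, together with a basis $Y_1,\dots,Y_q$ of $\RCA$. By the Poincar\'e--Birkhoff--Witt theorem, $(\univ\CLA)^{\tensor k}$ has a basis consisting of tensor products of ordered monomials $X^{\alpha}Y^{\beta}:=X_1^{\alpha_1}\cdots X_p^{\alpha_p}Y_1^{\beta_1}\cdots Y_q^{\beta_q}$, and the ideal $I$ is spanned by those tensor products in which at least one slot carries a factor with $\abs{\beta}\geq 1$. Using canonical coordinates of the second kind on a neighbourhood of $e$, sending $(x,y)$ to $\exp(x^1 X_1)\cdots\exp(x^p X_p)\exp(y^1 Y_1)\cdots\exp(y^q Y_q)$, one verifies that $(\leftinv{X^{\alpha}Y^{\beta}} f)(e)=(\partial_x^{\alpha}\partial_y^{\beta} f)(0,0)$; since the $x^i$ descend to local coordinates near $[e]\in\RLG/\RLH$ while the $y^j$ parametrise the fibre, $\pi^* f'$ is independent of $y$ for every $f'\in\Cinfty(\RLG/\RLH)$.

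For injectivity, if $\Psi([\vec u])=0$ I would expand $\vec u\in U_{\mathrm{inv}}$ in the PBW basis and evaluate at $e$: the $y$-independence of each $\pi^*f'_i$ forces only monomials with all $\beta^i=0$ to contribute, and letting $\vec f'$ vary shows that the coefficient of every pure $X$-monomial vanishes, so $\vec u\in I$. For surjectivity, I would use that any $D\in\nDiffop^{\RLG}(\RLG/\RLH)$ is determined by its germ at $[e]$ and that any $\RLH$-invariant $k$-differential operator at $[e]$ extends uniquely to an $\RLG$-invariant operator on $\RLG/\RLH$ by translation. The space of $k$-differential operators at $[e]$ is canonically $(\Sym T^{\mathbb{C}}_{[e]}(\RLG/\RLH))^{\tensor k}\cong(\Sym(\CLA/\CCA))^{\tensor k}$, which the PBW identification realises $\RLH$-equivariantly as $(\univ\CLA/\univ\CLA\cdot\CCA)^{\tensor k}$. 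The class $[\vec u]$ corresponding to $D\at{[e]}$ thus lies in $((\univ\CLA/\univ\CLA\cdot\CCA)^{\tensor k})^{\RLH}$, and the coordinate computation identifies $\Psi([\vec u])\at{[e]}$ with $D\at{[e]}$, whence $\Psi([\vec u])=D$ by uniqueness of the $\RLG$-invariant extension.

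The main obstacle will be the bookkeeping that matches the evaluation-at-$e$ map with the canonical symbol map of an $\RLG$-invariant $k$-differential operator at $[e]$; once the coordinate formula $(\leftinv{X^{\alpha}Y^{\beta}} f)(e)=(\partial_x^{\alpha}\partial_y^{\beta} f)(0,0)$ is established and the PBW/symbol identification is made $\RLH$-equivariant, both injectivity and surjectivity follow readily.
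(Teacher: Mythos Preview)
Your overall strategy---reduce to the identity coset via $\RLG$-invariance, identify the evaluation-at-$e$ map with a PBW-type isomorphism, and read off both injectivity and surjectivity---is the same as the paper's, and your use of canonical coordinates of the second kind is a genuine improvement: the exact identity $(\leftinv{X^{\alpha}Y^{\beta}} f)(e)=(\partial_x^{\alpha}\partial_y^{\beta} f)(0,0)$ for \emph{ordered} PBW monomials (which really does hold in those coordinates) lets you avoid the induction on the order that the paper carries out in \appendixref{appendix:diffops}. The paper only has the leading-order relation $\leftinv{X^I}\at e=\partial_x^I\at e+\text{lower order}$, so it peels off the top symbol of $D$ step by step; your exact formula makes the map $[\vec u]\mapsto\big(\vec f'\mapsto(\leftinv{\vec u}\pi^*\vec f')(e)\big)$ visibly bijective onto all $k$-differential operators at $[e]$ in one stroke.

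There is, however, a gap in your surjectivity argument as written. The claim that the space of $k$-differential operators at $[e]$ is \emph{canonically} $(\Sym T^{\mathbb C}_{[e]}(\RLG/\RLH))^{\tensor k}$, and that the PBW identification with $(\univ\CLA/\univ\CLA\cdot\CCA)^{\tensor k}$ is $\RLH$-equivariant, is not correct: the identification of differential operators at a point with symmetric tensors is coordinate-dependent, and under a change of coordinates (in particular under $\leftact_h$ for $h\in\RLH$) the chain rule produces lower-order terms, so the naive $\Sym$-action does not match the action on differential operators. What \emph{is} true---and what you actually need---is that the direct map
\[
\Phi\colon (\univ\CLA/\univ\CLA\cdot\CCA)^{\tensor k}\longrightarrow\{k\text{-diff.\ ops at }[e]\},\qquad [\vec u]\longmapsto\big(\vec f'\mapsto(\leftinv{\vec u}\pi^*\vec f')(e)\big),
\]
is $\RLH$-equivariant; this follows from the computation $(\leftinv{\vec u}\,\pi^*\vec f')(h)=(\leftinv{\Ad_h\vec u}\,\pi^*\vec f')(e)$ already used in \autoref{theo:diffops:Hinvariance:real}. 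With this in hand your argument goes through: $\Phi$ is a bijection (by your exact formula) and $\RLH$-equivariant, so it restricts to a bijection on $\RLH$-invariants, and composing with the isomorphism $\nDiffop^{\RLG}(\RLG/\RLH)\cong\{k\text{-diff.\ ops at }[e]\}^{\RLH}$ (evaluation at $[e]$, with inverse given by $\RLG$-translation) recovers $\Psi$. The paper avoids this issue differently: it first produces some $\vec u\in(\univ\CLA)^{\tensor k}$ with $\Psi([\vec u])=D$ by the order-induction, and only afterwards verifies $[\Ad_h\vec u]=[\vec u]$ by invoking the already-established injectivity of $\Psi$. Either route works, but drop the detour through $\Sym$.
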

\begin{proof}
	The proof is given in \appendixref{appendix:diffops}.
\end{proof}
The last result of this subsection gives a description of the $k$-differential operator
$\Psi([\vec u])$ on the coadjoint orbit without using extensions to $G$.
Let $S$ be the antipode of $\univ \lie g$ and extend the Lie algebra homomorphism
$\lie g 
\ni 
X \mapsto X_{\orb\lambda} 
\in 
\Secinfty(\Tangent \orb\lambda)$
defined just before \eqref{eq:KKSform} to an algebra homomorphism
$\univ\lie g \to \Diffop(\orb\lambda)$.

\begin{proposition}\label{theo:calculationOfLeftInvDiffOpsOnOrbit} Let 
	$\orb\lambda \cong G / G_\lambda$ be a coadjoint orbit.
	If $\vec u = u_1 \tensor \dots \tensor u_k \in U_\mathrm{inv}$ 
	and $\vec f = (f_1, \dots, f_k) \in \Cinfty(\orb\lambda)^{k}$, then
	\begin{equation}
	\Psi([\vec u]) \vec f(\Ad_g^* \lambda) 
	= 
	(S(\Ad_g u_1))^{(1,0)}_{\orb\lambda} f_1 
	(\Ad_g^* \lambda)
	\cdot\ldots\cdot 
	(S(\Ad_g u_k))^{(1,0)}_{\orb\lambda} f_k (\Ad_g^* \lambda) \punkt
	\end{equation}
\end{proposition}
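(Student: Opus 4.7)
By construction, $\Psi([\vec u])\vec f = \invpi(\leftinv{\vec u}\,\pi^*\vec f)$, so evaluating at $\Ad_g^*\lambda = \pi(g)$ gives
\[
\Psi([\vec u])\vec f(\Ad_g^*\lambda) \;=\; (\leftinv{\vec u}\,\pi^*\vec f)(g) \;=\; \prod_{i=1}^k (\leftinv{u_i}\,\pi^* f_i)(g)
\]
for decomposable $\vec u = u_1\tensor\dots\tensor u_k$. Since both sides are linear in $\vec u$, everything reduces to the single-variable identity
\begin{equation}\label{eq:claim-singlevar}
(\leftinv{u}\,\pi^* f)(g) \;=\; (S(\Ad_g u))_{\orb\lambda} f \big|_{\Ad_g^*\lambda}
\qquad (u \in \univ\CLA,\ f\in\Cinfty(\orb\lambda)),
\end{equation}
where $(\argumentdot)_{\orb\lambda} : \univ\CLA \to \Diffop(\orb\lambda)$ is the algebra homomorphism extending the Lie algebra homomorphism $X \mapsto X_{\orb\lambda}$, and $S$ is the antipode of $\univ\CLA$.

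I prove \eqref{eq:claim-singlevar} by induction on the PBW length of $u$, using $\leftinv{(\argumentdot)}$ is an algebra homomorphism. For $u = X \in \CLA$, write $\Ad_{g\exp(tX)}^* = \Ad_g^*\circ\Ad_{\exp(tX)}^*$ and set $\varphi_h : \orb\lambda\to\orb\lambda$, $\xi\mapsto\Ad_h^*\xi$. The chain rule yields
\[
(\leftinv{X}\pi^*f)(g)
= T_\lambda(f\circ\varphi_g)\Big(\partt \Ad_{\exp(tX)}^*\lambda\Big)
= -T_\lambda\varphi_g(X_{\orb\lambda}|_\lambda)(f),
\]
since $X_{\orb\lambda}|_\lambda = \partt \Ad_{\exp(-tX)}^*\lambda$ by definition. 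Equivariance of the coadjoint action gives the key identity $T_\lambda\varphi_g(X_{\orb\lambda}|_\lambda) = (\Ad_g X)_{\orb\lambda}|_{\Ad_g^*\lambda}$, so the base case follows from $S(X) = -X$.

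For the inductive step take $u = v u'$ with $v\in\CLA$ and $u' \in \univ\CLA$ of smaller length. Using $\leftinv{vu'} = \leftinv{v}\leftinv{u'}$ and the induction hypothesis,
\[
(\leftinv{vu'}\pi^* f)(g)
= \partt \big[ (S(\Ad_{g\exp(tv)} u'))_{\orb\lambda} f\big|_{\Ad^*_{g\exp(tv)}\lambda}\big].
\]
This splits into two contributions. Differentiating $\Ad_{g\exp(tv)} u' = \Ad_g u' + t\Ad_g[v,u']+O(t^2)$ in the operator factor produces $S(\Ad_g[v,u'])_{\orb\lambda} f(\Ad_g^*\lambda)$, while differentiating the base point produces $S(\Ad_g v)_{\orb\lambda}|_{\Ad_g^*\lambda}$ applied to the function $S(\Ad_g u')_{\orb\lambda} f$, again by the base case applied to $F := S(\Ad_g u')_{\orb\lambda} f \in \Cinfty(\orb\lambda)$. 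Because $S$ is an anti-algebra homomorphism and $(\argumentdot)_{\orb\lambda}$ is an algebra homomorphism,
\[
S(\Ad_g[v,u'])_{\orb\lambda} = -[S(\Ad_g v), S(\Ad_g u')]_{\orb\lambda}
= -\big[S(\Ad_g v)_{\orb\lambda},\, S(\Ad_g u')_{\orb\lambda}\big],
\]
and the commutator term cancels precisely the ordering discrepancy in the second contribution, leaving
\[
\big( S(\Ad_g u')\, S(\Ad_g v)\big)_{\orb\lambda} f\big|_{\Ad_g^*\lambda}
\;=\; S(\Ad_g(vu'))_{\orb\lambda} f\big|_{\Ad_g^*\lambda},
\]
as required.

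The only delicate point is the sign/ordering bookkeeping in the inductive step: the minus sign from the definition of $X_{\orb\lambda}$, the anti-multiplicativity of $S$, and the reversal of order in $\leftinv{u'}\leftinv{v}$ versus $(\argumentdot)_{\orb\lambda}$ composition all have to conspire correctly, which they do once one uses the single geometric identity $T_\lambda\varphi_g(X_{\orb\lambda}|_\lambda) = (\Ad_g X)_{\orb\lambda}|_{\Ad_g^*\lambda}$. The argument is the same in the real and complex (holomorphic) settings, since $\pi^*$, $\invpi$, $\leftinv{(\argumentdot)}$ and the fundamental vector fields are all compatible with the respective smooth/holomorphic structures.
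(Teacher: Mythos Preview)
Your proof is correct. The reduction to the single-variable identity \eqref{eq:claim-singlevar} is exactly right, and your inductive Leibniz-rule argument goes through: the commutator term from differentiating the operator factor does cancel the ordering discrepancy from differentiating the base point, precisely because $S$ is an antihomomorphism.

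The paper takes a different, somewhat cleaner route. It introduces the right-invariant vector fields $\rightinv X|_g = \partt \exp(-tX)g$ and factors the identity into two pieces: first the purely group-theoretic relation $\leftinv u f(g) = \rightinv{(S(\Ad_g u))} f(g)$, proved in one shot on monomials $u = cX_1\cdots X_j$ using the identity $g\exp(tX) = \exp(t\Ad_g X)g$ repeatedly (which simultaneously reverses the order of the $X_i$'s and conjugates them by $g$, producing $S\circ\Ad_g$ without any induction); second the observation $\rightinv X(\pi^*f) = \pi^*(X_{\orb\lambda} f)$, which is immediate from the definition of the fundamental vector field. Composing these two gives the result directly. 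The advantage of the paper's approach is that it isolates the conceptual reason the antipode and $\Ad_g$ appear, and avoids the bookkeeping of your inductive step entirely. Your approach, on the other hand, is self-contained and does not require introducing the auxiliary right-invariant fields; it works, but the cancellation you track by hand is exactly what the paper's factorisation makes automatic.
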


\begin{proof}
	Defining the Lie algebra homomorphism
	$\rightinv{(\argumentdot)} \colon \lie g \to \Secinfty(\Tangent G)$,
	$X \mapsto \rightinv X$ 
	with 
	$\rightinv X \at g \coloneqq \frac{\D}{\D t}\at{t=0} \exp(-t X) g$
	and extending to $\univ \lie g$ as before, one checks that
	\begin{align*}
	\leftinv u f(g) &= \leftinv{X_1} \dots \leftinv{X_j} f(g)
	\\&= \frac{\D}{\D t_1}\At{t_1=0} \dots \frac{\D}{\D t_j}\At{t_j=0} f(g 
	\exp(t_1 X_1) \dots 
	\exp(t_j X_j))
	\\&= \frac{\D}{\D t_1}\At{t_1=0} \dots \frac{\D}{\D t_j}\At{t_j=0} 
	f(\exp(t_1\Ad_g X_1) \dots 
	\exp(t_j\Ad_g X_j) g)
	\\&= \rightinv{(- \Ad_g X_j)} \dots \rightinv{(-\Ad_g X_1)} f(g)
	\\&= \rightinv{(S(\Ad_g u))} f(g)
	\end{align*}
	for $u = X_1 \dots X_j \in \univ \lie g$
	and similarly $\leftinvholo u f (g) = \rightinvholo{(S(\Ad_g u))} f(g)$.
	Furthermore, we have
	\begin{multline*}
	\rightinv X (\pi^* f) (g) 
	= \frac{\D}{\D t}\At{t=0} \pi^* f (\exp(-t X) g) = \\
	= \frac{\D}{\D t}\At{t=0} f(\Ad^*_{\exp(-tX)} \Ad^*_g \lambda)
	= X_{\orb\lambda} f(\Ad_g^* \lambda)
	= \pi^* (X_{\orb\lambda} f)(g)
	\end{multline*}
	for all $X \in \lie g$,
	implying that $\rightinvholo X \circ \pi^* = \pi^* \circ X^{(1,0)}_{\orb\lambda}$,
	and therefore that $\rightinvholo u \circ \pi^* = \pi^* \circ u^{(1,0)}_{\orb\lambda}$
	for all $u \in \univ \lie g$.
	Finally,
	\begin{align*}
	\Psi([\vec u]) \vec f(\Ad_g^* \lambda) 
	&= (\leftinvholo{\vec u} \pi^* \vec f) (g) \\
	&= \leftinvholo{u_1} (\pi^* f_1)(g) \cdot\ldots\cdot 
	\leftinvholo{u_k} (\pi^* f_k)(g) \\
	&= \rightinvholo{(S(\Ad_g u_1))} (\pi^* f_1)(g) \cdot\ldots\cdot
	\rightinvholo{(S(\Ad_g u_k))} (\pi^* f_k)(g) \\
	&= (S(\Ad_g u_1))^{(1,0)}_{\orb\lambda} f_1 (\Ad^*_g \lambda) \cdot\ldots\cdot 
	(S(\Ad_g u_k))^{(1,0)}_{\orb\lambda} f_k (\Ad^*_g \lambda) \punkt
	\end{align*}
\end{proof} 
\section{Quantizing complex coadjoint orbits}
\label{sec:starProduct}
In this section we construct a formal associative product
for holomorphic functions on a semisimple coadjoint orbit
of a complex connected semisimple Lie group,
and a strict associative product for polynomials.
These products are induced by a twist,
which is constructed using the Shapovalov pairing
between generalized Verma modules.
For the convenience of the reader we first consider the special case
of regular semisimple orbits in \autoref{subsec:twist:restricted},
where we introduce the Shapovalov pairing between Verma modules and
compute its canonical element.
In \autoref{subsec:twist:general} we generalize these results
to non-regular semisimple orbits.
In \autoref{subsec:starProductFromTwist} we describe
the induced formal and strict products in detail.
We consider an example in \autoref{subsec:examples}.

Later, in \autoref{sec:properties},
we will use the results of this section to obtain star products
on semisimple coadjoint orbits of real connected semisimple Lie groups.
From the example considered in this section, we will then obtain strict
quantizations of the hyperbolic disc and the complex projective space.
 
\subsection{Verma modules and the Shapovalov pairing}
\label{subsec:twist:restricted}

In this subsection we introduce the Shapovalov pairing
between Verma modules.
In case this pairing is non-degenerate,
we derive an explicit formula for its canonical element,
following \cite{ostapenko:1992a}.
A similar formula in the more general setting of quantum groups
was obtained recently in \cite{mudrov:2014a}.
The results allow us to quantize regular orbits.

Let $\lie g$ be a complex semisimple Lie algebra
with Cartan subalgebra $\lie h$.
Recall that a root is a non-zero element $\alpha \in \lie h^*$ 
such that 
$\lie g^\alpha 
\coloneqq 
\lbrace 
	X \in \lie g 
	\mid 
	\ad_H X = \alpha(H) X \text{ for all $H \in \lie h$}
\rbrace$
contains a non-zero element.
Denote the set of roots by $\Delta$
and choose an ordering (i.e.\ a subset $\Delta^+$ of positive roots
such that, setting $\Delta^- \coloneqq - \Delta^+$, we have
$\Delta^+ \cup \Delta^- = \Delta$,
$\Delta^+ \cap \Delta^- = \emptyset$, and such that
if the sum of positive roots is a root, then it is positive).
Denote the simple roots (i.e.\ elements of $\Delta^+$ that cannot be 
written as a sum of two elements of $\Delta^+$) by $\Sigma$.
Let $\plus{\lie n}$ and $\minus{\lie n}$ be
the nilpotent Lie subalgebras of $\lie g$ spanned by 
the positive and negative root spaces, respectively, and define
$ \plus{\lie b} \coloneqq \lie h \oplus  \plus{\lie n}$ and  
$\minus{\lie b} \coloneqq \lie h \oplus \minus{\lie n}$ 
(the direct sum is as vector spaces, the Lie algebra structure 
on $\plusminus{\lie b} \subseteq \lie g$ is obtained by restriction from $\lie g$).

Note that $0$ is not a root. 
However, it is convenient to introduce the notation $\lie g^0 \coloneqq \lie h$.
Then $\lie g$ is $(\Delta \cup \lbrace 0 \rbrace)$-graded, in the sense
that $\lie g = \bigoplus_{\alpha \in \Delta \cup \lbrace 0 \rbrace} \lie g^\alpha$
and $[\lie g^\alpha, \lie g^\beta] \subseteq \lie g^{\alpha+\beta}$ for any
$\alpha, \beta \in \Delta \cup \lbrace 0 \rbrace$.
Consequently the tensor algebra $\Tensor \lie g$ is $\mathbb Z \Delta$-graded,
where the so-called root lattice $\mathbb Z \Delta$ is the set of linear
combinations of roots.
The two-sided ideal generated by elements of the form
$X \tensor Y - Y \tensor X -[X,Y]$ with $X, Y \in \lie g$
is homogeneous and therefore the universal enveloping algebra
$\univ\lie g = \Tensor \lie g / \langle X \tensor Y - Y \tensor X -[X,Y] \rangle$
is also $\mathbb Z \Delta$-graded.
Denote the degree of a homogeneous element $w \in \univ\lie g$
by $d(w) \in \mathbb Z\Delta$.

Given a linear functional $\lambda \in \lie h^*$, 
the formula $H \acts z = \lambda(H) z$ makes $\mathbb C$
a left $\lie h$-module, and since $\lie h$ is commutative
also a right $\lie h$-module.
We can extend this to a left or right $\plusminus{\lie b}$-module
by noting that $\plusminus{\lie b} = \lie h \oplus \plusminus{\lie n}$
and letting $\plusminus{\lie n}$ act trivially.
Denote the corresponding left $\univ(\plusminus{\lie b})$-module
by $\mathbb C^\pm_\lambda$ and the right $\univ(\minus{\lie b})$-module
by $\mathbb C^*_\lambda$.
Define the \emph{Verma modules}
\begin{equation}
M_\lambda \coloneqq \univ\lie g \tensor_{\univ(\plus{\lie b})} \mathbb C^+_\lambda \komma 
\quad
M^-_\lambda \coloneqq \univ\lie g \tensor_{\univ(\minus{\lie b})} \mathbb C^-_{-\lambda}  
\quad\text{and}\quad
M^*_\lambda \coloneqq \mathbb C^*_\lambda \tensor_{\univ(\minus{\lie b})} \univ\lie g \punkt
\end{equation}
Note that $M_\lambda$ and $M_\lambda^-$ are left $\univ \lie g$-modules, whereas $M_\lambda^*$ is a 
right $\univ \lie g$-module.
$M_\lambda$ is the most general left $\univ\lie g$-module of highest weight $\lambda$,
meaning that any other left $\univ\lie g$-module of highest weight $\lambda$ 
can be obtained as a quotient of $M_\lambda$. $M_\lambda^-$ is the most general left $\univ\lie 
g$-module of lowest weight $-\lambda$.

There are canonical isomorphisms
$M_\lambda^* \tensor_{\univ\lie g} M_\lambda 
\cong
	\mathbb C^*_\lambda \tensor_{\univ(\minus {\lie b})} \univ \lie g 
	\tensor_{\univ(\plus{\lie b})} \mathbb C_\lambda 
\cong 
	\mathbb C^*_\lambda \tensor_{\univ \lie h} \mathbb C_\lambda 
\cong
	\mathbb C$
since the left and right $\lie h$-module structures on $\mathbb C$ coincide.

\begin{definition}
	The pairing 
	$\langle \argumentdot , \argumentdot \rangle'_\lambda 
	\colon
	M^*_\lambda \times M_\lambda \to \mathbb C$ 
	defined by 
	$(x,y) \mapsto x \tensor_{\univ\lie g} y$ 
	is called the \emph{Shapovalov pairing} between $M^*_\lambda$ and 
	$M_\lambda$.
\end{definition}
In the following it will be convenient to have alternative descriptions of 
$M_\lambda$, $M_\lambda^-$ and $M^*_\lambda$.
Let $\lbrace X_1, \dots, X_k \rbrace$ be a basis of $\plus{\lie n}$, 
$\lbrace Y_1, \dots, Y_k \rbrace$ be a basis of $\minus{\lie n}$, and 
$\lbrace H_1, \dots, H_r \rbrace$ be a basis of $\lie h$.
Since $\lie g = \plus{\lie n} \oplus \lie h \oplus \minus{\lie n}$ (as vector spaces)
the Poincar\'e--Birkhoff--Witt theorem implies that
\begin{equation}
\lbrace Y^I H^J X^K \mid I,K \in \mathbb N_0^k, J \in \mathbb N_0^r \rbrace
\quad\text{and}\quad
\lbrace X^K H^J Y^I \mid I,K \in \mathbb N_0^k, J \in \mathbb N_0^r \rbrace
\end{equation}
are bases for $\univ \lie g$.
Here we use the multiindex notation $Y^I \coloneqq Y_1^{I_1} \dots Y_k^{I_k}$
(and similarly for $H$ and $X$).
Define maps 
\begin{subequations}
\begin{align}
\pi_\lambda^- &\colon \univ \lie g \to \univ(\minus{\lie n}) \komma &
\pi_\lambda^-(Y^I H^J X^K) &\coloneqq \lambda(H_1)^{J_1} \dots \lambda(H_r)^{J_r} Y^I \delta_{K,0} \komma
\\
\pi_\lambda^+ &\colon \univ \lie g \to \univ(\plus{\lie n}) \komma &
\pi_\lambda^+(X^K H^J Y^I) &\coloneqq (-\lambda(H_1))^{J_1} \dots (-\lambda(H_r))^{J_r} X^K \delta_{I,0} 
\komma
\\
\pi_\lambda^* &\colon \univ \lie g \to \univ(\plus{\lie n}) \komma &
\pi_\lambda^*(Y^I H^J X^K) &\coloneqq \lambda(H_1)^{J_1} \dots \lambda(H_r)^{J_r} X^K \delta_{I,0} \komma
\end{align}
\end{subequations}
where $\delta_{K,0}$ is $1$ if $K = (0, \dots, 0)$ and is $0$ otherwise.
Note that $\pi_\lambda^\pm$ and $\pi_\lambda^*$
are independent of the choice of bases.
Fix non-zero vectors $1 \in\mathbb C^\pm_\lambda$ and $1 \in \mathbb C^*_\lambda$
(thinking of $\mathbb C$ as a vector space, this choice is not canonical).

\begin{lemma} \label{lemma:vermaModule}
The maps
$\argumentdot \tensor 1 \colon \univ (\minus{\lie n}) \to M_\lambda$, $v \mapsto v \tensor 1$ and
$\argumentdot \tensor 1 \colon \univ (\plus{\lie n}) \to M^-_\lambda$, $u \mapsto u \tensor 1$
define isomorphisms of left $\univ(\minus{\lie n})$-modules and $\univ(\plus{\lie n})$-modules, 
respectively. The map
$1 \tensor \argumentdot \colon \univ (\plus{\lie n}) \to M^*_\lambda$, $u \mapsto 1 \tensor u$
defines an isomorphism of right $\univ(\plus{\lie n})$-modules.
The $\univ\lie g$-module structures on $\univ(\plusminus{\lie n})$
obtained by transferring the module structures on the Verma modules
with these isomorphisms are given explicitly by
\begin{subequations}
\begin{align} \label{eq:moduleStructure:i}
\vermaacts - &\colon \univ\lie g \times \univ(\minus{\lie n}) \to \univ(\minus{\lie n}) \komma&
(w, v) &\mapsto w \vermaacts - v \coloneqq \pi^-_\lambda (w v) \komma
\\ \label{eq:moduleStructure:ii}
\vermaacts + &\colon \univ\lie g \times \univ(\plus{\lie n}) \to \univ(\plus{\lie n}) \komma&
(w, u) &\mapsto w \vermaacts + u \coloneqq \pi^+_\lambda (w u) \komma
\\ \label{eq:moduleStructure:iii}
\vermaracts &\colon \univ(\plus{\lie n}) \times \univ\lie g \to \univ(\plus{\lie n}) \komma&
(u, w) &\mapsto u \vermaracts w \coloneqq \pi^*_\lambda (u w) \punkt
\end{align}
\end{subequations}
Furthermore, $S (w \vermaacts + u) = S(u) \vermaracts S(w)$,
where $S$ denotes the antipode of $\univ \lie g$.
Or, in other words, $S \colon \univ(\plus{\lie n}) \to \univ(\plus{\lie n})$ is an isomorphism
from the left $\univ \lie g$-module $(\univ(\plus{\lie n}), \acts_\lambda^+)$ to the right
$\univ \lie g$-module $(\univ(\plus{\lie n}), \racts_\lambda^*)$ 
over the map $S \colon \univ \lie g \to \univ \lie g$.
\end{lemma}

\begin{proof}
	One checks easily that the maps
	$M_\lambda \to \univ(\minus{\lie n})$, $w \tensor z 1 \mapsto z \cdot \pi^-_\lambda(w)$ and
	$M^-_\lambda \to \univ(\plus{\lie n})$, $w \tensor z 1 \mapsto z \cdot \pi^+_\lambda(w)$
	as well as
	$M^*_\lambda \to \univ(\plus{\lie n})$, $z 1 \tensor w \mapsto z \cdot \pi^*_\lambda(w)$
	are all well-defined and inverses of the maps in the statement of the lemma.
	Consequently, $w \vermaacts - v 
	= 
	(\argumentdot \tensor 1)^{-1} (w v \tensor 1) 
	=
	\pi^-_\lambda(w v)$,
	and \eqref{eq:moduleStructure:ii} and \eqref{eq:moduleStructure:iii}
	follow similarly.
	Finally, $\pi_\lambda^* \circ S = S \circ \pi_\lambda^+$, so 
	$S( w \vermaacts + u) 
	= 
	S \circ \pi_\lambda^+ (w u) 
	= 
	\pi_\lambda^* \circ S (w u) 
	= 
	\pi_\lambda^* (S(u) S(w)) 
	=
	S(u) \vermaracts S(w)$.
\end{proof}
The pairing of the left $\univ \lie g$-modules $(\univ(\plusminus{\lie n}), \acts_\lambda^\pm)$ 
obtained from the Shapovalov pairing by composing with the isomorphisms 
$(\univ(\minus{\lie n}), \acts_\lambda^-) 
\xrightarrow{\argumentdot \tensor 1}
M_\lambda$ 
and 
$(\univ(\plus{\lie n}), \acts_\lambda^+) 
\xrightarrow{S} 
(\univ(\plus{\lie n}), \racts_\lambda^*) 
\xrightarrow{1 \tensor \argumentdot}
 M_\lambda^*$
of the previous lemma, is
\begin{equation}\label{eq:AL:pairing}
\langle \argumentdot, \argumentdot \rangle_\lambda 
\colon
\univ(\plus{\lie n}) \times \univ(\minus{\lie n}) \to \mathbb C \komma
\quad
(u,v) \mapsto \langle u, v \rangle_\lambda 
\coloneqq 
\langle 1 \tensor S(u), v \tensor 1 \rangle'_\lambda \punkt
\end{equation} 
In order to compute 
$\langle u , v \rangle_\lambda$ 
for $u \in \univ(\plus{\lie n})$ and
$v \in \univ(\minus{\lie n})$ one needs to write
$S(u) v \in \univ\lie g$ in the form $\sum_i v'_i h'_i u'_i$ with 
$u'_i \in \univ(\plus{\lie n})$, 
$v'_i \in \univ(\minus{\lie n})$ and
$h'_i \in \univ \lie h$.
The pairing is then given by summing $\lambda(h'_i)$
for those summands that have $v'_i = u'_i = 1$.
This is made more precise in the next lemma.
Define 
$\pi_\lambda 
\coloneqq
\pi_\lambda^- \circ \pi_\lambda^* 
=
\pi_\lambda^* \circ \pi_\lambda^- 
\colon
\univ \lie g \to \mathbb C$,
where $\mathbb C$ is identified with $\mathbb C 1 \subseteq \univ (\plusminus{\lie n})$
and we have implicitly used the inclusion $\univ(\plusminus{\lie n}) \to \univ \lie g$
when composing the maps.

\begin{lemma} \label{theo:shapovalovPairing:computations}
	For $u \in \univ(\plus{\lie n})$ and $v \in \univ(\minus{\lie n})$ 
	the pairing $\langle \argumentdot, \argumentdot \rangle_\lambda$ 
	defined in \eqref{eq:AL:pairing} can be 
	computed as
	\begin{equation}\label{eq:shapovalovPairing:computation:i}
	\langle u, v \rangle_\lambda 
	= 
	\pi_\lambda(S(u) v) \punkt
	\end{equation}
	It is $\univ\lie g$-invariant,
	in the sense that
	$\langle w \vermaacts + u, v \rangle_\lambda 
	= \langle u, S(w) \vermaacts - v \rangle_\lambda$
	for $u \in \univ(\plus{\lie n})$, $v \in \univ(\minus{\lie n})$ and $w \in \univ\lie g$.
The pairing respects the degree $d$ defined in the beginning of this section,
	meaning that  
	$\langle u, v \rangle_\lambda = 0$ 
	for homogeneous elements $u \in \univ(\plus{\lie n})$ and $v \in \univ(\minus{\lie n})$
	with $d(u) \neq -d(v)$. Furthermore, if $d(u) = -d(v)$, then
	\begin{equation}\label{eq:shapovalovPairing:computation:ii}
	\langle u, v \rangle_\lambda 1_{\univ(\minus{\lie n})} = S(u) \vermaacts - v 
	\quad\text{and}\quad
	\langle u, v \rangle_\lambda 1_{\univ(\plus{\lie n})} = S(v) \vermaacts + u \punkt
	\end{equation}
\end{lemma}

\begin{proof}
	By definition
	$\langle u, v \rangle_\lambda 
	=
	1 \tensor_{\univ(\minus{\lie b})} S(u) v \tensor_{\univ(\plus{\lie b})} 1$.
	So to prove \eqref{eq:shapovalovPairing:computation:i}
	it suffices to check that
	$1 \tensor_{\univ(\minus{\lie b})} w \tensor_{\univ(\plus{\lie b})} 1 
	= 
	\pi_\lambda(w)$ for all $w \in \univ \lie g$,
	which one can easily verify on the basis 
	$\lbrace Y^I H^J X^K \mid I,K \in \mathbb N_0^k, J \in \mathbb N_0^r \rbrace$.
The $\univ \lie g$-invariance follows by noting that 
	$\langle \argumentdot, \argumentdot \rangle'_\lambda$ 
	is $\univ \lie g$-invariant, meaning
	$\langle x w, y \rangle'_\lambda = \langle x, w y \rangle'_\lambda$
	for $x \in M_\lambda^*$ and $y \in M_\lambda$,
	and using the isomorphisms of the previous lemma.
For homogeneous $u \in \univ(\plus{\lie n})$ and $v \in \univ(\minus{\lie n})$
	with $d(u) \neq -d(v)$ it follows that $S(u) v$ is also homogeneous
	of degree $d(u) + d(v) \neq 0$ and therefore $\pi_\lambda (S(u) v) = 0$.
Finally, if $d(u) = - d(v)$, then $d(S(u)v) = 0$ and
	$\langle u, v \rangle_\lambda 1_{\univ(\minus{\lie n})}
	=
	\pi_\lambda(S(u) v) 1_{\univ(\minus{\lie n})} 
	=
	\pi_\lambda^-(S(u) v)
	= 
	S(u) \acts_\lambda^- v$,
	implying the first equality of \eqref{eq:shapovalovPairing:computation:ii}.
	The second one follows from applying $S$ on both sides of
	$\langle u, v \rangle_\lambda 1_{\univ(\plus{\lie n})}
	=
	\pi_\lambda(S(u)v) 1_{\univ(\plus{\lie n})} 
	= 
	\pi_\lambda^*(S(u)v) 
	= 
	S(\pi_\lambda^+ (S(v) u)) 
	= 
	S(S(v) \acts_\lambda^+ u)$.
\end{proof}
If the pairing $\langle \argumentdot, \argumentdot \rangle_\lambda$
is non-degenerate, we can pick bases
$\lbrace u_i \rbrace_{i \in \mathbb N}$ of $\univ(\plus{\lie n})$ and
$\lbrace v_j \rbrace_{j \in \mathbb N}$ of $\univ(\minus{\lie n})$
consisting of homogeneous elements with respect to $d$ and
satisfying $\langle u_i, v_j \rangle_\lambda = \delta_{ij}$.
Then the element
$F_\lambda \coloneqq \sum_{i=1}^\infty u_i \tensor v_i
\in \univ(\plus{\lie n}) \tensorhat \univ(\minus{\lie n})$
is called the \emph{canonical element} of the pairing.
It is independent of the choice of bases.
By $\univ(\plus{\lie n}) \tensorhat \univ(\minus{\lie n})$
we mean the completion of the tensor product 
with respect to the $\mathbb Z \Delta$-grading $d$
defined in the beginning of this subsection,
which is needed to make sense of the infinite sum.
The following lemma is a standard statement when working with canonical elements.

\begin{lemma}\label{theo:canonicalElement:FormulaToCheck}
	Assume that $\langle\argumentdot,\argumentdot\rangle_\lambda$
	is non-degenerate, and let 
	$F_\lambda = \sum_{i=1}^\infty u_i \tensor v_i 
	\in \univ(\plus{\lie n})\tensorhat \univ(\minus{\lie n})$
	be its canonical element. 
	Then 
	\begin{equation} \label{eq:canonicalElement:lemma}
	\sum_{i=1}^\infty u_i \langle u, v_i \rangle_\lambda = u   
	\quad\text{and}\quad
	\sum_{i=1}^\infty v_i \langle u_i, v \rangle_\lambda = v
	\end{equation} 
	hold for all $u \in \univ(\plus{\lie n})$
	and all $v \in \univ(\minus{\lie n})$,
	and $F_\lambda$ is uniquely determined by this property.
\end{lemma}
Note that $\langle u, v_i \rangle$ and $\langle u_i, v \rangle$
are non-zero for only finitely many indices $i$, so that the sums in
\eqref{eq:canonicalElement:lemma} are both finite. 
The pairing $\langle \argumentdot,\argumentdot \rangle_\lambda$ is 
non-degenerate precisely when the Verma modules are irreducible,
but we will not need this below.
In order to determine $F_\lambda$ explicitly,
we need to introduce some more notation.

Denote the Killing form of $\lie g$ by $B$.
Since $\lie g$ is semisimple, $B$ is non-degenerate on $\lie g$.
Extending linear functionals on $\lie h$ by $0$ on the root spaces
$\lie g^\alpha$, we may view $\lie h^*$ as a subspace of $\lie g^*$.
Since $B$ restricts to zero on $\lie h \times \lie g^\alpha$
for any $\alpha \in \Delta$,
it follows that $B$ is non-degenerate on $\lie h$ and
that the maps 
$^\flat\colon \lie g \to \lie g^*$ and
$^\sharp\colon \lie g^* \to \lie g$ 
defined in \autoref{subsec:generalities}
restrict to mutually inverse isomorphisms 
$^\flat\colon \lie h \to \lie h^*$ and
$^\sharp\colon \lie h^* \to \lie h$. 
For $\alpha, \beta \in \lie h^*$, let
$(\alpha, \beta) \coloneqq B(\alpha^\sharp, \beta^\sharp)$.

Denote the positive roots by $\alpha_1, \dots, \alpha_k$.
For every positive root $\alpha_i \in \Delta^+$ choose elements
$X_i \coloneqq X_{\alpha_i} \in \lie g^{\alpha_i}$ and
$Y_i \coloneqq Y_{\alpha_i} = X_{-\alpha_i} \in \lie g^{-\alpha_i}$
such that $B(X_i, Y_i) = 1$.
Then we have $[X_i, Y_i] = \alpha_i^\sharp$ since for all $H \in \lie h$, 
\begin{equation*}
B([X_i, Y_i], H) 
= B(X_i, [Y_i, H]) 
= \alpha_i(H) B(X_i, Y_i) 
= \alpha_i(H) 
= B(\alpha_i^\sharp, H)
\end{equation*}
and the Killing form is non-degenerate on $\lie h$.
Note that 
$[\alpha_i^\sharp, X_i] 
= \alpha_i(\alpha_i^\sharp) X_i 
= (\alpha_i, \alpha_i) X_i$
and similarly
$[\alpha_i^\sharp, Y_i] = -(\alpha_i, \alpha_i) Y_i$,
so $X'_i = 2(\alpha_i, \alpha_i)^{-1} X_i$,
$Y'_i = Y_i$ and
$H'_i = 2(\alpha_i, \alpha_i)^{-1} \alpha_i^{\sharp}$
satisfy the commutation relations 
$[X'_i, Y'_i] =    H'_i$,
$[H'_i, X'_i] =  2 X'_i$ and 
$[H'_i, Y'_i] = -2 Y'_i$
of the usual generators of $\liesl[2]$, the special linear Lie algebra in 2 dimensions.

Let $\rho = \frac 1 2 \sum_{\alpha\in \Delta^+} \alpha$ be 
the \emph{half-sum} of all positive roots.
Denote non-negative integral linear combinations 
of positive roots by $\mathbb N_0 \Delta^+$.
For $\lambda \in \lie h^*$ fixed,
and $\mu \in \lie h^*$ define the number
\begin{equation} \label{eq:def:plambda}
p_{\lambda}(\mu) \coloneqq \frac 1 2 (\mu, \mu) - (\rho, \mu) - (\lambda, \mu) \punkt
\end{equation}
Recall that for a representation $\varrho \colon \lie g \to V$
and $\mu \in \lie h^*$ we define 
$V^\mu 
\coloneqq 
\lbrace 
	v \in V 
	\mid 
	\varrho(H) v = \mu(H) v \text{ for all $H \in \lie h$}
\rbrace$.
If $V^\mu \neq \lbrace 0 \rbrace$, then we call $\mu$ a \emph{weight}
and any $v \in V^\mu$ is called a \emph{weight vector} of weight $\mu$.
$V$ is called a weight module if $V = \smash{\bigoplus_{\mu \in \lie h^*} V^\mu}$.
A highest weight module is a weight module 
generated by a vector $v \in V$ satisfying $X_\alpha v = 0$
for all $\alpha \in \Delta^+$.
It is said to be of highest weight $\mu$ if $v \in V^\mu$.

\begin{lemma}[{Ostapenko, {\cite[Lemma 2]{ostapenko:1992a}}}] 
	\label{theo:casimirActing}
	Let $V$ be a highest weight module of highest weight 
	$\lambda$, assume $\mu \in \mathbb N_0 \Delta^+$,
	and let $v \in V^{\lambda - \mu}$.
	Then
	\begin{equation}
	-p_\lambda(\mu) v = \sum_{\alpha \in \Delta^+} Y_{\alpha} X_{\alpha} v 
	\punkt
	\end{equation}
\end{lemma}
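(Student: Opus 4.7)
My plan is to recognize the operator $\sum_{\alpha \in \Delta^+} X_{-\alpha} X_\alpha$ as appearing naturally when one rewrites the quadratic Casimir element, and then to extract the value of $p_\lambda(\mu)$ from the scalar by which the Casimir acts on the highest weight module $V$.

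First, I would form the Casimir $C \in \univ \lie g$. Choosing a basis $\{H_j\}$ of $\lie h$ with $B$-dual basis $\{H^j\}$, and using that $B(X_\alpha, X_{-\alpha}) = 1$ together with $B(\lie h, \lie g^\alpha) = 0$ for $\alpha \in \Delta$, the canonical Casimir is
\begin{equation*}
C = \sum_j H_j H^j + \sum_{\alpha \in \Delta} X_\alpha X_{-\alpha}.
\end{equation*}
Splitting the sum over $\Delta$ into positive and negative parts and using $[X_\alpha, X_{-\alpha}] = \alpha^\sharp$ for $\alpha \in \Delta^+$, this rewrites as
\begin{equation*}
C = \sum_j H_j H^j + 2\rho^\sharp + 2 \sum_{\alpha \in \Delta^+} X_{-\alpha} X_\alpha,
\end{equation*}
since $\sum_{\alpha \in \Delta^+} \alpha^\sharp = 2\rho^\sharp$.

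Next, I would compute the scalar by which $C$ acts on $V$. Since $C$ is central, it acts by a single scalar on any module generated by an eigenvector of $\lie h$ annihilated by $\plus{\lie n}$; evaluating on a highest weight vector $v_\lambda \in V^\lambda$ and using that $X_\alpha v_\lambda = 0$ for $\alpha \in \Delta^+$ gives the scalar
\begin{equation*}
C v_\lambda = \bigl( (\lambda,\lambda) + 2(\rho,\lambda) \bigr) v_\lambda,
\end{equation*}
where I used the standard identification $\sum_j \lambda(H_j) \lambda(H^j) = (\lambda,\lambda)$.

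Finally, applying $C$ to $v \in V^{\lambda - \mu}$, the terms $\sum_j H_j H^j$ and $2\rho^\sharp$ act by the scalars $(\lambda - \mu, \lambda - \mu)$ and $2(\rho, \lambda - \mu)$, respectively. Equating both expressions for $Cv$ gives
\begin{equation*}
2 \sum_{\alpha \in \Delta^+} X_{-\alpha} X_\alpha v = \bigl( (\lambda,\lambda) - (\lambda - \mu, \lambda - \mu) + 2(\rho,\mu) \bigr) v = \bigl( 2(\lambda, \mu) - (\mu, \mu) + 2(\rho, \mu) \bigr) v,
\end{equation*}
which, after dividing by $2$, is exactly $-p_\lambda(\mu) v$. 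There is no real obstacle here: the computation is entirely formal once one has the right expression for $C$; the only thing to be careful about is the normalisation of the Killing form and the conventions for $\sharp$, but these are already fixed in the preliminaries. I do not need any assumption on $V$ beyond being a highest weight module, since the centrality of $C$ forces it to act by the scalar computed on $v_\lambda$.
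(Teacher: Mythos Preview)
Your proof is correct and follows essentially the same approach as the paper: both rewrite the Casimir element as $\sum_j H_j H^j + 2\rho^\sharp + 2\sum_{\alpha \in \Delta^+} X_{-\alpha} X_\alpha$, compute the scalar $(\lambda,\lambda) + 2(\rho,\lambda)$ by acting on a highest weight vector, and then compare with the action on $v \in V^{\lambda-\mu}$. The only cosmetic difference is that the paper uses an orthonormal basis of $\lie h$ rather than a basis together with its $B$-dual.
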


\begin{proof}
	Choose an orthonormal basis $\lbrace H_1, \dots, H_r\rbrace$ of $\lie h$
	with respect to the Killing form.
	The Casimir element 
	\begin{equation*}
	c
	=\sum_{\alpha \in \Delta^+} (X_{\alpha} Y_{\alpha} + 
	Y_{\alpha} X_{\alpha}) + \sum_{i=1}^r H_i H_i 
	= \sum_{\alpha \in \Delta^+} (2 Y_{\alpha} X_{\alpha} + \alpha^\sharp)
	+ \sum_{i=1}^r H_i H_i 
	\end{equation*}
	acts as a scalar on $V$ because $V$ is generated 
	by a highest weight vector and $c$ is central in $\univ\lie g$.
Evaluating it on a highest weight vector the $Y_\alpha X_\alpha$-part vanishes 
	and we obtain that $c$ acts as multiplication by 
	$\sum_{\alpha \in \Delta^+} (\alpha, \lambda) 
	+ \sum_{i=1}^r \lambda(H_i) \lambda(H_i) 
	= (2\rho, \lambda) + (\lambda, \lambda)$. 
Therefore 
	\begin{equation*}
	(2\rho,\lambda)v+(\lambda, \lambda) v 
	= 2 \sum_{\alpha \in \Delta^+} Y_{\alpha} X_{\alpha}v 
	+ (2\rho, \lambda - \mu)v  + (\lambda - \mu, \lambda - \mu)v
	\end{equation*}
	holds for any $v \in V^{\lambda - \mu}$,
	and rearranging this equation proves the lemma.
\end{proof}
\newcommand{\tikzcircleradius}{0.06}
\newcommand{\tikzvertspace}{0.8660254}
\newcommand{\tikzseparator}{0.1}
\newcommand{\tikzoffset}{0.08}
\newcommand{\tikznodeoffset}{0.2}
\newcommand{\tikzshorten}{0.95}
\newcommand{\tikzlinewidth}{1.5pt}
\begin{figure}
	\definecolor{qqzzqq}{rgb}{0,0.6,0}
	\definecolor{qqqqff}{rgb}{0,0,1}
	\definecolor{wwzzff}{rgb}{0.3,0.5,1}
	\definecolor{ffqqqq}{rgb}{1,0,0}
	\definecolor{uuuuuu}{rgb}{0.27,0.27,0.27}
	\definecolor{qqzzqq}{rgb}{0,0.6,0}
	\definecolor{qqqqff}{rgb}{0,0,1}
	\definecolor{wwzzff}{rgb}{0.3,0.5,1}
	\definecolor{ffqqqq}{rgb}{1,0,0}
	\definecolor{uuuuuu}{rgb}{0.27,0.27,0.27}
	\definecolor{qqzzqq}{rgb}{0,0.6,0}
	\definecolor{qqqqff}{rgb}{0,0,1}
	\definecolor{wwzzff}{rgb}{0.3,0.5,1}
	\definecolor{ffqqqq}{rgb}{1,0,0}
	\definecolor{uuuuuu}{rgb}{0.27,0.27,0.27}
	\definecolor{qqzzqq}{rgb}{0,0.6,0}
	\definecolor{qqqqff}{rgb}{0,0,1}
	\definecolor{wwzzff}{rgb}{0.3,0.5,1}
	\definecolor{ffqqqq}{rgb}{1,0,0}
	\definecolor{uuuuuu}{rgb}{0.27,0.27,0.27}
	\begin{tikzpicture}[line cap=round,line join=round,>=triangle 45, scale=.865]
\draw (-1.9,0) -- (1.9,0);
\draw [->,line width=1.2pt,color=ffqqqq] (0,0) -- (1.5,-0.87);
	\draw [->,line width=1.2pt,color=ffqqqq] (0,0) -- (0,-1.73);
	\draw [->,line width=1.2pt,color=ffqqqq] (0,0) -- (-1.5,-0.87);
\draw [->,line width=1.2pt,color=qqzzqq] (0,0) -- (1.5,0.87);
	\draw [->,line width=1.2pt,color=qqzzqq] (0,0) -- (-1.5,0.87);
	\draw [->,line width=1.2pt,color=qqzzqq] (0,0) -- (0,1.73);
\draw (-1.5,0.98) node[anchor=south] {$\alpha_1$};
	\draw (0,1.84) node[anchor=south] {$\alpha_2$};	
	\draw (1.5,0.98) node[anchor=south] {$\alpha_3$};
\begin{scriptsize}
	\fill [color=uuuuuu] (0,0) circle (1.5pt);
	\fill [color=uuuuuu] (0,-1.73) circle (1.5pt);
	\fill [color=uuuuuu] (-1.5,-0.87) circle (1.5pt);
	\fill [color=uuuuuu] (-1.5,0.87) circle (1.5pt);
	\fill [color=uuuuuu] (0,1.73) circle (1.5pt);
	\fill [color=uuuuuu] (1.5,0.87) circle (1.5pt);
	\fill [color=uuuuuu] (1.5,-0.87) circle (1.5pt);
	\end{scriptsize}
	\end{tikzpicture}
	\hfill
	\renewcommand{\tikzcircleradius}{0.1}
	\begin{tikzpicture}[>=triangle 45, scale=0.6]
\foreach \n in {0,1,2,3} {
		\fill ({(-\n+0)*\tikzvertspace},-0.0-0.5*\n) circle (\tikzcircleradius);
		\fill ({(-\n+1)*\tikzvertspace},-0.5-0.5*\n) circle (\tikzcircleradius);
		\fill ({(-\n+2)*\tikzvertspace},-1.0-0.5*\n) circle (\tikzcircleradius);
		\fill ({(-\n+3)*\tikzvertspace},-1.5-0.5*\n) circle (\tikzcircleradius);
	}
	\node at (0,-3.7) {$\vdots$};
	\node at (2.46,-2.42) {$\ddots$};
	\node at (-2.46,-2.42) {\reflectbox{$\ddots$}};
\fill[color=red] (0,0) circle (\tikzcircleradius);
	\node[color=red] at (0,-0.4) {$\lambda$};
\fill[color=red] (\tikzvertspace,-2.5) circle (\tikzcircleradius);
	\node[color=red] at (\tikzvertspace,-2.5-0.4) {$\lambda-\mu$};
\draw [->,line width=1pt,color=qqzzqq] 
	(-\tikzvertspace*\tikzseparator,0.5*\tikzseparator) 
	-- (-\tikzvertspace+\tikzvertspace*\tikzseparator,0.5-0.5*\tikzseparator);
	\draw [->,line width=1pt,color=qqzzqq] 
	(\tikzvertspace*\tikzseparator,0.5*\tikzseparator) 
	-- (\tikzvertspace-\tikzvertspace*\tikzseparator,0.5-0.5*\tikzseparator);
	\draw [->,line width=1pt,color=qqzzqq] 
	(0,\tikzseparator) -- (0,1-\tikzseparator);
\node[color=qqzzqq] at (0,1.2) {$\lambda+\alpha_2$};
	\node[color=qqzzqq] at (-\tikzvertspace-0.8,0.5) {$\lambda+\alpha_1$};
	\node[color=qqzzqq] at (\tikzvertspace+0.8,0.5) {$\lambda+\alpha_3$};
\end{tikzpicture}
	\hfill
	\renewcommand{\tikzcircleradius}{0.05}
	\begin{tikzpicture}[>=triangle 45, xscale=1.28, yscale=1.52]
	\draw[line width=\tikzlinewidth,color=red] (0,0) -- (-\tikzvertspace-2*\tikzoffset, 0.5);
	\draw[line width=\tikzlinewidth,color=red,->] (-\tikzvertspace-2*\tikzoffset, 0.5) -- 
	({-\tikzvertspace-2*\tikzoffset+\tikzshorten*(-\tikzvertspace-2*\tikzoffset)}, 
	0.5+\tikzshorten*0.5);
	\draw[line width=\tikzlinewidth,color=red, ->] (-\tikzvertspace-2*\tikzoffset, 0.5) -- 
	(-\tikzvertspace-2*\tikzoffset-\tikzshorten*2*\tikzoffset, 0.5+\tikzshorten*1.0);	
	\fill (-2*\tikzvertspace-4*\tikzoffset, 1) circle (\tikzcircleradius);
	\node at (-2*\tikzvertspace-4*\tikzoffset, 1+\tikznodeoffset){$11$};
	\fill (-\tikzvertspace-4*\tikzoffset, 1.5) circle (\tikzcircleradius);
	\node at (-\tikzvertspace-7*\tikzoffset, 1.5+\tikznodeoffset-\tikzoffset){$12$};
	
	\draw[line width=\tikzlinewidth,color=red] (-\tikzvertspace-2*\tikzoffset, 0.5) -- 
	(-2*\tikzoffset, 1);
	\draw[line width=\tikzlinewidth,color=red,->] (-1.35*\tikzoffset,1.02) -- 
	(-1.35*\tikzoffset,1+\tikzshorten*1.0);
	\draw[line width=\tikzlinewidth,color=red,->] (-2*\tikzoffset,1) -- 
	(-2*\tikzoffset+\tikzshorten*\tikzvertspace, 1+\tikzshorten*0.5);
	\draw[line width=\tikzlinewidth,color=red,->] (-2*\tikzoffset,1) -- 
	(-2*\tikzoffset-\tikzshorten*\tikzvertspace, 1+\tikzshorten*0.5);
	\fill (-1.5*\tikzoffset, 2) circle (\tikzcircleradius);
	\node at (-4.35*\tikzoffset, 2+\tikznodeoffset){$132$};
	\fill (-2*\tikzoffset-\tikzvertspace, 1.5) circle (\tikzcircleradius);
	\node at (-\tikzvertspace-2*\tikzoffset, 1.5+\tikznodeoffset){$131$};
	\fill (-2*\tikzoffset+\tikzvertspace, 1.5) circle (\tikzcircleradius);
	\node at (\tikzvertspace-4*\tikzoffset, 1.5+\tikznodeoffset){$133$};
	
	\draw[line width=\tikzlinewidth,color=blue] (0, 0) -- (0, 1);
	\draw[line width=\tikzlinewidth,color=blue,->] (0, 1) -- (0, 1+\tikzshorten*1.0);
	\draw[line width=\tikzlinewidth,color=blue,->] (0, 1) -- (-\tikzshorten*\tikzvertspace, 
	1+\tikzshorten*0.5);
	\draw[line width=\tikzlinewidth,color=blue,->] (0, 1) -- (\tikzshorten*\tikzvertspace, 
	1+\tikzshorten*0.5);
	\fill (0, 2) circle (\tikzcircleradius);
	\node at (2*\tikzoffset, 2+\tikznodeoffset){$22$};
	\fill (-\tikzvertspace, 1.5) circle (\tikzcircleradius);
	\node at (-\tikzvertspace+3*\tikzoffset, 1.5+\tikznodeoffset-\tikzoffset){$21$};
	\fill (+\tikzvertspace, 1.5) circle (\tikzcircleradius);
	\node at (\tikzvertspace+2*\tikzoffset, 1.5+\tikznodeoffset){$23$};
	
	\draw[line width=\tikzlinewidth,color=qqzzqq,->] (0,0) -- (\tikzvertspace*\tikzshorten, 
	0.5*\tikzshorten);
	\fill (\tikzvertspace, 0.5) circle (\tikzcircleradius);
	\node at (\tikzvertspace, 0.5+\tikznodeoffset){$3$};
	\end{tikzpicture}
	\caption{\emph{Left:} The roots of $\liesl[3]$.
		The Cartan subalgebra $\lie h$ of $\liesl[3]$ is 2-dimensional
		and there are six 1-dimensional root spaces.
		The picture shows the real subspace of $\lie h^*$ spanned by the roots.
		The positive roots are denoted by $\alpha_1$,
		$\alpha_2$ and $\alpha_3$ and drawn in green,
		negative roots are drawn in red.
\emph{Middle:} The weights in a highest weight module of highest weight $\lambda$.
		The picture shows again the real subspace of $\lie h^*$ spanned by the roots.
		Weights are indicated by black dots, 
		and $\mu = 3 \alpha_1 + 2 \alpha_3$.
		Since $\lambda$ is a highest weight,
		the spaces $V^{\lambda + \alpha_1}$,
		$V^{\lambda + \alpha_2}$ and $V^{\lambda + \alpha_3}$ must all be trivial.
\emph{Right:} Visualization of the tree 
		$T = \lbrace \emptyset,1,2,3,11,12,13,21,22,23,131,132,133 \rbrace$.
		The elements of $\max T = \lbrace 3, 11, 12, 21, 22, 23, 131, 132, 133 \rbrace$
		are indicated by black dots. 
		Words starting with $1$ are coloured red, 
		words starting with $2$ blue,
		and words starting with $3$ green.}
	\label{figure:roots:verma:tree}
\end{figure}
Let $\wordset$ be the set of words with letters from
$\lbrace 1, \dots, k\rbrace$.
For any $w = (w_1, \dots, w_{\abs w}) \in \wordset$,
we define $w^{\opp} \coloneqq (w_{\abs w}, \dots, w_1)$,
$w_{i \dots j} \coloneqq (w_i, \dots, w_j)$,
$X_w \coloneqq \smash{X_{w_1} \dots X_{w_{\abs w}}} \in \univ(\plus{\lie n})$,
$Y_w \coloneqq \smash{Y_{w_1} \dots Y_{w_{\abs w}}} \in \univ(\minus{\lie n})$ and
$\alpha_w \coloneqq \alpha_{w_1} + \dots + \alpha_{w_{\abs w}}$.
We use $w_{i \dots j} \coloneqq \emptyset$ if $j<i$,
$X_\emptyset \coloneqq 1$, $Y_\emptyset \coloneqq 1$ and $\alpha_\emptyset \coloneqq 0$.
Furthermore let 
\begin{equation}
p^w_\lambda(\mu) \coloneqq \prod_{i = 0}^{\abs w-1} 
p_{\lambda}(\mu-\alpha_{w_{1\dots i}}) 
\punkt
\end{equation}
We call a set $T$ of words a \emph{tree} 
if $w = (w_1, \dots, w_{\abs w}) \in T$ 
implies that $w_{1\dots i} \in T$ for all $i = 0, \dots, \abs w-1$
and $(w_1, w_2, \dots, w_{\abs w-1}, x) \in T$
for all $x \in \lbrace 1, \dots, k\rbrace$.
See \autoref{figure:roots:verma:tree} for a visualization of a tree.
For a tree $T$ we denote by $\max T$ the set of elements $w \in T$
such that $w \neq w'_{1\dots i}$ for any $w' \in T$
and any $i \in \lbrace 0, \dots, \abs{w'}-1\rbrace$.
Finally a tree is said to be \emph{$\mu$-admissible}
if $p_\lambda (\mu - \alpha_w) \neq 0$ for all $w \in T \setminus \max T$,
or equivalently if $p^w_\lambda(\mu) \neq 0$ for all $w \in T$.

\begin{lemma}[Ostapenko, {\cite[Theorem 3]{ostapenko:1992a}}] 
	\label{theo:casimirIterated}
	Let $V$ be a highest weight module of highest weight $\lambda$,
	assume $\mu \in \mathbb N_0 \Delta^+$, 
	and let $v \in V^{\lambda - \mu}$.
Then
	\begin{equation}\label{eq:casimirIterated}
	v = \sum_{w \in \max T} (-1)^{\abs w} p^w_\lambda(\mu)^{-1} 
	Y_{w} X_{w^{\opp}} v
	\end{equation}
	holds for every $\mu$-admissible tree $T$.
\end{lemma}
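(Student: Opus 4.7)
The plan is to prove \eqref{eq:casimirIterated} by induction on $|T|$, with \autoref{theo:casimirActing} providing the key identity at each step. For the base case I would take the smallest admissible tree $T_0 = \{\emptyset, (1), \ldots, (k)\}$: note that every nonempty tree must contain $T_0$, since $w \in T$ forces $\emptyset \in T$ by prefix closure, which in turn forces $(x) \in T$ for each $x \in \{1, \ldots, k\}$ by the sibling-closure axiom applied at $\emptyset$. For $T_0$ one has $\max T_0 = \{(1), \ldots, (k)\}$ and $p_\lambda^{(x)}(\mu) = p_\lambda(\mu)$, so \eqref{eq:casimirIterated} reduces to $v = -p_\lambda(\mu)^{-1} \sum_{x=1}^k Y_x X_x v$. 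This is exactly \autoref{theo:casimirActing} rearranged, using $\sum_{\alpha \in \Delta^+} X_{-\alpha} X_\alpha = \sum_{x=1}^k Y_x X_x$ from the labelling $X_x = X_{\alpha_x}$, $Y_x = X_{-\alpha_x}$, together with the fact that division by $p_\lambda(\mu)$ is legal by admissibility.

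For the inductive step I would take an admissible tree $T' \supsetneq T_0$ and pick a word $u \in T' \setminus \max T'$ of \emph{maximal} length. Maximality of $|u|$ together with the tree axioms forces all $k$ children $(u_1, \ldots, u_{|u|}, x)$ for $x \in \{1, \ldots, k\}$ to lie in $\max T'$. Pruning these children yields a strictly smaller tree $T$ with $\max T = (\max T' \setminus \{(u, x) : x = 1, \ldots, k\}) \cup \{u\}$; since $T \subseteq T'$, admissibility is inherited, and by induction \eqref{eq:casimirIterated} holds for $T$. The difference between the right hand sides for $T'$ and $T$ is
\begin{equation*}
\sum_{x=1}^k (-1)^{|u|+1} p_\lambda^{(u,x)}(\mu)^{-1} Y_u Y_x X_x X_{u^{\opp}} v \;-\; (-1)^{|u|} p_\lambda^u(\mu)^{-1} Y_u X_{u^{\opp}} v.
\end{equation*}
Setting $v' := X_{u^{\opp}} v \in V^{\lambda - (\mu - \alpha_u)}$, using the factorization $p_\lambda^{(u,x)}(\mu) = p_\lambda^u(\mu) \, p_\lambda(\mu - \alpha_u)$ (with the second factor nonzero by admissibility of $T'$), and applying \autoref{theo:casimirActing} with $\mu$ replaced by $\mu - \alpha_u$ to evaluate $\sum_{x=1}^k Y_x X_x v' = -p_\lambda(\mu - \alpha_u) v'$, I would verify that this difference cancels exactly to zero, completing the step.

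The argument is essentially one long iteration of \autoref{theo:casimirActing}, so the only real thing to check is combinatorial: every admissible tree $T'$ must be reachable from $T_0$ by successive child-adjunctions (adding a full sibling set at a leaf), with every intermediate tree remaining admissible. The pruning recipe above answers this — choosing a deepest interior word $u$ removes precisely one full sibling set and yields a genuine admissible subtree — so the induction is well-founded and no analytic subtlety arises. I expect the main obstacle to be purely notational: keeping the tree-surgery bookkeeping (how $\max T$ changes under pruning, and the factorization of $p_\lambda^{(u,x)}(\mu)$) aligned with the signs $(-1)^{|u|}$ in the formula.
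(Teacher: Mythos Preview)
Your proof is correct and is precisely a careful unpacking of the paper's one-line argument ``Apply the previous lemma repeatedly.'' The induction on $|T|$, with \autoref{theo:casimirActing} furnishing the identity at each pruning step, is exactly the intended mechanism; the paper simply omits the tree-surgery bookkeeping that you have written out.

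One very minor point: your claim that every nonempty tree contains $T_0$ rests on reading the sibling-closure axiom as nonvacuous at $w=\emptyset$. Under the alternative reading, $\{\emptyset\}$ is itself a tree, but then \eqref{eq:casimirIterated} holds trivially (the sum has the single term $(-1)^0\cdot 1\cdot Y_\emptyset X_\emptyset v = v$), so one may equally well take $\{\emptyset\}$ as the base case and let the inductive step cover $T_0$ too. Also note that your induction on $|T|$ tacitly assumes $T$ is finite; this is harmless since for fixed $\mu$ only words $w$ with $|w|$ at most the height of $\mu$ can contribute (otherwise $X_{w^{\opp}}v=0$), and the trees actually used in the paper are finite.
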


\begin{proof}
	Apply the previous lemma repeatedly.
\end{proof}

\begin{lemma}\label{theo:casimirIterated:lowestWeight}
	Let $V$ be a lowest weight module of lowest weight $-\lambda$,
	assume $\mu \in \mathbb N_0 \Delta^+$,
	and let $v \in V^{-\lambda + \mu}$.
Then
	$\sum_{\alpha \in \Delta^+} X_\alpha Y_{\alpha} v = -p_\lambda(\mu) v$,
	and
	\begin{equation}
	v = \sum_{w \in \max T} (-1)^{\abs w} p^w_\lambda(\mu)^{-1}  
	X_{w} Y_{w^{\opp}} v
	\end{equation}
    holds for every $\mu$-admissible tree $T$.
\end{lemma}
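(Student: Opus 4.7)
The plan is to reduce to \autoref{theo:casimirActing} and \autoref{theo:casimirIterated} by twisting the $\univ\lie g$-module structure on $V$ via the Chevalley involution $\theta : \lie g \to \lie g$, the Lie algebra automorphism determined by $\theta(X_i) = -Y_i$, $\theta(Y_i) = -X_i$ and $\theta\at{\lie h} = -\id$. Extend $\theta$ to an algebra automorphism of $\univ\lie g$ and consider the twisted module structure on the underlying vector space of $V$ given by $w \cdot v := \theta(w) v$; denote the resulting module by $V^\theta$. Alternatively one could recompute the action of the Casimir element directly and repeat the tree induction of Ostapenko, but the twist makes the derivation a one-line translation from the highest weight case.

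First I check that $V^\theta$ is a highest weight module of highest weight $\lambda$ and that $(V^\theta)^{\lambda - \mu} = V^{-\lambda + \mu}$. If $v_0 \in V$ generates $V$ as a lowest weight vector of weight $-\lambda$, then $Y_\alpha v_0 = 0$ and $H v_0 = -\lambda(H) v_0$ for all $\alpha \in \Delta^+$ and $H \in \lie h$. Under the twisted action these become $X_\alpha \cdot v_0 = -Y_\alpha v_0 = 0$ and $H \cdot v_0 = -H v_0 = \lambda(H) v_0$, so $v_0$ is a highest weight vector of weight $\lambda$ in $V^\theta$, and it still generates $V^\theta$ because $\theta$ is bijective. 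The weight space correspondence follows because $\theta$ acts as $-\id$ on $\lie h$.

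The first identity of the lemma then follows by applying \autoref{theo:casimirActing} to $v \in (V^\theta)^{\lambda - \mu}$:
\begin{equation*}
-p_\lambda(\mu)\, v = \sum_{\alpha \in \Delta^+} (X_{-\alpha} X_\alpha) \cdot v = \sum_{\alpha \in \Delta^+} \theta(X_{-\alpha})\, \theta(X_\alpha)\, v = \sum_{\alpha \in \Delta^+} X_\alpha X_{-\alpha}\, v \punkt
\end{equation*}
For the iterated identity I observe that $\theta(X_w) = (-1)^{\abs w} Y_w$ and $\theta(Y_w) = (-1)^{\abs w} X_w$ for any word $w \in \wordset$. Hence $(Y_w X_{w^{\opp}}) \cdot v = \theta(Y_w)\, \theta(X_{w^{\opp}})\, v = (-1)^{2\abs w} X_w Y_{w^{\opp}}\, v = X_w Y_{w^{\opp}}\, v$, so substituting into \autoref{theo:casimirIterated} applied to $v \in (V^\theta)^{\lambda - \mu}$ yields the claim. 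Admissibility of the tree $T$ depends only on $\lambda$ and $\mu$ via the scalars $p^w_\lambda(\mu)$ and is therefore preserved by the twist. The only possible obstacle is keeping track of the signs introduced by $\theta$, but these cancel cleanly since each summand of the iterated formula involves an even total number of $\theta$-twisted generators.
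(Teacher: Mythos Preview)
Your proof is correct and takes a genuinely different route from the paper. The paper's own proof is literally ``Similar to the proof of \autoref{theo:casimirActing} and \autoref{theo:casimirIterated}''---that is, redo the Casimir computation on a lowest weight vector and then iterate. You instead twist by the Chevalley involution and reduce to the highest weight statements already established, which is cleaner and avoids repeating the calculation; the paper's approach has the (minor) advantage of not invoking any structure beyond what is already on the page.

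One small imprecision worth noting: with the paper's normalisation of root vectors (only $B(X_\alpha,Y_\alpha)=1$ is imposed) the Chevalley involution need not act as $\theta(X_\alpha)=-Y_\alpha$ for \emph{non-simple} $\alpha$; in general one only has $\theta(X_\alpha)=c_\alpha Y_\alpha$ with $c_\alpha c_{-\alpha}=1$ (from $\theta^2=\id$). This does not damage your argument, because in both the single-step and the iterated identity the relevant product is
\[
\theta(Y_w)\,\theta(X_{w^{\opp}})
=\Bigl(\prod_j c_{-\alpha_{w_j}}\Bigr)\Bigl(\prod_j c_{\alpha_{w_j}}\Bigr)\,X_w Y_{w^{\opp}}
= X_w Y_{w^{\opp}},
\]
so the unknown scalars cancel in pairs; your sign bookkeeping $(-1)^{2\abs w}=1$ is just the special case $c_\alpha=-1$. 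Either remark that the scalars cancel, or note that one may rescale the $X_\alpha$ (preserving $B(X_\alpha,Y_\alpha)=1$) to obtain a Chevalley basis in which $\theta(X_\alpha)=-Y_\alpha$ holds for all roots.
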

\begin{proof}
	Similar to the proof of \autoref{theo:casimirActing} and 
	\autoref{theo:casimirIterated}.
\end{proof}
Define the set 
\begin{equation}
\Lambda \coloneqq \lbrace \lambda \in \lie h^* \mid p_\lambda(\mu) \neq 0 \,\,\,
\forall \mu \in \mathbb N_0 \Delta^+ \setminus\lbrace 0 \rbrace \rbrace \punkt
\end{equation}

\begin{proposition}\label{theo:canonicalelement:restricted}
	The Shapovalov pairing
	$\langle \argumentdot, \argumentdot \rangle_\lambda 
	\colon
	\univ (\plus{\lie n}) \times \univ (\minus{\lie n}) \to \mathbb C$
	is non-degenerate for $\lambda \in \Lambda$, 
	and in this case its canonical element
	$F_\lambda \in \univ(\plus{\lie n}) \tensorhat \univ(\minus{\lie n})$
	is given by
	\begin{equation} \label{eq:canonicalelement:restricted}
	F_\lambda 
	= \sum_{w\in\wordset} p^w_{\lambda}(\alpha_w)^{-1} X_w \tensor Y_w
	= \sum_{w \in\wordset} 
	\prod_{i = 1}^{\abs w} p_{\lambda}(\alpha_{w_{i \dots \abs w}})^{-1} 
	X_w \tensor Y_w\punkt
	\end{equation}
\end{proposition}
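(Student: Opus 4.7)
The plan is to verify that the expression on the right-hand side of \eqref{eq:canonicalelement:restricted} satisfies the second identity of \autoref{theo:canonicalElement:FormulaToCheck}. Non-degeneracy of $\langle \argumentdot, \argumentdot \rangle_\lambda$ will follow immediately, since the identity exhibits a dual basis on each finite-dimensional graded piece $\univ(\plus{\lie n})^\mu \times \univ(\minus{\lie n})^{-\mu}$; \autoref{theo:canonicalElement:FormulaToCheck} then identifies the formula with the canonical element $F_\lambda$.

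Fix $\mu \in \mathbb N_0 \Delta^+$ and $v \in \univ(\minus{\lie n})^{-\mu}$. I would consider the weight vector $v \tensor \eta_\lambda \in M_\lambda^{\lambda-\mu}$ and introduce the set
$T_\mu = \{\, w \in \wordset : \alpha_{w_{1\ldots i}} \leq \mu \text{ and } \alpha_{w_{1\ldots i}} \neq \mu \text{ for all } 0 \leq i < \abs w \,\}$.
A direct check shows that $T_\mu$ is a tree in the sense introduced just before \autoref{theo:casimirIterated}, that it is finite (word length is bounded by $\mathrm{height}(\mu)+1$ since each added positive root has height at least one), and that it is admissible, because $\lambda \in \Omega$ forces $p_\lambda(\mu - \alpha_{w_{1\ldots i}}) \neq 0$ along every branch. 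A short case analysis further shows that a leaf $w \in \max T_\mu$ satisfies either $\alpha_w = \mu$ or $\alpha_w \not\leq \mu$, and conversely that every word $w$ with $\alpha_w = \mu$ automatically lies in $\max T_\mu$.

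Applying \autoref{theo:casimirIterated} with $V = M_\lambda$ yields
$v \tensor \eta_\lambda = \sum_{w \in \max T_\mu} (-1)^{\abs w} p^w_\lambda(\mu)^{-1} Y_w X_{w^{\opp}}(v \tensor \eta_\lambda)$.
Terms with $\alpha_w \not\leq \mu$ vanish because the weight $\lambda - \mu + \alpha_w$ does not occur in $M_\lambda$. For each leaf with $\alpha_w = \mu$ the vector $X_{w^{\opp}}(v \tensor \eta_\lambda) = (X_{w^{\opp}} v) \tensor \eta_\lambda$ lands in the one-dimensional highest weight space $\mathbb C \eta_\lambda$, and a PBW computation (decompose $X_{w^{\opp}} v$ as a sum of monomials $v_i h_i u_i$ in the order $\univ(\minus{\lie n}) \cdot \univ\lie h \cdot \univ(\plus{\lie n})$; only the terms with $v_i = u_i = 1$ survive in $M_\lambda$) identifies
$X_{w^{\opp}}(v \tensor \eta_\lambda) = \lambda\bigl((X_{w^{\opp}} v)_0\bigr)\, \eta_\lambda = (-1)^{\abs w} \langle X_w, v\rangle_\lambda \, \eta_\lambda$,
where I used $S(X_w) = (-1)^{\abs w} X_{w^{\opp}}$ together with the definition \eqref{eq:AL:pairing}.

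Substituting back and using $M_\lambda \cong \univ(\minus{\lie n})$ via $\unvermanize{\argumentdot}$, the signs $(-1)^{\abs w}$ cancel and $p^w_\lambda(\mu) = p^w_\lambda(\alpha_w)$ when $\alpha_w = \mu$, so
$v = \sum_{w : \alpha_w = \mu} p^w_\lambda(\alpha_w)^{-1} \langle X_w, v\rangle_\lambda Y_w = \sum_{w \in \wordset} p^w_\lambda(\alpha_w)^{-1} \langle X_w, v\rangle_\lambda Y_w$,
the last equality because $\langle X_w, v\rangle_\lambda = 0$ for $\alpha_w \neq \mu$ by the grading. This is exactly the second identity of \autoref{theo:canonicalElement:FormulaToCheck} for the proposed formula, and concludes the argument. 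I expect the main obstacle to be the PBW-style computation in the Verma module and the careful bookkeeping of the antipode signs and of the projection $(\argumentdot)_0$; once this computation is settled, the combinatorics of $T_\mu$ and the invocation of \autoref{theo:casimirIterated} are routine.
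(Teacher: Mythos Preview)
Your proposal is correct and follows essentially the same route as the paper: reduce to the identity of \autoref{theo:canonicalElement:FormulaToCheck}, work weight by weight, build an admissible tree whose leaves are exactly the words $w$ with $\alpha_w=\mu$ (up to harmless leaves that annihilate $v$), and then invoke \autoref{theo:casimirIterated}. Your tree $T_\mu$ coincides with the paper's ``smallest tree containing $\wordset_\mu$'', just described by the explicit condition $\alpha_{w_{1\ldots i}}<\mu$ on proper prefixes rather than as a closure.

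Two small remarks. First, the ``PBW computation'' you outline is precisely \eqref{eq:shapovalovPairing:computation:ii} of \autoref{theo:shapovalovPairing:computations}: since $d(S(X_w))=-d(v)$ one has $\langle X_w,v\rangle_\lambda\cdot 1=\unvermanize{(S(X_w)v\tensor\eta_\lambda)}$, hence $X_{w^{\opp}}(v\tensor\eta_\lambda)=(-1)^{\abs w}\langle X_w,v\rangle_\lambda\,\eta_\lambda$ with no further work. Second, the paper verifies \emph{both} identities of \autoref{theo:canonicalElement:FormulaToCheck} (the dual one via the lowest-weight module $M_\lambda^-$ and \autoref{theo:casimirIterated:lowestWeight}); you verify only the second. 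Your reduction is valid because $\dim\univ(\plus{\lie n})^\mu=\dim\univ(\minus{\lie n})^{-\mu}<\infty$, so the single identity forces the pairing map $\univ(\minus{\lie n})^{-\mu}\to(\univ(\plus{\lie n})^\mu)^*$ to be bijective and pins down the element of $\univ(\plus{\lie n})^\mu\tensor\univ(\minus{\lie n})^{-\mu}$ uniquely---but note that \autoref{theo:canonicalElement:FormulaToCheck} as stated characterizes $F_\lambda$ by \emph{both} conditions, so it is your dimension argument, not that lemma, that closes the gap.
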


\begin{proof}
	\renewcommand{\tikzcircleradius}{0.04}
	\renewcommand{\tikzlinewidth}{1pt}
	\newcommand{\tikzlinesep}{0.045}
	\newcommand{\tikzlinesepx}{\tikzlinesep*\tikzvertspace}
	\newcommand{\tikzlinesepy}{\tikzlinesep*0.5}
	\definecolor{qqzzqq}{rgb}{0,0.6,0}
	\definecolor{uuuuuu}{rgb}{0.27,0.27,0.27}
	\definecolor{ffqqqq}{rgb}{1,0,0}
	\figurewithtwoimages{
		\begin{tikzpicture}[line cap=round,line join=round,>=triangle 45, scale=.95]
\draw (-1.9,0) -- (1.9,0);
\draw [->,line width=1.2pt,color=ffqqqq] (0,0) -- (1.5,-0.87);
		\draw [->,line width=1.2pt,color=ffqqqq] (0,0) -- (0,-1.73);
		\draw [->,line width=1.2pt,color=ffqqqq] (0,0) -- (-1.5,-0.87);
\draw [->,line width=1.2pt,color=qqzzqq] (0,0) -- (1.5,0.87);
		\draw [->,line width=1.2pt,color=qqzzqq] (0,0) -- (-1.5,0.87);
		\draw [->,line width=1.2pt,color=qqzzqq] (0,0) -- (0,1.73);
\draw (-1.5,0.98) node[anchor=south] {$\alpha_1$};
		\draw (0,1.84) node[anchor=south] {$\alpha_2$};	
		\draw (1.5,0.98) node[anchor=south] {$\alpha_3$};
\begin{scriptsize}
		\fill [color=uuuuuu] (0,0) circle (1.5pt);
		\fill [color=uuuuuu] (0,-1.73) circle (1.5pt);
		\fill [color=uuuuuu] (-1.5,-0.87) circle (1.5pt);
		\fill [color=uuuuuu] (-1.5,0.87) circle (1.5pt);
		\fill [color=uuuuuu] (0,1.73) circle (1.5pt);
		\fill [color=uuuuuu] (1.5,0.87) circle (1.5pt);
		\fill [color=uuuuuu] (1.5,-0.87) circle (1.5pt);
		\end{scriptsize}
		\end{tikzpicture}
	}{
		\begin{tikzpicture}[>=triangle 45, scale=1.5]
		\draw[color=red, line width=\tikzlinewidth] (\tikzvertspace, -1.5) -- (0, -1);
		\draw[color=red, line width=\tikzlinewidth,->] (0, -1) -- (0,-1+\tikzshorten*1.0);
		
		\draw[color=red, line width=\tikzlinewidth] (0, -1) -- (-\tikzvertspace, -0.5);
		\draw[color=red, line width=\tikzlinewidth,->] (-\tikzvertspace, -0.5) -- 
		(-\tikzvertspace-\tikzshorten*\tikzvertspace, -0.5+\tikzshorten*0.5);
		\draw[color=red, line width=\tikzlinewidth,->] (-\tikzvertspace, -0.5) -- (-\tikzvertspace, 
		-0.5+\tikzshorten*1.0);
		\draw[color=red, line width=\tikzlinewidth,->] (-\tikzvertspace, -0.5) -- 
		(-\tikzvertspace+\tikzshorten*\tikzvertspace, -0.5+0.5*\tikzshorten);
		
		\draw[color=red, line width=\tikzlinewidth] (0, -1) -- (\tikzvertspace, -0.5);
		\draw[color=red, line width=\tikzlinewidth,->] (\tikzvertspace-\tikzlinesepx, 
		-0.5+\tikzlinesepy) -- 
		(\tikzvertspace-\tikzlinesepx+\tikzshorten*\tikzvertspace, 
		\tikzlinesepy-0.5+\tikzshorten*0.5);
		\draw[color=red, line width=\tikzlinewidth,->] (\tikzvertspace-\tikzlinesep, -0.5) -- 
		(\tikzvertspace-\tikzlinesep, -0.5+\tikzshorten*1.0);
		\draw[color=red, line width=\tikzlinewidth,->] (\tikzvertspace-\tikzlinesepx, 
		-0.5-\tikzlinesepy) -- 
		(-\tikzlinesepx+\tikzvertspace-\tikzshorten*\tikzvertspace, 
		-\tikzlinesepy-0.5+\tikzshorten*0.5);
		
		\draw[color=blue, line width=\tikzlinewidth] (\tikzvertspace, -1.5) -- (\tikzvertspace, 
		-0.5);
		\draw[color=blue, line width=\tikzlinewidth,->] (\tikzvertspace+\tikzlinesepx, 
		-0.5-\tikzlinesepy) -- 
		(\tikzvertspace+\tikzlinesepx+\tikzshorten*\tikzvertspace, 
		-\tikzlinesepy-0.5+0.5*\tikzshorten);
		\draw[color=blue, line width=\tikzlinewidth,->] (\tikzvertspace+\tikzlinesep, -0.5) -- 
		(\tikzvertspace+\tikzlinesep, -0.5+\tikzshorten*1.0);
		\draw[color=blue, line width=\tikzlinewidth,->] (\tikzvertspace+\tikzlinesepx, 
		-0.5+\tikzlinesepy) -- 
		(\tikzlinesepx+\tikzvertspace-\tikzshorten*\tikzvertspace, 
		\tikzlinesepy-0.5+\tikzshorten*0.5);
		
		\draw[color=qqzzqq, line width=\tikzlinewidth] (\tikzvertspace, -1.5) -- (2*\tikzvertspace, 
		-1);
		\draw[color=qqzzqq, line width=\tikzlinewidth] (2*\tikzvertspace, -1) -- (\tikzvertspace, 
		-0.5);
		\draw[color=qqzzqq, line width=\tikzlinewidth,->] (2*\tikzvertspace, -1) -- 
		(2*\tikzvertspace+\tikzshorten*\tikzvertspace, -1+\tikzshorten*0.5);
		\draw[color=qqzzqq, line width=\tikzlinewidth,->] (2*\tikzvertspace, -1) -- 
		(2*\tikzvertspace+\tikzshorten*2*\tikzlinesepx, {-1+\tikzshorten*(1-2*\tikzlinesepy)});
		\draw[color=qqzzqq, line width=\tikzlinewidth,->] (\tikzvertspace, -0.5) -- 
		(\tikzvertspace+\tikzshorten*\tikzvertspace, -0.5+\tikzshorten*0.5);
		\draw[color=qqzzqq, line width=\tikzlinewidth,->] (\tikzvertspace, -0.5) -- 
		(\tikzvertspace, 
		-0.5+\tikzshorten*1.0);
		\draw[color=qqzzqq, line width=\tikzlinewidth,->] (\tikzvertspace, -0.5) -- 
		(\tikzvertspace-\tikzshorten*\tikzvertspace, -0.5+\tikzshorten*0.5);
		
\foreach \n in {0,1,2} {
			\fill ({(-\n+0)*\tikzvertspace},-0.0-0.5*\n) circle (\tikzcircleradius);
			\fill ({(-\n+1)*\tikzvertspace},-0.5-0.5*\n) circle (\tikzcircleradius);
			\fill ({(-\n+2)*\tikzvertspace},-1.0-0.5*\n) circle (\tikzcircleradius);
			\fill ({(-\n+3)*\tikzvertspace},-1.5-0.5*\n) circle (\tikzcircleradius);
		}
		\node at (\tikzvertspace,-2.9) {$\vdots$};
		\node at (3.5*\tikzvertspace,-1.75) {$\ddots$};
		\node at (-2.5*\tikzvertspace,-1.25) {\reflectbox{$\ddots$}};
\node at (0,0.2) {$\lambda$};
\node at (\tikzvertspace,-1.5-0.2) {$\lambda-\mu$};
		\end{tikzpicture}
	}{The tree $T$ used in the proof of \autoref{theo:canonicalelement:restricted} for $\lie 
		g = \lie{sl}_3(\mathbb C)$ and $\mu = 2\alpha_1+\alpha_3$.
		Elements of the tree starting with $1$, $2$ and $3$
		are coloured red, blue and green, respectively.
		Note that all weight spaces of maximal elements of this tree are trivial,
		except for $V^\lambda$. All non-maximal weight spaces are non-trivial.
	}{figure:tree:proof:special}
	We check that $F_\lambda$ satisfies the property given in 
	\autoref{theo:canonicalElement:FormulaToCheck}.
We decompose $v \in \univ(\minus{\lie n})$ as
	$v = \sum_{\mu \in \mathbb N_0 \Delta^+} v_{-\mu}$
	where $v_{-\mu}$ is homogeneous of degree $-\mu$
	with respect to the $\mathbb Z \Delta$-grading.
For $\mu \in \mathbb N_0 \Delta^+$ let $\wordset_\mu$ be 
	the set of words $w \in W$ satisfying $\alpha_w = \mu$. Then
	\begin{multline*}
	\sum_{w \in\wordset}  
	p^w_{\lambda}(\alpha_w)^{-1} Y_w  
	\langle X_w, v \rangle_\lambda
	=
	\sum_{w\in\wordset}  
	p^ w_{\lambda}(\alpha_w)^{-1} Y_w \vermaacts - S(X_w) \vermaacts - v_{-\alpha_w}
	= \\ =
	\sum_{\mu \in \mathbb N_0 \Delta^+}\sum_{w\in\wordset_\mu}  
	(-1)^{\abs w} p^w_{\lambda}(\alpha_w)^{-1} Y_w \vermaacts - 
		X_{w^{\mathrm{opp}}} \vermaacts - v_{-\mu} 
	=
	\sum_{\mu \in \mathbb N_0\Delta^+} 
	v_{-\mu} = v 
	\punkt
	\end{multline*}
	The first equality holds because 
	$Y_w \langle X_w, v \rangle_\lambda  
	=
	Y_w \vermaacts - (
		\langle X_w, v_{-\alpha_w} \rangle_\lambda 1_{\univ(\minus{\lie n})}
	)
	= Y_w \vermaacts - S(X_w) \vermaacts - v_{-\alpha_w}$
	by \autoref{theo:shapovalovPairing:computations}.
The second equality is true by basic manipulations.
The third equality follows from \autoref{theo:casimirIterated}
	because we can rewrite the sum over all $w \in \wordset_\mu$
	as a sum over $\max T$ for a $\mu$-admissible tree $T$ as follows:
	Define
	\begin{equation*}
	T \coloneqq \lbrace \emptyset \rbrace 
	\cup
	\lbrace w \in W \mid 
	\exists w' \in \wordset_\mu \text{ and $0 \leq i \leq \abs{w'}-1$
		such that $w_{1 \dots \abs w -1} = w'_{1 \dots i}$} 
	\rbrace \komma
	\end{equation*}
	which is the smallest tree containing $\wordset_\mu$.
Since $\lambda \in \Lambda$ this tree is $\mu$-admissible, 
	and clearly $\wordset_\mu \subseteq \max T$.
Let $w \in \max T$. 
	Then either	$\alpha_w = \mu$, so that $w \in \wordset_\mu$,
	or there do not exist $w' \in \wordset_\mu$
	and $i \in \lbrace 0, \dots, \abs{w'}\rbrace$ 
	with $w = w'_{1 \dots i}$,
	so that $\mu - \alpha_w \notin \mathbb N_0 \Delta^+$ 
	and therefore $X_{w^{\mathrm{opp}}} v_{-\mu} = 0$. 
	
	Similarly, for $u=\sum_{\mu\in\mathbb N_0 \Delta^+} 
	u_\mu \in \univ(\plus{\lie n})$
	with $d(u_\mu) = \mu$ we compute that
	\begin{multline*}
	\sum_{w \in\wordset}  
	p^w_{\lambda}(\mu)^{-1} X_w 
	\langle u,Y_w \rangle_\lambda
	=
	\sum_{w \in\wordset}  
	p^w_{\lambda}(\mu)^{-1} 
	X_w \acts^+_\lambda S(Y_w) \acts^+_\lambda u_{\alpha_w} 
	= \\ =
	\sum_{\mu\in\mathbb N_0 \Delta^+}
	\sum_{w\in\wordset_\mu}  
	(-1)^{\abs w} p^w_{\lambda}(\mu)^{-1} 
	X_w \acts^+_\lambda Y_{w^{\mathrm{opp}}} \acts^+_\lambda u_{\mu} 
	= \sum_{\mu \in \mathbb N_0\Delta^+} u_\mu
	= u \komma
	\end{multline*}
	using 
	$X_w \langle u, Y_w \rangle_\lambda 
	= X_w \acts_\lambda^+ (\langle 
	 u_{\alpha_w}, Y_w \rangle_\lambda 1_{\univ(\plus{\lie n})})
	= X_w \acts^+_\lambda S(Y_w) \acts^+_\lambda u_{\alpha_w}$,
	and that the sum over $w \in \wordset_\mu$ can be rewritten as a sum
	over maximal elements of a tree $T$ in a similar way as before.$\,$
\end{proof}
Using the inclusion 
$\univ(\plus{\lie n})\tensorhat\univ(\minus{\lie n}) 
\to (\univ\lie g)^{\hat\tensor 2}$
and passing to the quotient,
we can map the element $F_\lambda$ from \eqref{eq:canonicalelement:restricted}
to $(\univ\lie g / \univ\lie g \cdot \lie h)^{\smash{\hat\tensor 2}}$.
Note that $\univ \lie g \cdot \lie h$ is a homogeneous ideal in $\univ \lie g$
with respect to the degree $d$,
so the quotient $\univ \lie g / \univ \lie g \cdot \lie h$ is still graded. 
The completed tensor product is defined
with respect to this grading. 
The action of $\lie h$ on $(\univ \lie g)^{\tensor 2}$ given by 
$H \acts (w \tensor w') = \ad_H w \tensor w' + w \tensor \ad_H w'$
with $H \in \lie h$ and $w, w' \in \univ \lie g$ stays well-defined
on the quotient and preserves the degree,
so it extends uniquely to a continuous action on the completed tensor product.
Denote the coproduct of the Hopf algebra $\univ \lie g$ by $\Delta$.
It is defined by extending the assignment 
$\lie g \ni X \mapsto X \tensor 1 + 1 \tensor X \in \univ \lie g \tensor \univ \lie g$ 
to an algebra homomorphism 
$\Delta \colon \univ \lie g \to \univ \lie g \tensor \univ \lie g$.

\begin{proposition}[{Alekseev--Lachowska \cite{alekseev.lachowska:2005a}}]
	Let $\lambda \in \Lambda$.
Then $F_\lambda \in (\univ\lie g/ \univ\lie g \cdot \lie h)^{\hat\tensor 2}$
	is $\lie h$-invariant and satisfies
	\begin{equation}\label{eq:associativity}
	((\id \tensor \Delta) F_\lambda) 1 \tensor F_\lambda
	= 
	((\Delta \tensor \id) F_\lambda) F_\lambda \tensor 1 
	\end{equation}
	in $(\univ \lie g / \univ \lie g \cdot \lie h)^{\hat\tensor 3}$.
\end{proposition}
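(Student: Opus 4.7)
The statement splits into two assertions. For the $\lie h$-invariance, I argue by weight counting: each summand $p^w_\lambda(\alpha_w)^{-1} X_w \tensor Y_w$ in the explicit formula for $F_\lambda$ from \autoref{theo:canonicalelement:restricted} is $\ad_{\lie h}$-homogeneous of total weight zero, since $X_w$ has weight $\alpha_w$, $Y_w$ has weight $-\alpha_w$, and the coefficient is a scalar. Therefore $(\ad_H \tensor \id + \id \tensor \ad_H)(X_w \tensor Y_w) = 0$ for every $H \in \lie h$. Since the right ideal $\univ\lie g \cdot \lie h$ is $\ad_{\lie h}$-stable, the invariance of $F_\lambda$ descends from $(\univ\lie g)^{\hat\tensor 2}$ to the quotient $(\univ\lie g / \univ\lie g \cdot \lie h)^{\hat\tensor 2}$.

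For the twist equation, my plan is to lift both sides along the inclusion $\univ(\plus{\lie n}) \tensorhat \univ(\minus{\lie n}) \hookrightarrow (\univ\lie g)^{\hat\tensor 2}$ to elements of $(\univ\lie g)^{\hat\tensor 3}$ and verify the identity there by testing against triples $u_1 \tensor u_2 \tensor u_3 \in \univ(\plus{\lie n})^{\tensor 3}$. The key tool is the characterizing property of $F_\lambda$ from \autoref{theo:canonicalElement:FormulaToCheck}: contracting $F_\lambda$ against an element of $\univ(\plus{\lie n})$ using the Shapovalov pairing \eqref{eq:AL:pairing} in one slot reproduces that element in the other slot. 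Together with \autoref{theo:shapovalovPairing:computations}, I expect both sides of the twist equation to reduce, under the appropriate iterated pairing, to the same element of $\univ\lie g$: the left-hand side via the associative grouping $u_1 \cdot (u_2 u_3)$ induced by $(1 \tensor \Delta)$, and the right-hand side via $(u_1 u_2) \cdot u_3$ induced by $(\Delta \tensor 1)$. Associativity in $\univ\lie g$ combined with non-degeneracy of the Shapovalov pairing on each finite-dimensional weight component then forces the two sides to coincide in $(\univ\lie g / \univ\lie g \cdot \lie h)^{\hat\tensor 3}$.

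The main obstacle I anticipate is organizing the infinite sums arising from the completion $\hat\tensor$ so that the reduction to the test-pairing is legitimate. This is controllable because once the triple of weights of $u_1 \tensor u_2 \tensor u_3$ is fixed, only finitely many words $w \in \wordset$ contribute to each factor, so at each weight the sums truncate and convergence is automatic. The $\lie h$-invariance established in the first part also plays an essential role: it guarantees that $(1 \tensor \Delta) F_\lambda$ and $(\Delta \tensor 1) F_\lambda$ remain in $\lie h$-invariant subspaces, so that the subsequent multiplications by $1 \tensor F_\lambda$ and $F_\lambda \tensor 1$ yield expressions that descend cleanly to the triple quotient. The one subtlety I would need to handle carefully is that $\Delta$ does not preserve the subalgebra $\univ(\plus{\lie n}) \tensorhat \univ(\minus{\lie n})$, so the lift to $(\univ\lie g)^{\hat\tensor 3}$ produces terms outside the pure $\plus/\minus$ splitting, which one must systematically project back using the Poincaré--Birkhoff--Witt decomposition before applying the pairing.
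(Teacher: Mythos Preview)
Your $\lie h$-invariance argument by weight counting is correct and more elementary than the paper's. The paper (via the proof of \autoref{theo:starproduct:alekseev}, part~\refitem{theo:starproduct:alekseev:iii}) instead argues abstractly: the $\univ\lie g$-invariance of the Shapovalov pairing $\langle\argumentdot,\argumentdot\rangle_\lambda$ forces its canonical element $F_\lambda\in\univ(\plus{\lie n})\tensorhat\univ(\minus{\lie n})$ to be $\lie g$-invariant (not merely $\lie h$-invariant), and then observes that the map to $(\univ\lie g/\univ\lie g\cdot\lie h)^{\hat\tensor 2}$ is $\lie h$-equivariant. The paper's route is less explicit but buys more: the full $\lie g$-invariance on the $\univ(\plus{\lie n})\tensorhat\univ(\minus{\lie n})$ level is exactly what is recycled later for the non-regular case, where one needs $\lie g_\lambda$-invariance with $\lie g_\lambda\supsetneq\lie h$, and weight counting alone would not give that.

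For the twist equation~\eqref{eq:associativity} the paper gives no argument at all: both here and in \autoref{theo:starproduct:alekseev} it simply cites \cite[Section~4]{alekseev.lachowska:2005a}. Your plan is therefore strictly more than what the paper provides. The strategy you outline---pairing both sides against test elements and reducing to associativity of multiplication in $\univ\lie g$, using \autoref{theo:canonicalElement:FormulaToCheck} and the weight-finiteness to truncate sums---is in the right spirit and is close to the argument in \cite{alekseev.lachowska:2005a}, which is phrased in terms of intertwiners between tensor products of Verma modules built from the non-degenerate pairing. The subtlety you flag (that $\Delta$ does not preserve $\univ(\plus{\lie n})\tensorhat\univ(\minus{\lie n})$, so one must work modulo $\univ\lie g\cdot\lie h$ and reorganize via PBW) is genuine and is precisely where the quotient by $\lie h$ becomes essential; your sketch identifies this correctly but does not yet carry it out.
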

\begin{proof}
	See the proof of \autoref{theo:starproduct:alekseev}.
\end{proof}
Using the results of \autoref{subsec:diffopsOnHomogeneousSpaces},
elements of $((\univ \lie g / \univ \lie g \cdot \lie h)^{\tensor 2})^H$
determine bidifferential operators on a complex coadjoint orbit
for which $\lie g_\lambda = \lie h$.
Such orbits are of maximal dimension among all coadjoint orbits
and called \emph{regular}.
Note that $H$ is automatically connected by \autoref{theo:stabilizerConnected},
so $\lie h$-invariance of $F_\lambda$ implies $H$-invariance,
but $F_\lambda$ is only an element of the completed tensor product.
So applying the construction from \autoref{subsec:diffopsOnHomogeneousSpaces} naively
gives a sum of bidifferential operators of increasing orders.
To make sense of this sum, we can either introduce a formal parameter $\formParam$
in the construction in such a way that we obtain a formal power series of bidifferential operators, 
or we can restrict ourselves to applying these operators to some class of polynomials,
for which only finitely many of the bidifferential operators appearing in the sum give a non-zero 
contribution.

We will now proceed as follows:
In \autoref{subsec:twist:general}, we generalize the construction of $F_\lambda$
to work for arbitrary stabilizers $\lie g_\lambda$ (and not just $\lie h$).
In \autoref{subsec:starProductFromTwist} we will give details on how to construct
bidifferential operators out of $F_\lambda$, both in the formal and polynomial settings
mentioned above.
 
\subsection{Generalization to non-regular orbits}
\label{subsec:twist:general}

The aim of this subsection is to generalize the results of the last subsection 
to non-regular semisimple coadjoint orbits.
To achieve this, we need to replace $\lie h$ by a possibly larger stabilizer
$\lie g_\lambda$ and define a generalization of the Shapovalov pairing.
When this pairing is non-degenerate, we derive an explicit formula for its 
canonical element, which satisfies \eqref{eq:associativity}.

Let $\lie g$ be a complex semisimple Lie algebra
acting under the coadjoint action, i.e.\ the action dual
to the adjoint action, on its dual $\lie g^*$.
We assume that $\lambda \in \lie g^*$ is semisimple
(as defined in \autoref{subsec:generalities})
with stabilizer
$\lie g_\lambda \coloneqq \lbrace X \in \lie g \mid \ad_X^*\lambda = 0 \rbrace$. 
We fix a Cartan subalgebra $\lie h$ containing $\lambda^\sharp$
(which is possible since $\lambda$ is semisimple)
and denote the corresponding root system by $\Delta$.
Since any $H \in \lie h$ commutes with $\lambda^\sharp$,
it follows that 
$\ad_H^* \lambda 
= \lambda([-H, \argumentdot]) 
= -B(\lambda^\sharp, [H, \argumentdot]) 
= -B([\lambda^\sharp, H], \argumentdot) 
= 0$,
so $\lie h \subseteq \lie g_\lambda$.
We let 
\begin{equation}
\Delta'
\coloneqq \lbrace \alpha \in \Delta \mid (\alpha, \lambda) = 0 \rbrace
\quad\text{and}\quad
\hat\Delta
\coloneqq \lbrace \alpha \in \Delta \mid (\alpha, \lambda) \neq 0 \rbrace
= \Delta \setminus \Delta' \punkt
\end{equation}
One checks easily that
$\lie g_\lambda = \lie h \oplus \bigoplus_{\alpha \in \Delta'} \lie g^\alpha$.
Given an ordering on $\Delta$ with $\Delta^\pm$ being the set 
of positive respectively negative roots,
define $\smash{\hat\Delta^\pm} = \Delta^\pm \cap \smash{\hat \Delta}$ and 
$(\Delta')^\pm = \Delta^\pm \cap \Delta'$.
Furthermore, let 
$\plusminus{\tilde{\lie n}} \coloneqq \bigoplus_{\alpha \in \hat\Delta^\pm} \lie g^\alpha$ and
$\plusminus{\tilde{\lie b}} \coloneqq \lie g_\lambda \oplus \plusminus{\tilde{\lie n}}$.

\begin{definition}\label{def:invariantOrdering}
	An ordering of $\Delta$ is called \emph{invariant} if for any $\alpha \in \hat 
	\Delta^+$ and $\beta \in \Delta'$ such that $\alpha + \beta$ is again a 
	root, this root $\alpha + \beta$ is in $\hat \Delta^+$. 
\end{definition}
Note that since the sum of two roots in $\Delta'$ is again in $\Delta'$ (if it 
is a root), it is automatic that $\alpha + \beta \in \hat \Delta$. The 
important part of the previous definition is that $\alpha + \beta$ should again 
be positive.
See \autoref{figure:orderings} for an example 
of invariant and non-invariant orderings.

\begin{lemma}
	An ordering of $\Delta$ is invariant if and only if 
	$\alpha+\beta \in \hat\Delta^+$ holds
	for any $\alpha, \beta \in \hat\Delta^+$ 
	with $\alpha+\beta \in \Delta$.
\end{lemma}
In the condition of the lemma it is automatic that $\alpha + \beta$ is positive
and the important part is that it lies in $\hat\Delta$.

\begin{proof}
	Assume the condition of the lemma is false,
	i.e.\ $\alpha,\beta \in \hat\Delta^+$
	and $\alpha + \beta \in \Delta \setminus \hat\Delta^+$.
	Since $\alpha + \beta$ is positive
	we must then have $\alpha + \beta \in \Delta'$.
	Consequently $\alpha + (-(\alpha+\beta)) = -\beta \notin \hat\Delta^+$,
	so the ordering is not invariant.
	
	Conversely, if the ordering is not invariant, then we can find
	$\alpha \in \hat\Delta^+$ and $\beta \in \Delta'$
	such that $\alpha+\beta \in \Delta \setminus \hat\Delta^+$.
	Then we must have $\alpha + \beta \in \hat\Delta^-$ and therefore
	$\alpha + (-(\alpha+\beta)) = - \beta \notin \hat\Delta^+$,
	so the condition of the lemma is not fulfilled.
\end{proof}
Intuitively the invariance of an ordering means that roots in $\Delta'$
are close to being simple, 
or more precisely that they are linear combinations of simple roots in $\Delta'$.
Indeed, if $\alpha \in (\Delta')^+$, then $\alpha$ is a non-negative linear
combination of simple roots.
By the lemma at least one of those simple roots, 
say $\sigma$, must be in $\Delta'$, so $\alpha = \sigma$ or $\alpha-\sigma \in (\Delta')^+$
and we can apply induction.

\begin{corollary}
	If the ordering of $\Delta$ is invariant,
	then $\plusminus{\tilde {\lie n}}$ and $\plusminus{\tilde {\lie b}}$
	are both Lie subalgebras of $\lie g$.
	Moreover, 
	$[\lie g_\lambda, \plusminus{\tilde{\lie n}}] \subseteq \plusminus{\tilde{\lie n}}$
	and 
	$[\lie g_\lambda, \plusminus{\tilde{\lie b}}] \subseteq \plusminus{\tilde{\lie b}}$.
\end{corollary}

\begin{proof}
	The condition in the previous lemma says precisely that
	$[\plusminus{\tilde{\lie n}}, \plusminus{\tilde{\lie n}}] 
	\subseteq
	\plusminus{\tilde{\lie n}}$,
	i.e.\ that $\plusminus{\tilde{\lie n}}$ is a Lie subalgebra of $\lie g$.
	The defining property of an invariant ordering means that
	$[\lie g_\lambda, \plusminus{\tilde{\lie n}}] \subseteq \plusminus{\tilde{\lie n}}$.
	The statements for $\plusminus{\tilde{\lie b}}$ are then clear.
\end{proof}

\begin{definition}
We say an ordering is \emph{standard} if there is a set 
$S \subseteq \mathbb C \setminus \lbrace 0 \rbrace$,
closed under addition and satisfying $S \cap (-S) = \emptyset$,
$S \cup (-S) = \mathbb C \setminus \lbrace 0 \rbrace$
such that $\alpha \in \hat\Delta$ is positive if and only if $(\alpha, \lambda) \in S$.
\end{definition}
Standard invariant orderings exist always
since we can construct them as follows.
First, take any ordering on the set
$\Delta'$ 
(meaning a subset $(\Delta')^+$ such that
if the sum of two elements of $(\Delta')^+$ is in $\Delta'$,
then it is in $(\Delta')^+$ and such that
for $(\Delta')^- \coloneqq - (\Delta')^+$ we have 
$(\Delta')^+ \cup (\Delta')^- = \Delta'$ and 
$(\Delta')^+ \cap (\Delta')^- = \emptyset$).
Then choose a set $S$ that is closed under addition and satisfies 
$S \cap (-S) = \emptyset$ and $S \cup (-S) = \mathbb C \setminus \lbrace 0 \rbrace$,
e.g.\ $S = \lbrace z \in \mathbb C \setminus \lbrace 0 \rbrace \mid \RE(z) > 0 \text{ or $z \in \I 
\mathbb R^+$} \rbrace$.
Let $\alpha \in \Delta$ be positive if $\alpha \in (\Delta')^+$ or 
$(\alpha, \lambda) \in S$.

For real coadjoint orbits standard invariant orderings are the ones which induce 
star products of pseudo Wick type (under some further assumptions,
see \autoref{theo:starProduct:types}),
and therefore the orderings we are mainly interested in.
However, the construction below works also for other
(possibly non-standard) invariant orderings.

\begin{figure}
	\definecolor{qqzzqq}{rgb}{0,0.6,0}
	\definecolor{qqqqff}{rgb}{0,0,1}
	\definecolor{wwzzff}{rgb}{0.3,0.5,1}
	\definecolor{ffqqqq}{rgb}{1,0,0}
	\definecolor{uuuuuu}{rgb}{0.27,0.27,0.27}
	\begin{tikzpicture}[line cap=round,line join=round,>=triangle 
	45,x=1.0cm,y=1.0cm, scale=.95]
	\clip(-2.2,-2.2) rectangle (2.2,2.2);
\fill[color=green!10] (-4.47,7.74) -- (0,0) -- (4.42,7.66) -- cycle; 
\draw [domain=-9.1:9.99] plot(\x,{(-0-0*\x)/1});
	\draw [domain=-9.1:9.99] plot(\x,{(-0-0.87*\x)/-0.5});
	\draw [domain=-9.1:9.99] plot(\x,{(-0--0.87*\x)/-0.5});
\fill [color=green!30] (-1.5,0.87) circle (5pt);
	\fill [color=green!30] (1.5,0.87) circle (5pt);
	\draw [color=uuuuuu] (-1.5,0.87) circle (5pt);
	\draw [color=uuuuuu] (1.5,0.87) circle (5pt);
\draw [->,line width=1.2pt,color=ffqqqq] (0,0) -- (1.5,-0.87);
	\draw [->,line width=1.2pt,color=ffqqqq] (0,0) -- (0,-1.73);
	\draw [->,line width=1.2pt,color=ffqqqq] (0,0) -- (-1.5,-0.87);
\draw [->,line width=1.2pt,color=qqzzqq] (0,0) -- (1.5,0.87);
	\draw [->,line width=1.2pt,color=qqzzqq] (0,0) -- (-1.5,0.87);
	\draw [->,line width=1.2pt,color=qqzzqq] (0,0) -- (0,1.73);
\fill [color=blue] (1.4,1.4) circle (1.5pt);
	\draw [color=blue](1.43,1.65) node[anchor=north west] {$\lambda$};
\begin{scriptsize}
	\fill [color=uuuuuu] (0,0) circle (1.5pt);
	\fill [color=uuuuuu] (0,-1.73) circle (1.5pt);
	\fill [color=uuuuuu] (-1.5,-0.87) circle (1.5pt);
	\fill [color=uuuuuu] (-1.5,0.87) circle (1.5pt);
	\fill [color=uuuuuu] (0,1.73) circle (1.5pt);
	\fill [color=uuuuuu] (1.5,0.87) circle (1.5pt);
	\fill [color=uuuuuu] (1.5,-0.87) circle (1.5pt);
	\end{scriptsize}
	\end{tikzpicture}
	\hfill
	\begin{tikzpicture}[line cap=round,line join=round,>=triangle 
	45,x=1.0cm,y=1.0cm, scale=.95]
	\clip(-2.2,-2.2) rectangle (2.2,2.2);
\fill[color=green!10] 
	(-4.47,7.74) -- (0,0) -- 
	(4.42,7.66) -- cycle; 
\draw [domain=-9.1:9.99] plot(\x,{(-0-0*\x)/1});
	\draw [domain=-9.1:9.99] plot(\x,{(-0-0.87*\x)/-0.5});
	\draw [domain=-9.1:9.99] plot(\x,{(-0--0.87*\x)/-0.5});
\fill [color=blue!20] (1.5,0.87) circle (5pt);
	\fill [color=green!30] (-1.5,0.87) circle (5pt);
	\draw [color=uuuuuu] (-1.5,0.87) circle (5pt);
	\draw [color=uuuuuu] (1.5,0.87) circle (5pt);
\draw [line width=1pt,color=blue,dashed] (0,0) -- (-1.5,-0.87);
	\draw [->,line width=1pt,color=ffqqqq] (0,0) -- (0,-1.73);
	\draw [->,line width=1pt,color=ffqqqq] (0,0) -- (1.5,-0.87);
\draw [->,line width=1pt,color=qqzzqq] (0,0) -- (-1.5,0.87);
	\draw [line width=1pt,color=blue,dashed] (0,0) -- (1.5,0.87);
	\draw [->,line width=1pt,color=qqzzqq] (0,0) -- (0,1.73);
\fill [color=blue] (-0.805,1.4) circle (1.5pt);
	\draw [color=blue](-0.835,1.65) node[anchor=north east] {$\lambda$};
	\draw [color=blue](-1.6,-0.4) node[anchor=west]{$\Delta'$};
\begin{scriptsize}
	\fill [color=uuuuuu] (0,0) circle (1.5pt);
	\fill [color=uuuuuu] (0,-1.73) circle (1.5pt);
	\fill [color=uuuuuu] (-1.5,-0.87) circle (1.5pt);
	\fill [color=uuuuuu] (-1.5,0.87) circle (1.5pt);
	\fill [color=uuuuuu] (0,1.73) circle (1.5pt);
	\fill [color=uuuuuu] (1.5,0.87) circle (1.5pt);
	\fill [color=uuuuuu] (1.5,-0.87) circle (1.5pt);
	\end{scriptsize}
	\end{tikzpicture}
	\hfill
	\begin{tikzpicture}[line cap=round,line join=round,>=triangle 
	45,x=1.0cm,y=1.0cm, scale=.95]
	\clip(-2.2,-2.2) rectangle (2.2,2.2);
\fill[color=green!10] 
	(-4.47,7.74) -- (0,0) -- 
	(4.42,7.66) -- cycle; 
\draw [domain=-9.1:9.99] plot(\x,{(-0-0*\x)/1});
	\draw [domain=-9.1:9.99] plot(\x,{(-0-0.87*\x)/-0.5});
	\draw [domain=-9.1:9.99] plot(\x,{(-0--0.87*\x)/-0.5});
\fill [color=green!30] (-1.5,0.87) circle (5pt);
	\fill [color=green!30] (1.5,0.87) circle (5pt);
	\draw [color=uuuuuu] (-1.5,0.87) circle (5pt);
	\draw [color=uuuuuu] (1.5,0.87) circle (5pt);
\draw [line width=1pt,color=blue,dashed] (0,0) -- (0,-1.73);
	\draw [->,line width=1pt,color=ffqqqq] (0,0) -- (1.5,-0.87);
	\draw [->,line width=1pt,color=ffqqqq] (0,0) -- (-1.5,-0.87);
\draw [->,line width=1pt,color=qqzzqq] (0,0) -- (1.5,0.87);
	\draw [line width=1pt,color=blue,dashed] (0,0) -- (0,1.73);
	\draw [->,line width=1pt,color=qqzzqq] (0,0) -- (-1.5,0.87);
\fill [color=blue] (1.4,0) circle (1.5pt);
	\draw [color=blue](1.4,0) node[anchor=south] {$\lambda$};
	\draw [color=blue](0,-1.2) node[anchor=west]{$\Delta'$};
\begin{scriptsize}
	\fill [color=uuuuuu] (0,0) circle (1.5pt);
	\fill [color=uuuuuu] (0,-1.73) circle (1.5pt);
	\fill [color=uuuuuu] (-1.5,-0.87) circle (1.5pt);
	\fill [color=uuuuuu] (-1.5,0.87) circle (1.5pt);
	\fill [color=uuuuuu] (0,1.73) circle (1.5pt);
	\fill [color=uuuuuu] (1.5,0.87) circle (1.5pt);
	\fill [color=uuuuuu] (1.5,-0.87) circle (1.5pt);
	\end{scriptsize}
	\end{tikzpicture}
	\caption{Invariant and non-invariant orderings.
		As in the left picture of \autoref{figure:roots:verma:tree} the roots
		of $\liesl[3]$ are shown.
		Simple roots are encircled.
		Roots in $\Delta'$ are drawn with blue dashed lines.
		Roots in $\hat\Delta$ are drawn in green if they are positive,
		and in red if they are negative.
		The fundamental Weyl chamber has a light green background.
		A regular orbit of $\SL[3]$ is shown on the left,
		the other two pictures are of non-regular orbits.
		In the right picture the ordering on $\Delta$ is not invariant,
		since adding the negative root in $\Delta'$ (the lower blue dashed line)
		to one of the positive roots (a green arrow)
		gives a negative root (a red arrow).
		The ordering in the middle picture is invariant and standard,
		the ordering in the left picture is invariant, but not standard.
		It would be standard if $\lambda$ was in the fundamental Weyl chamber.}
	\label{figure:orderings}
\end{figure}

Before generalizing the results of the last subsection,
we would like to mention the following technical lemma for later use:
\begin{lemma} \label{theo:sumsInHatCannotHaveManyPrimedElements}
	Let $\lie g$ be a semisimple Lie algebra,
	let $\lambda \in \lie g^*$ be semisimple,
	and let $\lie h$ be a Cartan subalgebra of $\lie g$ containing $\lambda^\sharp$.
	Assume that we have chosen an invariant ordering
	defining sets $\Delta^+$, $\smash{\hat\Delta}$, and $\Delta'$ as above.
	Then there is a constant $M \in \mathbb N$ such that
	for any $m \in \mathbb N$ the sum of $m$ positive roots in $\hat\Delta^+$
	and at least $M m$ positive roots in $(\Delta')^+$ is not in $\mathbb N_0 \hat\Delta^+$.
\end{lemma}
\begin{proof}
	Label the simple roots by $\sigma_1, \dots, \sigma_r$ such that the first $r'$ simple 
	roots $\sigma_1, \dots, \sigma_{r'}$ are in $\Delta'$ and the remaining simple roots
	 are in $\hat \Delta$.
	Label all roots in $\hat\Delta^+$ by $\alpha_1, \dots, \smash{\alpha_{\tilde k}}$.
	Then there are unique non-negative integers $c^i_j \in \mathbb N_0$ such that 
	$\alpha_j = \sum_{i=1}^r c^i_j \sigma_i$. Set
	$M' = \smash{\max_{j \in \lbrace 1, \dots, \tilde k \rbrace} \sum_{i=1}^{r'} c^i_j}$,
	$M'' = \max_{j \in \lbrace 1, \dots, \tilde k \rbrace} \sum_{i=r'+1}^r c^i_j$
	and $M = M' M''+1$.
	
	Since $\alpha_j \in \smash{\hat\Delta^+}$ we have $\sum_{i=r'+1}^r c_j^i \geq 1$,
	and 
	$\sum_{i=1}^{r'} c^i_j \leq M' \leq M' \sum_{i=r'+1}^r c^i_j$ 
	for any $j \in \lbrace 1, \dots, \tilde k \rbrace$.
	Note that any element $\beta \in \mathbb N_0 \hat\Delta^+$ can be 
	written uniquely as 
	$\beta = \sum_{i=1}^r \beta^i \sigma_i$ with $\beta^i \in \mathbb N_0$, 
	and the coefficients satisfy the same inequality 
	$\sum_{i=1}^{r'} \beta^i \leq M' \sum_{i=r'+1}^r \beta^i$.
	 
	Recall that any root in $(\Delta')^+$ is a linear combination
	of simple roots in $(\Delta')^+$.
	So if $\sum_{i=1}^r d^i \sigma_i \in (\Delta')^+$,
	then $d^i = 0$ for all $i = r'+1, \dots, r$.
	Therefore, if $\gamma$ is the sum of $m$ roots from $\hat\Delta^+$
	and at least $Mm$ roots from $(\Delta')^+$,
	and $\gamma = \sum_{i=1}^r \gamma^i \sigma_i$, 
	then
	 $M' \sum_{i=r'+1}^r \gamma^i
	 \leq
	 M' M'' m < M m
	 \leq 
	 \sum_{i=1}^{r'} \gamma^i $, 
	 so $\gamma$ cannot be in $\mathbb N_0 \hat \Delta^+$.
\end{proof}
Note that for a regular coadjoint orbit, we have $\Delta' = \emptyset$.
Consequently $\hat\Delta = \Delta$, $\lie g_\lambda = \lie h$,
$\plus{\tilde{\lie n}} = \plus{\lie n}$ and 
$\minus{\tilde{\lie n}} = \minus{\lie n}$.
In this case every ordering is invariant,
and the generalized Shapovalov pairing, that we will introduce now,
coincides with the Shapovalov pairing introduced in the last subsection.
Since $\lie g_\lambda = \lie h$ when $\Delta' = \emptyset$,
we usually denote an element of $\lie g_\lambda$ by $H$.

Let $\lambda \in \lie g_\lambda^*$ be the restriction
of $\lambda \in \lie g^*$ to $\lie g_\lambda$.
Then $\lambda([H', H]) = \ad^*_H \lambda(H') = 0$
for all $H, H' \in \lie g_\lambda$,
so $H \acts z = \lambda(H) z$ makes $\mathbb C$ a left or right
$\lie g_\lambda$-module. Extending trivially along $\plusminus{\tilde{\lie n}}$
gives a left or right $\plusminus{\tilde{\lie b}}$-module,
and we denote the corresponding left $\univ(\plusminus{\tilde{\lie b}})$-module
by $\tilde{\mathbb C}^\pm_\lambda$ and the right $\univ(\minus{\tilde{\lie b}})$-module
by $\tilde{\mathbb C}^*_\lambda$. 
Define the \emph{generalized Verma modules}  
\begin{equation}
\tilde M_\lambda 
= 
\univ\lie g \tensor_{\univ(\plus{\tilde{\lie b}})} \tilde{\mathbb C}^+_\lambda 
\komma\quad 
\tilde M_\lambda^- 
=
\univ\lie g \tensor_{\univ(\minus{\tilde{\lie b}})} \tilde{\mathbb C}^*_{-\lambda} 
\komma \quad\text{and}\quad
\tilde M^*_\lambda 
=
\tilde{\mathbb C}_\lambda^* \tensor_{\univ(\minus{\tilde{\lie b}})} \univ\lie g \punkt
\end{equation}
$\tilde M_\lambda$ and $\tilde M_\lambda^-$ are left $\univ \lie g$-modules,
$\tilde M_\lambda^*$ is a right $\univ\lie g$-module.
Most of the results of the previous subsection
have obvious analogues in this setting.

Let $\lbrace X_1, \dots, X_{\tilde k} \rbrace$ be a basis of $\plus{\tilde{\lie n}}$, 
$\lbrace Y_1, \dots, Y_{\tilde k} \rbrace$ be a basis of $\minus{\tilde{\lie n}}$, and 
$\lbrace H_1, \dots, H_{\tilde r} \rbrace$ be a basis of $\lie g_\lambda$.
Since $\lie g = \plus{\tilde{\lie n}} \oplus \lie g_\lambda \oplus \minus{\tilde{\lie n}}$
the Poincar\'e--Birkhoff--Witt theorem implies that
\begin{equation}
\lbrace Y^I H^J X^K \mid I,K \in \mathbb N_0^{\tilde k}, J \in \mathbb N_0^{\tilde r} \rbrace
\quad\text{and}\quad
\lbrace X^K H^J Y^I \mid I,K \in \mathbb N_0^{\tilde k}, J \in \mathbb N_0^{\tilde r} \rbrace
\end{equation}
are bases for $\univ \lie g$.
Define maps 
\begin{subequations}
	\label{eq:projPiLambda}
	\begin{align}
	\tilde\pi_\lambda^- &\colon \univ \lie g \to \univ(\minus{\tilde{\lie n}}) \komma &
	\tilde\pi_\lambda^-(Y^I H^J X^K) &\coloneqq \lambda(H_1)^{J_1} \dots \lambda(H_{\tilde r})^{J_{\tilde r}} 
	Y^I 
	\delta_{K,0} \komma
	\\
	\tilde \pi_\lambda^+ &\colon \univ \lie g \to \univ(\plus{\tilde{\lie n}}) \komma &
	\tilde \pi_\lambda^+(X^K H^J Y^I) &\coloneqq (-\lambda(H_1))^{J_1} \dots (-\lambda(H_{\tilde 
	r}))^{J_{\tilde r}} X^K 
	\delta_{I,0} \komma
	\\
	\tilde \pi_\lambda^* &\colon \univ \lie g \to \univ(\plus{\tilde{\lie n}}) \komma &
	\tilde \pi_\lambda^*(Y^I H^J X^K) &\coloneqq \lambda(H_1)^{J_1} \dots \lambda(H_{\tilde r})^{J_{\tilde 
	r}} X^K 
	\delta_{I,0} \punkt
	\end{align}
\end{subequations}
Note that they are compatible with the maps $\pi_\lambda^-$, $\pi_\lambda^+$, and 
$\pi_\lambda^*$ in the sense that $\tilde \pi_\lambda^- \circ \pi_\lambda^- = \tilde 
\pi_\lambda^-$, $\tilde \pi_\lambda^+ \circ \pi_\lambda^+ = \tilde \pi_\lambda^+$, and $\tilde 
\pi_\lambda^* \circ \pi_\lambda^* = \tilde \pi_\lambda^*$. On the left hand sides, we are 
implicitly using the inclusion $\univ(\plusminus{\lie n}) \to \univ \lie g$.
Note that this inclusion is not a $\univ \lie g$-module map.

\begin{lemma} \label{lemma:genVermaModule}
	The maps
	$\argumentdot \tensor 1 \colon \univ (\minus{\tilde{\lie n}}) \to \tilde M_\lambda$,
	$v \mapsto v \tensor 1$ and
	$\argumentdot \tensor 1 \colon \univ (\plus{\tilde{\lie n}}) \to \tilde M^-_\lambda$,
	$u \mapsto u \tensor 1$
	define isomorphisms of left $\univ(\minus{\tilde{\lie n}})$-modules 
	and $\univ(\plus{\tilde{\lie n}})$-modules, 
	respectively. The map
	$1 \tensor \argumentdot \colon \univ (\plus{\tilde{\lie n}}) \to M^*_\lambda$,
	$u \mapsto 1 \tensor u$
	is an isomorphism of right $\univ(\plus{\tilde{\lie n}})$-modules.
	The $\univ\lie g$-module structures on $\univ(\plusminus{\tilde{\lie n}})$
	obtained by transferring the module structures on the generalized Verma modules
	with these isomorphisms are given explicitly by
	\begin{subequations}
		\begin{align} \label{eq:genModuleStructure:i}
		\tildevermaacts -&\colon \univ\lie g \times \univ(\minus{\tilde{\lie n}}) \to 
		\univ(\minus{\tilde{\lie n}}) 
		\komma&
		(w, v) &\mapsto w \tildevermaacts - v \coloneqq \tilde \pi^-_\lambda (w v) \komma
		\\ \label{eq:genModuleStructure:ii}
		\tildevermaacts +&\colon \univ\lie g \times \univ(\plus{\tilde{\lie n}}) \to 
		\univ(\plus{\tilde{\lie n}}) \komma&
		(w, u) &\mapsto w \tildevermaacts + u \coloneqq \tilde \pi^+_\lambda (w u) \komma
		\\ \label{eq:genModuleStructure:iii}
		\tildevermaracts &\colon \univ(\plus{\tilde{\lie n}}) \times \univ\lie g \to 
		\univ(\plus{\tilde{\lie n}}) 
		\komma&
		(u, w) &\mapsto u \tildevermaracts w \coloneqq \tilde \pi^*_\lambda (u w) \punkt
		\end{align}
	\end{subequations}
	Furthermore, $S (w \tildevermaacts + u) = S(u) \tildevermaracts S(w)$,
	where $S$ denotes the antipode of $\univ \lie g$.
\end{lemma}

\begin{proof}
	Similar to the proof of \autoref{lemma:vermaModule}.
\end{proof}
Note that since $\univ(\plusminus{\tilde{\lie n}})$ is a $\univ\lie g$-module,
we must have
\begin{equation} \label{eq:piLambdaIsModuleHom}
\tilde\pi_\lambda^\pm (w \tilde\pi_\lambda^\pm (w'))
=
w \tildevermaacts \pm (w' \tildevermaacts \pm 1)
=
(ww') \tildevermaacts \pm 1
=
\tilde\pi_\lambda^\pm (ww')
\end{equation}
and
\begin{equation}
\tilde\pi_\lambda^* (\tilde\pi_\lambda^* (w)w' ) = \tilde\pi_\lambda^* (ww')
\end{equation}
for all $w, w' \in \univ\lie g$. 
In particular, this implies that the map
$\smash{\tilde\pi_\lambda^\pm \at{\univ(\plusminus{\lie n})}} \colon 
\univ(\plusminus{\lie n}) \to \univ(\plusminus{\tilde{\lie n}})$
is a $\univ \lie g$-module homomorphism 
(with respect to the module structures given by 
$\vermaacts \pm$ and $\tildevermaacts \pm$).
Indeed, for the plus case we have
\begin{equation*}
\tilde \pi_\lambda^+ (w \vermaacts + u)
=
\tilde \pi_\lambda^+ \pi_\lambda^+ (wu)
=
\tilde \pi_\lambda^+ (wu)
=
\tilde \pi_\lambda^+ (w \tilde \pi_\lambda^+ u)
=
w \tildevermaacts + \tilde \pi_\lambda^+ u
\end{equation*}
for all $w \in \univ \lie g$ and $u \in \univ(\plus{\lie n})$ and the minus case is similar.
Define
$\lie g_\lambda^{\pm} \coloneqq \bigoplus_{\alpha \in(\Delta')^\pm} \lie g^\alpha 
= \lie g_\lambda \cap \lie n^\pm$. 
Note that 
$\univ \lie g \cdot \lie g_\lambda^\pm 
= 
\lbrace w \vermaacts \pm X 
	\mid w \in \univ\lie g, X \in \lie g_\lambda^\pm \rbrace$
is a $\univ \lie g$-submodule of $\univ(\plusminus{\lie n})$.
Since $\tilde\pi_\lambda^\pm$ is a map of $\univ\lie g$-modules and vanishes on $\lie 
g_\lambda^\pm$, $\univ \lie g \cdot \lie g_\lambda^\pm$ is in its kernel.

\begin{lemma} \label{theo:utildenIsQuotient}
	The induced maps 
	$\tilde \pi^\pm_\lambda
	\colon
	\univ(\plusminus{\lie n}) / \univ \lie g \cdot \lie g_\lambda^\pm 
	\to \univ(\plusminus{\tilde{\lie n}})$ 
	are 
	isomorphisms of 
	$\univ\lie g$-modules.
\end{lemma}
\begin{proof}
	It is easy to check that the quotient map induced by the inclusion 
	$\univ(\plusminus{\tilde{\lie n}}) \to \univ(\plusminus{\lie n})$ defines an inverse.
\end{proof}
As before there are isomorphisms
$\tilde M_\lambda^* \tensor_{\univ\lie g} \tilde M_\lambda
\cong
\tilde{\mathbb C}^-_\lambda \tensor_{\univ(\minus{\tilde{\lie b}})} \univ\lie g  
\tensor_{\univ(\plus{\tilde{\lie b}})} \tilde{\mathbb C}_\lambda
\cong
\tilde{\mathbb C}^-_\lambda \tensor_{\univ(\lie g_\lambda)} \tilde{\mathbb C}_\lambda
\cong
\mathbb C$,
which we use to define the Shapovalov pairings
${\langle \argumentdot, \argumentdot \rangle^{\sim}_{\lambda}}'
\colon 
\tilde M_\lambda^* \times \tilde M_\lambda \to \mathbb C$,
$(x,y) \mapsto {\langle x , y \rangle^\sim_{\lambda}}' \coloneqq x \tensor y$
and 
\begin{equation}
\langle \argumentdot, \argumentdot \rangle^\sim_{\lambda} 
\colon 
\univ(\plus{\tilde{\lie n}}) \times \univ(\minus{\tilde{\lie n}}) 
\to \mathbb C \komma
\quad
\langle u, v\rangle^\sim_{\lambda} = 
{\langle 
1 \tensor S(u), v \tensor 1
\rangle_{\lambda}^\sim}' = 1 \tensor S(u) v \tensor 1 \punkt
\end{equation}
In the same way as in \autoref{theo:shapovalovPairing:computations}
one proves that this pairing can be computed by
\begin{equation}
\langle u, v \rangle_\lambda^\sim = \pi_\lambda(S(u)v) \punkt
\end{equation}
Note that
$\tilde\pi^-_\lambda \circ \tilde\pi_\lambda ^*
= 
\tilde\pi_\lambda^* \circ \tilde \pi_\lambda^- 
= 
\pi_\lambda^* \circ \pi_\lambda^- 
= 
\pi_\lambda$,
so there is no need to introduce a $\tilde \pi_\lambda$. 

\begin{lemma}\label{theo:pairing:restricts}
	Let $u \in \univ(\plus{\lie n})$ and $v \in \univ(\minus{\lie n})$.
	Then we have
	$\langle \tilde \pi_\lambda^+ u, \tilde \pi_\lambda v \rangle^\sim_{\lambda} 
	= \langle u,v \rangle_\lambda$. 
	In particular
	$\langle \argumentdot, \argumentdot \rangle_\lambda
	\at{\univ(\plus{\lie n}) \times \univ \lie g \cdot \lie g_\lambda^-} 
	= \langle \argumentdot, \argumentdot \rangle_\lambda
	\at{\univ \lie g \cdot \lie g_\lambda^+ \times \univ(\minus{\lie n})}
	= 0$.
\end{lemma}

\begin{proof}
	Using \eqref{eq:piLambdaIsModuleHom} twice, we compute
	\begin{multline*}
	\langle \tilde \pi_\lambda^+ u, \tilde \pi_\lambda^- v \rangle_\lambda^\sim
	=
	\pi_\lambda (S (\tilde \pi_\lambda^+ u ) \tilde \pi_\lambda^- v)
	=
	\tilde \pi_\lambda^* \circ \tilde \pi_\lambda^- (\tilde \pi_\lambda^* (Su) \tilde \pi_\lambda^- 
	v)
	=
	\tilde \pi_\lambda^* \circ \tilde \pi_\lambda^- (\tilde \pi_\lambda^* (Su) v)
	= \\ =
	\tilde \pi_\lambda^- \circ \tilde \pi_\lambda^* (\tilde \pi_\lambda^* (Su) v)
	=
	\tilde \pi_\lambda^- \circ \tilde \pi_\lambda^* (S(u) v)
	=
	\pi_\lambda(S(u)v)
	=
	\langle u, v\rangle_\lambda \punkt
	\end{multline*}
\end{proof}
Define the set
\begin{equation}
\tilde\Lambda
=
\lbrace
	\lambda \in \lie h^* \mid p_\lambda(\mu) \neq 0 \,\,\,
		\forall \mu \in \mathbb N_0 \hat\Delta^+ \setminus\lbrace 0 \rbrace 
\rbrace \punkt
\end{equation}
Furthermore, let $\tilde \wordset$ be the set of words $w \in \wordset$
such that $\alpha_{w_{i\dots \abs w}} \in \mathbb N_0 \hat \Delta^+$
for all $i = 1, \dots, \abs w$. 
Since $\tilde\pi_\lambda^+(X_w) = 0$ and $\tilde\pi_\lambda^-(Y_w) = 0$
for $w \in \wordset\setminus\tilde\wordset$,
the following theorem is not surprising. 

\begin{theorem}\label{theo:canonicalelement:general}
	Let $\lambda \in \tilde \Lambda$.
	Then the Shapovalov pairing
	$\langle\argumentdot,\argumentdot\rangle^\sim_{\lambda} \colon 			
	\univ(\plus{\tilde{\lie n}}) \times \univ(\minus{\tilde{\lie n}})
	\to \mathbb C$
	is non-degenerate and its canonical element
	$F_\lambda \in 
	\univ(\plus{\tilde{\lie n}}) \tensorhat \univ(\minus{\tilde{\lie n}})$
	is given by
	\begin{equation} \label{eq:canonicalelement:general}
	F_\lambda 
	= \sum_{w\in\tilde\wordset} p^w_{\lambda}(\alpha_w)^{-1} \tilde\pi_\lambda^+(X_w) 
	\tensor \tilde\pi_\lambda^-(Y_w)
	= \sum_{w\in\tilde\wordset} 
	\prod_{i = 1}^{\abs w} p_{\lambda}(\alpha_{w_{i \dots \abs w}})^{-1} 
	\tilde\pi_\lambda^+(X_w) \tensor \tilde\pi_\lambda^-(Y_w) \punkt
	\end{equation}
\end{theorem}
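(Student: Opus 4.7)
The strategy mirrors the proof of \autoref{theo:canonicalelement:restricted}: I would verify the uniqueness criterion of \autoref{theo:canonicalElement:FormulaToCheck} applied to the generalized Shapovalov pairing $\langle\argumentdot,\argumentdot\rangle_{\sim,\lambda}$. Concretely, it suffices to show that
\[
\sum_{w \in \tilde\wordset} p^w_\lambda(\alpha_w)^{-1}\, \pi^-(Y_w)\, \langle \pi^+(X_w), v \rangle_{\sim,\lambda} = v
\]
for every $v \in \univ(\minus{\tilde{\lie n}})$, together with the symmetric identity. First, I would check that the sum defining $F_\lambda$ is well-defined in the completed tensor product: the $\mathbb N_0 \hat\Delta^+$-grading cuts the sum into finite graded pieces (one per total weight $\alpha_w$), and $\lambda \in \tilde\Omega$ together with the observation that $\pi^+(X_w) \tensor \pi^-(Y_w) = 0$ whenever some suffix sum $\alpha_w - \alpha_{w_{1\dots i}}$ fails to lie in $\mathbb N_0 \hat\Delta^+$ (here one invokes the invariance of the ordering to control how a $(\Delta')^+$-factor propagates under PBW reordering) guarantees that every surviving denominator $p^w_\lambda(\alpha_w)$ is non-zero.

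To establish the main identity, I would decompose $v$ into weight components and fix $\mu\in\mathbb N_0\hat\Delta^+$, $v \in \univ(\minus{\tilde{\lie n}})^{-\mu}$; only words $w$ with $\alpha_w = \mu$ then contribute. Using the compatibility $\langle \pi^+(u), \pi^-(v')\rangle_{\sim,\lambda} = \langle u, v'\rangle_\lambda$ from the remark after \autoref{theo:pairing:restricts}, together with \autoref{theo:shapovalovPairing:computations}, the identity transports into one in the generalized Verma module $\tilde M_\lambda \cong \univ(\minus{\tilde{\lie n}})$. Since $\tilde M_\lambda$ is a highest weight module with highest weight $\lambda$, Ostapenko's iteration lemma (\autoref{theo:casimirIterated}) applies directly.

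The central construction would be an admissible tree $T$ adapted to $\tilde\wordset_\mu := \{w \in \tilde\wordset : \alpha_w = \mu\}$, namely the smallest tree (in the sense preceding \autoref{theo:casimirIterated}) containing $\tilde\wordset_\mu$. Admissibility follows from $\lambda\in\tilde\Omega$ because for a prefix $w_{1\dots i}$ of a word in $\tilde\wordset_\mu$ the residual weight $\mu - \alpha_{w_{1\dots i}}$ again lies in $\mathbb N_0 \hat\Delta^+$; on the other hand, every leaf $w \in \max T \setminus \tilde\wordset_\mu$ is a ``dead end'' satisfying $\mu - \alpha_w \notin \mathbb N_0 \hat\Delta^+$, so (since all weights of $\tilde M_\lambda$ lie in $\lambda - \mathbb N_0 \hat\Delta^+$) we have $X_{w^{\mathrm{opp}}} v = 0$ in $\tilde M_\lambda$ and such terms drop out. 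The main obstacle I anticipate is translating the unprojected operators $Y_w X_{w^{\mathrm{opp}}}$ produced by Ostapenko's formula into the projected operators $\pi^-(Y_w)$ and $\pi^+(X_w)$ of the statement: this requires careful bookkeeping of $\lie g_\lambda^\pm$-factors under the quotient $M_\lambda \to \tilde M_\lambda$, where the invariance of the ordering (\autoref{def:invariantOrdering}) is crucial to keep the PBW reordering inside $\univ(\plus{\tilde{\lie n}})$ and $\univ(\minus{\tilde{\lie n}})$.
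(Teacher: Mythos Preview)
Your strategy is essentially the paper's: verify the \autoref{theo:canonicalElement:FormulaToCheck} criterion by lifting via the compatibility $\langle\pi^+(u),\pi^-(v')\rangle_{\sim,\lambda}=\langle u,v'\rangle_\lambda$, building the smallest tree $T$ containing $\tilde W_\mu$, and invoking Ostapenko's iteration. The paper differs only in that it applies \autoref{theo:casimirIterated} in $M_\lambda$ (i.e.\ in $\univ(\minus{\lie n})$ after lifting $\tilde v$) rather than in $\tilde M_\lambda$; it pulls $\pi^-$ outside the sum once and works unprojected throughout. The dead-end terms $Y_w X_{w^{\opp}} v_{-\mu}$ for $w\in\max T\setminus\tilde W_\mu$ then land in $\univ(\minus{\lie n})\cdot\lie g_\lambda^-=\ker\pi^-$ purely by weight count ($X_{w^{\opp}}v_{-\mu}$ has weight $\alpha_w-\mu\notin -\mathbb N_0\hat\Delta^+$). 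Your variant of applying Ostapenko directly in $\tilde M_\lambda$ is equally valid and arguably tidier.

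Two things you are overcomplicating. First, the ``main obstacle'' you anticipate does not arise: there is no PBW bookkeeping of $\lie g_\lambda^\pm$-factors needed to pass between $Y_w$ and $\pi^-(Y_w)$. One simply keeps $\pi^-$ outside and observes that the extra terms have the wrong weight to survive it. Second, your well-definedness argument is off. You do not need (and I do not think it is true) that $\pi^+(X_w)\tensor\pi^-(Y_w)$ vanishes whenever a suffix sum fails to lie in $\mathbb N_0\hat\Delta^+$. The correct and simpler point is that for $w\in\tilde W$ every prefix sum $\alpha_{w_{1\dots i}}$ lies in $\mathbb N_0\hat\Delta^+$, hence every suffix sum $\alpha_{w_{i\dots|w|}}=\alpha_w-\alpha_{w_{1\dots i-1}}$ lies in $\mathbb Z\hat\Delta^+=\mathbb N_0\hat\Delta$ and is nonzero; this is exactly what $\lambda\in\tilde\Omega$ controls, so every denominator $p^w_\lambda(\alpha_w)$ is automatically nonzero.
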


\renewcommand{\tikzcircleradius}{0.04}
\renewcommand{\tikzlinewidth}{1pt}
\newcommand{\tikzlinesep}{0.045}
\newcommand{\tikzlinesepx}{\tikzlinesep*\tikzvertspace}
\newcommand{\tikzlinesepy}{\tikzlinesep*0.5}
\definecolor{qqzzqq}{rgb}{0,0.6,0}
\definecolor{uuuuuu}{rgb}{0.27,0.27,0.27}
\definecolor{ffqqqq}{rgb}{1,0,0}

\figurewithtwoimages{
	\begin{tikzpicture}[line cap=round,line join=round,>=triangle 
	45,x=1.0cm,y=1.0cm, scale=.95]
	\clip(-2.2,-2.2) rectangle (2.2,2.2);
\draw [domain=-9.1:9.99] plot(\x,{(-0-0*\x)/1});
	\draw [domain=-9.1:9.99] plot(\x,{(-0-0.87*\x)/-0.5});
	\draw [domain=-9.1:9.99] plot(\x,{(-0--0.87*\x)/-0.5});
\fill [color=blue!20] (1.5,0.87) circle (5pt);
	\fill [color=green!30] (-1.5,0.87) circle (5pt);
	\draw [color=uuuuuu] (-1.5,0.87) circle (5pt);
	\draw [color=uuuuuu] (1.5,0.87) circle (5pt);
\draw [line width=1pt,color=blue,dashed] (0,0) -- (-1.5,-0.87);
	\draw [->,line width=1pt,color=ffqqqq] (0,0) -- (0,-1.73);
	\draw [->,line width=1pt,color=ffqqqq] (0,0) -- (1.5,-0.87);
\draw [->,line width=1pt,color=qqzzqq] (0,0) -- (-1.5,0.87);
	\draw [line width=1pt,color=blue,dashed] (0,0) -- (1.5,0.87);
	\draw [->,line width=1pt,color=qqzzqq] (0,0) -- (0,1.73);
\draw (-1.5,0.98) node[anchor=south] {$\alpha_1$};
	\draw (0,1.84) node[anchor=south] {$\alpha_2$};	
	\draw (1.5,0.98) node[anchor=south] {$\alpha_3$};
\fill [color=blue] (-0.805,1.4) circle (1.5pt);
	\draw [color=blue](-0.835,1.65) node[anchor=north east] {$\lambda$};
	\draw [color=blue](-1.6,-0.4) node[anchor=west]{$\Delta'$};
\begin{scriptsize}
	\fill [color=uuuuuu] (0,0) circle (1.5pt);
	\fill [color=uuuuuu] (0,-1.73) circle (1.5pt);
	\fill [color=uuuuuu] (-1.5,-0.87) circle (1.5pt);
	\fill [color=uuuuuu] (-1.5,0.87) circle (1.5pt);
	\fill [color=uuuuuu] (0,1.73) circle (1.5pt);
	\fill [color=uuuuuu] (1.5,0.87) circle (1.5pt);
	\fill [color=uuuuuu] (1.5,-0.87) circle (1.5pt);
	\end{scriptsize}
	\end{tikzpicture}
}{
	\begin{tikzpicture}[>=triangle 45, scale=1.45]
	\draw[color=red, line width=\tikzlinewidth] (\tikzvertspace, -1.5) -- (0, -1);
	\draw[color=red, line width=\tikzlinewidth,->] (0, -1) -- (0,-1+\tikzshorten*1.0);
	
	\draw[color=red, line width=\tikzlinewidth,->] (0, -1) -- (-\tikzshorten*\tikzvertspace, 
	-1+\tikzshorten*0.5);
	
	\draw[color=red, line width=\tikzlinewidth] (0, -1) -- (\tikzvertspace, -0.5);
	\draw[color=red, line width=\tikzlinewidth,->] (\tikzvertspace-\tikzlinesepx, 
	-0.5+\tikzlinesepy) -- 
	(\tikzvertspace-\tikzlinesepx+\tikzshorten*\tikzvertspace, 
	\tikzlinesepy-0.5+\tikzshorten*0.5);
	\draw[color=red, line width=\tikzlinewidth,->] (\tikzvertspace-\tikzlinesep, -0.5) -- 
	(\tikzvertspace-\tikzlinesep, -0.5+\tikzshorten*1.0);
	\draw[color=red, line width=\tikzlinewidth,->] (\tikzvertspace-\tikzlinesepx, 
	-0.5-\tikzlinesepy) -- 
	(-\tikzlinesepx+\tikzvertspace-\tikzshorten*\tikzvertspace, 
	-\tikzlinesepy-0.5+\tikzshorten*0.5);
	
	\draw[color=blue, line width=\tikzlinewidth] (\tikzvertspace, -1.5) -- (\tikzvertspace, -0.5);
	\draw[color=blue, line width=\tikzlinewidth,->] (\tikzvertspace+\tikzlinesepx, 
	-0.5-\tikzlinesepy) -- 
	(\tikzvertspace+\tikzlinesepx+\tikzshorten*\tikzvertspace, 
	-\tikzlinesepy-0.5+0.5*\tikzshorten);
	\draw[color=blue, line width=\tikzlinewidth,->] (\tikzvertspace+\tikzlinesep, -0.5) -- 
	(\tikzvertspace+\tikzlinesep, -0.5+\tikzshorten*1.0);
	\draw[color=blue, line width=\tikzlinewidth,->] (\tikzvertspace+\tikzlinesepx, 
	-0.5+\tikzlinesepy) -- 
	(\tikzlinesepx+\tikzvertspace-\tikzshorten*\tikzvertspace, 
	\tikzlinesepy-0.5+\tikzshorten*0.5);
	
	\draw[color=qqzzqq, line width=\tikzlinewidth] (\tikzvertspace, -1.5) -- (2*\tikzvertspace, -1);
	\draw[color=qqzzqq, line width=\tikzlinewidth] (2*\tikzvertspace, -1) -- (\tikzvertspace, -0.5);
	\draw[color=qqzzqq, line width=\tikzlinewidth,->] (2*\tikzvertspace, -1) -- 
	(2*\tikzvertspace+\tikzshorten*\tikzvertspace, -1+\tikzshorten*0.5);
	\draw[color=qqzzqq, line width=\tikzlinewidth,->] (2*\tikzvertspace, -1) -- 
	(2*\tikzvertspace+\tikzshorten*2*\tikzlinesepx, {-1+\tikzshorten*(1-2*\tikzlinesepy)});
	\draw[color=qqzzqq, line width=\tikzlinewidth,->] (\tikzvertspace, -0.5) -- 
	(\tikzvertspace+\tikzshorten*\tikzvertspace, -0.5+\tikzshorten*0.5);
	\draw[color=qqzzqq, line width=\tikzlinewidth,->] (\tikzvertspace, -0.5) -- (\tikzvertspace, 
	-0.5+\tikzshorten*1.0);
	\draw[color=qqzzqq, line width=\tikzlinewidth,->] (\tikzvertspace, -0.5) -- 
	(\tikzvertspace-\tikzshorten*\tikzvertspace, -0.5+\tikzshorten*0.5);
	
\foreach \n in {0} {
		\fill ({(-\n+0)*\tikzvertspace},-0.0-0.5*\n) circle (\tikzcircleradius);
	}
	\foreach \n in {1,2} {
		\draw ({(-\n+0)*\tikzvertspace},-0.0-0.5*\n) circle (\tikzcircleradius);
	}
	\foreach \n in {0,1} {
		\fill ({(-\n+1)*\tikzvertspace},-0.5-0.5*\n) circle (\tikzcircleradius);
	}
	\foreach \n in {2} {
		\draw ({(-\n+1)*\tikzvertspace},-0.5-0.5*\n) circle (\tikzcircleradius);
	}
	\foreach \n in {0,1,2} {
		\fill ({(-\n+2)*\tikzvertspace},-1.0-0.5*\n) circle (\tikzcircleradius);
		\fill ({(-\n+3)*\tikzvertspace},-1.5-0.5*\n) circle (\tikzcircleradius);
	}
	\node at (\tikzvertspace,-2.9) {$\vdots$};
	\node at (3.5*\tikzvertspace,-1.75) {$\ddots$};
	\node at (-2.5*\tikzvertspace,-1.25) {\reflectbox{$\ddots$}};
\node at (0,0.2) {$\lambda$};
\node at (\tikzvertspace,-1.5-0.2) {$\lambda-\mu$};
	\end{tikzpicture}
}{
	The tree $T$ used in the proof of 
	\autoref{theo:canonicalelement:general} for 
	$\lie g = \lie{sl}_3(\mathbb C)$ and $\mu = 2\alpha_1+\alpha_3$.
	Compare this with \autoref{figure:tree:proof:special}.
	Elements of the tree starting with $1$, $2$ and $3$
	are coloured red, blue and green, respectively.
	Only the weight spaces marked with filled dots are non-trivial
	(but might have a different dimension than in the case where $\Delta' = \emptyset$),
	and all weight spaces marked with circles only contain $0$.
	In particular, the weight spaces at maximal elements of the tree are trivial,
	except for $V^\lambda$.
	All non-maximal weight spaces are non-trivial.
	}{
	figure:tree:proof:general
}

\begin{proof}
	It suffices to prove that
	$\sum_{w\in\tilde\wordset} 
	p^w_{\lambda}(\alpha_w)^{-1} 
	\tilde\pi_\lambda^-(Y_w) \langle
	\tilde\pi_\lambda^+(X_w), \tilde v
	\rangle^\sim_{\lambda}
	= \tilde v$
	for all $\tilde v \in \univ(\minus{\tilde{\lie n}})$
	and that
	$\sum_{w\in\tilde\wordset} 
	p^w_{\lambda}(\alpha_w)^{-1}
	\tilde\pi_\lambda^+(X_w) \langle
	\tilde u, \tilde\pi_\lambda^-(Y_w) 
	\rangle^\sim_{\lambda}
	= \tilde u$ 
	for all $\tilde u \in \univ(\plus{\tilde{\lie n}})$
	by using an analogue of \autoref{theo:canonicalElement:FormulaToCheck}.
Let $v \in \univ(\minus{\lie n})$ be the image of $\tilde v$ under the 
	inclusion $\univ(\minus{\tilde{\lie n}}) \to \univ(\minus{\lie n})$, so 
	that $\tilde\pi_\lambda^-(v) = \tilde v$.
Assume that $v = \sum_{\mu \in \mathbb N_0 \hat\Delta^+} v_{-\mu}$
	is the weight decomposition of $v$.
Then
	\begin{align*}
	\sum_{w\in\tilde\wordset} p^w_{\lambda}(\alpha_w)^{-1} &
	\tilde \pi_\lambda^-(Y_w) \langle \tilde\pi_\lambda^+(X_w), \tilde v \rangle^\sim_{\lambda}  
	\\
	&= \sum_{w\in\tilde\wordset} p^w_{\lambda}(\alpha_w)^{-1} 
	\tilde\pi_\lambda^-(Y_w) \langle X_w, v \rangle_\lambda
	\\
	&= \tilde\pi^-_\lambda \bigg(
	\sum_{w\in\tilde\wordset} p^w_{\lambda}(\alpha_w)^{-1} 
	Y_w \langle X_w, v_{-\alpha_w} \rangle_\lambda \bigg) \\
	&= \tilde\pi^-_\lambda\bigg(
	\sum_{\mu \in \mathbb N_0 \hat\Delta^+}
	\sum_{w\in\tilde\wordset_\mu} (-1)^{\abs w}p^w_{\lambda}(\alpha_w)^{-1} 
	Y_w \acts_\lambda^- X_{w^\opp} \acts_\lambda^- v_{-\mu} \bigg) \komma
	\end{align*}
	where $\tilde W_\mu = 
	\lbrace w \in \tilde W \mid \alpha_w = \mu\rbrace$.
We claim that there is an admissible tree $T$ and
	$v' \in \univ\lie g \cdot \lie g_\lambda^-$ such that 
	\begin{equation*}
	\sum_{w\in\tilde\wordset_\mu}  
	(-1)^{\abs w} p^w_{\lambda}(\alpha_w)^{-1} 
	Y_w \acts_\lambda^- X_{w^{\mathrm{opp}}} \acts_\lambda^- v_{-\mu} 
	= v'+ \sum_{w\in\max T}  
	(-1)^{\abs w} p^w_{\lambda}(\alpha_w)^{-1} 
	Y_w \acts_\lambda^- X_{w^{\mathrm{opp}}} \acts_\lambda^- v_{-\mu} \komma
	\end{equation*}
	which would finish the proof by using \autoref{theo:casimirIterated}.	
	Indeed, let 
	\begin{equation*}
	T = \lbrace \emptyset \rbrace \cup \lbrace 
	w \in W \mid 
	\exists w' \in \tilde \wordset_\mu
	\text{ and $0 \leq i \leq \abs{w'}-1$
		such that $w_{1 \dots \abs w -1} = w'_{1 \dots i}$} 
	\rbrace
	\end{equation*}
	be the smallest tree containing $\tilde \wordset_\mu$. 
	Since 
	$\lambda \in\tilde \Lambda$, this tree is admissible. 
	Furthermore $\tilde \wordset_\mu \subseteq \max T$ and any element $w \in 
	\max T$ satisfies exactly one of the 
	following two conditions. Either $\alpha_w = 
	\mu$, so that $w \in \tilde W_\mu$ appears in the sum on the left hand 
	side of the above equation. Or $\mu - \alpha_{w} \notin \mathbb N_0 
	\hat\Delta^+$, so that $X_{w^{\mathrm{opp}}} v_{-\mu}$ would have to be of 
	weight $\alpha_w - \mu \notin -\mathbb N_0 \hat\Delta^+$ and does therefore 
	either vanish or lie in $\univ\lie g \cdot \lie g_\lambda^-$.
The statement for $\tilde u$ is proven similarly.
\end{proof}
Using the inclusions 
$\univ(\plusminus{\tilde{\lie n}}) \to \univ \lie g$
and the projection
$\univ \lie g \to \univ \lie g / \univ \lie g \cdot \lie g_\lambda$, 
we map $F_\lambda$ to 
$\smash{(\univ \lie g / \univ \lie g \cdot \lie g_\lambda)^{\hat\tensor 2}}$.
Note that, as before, $\univ \lie g \cdot \lie g_\lambda$ is a homogeneous
ideal in $\univ \lie g$, so the grading of $\univ \lie g$
stays well-defined on the quotient. 
The action of $\lie g_\lambda$ on $(\univ \lie g)^{\tensor 2}$
also passes to the quotient and extends to a continuous action
on the completed tensor product.

\begin{theorem}[{Alekseev--Lachowska \cite{alekseev.lachowska:2005a}}]
	\label{theo:alekseevLachowska:general}
	Let $\lambda \in \tilde\Lambda$.
Then 
	$F_\lambda \in (\univ\lie g/ \univ\lie g \cdot \lie g_\lambda)^{\hat\tensor 2}$
	is $\lie g_\lambda$-in\-vari\-ant and satisfies
	\begin{equation}\label{eq:associativity:general}
	((\id \tensor \Delta) F_\lambda) 1 \tensor F_\lambda
	= 
	((\Delta \tensor \id) F_\lambda) F_\lambda \tensor 1 
	\end{equation}
	in $(\univ\lie g/ \univ\lie g \cdot \lie g_\lambda)^{\hat\tensor 3}$.
\end{theorem}

\begin{proof}
	Note that the $\lie g$-invariance of the Shapovalov pairing 
	(proven similarly as in \autoref{theo:shapovalovPairing:computations}) 
	implies that
	$F_\lambda 
	\in 
	\univ( \plus{\tilde{\lie n}}) \tensorhat 
	\univ(\minus{\tilde{\lie n}})$
	is also $\lie g$-invariant.
	Then  
	$F_\lambda \in \smash{(\univ \lie g / \univ \lie g \cdot \lie g_\lambda)^{\hat\tensor 2}}$
	is $\lie g_\lambda$-invariant since the map 
	$\univ( \plus{\tilde{\lie n}}) \times 
	\univ(\minus{\tilde{\lie n}}) 
	\to
	\smash{(\univ \lie g / \univ \lie g \cdot \lie g_\lambda)^{\hat\tensor 2}}$
	is $\lie g_\lambda$-equivariant.
	Equation \eqref{eq:associativity:general} is proven in 
	\cite[Section 4]{alekseev.lachowska:2005a}.
\end{proof}
It will be convenient in the following to write $F_\lambda$
as a sum of elements that are all invariant under $\lie g_\lambda$.

\begin{lemma} \label{theo:componentsOfF}
	Let $\lambda \in \tilde \Lambda$.
	Then there is a partition of $\tilde W$ into finite subsets
	$\tilde W_\ell$, $\ell \in \mathbb N_0$ such that
	\begin{equation} \label{eq:Fhbarl}
	F_{\lambda, \ell}
	\coloneqq 
	\sum_{w\in\tilde\wordset_\ell} p^w_{\lambda}(\alpha_w)^{-1} 
	\tilde\pi_\lambda^+(X_w) \tensor \tilde\pi_\lambda^-(Y_w)
	\end{equation}
	is $\lie g_\lambda$-invariant.
\end{lemma}

\begin{proof}
	It will be convenient to introduce a different grading $d'$ on $\lie g$,
	for which $\lie g_\lambda$ is of degree $0$.
	To this end, let $\lie h$ and the root spaces 
	of simple roots in $\Delta'$ be of degree $0$,
	and let the root spaces of simple roots in $\hat\Delta$ be of degree $1$.
	Since any root is a unique linear combination of simple roots
	this assignment extends to a grading on $\lie g$.
	More explicitly, if $\sigma_1, \dots, \sigma_r \in \Delta$ are the simple roots,
	with $\sigma_1, \dots, \sigma_{r'} \in \Delta'$,
	then the root space of a root $\alpha = \sum_{i=1}^r c^i \sigma_i$ is of degree
	$d'(\alpha) = \sum_{i=r'+1}^r c^i$.
	Since $\lie g_\lambda$ is spanned by $\lie h$ and the root spaces of roots in $\Delta'$,
	and since the invariance of the ordering implies that any root in $\Delta'$
	is a linear combination of simple roots in $\Delta'$,
	it follows that every element of $\lie g_\lambda$ is homogeneous of degree $0$.
	This grading is coarser than the grading given by $d$,
	in the sense that the graded components with respect to the new grading $d'$
	are direct sums of the graded components with respect to $d$.
	The restrictions of the maps $\tilde \pi_\lambda^\pm$
	to $\univ (\plusminus{\lie n})$
	are homogeneous of degree $0$ with respect
	to (the restriction of) the $\mathbb Z$-grading on $\univ \lie g$ induced by $d'$.
	
	For $w \in \wordset$ set $d'(w) \coloneqq d'(\alpha_{w_1}) + \dots + d'(\alpha_{w_{\abs w}})$,
	and define 
	$\smash{\tilde \wordset_\ell} 
	\coloneqq 
	\lbrace w \in \smash{\tilde \wordset} \mid d'(w) = \ell \rbrace$.
	It follows from \autoref{theo:sumsInHatCannotHaveManyPrimedElements}
	that $\smash{\tilde \wordset_\ell}$ is finite for every $\ell$.
	The elements $F_{\lambda,\ell}$ defined from $\smash{\tilde \wordset}_\ell$
	as in \eqref{eq:Fhbarl} have a nice description in terms of the grading $d'$.	
	Since all graded components of $\plus{\tilde{\lie n}}$ resp.\ $\minus{\tilde{\lie n}}$
	are of degree $\geq 1$ resp.\ $\leq -1$,
	$d'$ induces a grading of  
	$\univ(\plus{\tilde{\lie n}}) \tensor \univ(\minus{\tilde{\lie n}})$
	by $\mathbb N_0 \times (-\mathbb N_0)$.
	Using the homogeneity of $\tilde \pi_\lambda^\pm$,
	it follows directly from the definition of $\smash{\tilde\wordset_\ell}$
	that $F_{\lambda, \ell}$ is precisely the component of $F_\lambda$ 
	of degree $(\ell, -\ell)$ with respect to this grading.
	Since $\lie g_\lambda$ is of degree $0$, the action of $\lie g_\lambda$ 
	on $\univ(\plus{\tilde{\lie n}}) \tensor \univ(\minus{\tilde{\lie n}})$
	preserves the graded components,
	and the $\lie g_\lambda$-invariance of $F_\lambda$ implies that 
	all the graded components $F_{\lambda,\ell}$ must also be $\lie g_\lambda$-invariant.
\end{proof} 
\subsection{The induced formal and strict products}
\label{subsec:starProductFromTwist}

In this subsection we construct associative products from the element $F_\lambda$
obtained at the end of the last subsection.
We will rescale $\lambda$ in order to introduce a parameter 
playing the role of Planck's constant in the construction.
Then we would like to use the results of \autoref{subsec:diffopsOnHomogeneousSpaces}
to obtain bidifferential operators from (the rescaled) $F_\lambda$.
However, since $F_\lambda$ is only in the completed tensor product, 
applying these results naively would give a sum of bidifferential operators
of increasing orders and we have to deal with its convergence.
There are essentially two solutions to this problem:
Firstly, we can take a formal expansion in the parameter $\hbar$,
which will give us a well-defined power series of bidifferential operators of increasing order.
Secondly, we can restrict ourselves to applying these operators
only to some polynomial functions, for which only finitely many terms of the
infinite sum give a non-zero contribution.
We discuss both approaches in detail, starting with the formal one.

Let us first introduce the rescaling. Define the set 
\begin{equation}
P_\lambda = \lbrace 0 \rbrace \cup \lbrace \hbar\in\mathbb C \setminus\lbrace 
0 \rbrace
\mid \I\lambda / \hbar \notin \tilde \Lambda\rbrace \komma
\end{equation}
and for $\hbar \in \mathbb C \setminus P_\lambda$ 
set $F_\hbar \coloneqq F_{\I \lambda / \hbar}$ 
and $F_{\hbar,\ell} \coloneqq F_{\I \lambda/\hbar, \ell}$,
where $F_{\I \lambda / \hbar}$ was computed in \autoref{theo:alekseevLachowska:general}
and $F_{\I \lambda/\hbar, \ell}$ was defined in \autoref{theo:componentsOfF}.
Note that $\lie g_{\I\lambda/\hbar} = \lie g_{\lambda}$, so
$F_\hbar \in 
\smash{((\univ \lie g / \univ \lie g \cdot \lie g_\lambda)^{\tensorhat 2})^{\lie g_\lambda}}$
holds for all $\hbar \in \mathbb C \setminus P_\lambda$.
Furthermore, the projections
$\smash{\tilde\pi_{\I\lambda/\hbar}^\pm \at{\univ(\plusminus{\lie n})}} \colon
\univ(\plusminus{\lie n}) \to \univ(\plusminus{\tilde{\lie n}})$
are independent of $\hbar$, which one can easily see from their definition in
\eqref{eq:projPiLambda}.

\begin{proposition} \label{theo:PlambdaCountable}
	Let $\lie g$ be a complex semisimple Lie algebra, 
	$\lie h$ a Cartan subalgebra of $\lie g$,
	and $\lambda \in \lie h^*$.
	Fix an invariant ordering on $\Delta$,
	and assume that $(\lambda,\mu) \neq 0$ 
	for all $\mu \in \mathbb N_0 \hat\Delta^+$ 
	satisfying $\frac 1 2 (\mu,\mu) = (\rho,\mu)$. 
	Then the set $P_\lambda$ is countable and accumulates only at zero.	
\end{proposition}

\begin{proof}
	From the definition of $P_\lambda$ we obtain
	\begin{equation*}
	P_\lambda = \lbrace 0 \rbrace \cup \lbrace \hbar \in \mathbb C 
	\setminus\lbrace 0 \rbrace \mid p_{\I \lambda / \hbar}(\mu) = 
	0 
	\text{ for some $\mu \in \mathbb N_0 \hat \Delta^+ \setminus \lbrace 0 
		\rbrace$} \rbrace \punkt
	\end{equation*}
	Under our assumptions the function
	$\hbar \mapsto p_{\I \lambda / \hbar} (\mu) 
	= \frac 1 2 (\mu, \mu) - (\rho, \mu) - \frac \I \hbar (\lambda, \mu)$
	has the only root $\I (\lambda,\mu) / (\frac 1 2 (\mu,\mu) - (\rho,\mu))$ 
	if $\frac 1 2 (\mu, \mu) - (\rho, \mu) \neq 0$ and no root otherwise. 
	Therefore $P_\lambda$ is countable since $\mathbb N_0 \hat\Delta^+ 
	\setminus \lbrace 0 \rbrace$ is countable.
	Furthermore, $P_\lambda$ accumulates only at zero since
	\begin{equation*}
	\left|\frac{\I(\lambda, \mu)}{\frac 1 2 (\mu,\mu) - (\rho,\mu)}\right|
	\leq
	\frac{\norm\lambda \norm\mu}{\frac 1 2 \norm\mu^2 - \norm\mu \norm\rho} 
	= 
	\frac{\norm\lambda}{\frac 1 2 \norm \mu - \norm \rho} 
	\end{equation*}
	if $\norm\mu > 2 \norm \rho$. Note that there are only finitely many 
	elements $\mu \in \mathbb N_0 \hat\Delta^+$ with $\norm \mu \leq 2 \norm 
	\rho$.
\end{proof}
\begin{remark} \label{remark:whatPlambdaCountableMeans}
If the ordering in the previous proposition is standard,
then any element $\mu \in \mathbb N_0 \hat\Delta^+$
automatically satisfies $(\lambda, \mu) \neq 0$:
For all $\alpha \in \hat\Delta^+$ we have $(\lambda, \alpha) \in S$
and since $S$ is closed under addition this implies $(\lambda, \mu) \in S$
for all $\mu \in \mathbb N_0 \hat\Delta^+$. Note that $0 \notin S$, so in particular
$(\lambda , \mu) \neq 0$.

Note also that $\frac 1 2 (\mu, \mu) = (\rho, \mu)$
implies $\norm \mu \leq 2 \norm \rho$,
so there can only be finitely many elements $\mu \in \mathbb N_0 \Delta$ 
satisfiying $\frac 1 2 (\mu, \mu) = (\rho, \mu)$.
Among those are all simple roots and the element $2 \rho$.
However, simple roots which are in $\smash{\mathbb N_0 \hat\Delta}$ 
are by definition not orthogonal to $\lambda$.
An example of an element that is not a simple root and not $2 \rho$
in the case of $\lie g = \liesl[3]$ with root system as in 
\autoref{figure:roots:verma:tree} is $\mu = \alpha_1 + \alpha_2$.
\end{remark}
We say that $F_\hbar$ depends rationally on $\hbar$
if all the $F_{\hbar,\ell}$ depend rationally on $\hbar$.
This makes sense since $F_{\hbar, \ell}$
takes values in a finite dimensional subspace of 
$(\univ \lie g / \univ \lie g \cdot \lie g_\lambda)^{\tensor 2}$
that is independent of $\hbar$.

\begin{theorem}[Alekseev--Lachowska 
	\cite{alekseev.lachowska:2005a}]\label{theo:starproduct:alekseev}
	Let $\lambda \in \lie h^*$ and assume that $P_\lambda$ is countable.
	Then $F_\hbar$ depends rationally on $\hbar$, with no pole at zero.
	In particular, the Taylor series expansion of $F_\hbar$ around $0$ makes sense,
	and it gives an element
	$F \in (\univ \lie g / \univ \lie g \cdot \lie g_\lambda)^{\tensor 2} \formal\formParam$,
	where the tensor product is the usual (not completed) tensor product.
	Furthermore, $F$ satisfies \eqref{eq:associativity:general}
	in $(\univ \lie g / \univ \lie g \cdot \lie g_\lambda)^{\tensor 3}\formal\formParam$
	and is $\lie g_\lambda$-invariant.
\end{theorem}

\begin{proof}
	As mentioned before, $\lie g_{\I \lambda / \hbar}$ and 
	$\smash{\tilde\pi_{\I\lambda/\hbar}^\pm \at{\univ(\plusminus{\lie n})}} \colon
	\univ(\plusminus{\lie n}) \to \univ(\plusminus{\tilde{\lie n}})$
	are independent of $\hbar$, so only the coefficients 
	$\smash{p_{\I \lambda/\hbar}^w(\alpha_w)^{-1}}$ in the formula for $F_{\I\lambda/\hbar}$ 
	obtained in \autoref{theo:canonicalelement:general} depend on $\hbar$.
	Since they are products of elements of the form
	\begin{equation*}
	p_{\I \lambda/\hbar}(\mu)^{-1} 
	= 
	\left(\frac 1 2 (\mu, \mu) - (\rho, \mu) 
	- \left(\frac{\I \lambda}{\hbar}, \mu\right)\right)^{-1} 
	= \frac{\hbar}{\left(\frac 1 2 (\mu, \mu) - (\rho, \mu)\right)\hbar - (\I \lambda, \mu)}
	\end{equation*}
	with $\mu \in \mathbb N_0 \hat\Delta^+ \setminus \lbrace 0 \rbrace$,
	their dependence on $\hbar$ is rational without a pole at zero.
	(Observe that $\frac 1 2 (\mu,\mu) - (\rho,\mu)$ and $(\I\lambda, \mu)$ cannot
	vanish simultaneously since $P_\lambda$ is assumed to be countable.) 
	Consequently, we may take the Taylor expansion of $F_{\I \lambda/\hbar}$
	around $\hbar = 0$. To see that this yields an element in the usual tensor product,
	note that the formal expansion of $\smash{p_{\I \lambda/\hbar}(\mu)^{-1}}$ 
	is a multiple of $\formParam$ unless 
	$(\lambda, \mu) = 0$.
	Now
	$\smash{p_{\I\lambda/\hbar}^w(\alpha_w)^{-1}} 
	= 
	\smash{\prod_{i=1}^{\abs w} p_{\I \lambda/\hbar} (\alpha_{w_{i \dots \abs w}})^{-1}}$,
	and if the formal expansions of both 
	$p_{\I \lambda/\hbar} (\alpha_{w_{i \dots \abs w}})^{-1}$ and 
	$p_{\I \lambda/\hbar} (\alpha_{w_{i+1 \dots \abs w}})^{-1}$ 
	are not multiples of $\hbar$, then $(\lambda, \alpha_{w_i}) = 0$,
	i.e.\ $\alpha_{w_i} \in \Delta'$.
	However, \autoref{theo:sumsInHatCannotHaveManyPrimedElements}
	ensures that this cannot happen too often:
	If $M$ is the constant obtained in that lemma, 
	then at least $\lceil\abs w / (M+1)\rceil$ many elements 
	in the formal expansion of $p_{\I \lambda/\hbar}^w(\alpha_w)^{-1}$
	are multiples of $\formParam$,
	so this expansion is of order 
	at least $\smash{\formParam^{\lceil \abs w / (M+1) \rceil}}$.
	Consequently, only finitely many words contribute to a given order in $\formParam$,
	so that we do not need to complete the tensor product.
	Since every $F_\hbar$ satisfies \eqref{eq:associativity:general} 
	and is $\lie g_\lambda$-invariant,
	this is also true for the formal expansion $F$.
\end{proof}
Let us now apply this theorem to quantize complex coadjoint orbits.
Let $G$ be a complex connected semisimple Lie group with coadjoint
orbit $\orb\lambda$ through a semisimple element
$\lambda \in \lie g^*$. Pick a Cartan subalgebra $\lie h$
containing $\lambda^\sharp$. 
Choose an invariant ordering for which $P_\lambda$ is countable
(e.g.\ a standard invariant ordering).

By \autoref{theo:stabilizerConnected} we know that $G_\lambda$ is connected.
Therefore the $\lie g_\lambda$-invariance of the elements $F$ and $F_\hbar$ 
constructed previously implies their $G_\lambda$-invariance. 
Consequently we can apply the results of 
\autoref{subsec:diffopsOnHomogeneousSpaces} in order
to obtain holomorphic $G$-invariant bidifferential operators on 
$\orb\lambda \cong G / G_\lambda$.
Define the formal product
	\begin{equation} 
	\label{eq:defStarProduct:Formal:Extended}
	\star \colon 
	\Cinfty(\orb\lambda)\formal\formParam \times 
	\Cinfty(\orb\lambda)\formal\formParam
	\to
	\Cinfty(\orb\lambda)\formal\formParam
	\komma  \quad
	(f, g) \mapsto f \star g \coloneqq \Psi(F)(f, g) \komma
	\end{equation} 
which is well-defined since the previous theorem asserts that
$F \in (\univ\lie g / \univ \lie g \cdot \lie g_\lambda)^{\tensor 2} \formal\formParam$.

\begin{proposition}\label{theo:starProduct:associative}
	The product $\star$ is associative and restricts to a product
	\begin{equation}
	\star \colon 
	\Hol(\orb\lambda)\formal\formParam \times 
	\Hol(\orb\lambda)\formal\formParam
	\to
	\Hol(\orb\lambda)\formal\formParam
	\end{equation}
	on formal power series of holomorphic functions.
	Moreover, $\star$ is $G$-in\-vari\-ant, in the sense that
	$(g \acts f_1) \star (g \acts f_2) = g \acts (f_1 \star f_2)$ holds for all $g \in G$
	and $f_1, f_2 \in \Cinfty(\orb\lambda)\formal\formParam$.
\end{proposition}
\begin{proof}
	It is a standard argument that the twist condition \eqref{eq:associativity:general}
	translates into associativity of the induced product. That $\star$ restricts
	to power series of holomorphic functions and is $G$-invariant
	is immediate since the image of $\Psi$
	consists of holomorphic $G$-invariant bidifferential operators.
\end{proof}
In order to define strict star products from $F_\hbar$ directly,
i.e.\ without taking a formal power series expansion, 
we need to ensure that $\Psi(F_\hbar)$ is well-defined.
To do that we introduce polynomials on the coadjoint orbit.
It will turn out that only finitely many elements of the infinite sum
defining $F_\hbar$ contribute non-trivially
when $\Psi(F_\hbar)$ is applied to polynomials.

Recall from \autoref{subsec:generalities} that we may assume without loss of
generality that $G$ is a closed complex Lie subgroup of $\GL[N]$.
We fix a way to realize $G$ as such a matrix Lie group once and for all.
In particular, the Lie algebra $\lie g$ of $G$ is realized 
as a complex Lie subalgebra of $\liegl[N]$.

\begin{definition}[Polynomials on $\orb\lambda$] 
	\label{def:polynomials:orbit}
	Let $\orb\lambda \subseteq \lie g^*$ be a complex coadjoint orbit.
	Then 
	\begin{equation}
	\Pol(\orb\lambda) 
	= \lbrace 
		p \colon \orb\lambda \to \mathbb C 
		\mid 
		p = P \at {\orb\lambda}\text{ for some holomorphic polynomial $P$ on $\lie g^*$}
	\rbrace
	\end{equation}
	is called the algebra of polynomials on $\orb\lambda$.
\end{definition}
Recall that the symmetric algebra $\Sym \lie g$ of $\lie g$ is isomorphic (as an algebra)
to the algebra $\Pol(\lie g^*)$ of polynomials on $\lie g^*$.
The isomorphism sends an element $X_1 \vee \dots \vee X_j \in \Sym^j \lie g$
to $\xi \mapsto \xi(X_1) \dots \xi(X_j)$.

\begin{definition}[Polynomials on $G$]
	\label{def:polynomials:lieGroup}
	For a complex linear Lie group $G$,
	the algebra of polynomials $\Pol(G)$ is the unital
	complex	subalgebra of $\Cinfty(G)$ generated by the functions 
	$P_{ij} \colon G \to \mathbb C$, $g \mapsto g_{ij}$.
\end{definition}
Polynomials on a complex Lie group $G$ are holomorphic.
In the case of semisimple connected Lie groups
both the Lie group itself and the coadjoint orbit are affine algebraic varieties,
see \autoref{theo:coadjointOrbitsAreClosed},
and our definition of polynomials coincides with
the definition of regular functions on algebraic varieties. 
If $G$ is connected and semisimple, then the definition of polynomials on $G$
is independent of the way in which $G$ is realized as a linear group,
which can be proven as outlined in \appendixref{appendix:gfinites}.

\begin{proposition}\label{theo:pullbackOfPolynomials}
	Assume that the complex linear Lie group $G$ is semisimple and connected. Then 
	$\pi^* \colon \Hol(\orb\lambda) \cong \Hol(G/G_\lambda) \to \Hol(G)^{G_\lambda}$ 
	restricts to an isomorphism
	$\pi^* \colon \Pol(\orb\lambda) \to \Pol(G)^{G_\lambda}$.
\end{proposition}

\begin{proof}
	Since the Lie algebra $\lie g$ is semisimple,
	we have $\lie g = [\lie g, \lie g]$, i.e.\ every element
	of $\lie g$ can be written as a sum of commutators.
	Consequently the trace of any element of $\lie g$ is zero.
	Therefore any element in a sufficiently small neighbourhood
	of the identity of $G$ must have determinant $1$, and consequently
	$G$ is a Lie subgroup of $\SL[N]$.
	
	Let $E_{ij} \in \liegl[N]$ be the matrix 
	that is $1$ at position $(i,j)$ and $0$ otherwise. 
Extend $\lambda$ to a linear functional $\tilde\lambda \in \liegl[N]^*$.
For an element $X \in \lie g = \Sym^1 \lie g$, which we identify
	with a polynomial on $\lie g^*$, we compute
	\begin{multline*}
	\pi^* \big(X \at{\orb\lambda}\big)(g) 
	= X \at{\orb\lambda}(\pi(g)) 
	= X \at{\orb\lambda}(\Ad_g^* \lambda)
	= X \at{\orb\lambda} (\lambda(g^{-1} \cdot g))
	= \\ = \lambda(g^{-1} X g)
	= \sum_{i,j} \tilde\lambda((g^{-1} X g)_{ij} E_{ij})
	= \sum_{i,j,k,\ell} (g^{-1})_{ik} g_{\ell j} X_{k\ell} \tilde\lambda(E_{ij}) \punkt
	\end{multline*}
	Since $\det g=1$ we can write $(g^{-1})_{i k}$
	as a polynomial in the entries of $g$,
	so that $\smash{\pi^* \big(X|_{\orb\lambda}\big)}$ itself is a polynomial
	in the entries of $g$.
Since $\Pol(\orb\lambda)$ is generated by $\smash{X |_{\orb\lambda}}$
	and $\pi^*$ is an algebra homomorphism,
	it follows that $\pi^* p \in \Pol(G)$ for any $p \in \Pol(\orb\lambda)$.
Injectivity of $\pi^*$ is immediate.
Surjectivity is harder to prove. One can either use methods
	from algebraic geometry
	(making use of \autoref{theo:coadjointOrbitsAreClosed},
	see for example \cite[Chapter 12]{humphreys:1995a})
	or work in a more differential geometric setting using
	$G$-finite functions as outlined in \appendixref{appendix:gfinites}.
\end{proof}
Recall the degree $d'$ introduced in the proof of \autoref{theo:componentsOfF}.

\begin{lemma} \label{theo:starFiniteOnPolys}
	For any polynomial $p \in \Pol(\GL[N])$,
	there is a constant $N_p \in \mathbb N$ such that
	$\leftinvholo u p = \leftinvholo v p = 0$ holds
	for any $u \in \univ(\plus{\tilde{\lie n}}) \subseteq \univ(\liegl[N])$
	of degree $d'$ greater than $N_p$ and any 
	$v \in \univ(\minus{\tilde{\lie n}}) \subseteq \univ(\liegl[N])$
	of degree $d'$ smaller than $-N_p$.
\end{lemma}

\begin{proof}
	Using the Leibniz rule we may assume that $p = P_{k\ell}$
	in the notation of \autoref{def:polynomials:lieGroup}.
	Let $E_{ij} \in \liegl[N]$ 
	be the matrix that is $1$ at position $(i,j)$ and $0$ otherwise. 
	It is easy to check that 
	$\leftinv{E_{ij}} P_{k\ell} = \delta_{j\ell}P_{ki}$ 
	and therefore 
	$\leftinv X P_{k\ell} 
	= 
	\smash{\leftinv{\big(\sum_{i,j} X_{ij} E_{ij} \big)} P_{k\ell}}
	=
	\sum_{i} X_{i \ell} P_{ki}$
	for all $X \in \liegl[N]$. 
	Since $P_{k \ell}$ is holomorphic, this implies that also
	$\smash{\leftinvholo X P_{k\ell}} 
	= 
	\smash{\leftinv X P_{k \ell}}
	=
	\sum_{i} X_{i \ell} P_{ki}$.
	Consequently, if $u = u_1 \dots u_M \in \univ(\liegl[N])$ with $u_1, \dots, u_M \in 
	\liegl[N]$, then 
	\begin{multline*}
	\leftinvholo u P_{k\ell} 
	= \sum_{i_M} \leftinvholo{(u_1 \dots u_{M-1})} (u_M)_{i_M \ell} P_{ki_M}
	= \\ 
	= \sum_{i_{M-1},i_M}\leftinvholo{(u_1 \dots u_{M-2})} (u_{M-1})_{i_{M-1} i_M} (u_M)_{i_M \ell} 
	P_{ki_{M-1}} 
	= \dots = \\
	= \sum_{i_1, \dots, i_M} (u_1)_{i_1 i_2} \dots (u_{M-1})_{i_{M-1}i_M} (u_M)_{i_M \ell} P_{ki_1}
	= \sum_i (u_1 \dots u_M)_{i \ell} P_{ki} \punkt
	\end{multline*} 
	Since $\ad_X$ is nilpotent for any $X \in \plus{\tilde{\lie n}}$ it follows 
	that $0 = (\ad X)_s = \ad (X_s)$ for $X \in \plus{\tilde{\lie n}}$, 
	where the index $s$ stands for the semisimple part of the Jordan 
	decomposition.
	Since $\lie g$ is semisimple this implies $X_s = 0$,
	so every $X \in \plus{\tilde{\lie n}}$ is realized by a nilpotent matrix. 
	It follows from Engel's theorem that any matrix Lie algebra
	consisting of nilpotent matrices is nilpotent as an algebra,
	so there exists a constant $M \in \mathbb N$ such that products
	of $M$ or more elements of $\plus{\tilde{\lie n}}$ vanish.
	Therefore, if $u$ is a product of at least $M$ elements of 
	$\plus{\tilde{\lie n}}$
	the above calculation shows that $\leftinv u P_{k \ell} = 0$.
If $M'$ is an upper bound for the degree $d'$ of elements of $\plus{\tilde{\lie n}}$
	then we can set $N_{P_{k\ell}} \coloneqq M M'$.
	It is easy to check that this constant also works for $\minus{\tilde{\lie n}}$.
\end{proof}

\begin{corollary} \label{theo:starWellDefOnPolynomials}
	For all $p, q \in \Pol(\orb\lambda)$ and all $\hbar \in \mathbb C \setminus P_\lambda$,
	the sum $\sum_{\ell = 0}^\infty \Psi(F_{\hbar,\ell})(p,q)$ is finite,
	and $\sum_{\ell = 0}^\infty \Psi(F_{\hbar,\ell})(p,q) \in \Pol(\orb\lambda)$.
\end{corollary}

\begin{proof}
    \autoref{theo:pullbackOfPolynomials} implies that 
    $\pi^* p$ and $\pi^* q$ are polynomials.
    By \autoref{theo:componentsOfF} the components $F_{\hbar,\ell}$ are of degree $(\ell, -\ell)$,
	and then the previous lemma implies that only finitely many summands of
	$\sum_{\ell=0}^\infty \leftinvholo F_{\hbar,\ell} (\pi^* p, \pi^* q)$
	are non-zero.
	Its proof shows that 
	$\smash{\sum_{\ell=0}^\infty \leftinvholo F_{\hbar, \ell} (\pi^* p, \pi^* q)}$ 
	is again a polynomial.
	The components $F_{\hbar,\ell}$ are $\lie g_\lambda$-invariant, and therefore,
	since $G_\lambda$ is connected by \autoref{theo:stabilizerConnected},
	also $G_\lambda$-invariant.
	Consequently 
	$\sum_{\ell=0}^\infty \leftinvholo F_{\hbar,\ell} (\pi^* p, \pi^* q)$ 
	is $G_\lambda$-invariant by \autoref{theo:diffops:Hinvariance}.
	Finally, 	
	$\sum_{\ell=0}^\infty \Psi(F_{\hbar,\ell})(p,q) 
	= 
	\sum_{\ell=0}^\infty \pi_*({\leftinvholo F_{\hbar,\ell}}^{\,\!} (\pi^* p, \pi^* q))$
	is a polynomial by \autoref{theo:pullbackOfPolynomials}.	
\end{proof}
\begin{corollary} \label{theo:starProduct:strict:associative}
	Let $\orb\lambda$ be a semisimple coadjoint orbit of a complex connected
	semisimple Lie group $G$ with Lie algebra $\lie g$. Assume that $\lie h$
	is a Cartan subalgebra of $\lie g$ containing $\lambda^\sharp$,
	and that one has chosen an invariant ordering.
	Then for any $\hbar \in \mathbb C \setminus P_\lambda$, 
	\begin{equation}
	*_\hbar \colon \Pol(\orb\lambda) \times \Pol(\orb\lambda) \to \Pol(\orb\lambda) 
	\komma \quad
	(p, q) \mapsto p *_\hbar q \coloneqq \sum_{\ell = 0}^\infty \Psi(F_{\hbar,\ell})(p, q)
	\label{eq:defStarProduct:Strict:Restricted}
	\end{equation}
	defines an associative and $G$-invariant product
	(where $G$-invariant means that
	$(g \acts p) *_\hbar (g \acts q) 
	= 
	g \acts (p *_\hbar q)$
	holds for any $g \in G$ and $p, q \in \Pol(\orb\lambda)$).
	For $p, q \in \Pol(\orb\lambda)$, $p *_\hbar q$ depends rationally on $\hbar$,
	and the formal expansion of $*_\hbar$ around $\hbar = 0$
	coincides with the formal product $\star$.
\end{corollary}

\begin{proof}
As in the formal case, it is a standard argument to show that 
\eqref{eq:associativity:general} implies the associativity of $*_\hbar$.
Since the codomain of $\Psi$ consists of $G$-invariant bidifferential operators,
it is clear that $*_\hbar$ is $G$-invariant.
Since the dependence of $F_\hbar$ on $\hbar$ is rational without pole at $0$,
it follows that $*_\hbar$ also depends rationally on $\hbar$ without pole at $0$,
and since $\star$ was constructed from the formal expansion of $F_\hbar$,
it coincides with the formal expansion of $*_\hbar$. 
\end{proof}

\begin{remark} \label{remark:leaveOutProjections}
	When considering $\Psi(F_{\hbar,\ell})$, 
	we may leave out the projections $\tilde \pi_\lambda^\pm$
	in the formula for $F_{\hbar,\ell}$ from \autoref{theo:componentsOfF}
	to obtain the same result.
	Indeed, by \autoref{theo:utildenIsQuotient} the difference of $F_{\hbar,\ell}$ and 
	\begin{equation} \label{eq:fhbarprime}
	F'_{\hbar,\ell}
	\coloneqq
	\sum_{w\in\tilde\wordset_\ell} p^w_{\I\lambda/\hbar}(\alpha_w)^{-1} X_w \tensor Y_w
	\in \univ(\plus{\lie n}) \tensor \univ(\minus{\lie n})
	\end{equation} 
	is an element in the ideal 
	$\univ \lie g \cdot \lie g_\lambda \tensor \univ \lie g 
	+ \univ \lie g \tensor \univ \lie g \cdot \lie g_\lambda$
	and therefore contained in the kernel of $\Psi$ by \autoref{theo:diffops:Hinvariance}.
\end{remark}
Recall that we obtained a condition for $P_\lambda$ being countable 
in \autoref{theo:PlambdaCountable}, and that this condition is satisfied in particular
when the ordering is standard, see \autoref{remark:whatPlambdaCountableMeans}.

\begin{proposition}
	\label{theo:firstOrderIsKKS}
	Assume that $P_\lambda$ is countable.
	Then the first order commutator of $\star$ coincides with the 
	Poisson bracket induced by the KKS form
	$\omega_{\mathrm{KKS}}$ defined in \eqref{eq:KKSform}.
\end{proposition}

\begin{proof}
	Note that the formal expansion of 
	\begin{equation*}
	p_{\I\lambda/\hbar}(\mu)^{-1} 
	= \left(
	\frac 1 2 (\mu,\mu) - (\rho,\mu) - \frac \I \hbar (\lambda, \mu)
	\right)^{-1} 
	= \I \hbar \left(
	\frac{\I\hbar} 2(\mu, \mu) - \I\hbar (\rho, \mu) + (\lambda, \mu)
	\right)^{-1}
	\end{equation*}
	is of order $\formParam$ if $(\lambda, \mu) \neq 0$.
	It follows from \autoref{theo:canonicalelement:general}
	that the element $F$ is the formal expansion of
	\begin{align*}
	\sum_{\substack{w\in\tilde\wordset\\\abs w \leq 1}}
		p^w_{\I\lambda/\hbar}(\alpha_w)^{-1} 
		\tilde\pi_{\lambda}^+(X_w) 
		\tensor
		\tilde\pi_{\lambda}^-(Y_w) 
	+
	\sum_{\substack{w\in\tilde\wordset\\\abs w \geq 2}} 
		p^w_{\I\lambda/\hbar}(\alpha_w)^{-1}
		\tilde\pi_{\lambda}^+(X_w) 
		\tensor 
		\tilde\pi_{\lambda}^-(Y_w)
\punkt
	\end{align*}
	Using that the words $w \in \tilde \wordset$ with $\abs w \leq 1$ 
	are precisely the empty word and the one-letter words $(\ell)$ 
	with $\alpha_\ell \in \hat\Delta^+$,
	i.e.\ $(\lambda, \alpha_\ell) \neq 0$,
the first sum expands to 
	$1 + \I \formParam \sum_{\alpha \in \hat\Delta^+}
		(\lambda, \alpha)^{-1} X_\alpha \tensor Y_\alpha + \mathcal O(\formParam^2)$.
	Let us argue why the formal expansion of the second sum is of order $\formParam^2$. 
	By definition
	$\smash{p_{\I\lambda/\hbar}^w(\alpha_w)^{-1}} 
	= 
	\smash{\prod_{i = 1}^{\abs w} p_{\I\lambda/\hbar}(\alpha_{w_{i \dots \abs w}})^{-1}}$.
	Since, by definition of $\smash{\tilde\wordset}$, 
	we have $\smash{\alpha_{w_{\abs w}}} \in \hat\Delta^+$, 
	it is clear that the formal expansions of all summands with 
	$(\lambda, \smash{\alpha_{w_{\abs w - 1}}} + \smash{\alpha_{w_{\abs w}}}) \neq 0$
	are of order $\formParam^2$ 
	(because both $p_{\I\lambda/\hbar}(\smash{\alpha_{(w_{\abs w-1}, w_{\abs w})}})^{-1}$ and
	$p_{\I\lambda/\hbar}(\smash{\alpha_{w_{\abs w}}})^{-1}$ are of order $\formParam$).
	So assume $(\lambda, \alpha_{w_{\abs w - 1}} + \alpha_{w_{\abs w}}) = 0$,
	in which case $\alpha_{w_{\abs w - 1}} \in \smash{\hat\Delta^+}$ and, 
	by invariance of the ordering,
	$\alpha_{w_{\abs w - 1}} + \alpha_{w_{\abs w}}$ is not a root. 
	Therefore $X_{w_{\abs w-1}} X_{w_{\abs w}} = X_{w_{\abs w}} X_{w_{\abs w-1}}$, 
	and if $w' = (w_1, \dots, w_{\abs w-2}, w_{\abs w}, w_{\abs w-1})$ 
	is the word obtained form $w$ by switching the last two letters
	then $X_w = X_{w'}$.
	Similarly $Y_w = Y_{w'}$. Furthermore, by definition of $\alpha_w$,
	we have $\smash{\alpha_{w_{i \dots \abs w}}} = \smash{\alpha_{w'_{i \dots \abs{w'}}}}$ 
	for all $i < \abs w$ and
	\begin{equation*}
	p^w_{\I\lambda/\hbar}(\alpha_w)^{-1} + p^{w'}_{\I\lambda/\hbar}(\alpha_{w'})^{-1} 
	=
\left(
		p_{\I\lambda/\hbar}(\alpha_{w_{\abs w}})^{-1} 
		+ p_{\I\lambda/\hbar}(\alpha_{w_{\abs w-1}})^{-1}
	\right)
\prod_{i = 1}^{\abs w-1} p_{\I\lambda/\hbar}(\alpha_{w_{i \dots \abs w}})^{-1} 
	\punkt
	\end{equation*}
	But under our assumptions 
	$(\alpha_{w_{\abs w}}, \lambda)^{-1} + 
	(\alpha_{w_{\abs w-1}}, \lambda)^{-1} = 0$,
	and therefore the formal expansion of 
	$p_{\I\lambda/\hbar}(\alpha_{w_{\abs w}})^{-1} 
	+ p_{\I\lambda/\hbar}(\alpha_{w_{\abs w-1}})^{-1}$
	is $\I \formParam (\alpha_{w_{\abs w}}, \lambda)^{-1} 
	+ \I \formParam (\alpha_{w_{\abs w-1}}, \lambda)^{-1} 
	+ \mathcal O(\formParam^2) 
	=
	\mathcal O(\formParam^2)$.
	Consequently, the summands which could potentially be of order $\formParam$ 
	in the sum over $w \in \tilde\wordset$ with $\abs w \geq 2$ cancel out,
	and this sum is therefore of order $\formParam^2$ as claimed.
	
	To conclude the proof, note that antisymmetrizing the first order gives indeed
	\begin{equation*}
	F_{(1)}^{\mathrm{antisym}}
	= \I\sum_{\alpha\in\hat\Delta^+} \lambda(\alpha^\sharp)^{-1} 
	(X_\alpha \tensor Y_\alpha - Y_\alpha \tensor X_\alpha)
	= \I\sum_{\alpha\in\hat\Delta}  \lambda([X_\alpha, Y_\alpha])^{-1}
	X_\alpha \tensor Y_\alpha
	= \I \pi_{\mathrm{KKS}}\komma
	\end{equation*}
	where $\pi_{\mathrm{KKS}}$ denotes the Poisson tensor
	associated to the KKS symplectic form.
\end{proof}
We conclude this subsection by saying a bit more about the directions
in which $\star$ and $*_\hbar$ differentiate.
\begin{lemma} \label{theo:distributionSpacesWellDefd}
	For any $\xi = \Ad_g^* \lambda \in \orb\lambda$, the subspaces
	\begin{subequations}
		\begin{align}
		L_{+,\xi} &=
		\spann \left\lbrace (\Ad_g X_\alpha)_{\orb\lambda}\at \xi\komma\,\, 
		\alpha 
		\in \hat\Delta^+\right\rbrace \subseteq 
		\Tangent_\xi \orb\lambda\komma\\
		L_{-,\xi} &=
		\spann \left\lbrace (\Ad_g X_\alpha)_{\orb\lambda}\at \xi\komma\,\, 
		\alpha 
		\in \hat\Delta^-\right\rbrace \subseteq 
		\Tangent_\xi \orb\lambda
		\end{align}
	\end{subequations} are
	independent of the choice of $g \in G$.
\end{lemma}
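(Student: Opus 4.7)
The plan is to reduce the independence statement to a Lie-algebraic invariance statement for $\plus{\tilde{\lie n}}$ under $\Ad_{\RLG_\lambda}$, and then to bootstrap that via the connectedness of $\CLG_\lambda$.

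First I would fix two elements $g, g' \in \RLG$ with $\Ad_g^* \lambda = \Ad_{g'}^* \lambda = \xi$, which forces $h := g^{-1} g' \in \RLG_\lambda$, so that $g' = g h$ and $\Ad_{g'} X_\alpha = \Ad_g \Ad_h X_\alpha$ for every $\alpha$. Hence it is enough to show that $\Ad_h$ preserves $\plus{\tilde{\lie n}} = \bigoplus_{\alpha \in \hat\Delta^+} \lie g^\alpha$ and $\minus{\tilde{\lie n}} = \bigoplus_{\alpha \in \hat\Delta^-} \lie g^\alpha$, because this would guarantee that each $\Ad_h X_\alpha$ is a linear combination of $\{X_\beta : \beta \in \hat\Delta^+\}$ (respectively $\hat\Delta^-$) and hence that $(\Ad_{g'} X_\alpha)_{\orb\lambda}\at\xi$ lies in the span defining $L_{+,\xi}$ (respectively $L_{-,\xi}$) computed from $g$. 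The reverse inclusion follows by interchanging the roles of $g$ and $g'$ (i.e.\ applying the same argument with $h^{-1}$).

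Next I would establish the required $\Ad_h$-invariance. At the Lie algebra level, the $\mathbb Z$-grading $\CLA = \bigoplus_i \lie g_i$ constructed in \autoref{subsec:alekseevLachowskaConstruction} satisfies $[\lie g_0, \lie g_i] \subseteq \lie g_i$, and $\lie g_0$ was identified with $\CLA_\lambda$. Thus $\ad_{\CLA_\lambda}$ preserves both $\plus{\tilde{\lie n}} = \bigoplus_{i > 0} \lie g_i$ and $\minus{\tilde{\lie n}} = \bigoplus_{i < 0} \lie g_i$. To pass from this infinitesimal statement to the group, I would use that $\RLG_\lambda \subseteq \CLG_\lambda$ and that $\CLG_\lambda$ is connected by \autoref{theo:stabilizerConnected}, so every element of $\CLG_\lambda$ (in particular every $h \in \RLG_\lambda$) is a finite product of exponentials of elements of $\CLA_\lambda$. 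Since each such exponential integrates $\ad$-invariance to $\Ad$-invariance, $\Ad_h(\plus{\tilde{\lie n}}) \subseteq \plus{\tilde{\lie n}}$ and $\Ad_h(\minus{\tilde{\lie n}}) \subseteq \minus{\tilde{\lie n}}$ as required.

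The step that requires a little care is the passage from the Lie algebra invariance to a group invariance for $h \in \RLG_\lambda$, since $\RLG_\lambda$ need not be connected on its own; the trick is precisely to exploit that it embeds into the connected complex stabilizer $\CLG_\lambda$ so that the Lie algebra argument does apply. Everything else is bookkeeping with the decomposition $\plus{\tilde{\lie n}} = \spann\{X_\alpha : \alpha \in \hat\Delta^+\}$ already identified in \autoref{subsec:alekseevLachowskaConstruction}.
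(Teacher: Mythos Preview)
Your proposal is correct and follows essentially the same approach as the paper: both reduce to showing that $\Ad_h$ preserves $\spann\{X_\alpha : \alpha \in \hat\Delta^\pm\} = \tilde{\lie n}^\pm$ for $h \in \RLG_\lambda$, and both obtain this from the invariance of the ordering (which you phrase via the $\mathbb Z$-grading) together with the connectedness of $\CLG_\lambda$. Your write-up simply spells out in more detail what the paper summarizes in one sentence.
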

\begin{proof}
	Any two choices $g, g' \in G$ differ by an element of $G_\lambda$, 
	that is $g' = g x$ with $x \in G_\lambda$.
So it suffices to prove that
	$\spann \lbrace \Ad_x X_\alpha\komma\,\, \alpha \in \hat\Delta^\pm\rbrace
	= \spann \lbrace X_\alpha\komma\,\, \alpha \in \hat\Delta^\pm\rbrace$.
	This follows from the invariance of the ordering
	and the connectedness of $G_\lambda$.
\end{proof}
Therefore the distributions $L_+$ and $L_-$ in $\Tangent \orb\lambda$
spanned by $L_{+,\xi}$ and $L_{-,\xi}$, respectively, are well-defined.

\begin{corollary} \label{theo:distributions:complex}
	The star product $*_\hbar$ derives the first argument only in the directions
	of $L_+^{(1,0)}$ and the second argument only in the directions of $\smash{L_-^{(1,0)}}$.
\end{corollary}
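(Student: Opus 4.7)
The plan is to combine Theorem~\ref{theo:canonicalelement:general}, which gives the explicit formula
$F_\hbar = \sum_{w \in \tilde{\wordset}} p^w_{\I\lambda/\hbar}(\alpha_w)^{-1}\,\pi^+(X_w) \tensor \pi^-(Y_w)$,
with Proposition~\ref{theo:calculationOfLeftInvDiffOpsOnOrbit}, which expresses the star product at a point $\xi = \Ad^*_h \lambda$ as
\begin{equation*}
(f_1 *_\hbar f_2)(\xi)
= \sum_{w \in \tilde{\wordset}} p^w_{\I\lambda/\hbar}(\alpha_w)^{-1}
\bigl(S(\Ad_h \pi^+(X_w))\bigr)_{\orb\lambda} f_1(\xi)
\cdot \bigl(S(\Ad_h \pi^-(Y_w))\bigr)_{\orb\lambda} f_2(\xi)
\punkt
\end{equation*}

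First I would analyze the structure of the individual factors. For $w \in \tilde{\wordset}$ each letter $w_i$ corresponds to a positive root $\alpha_{w_i} \in \hat\Delta^+$, so $\pi^+(X_w) = X_{w_1} \dots X_{w_{\abs w}}$ is a product of elements of $\plus{\tilde{\lie n}}$ drawn from the root vectors $X_\alpha$ with $\alpha \in \hat\Delta^+$; similarly $\pi^-(Y_w)$ is a product of elements of $\minus{\tilde{\lie n}}$ drawn from the $Y_\alpha = X_{-\alpha}$, i.e.\ root vectors for $\hat\Delta^-$. Since $\Ad_h$ is the algebra automorphism of $\univ\CLA$ induced by the Lie algebra automorphism $\Ad_h : \CLA \to \CLA$, and since the antipode $S$ is an anti-automorphism with $S(X_1 \dots X_m) = (-1)^m X_m \dots X_1$ for $X_i \in \CLA$, both of these operations preserve the property of being a product of generators; they merely conjugate the generators by $\Ad_h$, reverse their order, and introduce a sign.

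Next I would pass to the fundamental vector fields. The operator acting on $f_1$ becomes, up to sign, a composition of vector fields $(\Ad_h X_\alpha)_{\orb\lambda}$ with $\alpha \in \hat\Delta^+$, and the operator acting on $f_2$ becomes a composition of vector fields $(\Ad_h X_\alpha)_{\orb\lambda}$ with $\alpha \in \hat\Delta^-$. By the very definition of $L_{+,\xi}$ and $L_{-,\xi}$ given just before the corollary, every such vector field takes values in $L_+$ respectively $L_-$ along $\orb\lambda$, and the previous lemma guarantees that these distributions are independent of the choice of $h$ representing $\xi$.

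Therefore, at any $\xi \in \orb\lambda$, $(f_1 *_\hbar f_2)(\xi)$ depends on $f_1$ only through iterated derivatives along $L_+$ and on $f_2$ only through iterated derivatives along $L_-$, which is the claim. The argument is essentially a direct unpacking of the twist formula; the only subtlety I anticipate is keeping track of the antipode and the $\Ad_h$-conjugation together with the fact that $\pi^\pm$ annihilates anything leaving $\univ(\tilde{\lie n}^\pm)$, but both are handled by the observations above and so no serious obstacle arises.
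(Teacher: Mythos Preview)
Your proposal is correct and follows exactly the approach indicated in the paper's one-line proof, which simply cites the explicit formula for $F_\hbar$ together with Proposition~\ref{theo:calculationOfLeftInvDiffOpsOnOrbit}. One small inaccuracy: for $w\in\tilde\wordset$ the individual letters need not all lie in $\hat\Delta^+$, so $\pi^+(X_w)$ is not literally $X_{w_1}\cdots X_{w_{|w|}}$; what matters, and what you use, is that $\pi^+(X_w)\in\univ(\plus{\tilde{\lie n}})$ is a linear combination of monomials in the $X_\alpha$ with $\alpha\in\hat\Delta^+$, and your argument goes through unchanged with this correction.
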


\begin{proof}
	This follows from the %explicit 
	formula for $F_\hbar$ 
	obtained in \autoref{theo:canonicalelement:general},
	from \autoref{remark:leaveOutProjections},
	and from \autoref{theo:calculationOfLeftInvDiffOpsOnOrbit}.
\end{proof} 
\subsection{Examples}
\label{subsec:examples}

In this subsection we derive formulas for $F_\hbar$ in the case 
$G = \SL[1+n]$ for the largest non-trivial stabilizer $G_\lambda$.
When restricting to real coadjoint orbits in \autoref{subsec:realExamples},
this example allows us to obtain quantizations of
complex projective spaces and hyperbolic discs.

\begin{example}[{{$\SL[1+n]$}}]\label{ex:SLC}
	Let $G = \SL[1+n]$ be the Lie group of matrices with determinant $1$.
	Its Lie algebra $\lie g = \liesl[1+n]$ consists of matrices with trace $0$.
	Number the rows and columns of a matrix $X \in \lie g$ by $0, \dots, n$.
	Let $\lambda \colon \lie g \to \mathbb C$, 
	$X \mapsto -\I r X_{0,0}$ where $r \in \mathbb C$.
Using that the Killing form $B$ satisfies $B(X,Y) = 2(n+1) \trace(XY)$,
	where $\trace$ is the usual (not normalized) matrix trace,
	it follows that $\lambda^\sharp$ is a multiple 
	of the diagonal matrix $\diag(n,-1, \dots, -1)$,
	and therefore
	\begin{subequations}
	\begin{align}
	\lie g_\lambda &= \lbrace X \in \lie{sl}_{1+n}(\mathbb C) \mid 
	X_{0,i} = X_{i,0} = 0 \text{ for $1 \leq i \leq n$}\rbrace \komma \\
	G_\lambda &= \lbrace g \in \group{SL}_{1+n}(\mathbb C) \mid 
	g_{0,i} = g_{i,0} = 0 \text{ for $1 \leq i \leq n$}\rbrace \punkt
	\label{eq:stab:SL}
	\end{align}
	\end{subequations}
	We choose the Cartan subalgebra $\lie h$
	consisting of the diagonal matrices in $\lie g$.
	The roots are then given by $\alpha_{i,j} = L_i - L_j$
	for $0 \leq i,j \leq n$ with $i \neq j$,
	where $L_{i} \in \lie h^*$, $L_{i}(X) = X_{i,i}$.
If we let the roots $\alpha_{i,j}$ with $i < j$ be positive,
	then the simple roots are $\alpha_{0,1}, \alpha_{1,2}, \dots, \alpha_{n-1, n}$.
As before, denote the matrix with entry $1$ at position $(i,j)$
	by $E_{i,j}$, and define 
	$X_{i,j} \coloneqq E_{i,j} \in \lie g^{\alpha_{i,j}}$ and 
	$Y_{i,j} \coloneqq E_{j,i} \in \lie g^{\alpha_{j,i}} = \lie g^{-\alpha_{i,j}}$.
	Note that $B(X_{i,j}, Y_{i,j}) = 2(n+1) \trace(X_{i,j} Y_{i,j}) = 2(n+1)$,
	so we use a normalization different from that in \autoref{subsec:twist:restricted}. 
	
	If $n=1$, it is easy to simplify the formula for $F_\hbar$
	obtained in \autoref{theo:canonicalelement:general}:
	There is only one positive root $\alpha = \alpha_{0,1}$,
	and there is a unique word $w_\ell$ of a given length $\ell \in \mathbb N_0$.
	Note that $\lambda = - \I r \alpha/2$ and $\rho = \alpha/2$, so 
	$p_{\I \lambda/\hbar}(m \alpha) 
	=
	\frac 1 2 m^2 (\alpha, \alpha) 
		- \frac 1 2 m (\alpha, \alpha) 
		- \frac{1}{2\hbar} m r (\alpha, \alpha) 
	=
	\frac 1 4 m(m - 1 - \frac r \hbar)$.
	Therefore
	\begin{equation*}
	p_{\I\lambda/\hbar}^{w_\ell} (\alpha_{w_\ell}) = \prod_{m=1}^\ell \frac 4 { 
		m\left(m-1-\frac r \hbar\right)} = \frac {(-4)^\ell} {\ell!  
		\frac r \hbar 
		\left(\frac r \hbar - 1\right) \dots \left(\frac r \hbar - 
		(\ell-1)\right)}\punkt
	\end{equation*}
	We set $X \coloneqq X_{0,1}$ and $Y \coloneqq Y_{0,1}$.
	Since $B(X,Y)=4$ we have to plug the normalized elements $X/2$ and $Y/2$ into 
	\eqref{eq:canonicalelement:general}, and obtain
	\begin{equation}\label{eq:example:nIsOne}
	F_\hbar = \sum_{\ell \in \mathbb N_0} \frac {(-1)^\ell} {\ell!  
		\frac{r}{\hbar} 
		\left( \frac{r}{\hbar} - 1\right) \dots \left( 
		\frac{r}{\hbar} - (\ell-1)\right)} X^\ell 
	\tensor Y^\ell \punkt
	\end{equation}
	This result was already obtained in \cite[Example 4.16]{alekseev.lachowska:2005a},
	but the following result for arbitrary $n$ is new.
	We prove it by computing the canonical element of the Shapovalov pairing directly,
	instead of simplifying \eqref{eq:canonicalelement:general}.
\end{example}
\begin{proposition}\label{theo:example:standardOrdering}
	For $G = \SL[1+n]$, the same $\lambda$ and the same ordering as above, one has
	\begin{equation} \label{eq:example:standardOrdering}
	F_\hbar = \sum_{\ell \in \mathbb N_0} \frac {(-1)^\ell} {\ell!  
		\frac{r}{\hbar} 
		\left(\frac{r}{\hbar} - 1\right) \dots \left( 
		\frac{r}{\hbar} - (\ell-1)\right)} (X_{0,1} \tensor Y_{0,1} 
	+ \dots + 
	X_{0,n} \tensor Y_{0,n})^\ell \punkt
	\end{equation}
\end{proposition}
\begin{proof}
	The Lie algebras $\plus{\tilde{\lie n}}$ and $\minus{\tilde{\lie n}}$
	are commutative Lie algebras spanned by 
	$X_{0,1}, \dots, X_{0,n}$ and $Y_{0,1}, \dots, Y_{0,n}$ respectively,
	and therefore 
	$\lbrace X^I \coloneqq X_{0,1}^{I_1} \dots X_{0,n}^{I_n} \mid I \in \mathbb N_0^n \rbrace$ 
	and 
	$\lbrace Y^J \coloneqq Y_{0,1}^{J_1} \dots Y_{0,n}^{J_n} \mid J	\in \mathbb N_0^n \rbrace$
	are bases of $\univ(\plus{\tilde{\lie n}})$ and $\univ(\minus{\tilde{\lie n}})$.
	The Lie algebra $\plus{\lie n}$ is spanned by $X_{i,j}$ with $i < j$
	and we can view $X^I$ also as an element of $\univ(\plus{\lie n})$.
	Then $\smash{\tilde\pi_\lambda^+(X^I)} = X^I$ and similarly 
	$\smash{\tilde\pi_\lambda^-(Y^J)} = Y^J$.
	Consequently $\smash{\langle X^I, Y^J 
	\rangle^\sim_{\I\lambda/\hbar}}
	= 
	\smash{\langle X^I, Y^J \rangle_{\I \lambda / \hbar}}$.
	For	degree reasons the bases $\lbrace X^I \rbrace$ 
	and $\lbrace Y^J \rbrace$ are orthogonal, 
	meaning that $\smash{\langle X^I, Y^J \rangle_{\I\lambda/\hbar}} = 0$ for $I \neq J$.
	Indeed, $X^I$ and $Y^J$ are homogeneous with respect to the degree $d$
	defined in the beginning of \autoref{subsec:twist:restricted},
	$d(X^I) 
	= 
	I_1 d(X_{0,1}) + \dots + I_n d(X_{0,n}) 
	= 
	I_1 \alpha_{0,1} + \dots + I_n \alpha_{0,n}$,
	and
	$d(Y^J) = -(J_1 \alpha_{0,1} + \dots + J_n \alpha_{0,n})$.
	Since the $\alpha_{0,i}$ are linearly independent, 
	\autoref{theo:shapovalovPairing:computations} implies 
	the claimed orthogonality.
	
	Therefore it suffices to determine the normalization
	$\langle X^I, Y^I \rangle_{\I\lambda/\hbar}$. 
	Define $H_i \coloneqq [X_{0,i}, Y_{0,i}] = E_{0,0} - E_{i,i}$.
	Given a multiindex $I \in \mathbb N_0^n$ we can form a sequence
	that starts with $I_1$ many $1$'s, then has $I_2$ many $2$'s,
	$\dots$, then $I_n$ many $n$'s.
	Denote the $k$-th element of this sequence by $I_{\lbrace k \rbrace}$.
	Introduce the projection $(\argumentdot)_0$ to $\univ \lie h$ 
	in the decomposition
	$\univ \lie g
	=
	\univ \lie h \oplus (
		\minus{\lie n} \cdot \univ \lie g
		+ \univ \lie g \cdot \plus{\lie n}
	)$, so that $\pi_\lambda(u) = \lambda((u)_0)$.
	Then we claim that
	\begin{equation} \label{eq:claim:XIYI}
	(X^I Y^I)_0 = I! \prod_{\ell=0}^{\abs I-1} (H_{I_{\lbrace\ell\rbrace}} - \ell) \punkt
	\end{equation}
	To see that this formula implies the proposition, note that
	\begin{equation*}
	\langle X^I, Y^I \rangle_{\I\lambda/\hbar} 
	=
	\pi_{\I \lambda/\hbar}(S(X^I) Y^I)
	=
	(-1)^{\abs I}\left(\frac\I\hbar\lambda\right)((X^I Y^I)_0)
	\end{equation*}
	and that
	$\frac \I \hbar \lambda(H_i) = \frac r \hbar$
	for all $i = 1, \dots, n$. So
	\begin{equation*}
	F_\hbar 
	= 
	\sum_{I \in \mathbb N_0^n} 
		\frac 1 {\langle X^I, Y^I\rangle_{\I \lambda/\hbar}} X^I \tensor Y^I
	=
	\sum_{I \in \mathbb N_0^n}
		\frac{(-1)^{\abs I}}
			 {I! \frac r \hbar 
			 		   \left(\frac r \hbar -1 \right) \dots 
			 		   \left(\frac r \hbar -(\abs I -1)\right)}
		X^I \tensor Y^I
	\end{equation*}
	and an application of the multinomial theorem gives \eqref{eq:example:standardOrdering}.
	
	It remains to prove \eqref{eq:claim:XIYI}.
	For $n=1$ this is the statement of \cite[Lemma 5.2]{me}.
	Note that this also means that 
	$Z \coloneqq X_{0,n}^{I_n} Y_{0,n}^{I_n} - I_n! H_n (H_n -1) \dots (H_n - I_n + 1)
	\in \univ(\spann \lbrace X_{0,n}, Y_{0,n}, H_n \rbrace)$
	satisfies $(Z)_0 = 0$.
	We proceed by induction and assume that \eqref{eq:claim:XIYI} holds for $n-1$.
	Writing $I_- = (I_1, \dots, I_{n-1},0)$ 
	and noting that $[H_n, X_{0,i}] = X_{0,i}$ for $1 \leq i \leq n-1$,
	we compute
	\begin{align*}
	&(X^I Y^I)_0 = (X^{I_-} X_{0,n}^{I_n} Y_{0,n}^{I_n} Y^{I_-})_0 \\
	&= (X^{I_-} (I_n! H_n (H_n-1) \dots (H_n-I_n+1) + Z) Y^{I_-})_0 \\
	&= I_n! ((H_n-\abs{I_-}) (H_n-\abs{I_-}-1) \dots (H_n-\abs{I_-}-I_n+1) 
	X^{I_-} Y^{I_-})_0 + (X^{I_-} Z Y^{I_-})_0 \\
	&= I_n! (H_n-\abs{I_-}) (H_n-\abs{I_-}-1) \dots (H_n-\abs{I_-}-I_n+1) 
	(X^{I_-} Y^{I_-})_0 + (X^{I_-} Z Y^{I_-})_0 .
	\end{align*}
	Since $(Z)_0 = 0$ and  
	$d(Z) = d(X_{0,n}^{I_n} Y_{0,n}^{I_n} - I_n! H_n (H_n -1) \dots (H_n - I_n + 1)) = 0$
	we can write $Z = Y_{0,n} Z' X_{0,n}$ for some
	$Z' \in \univ (\spann \lbrace X_{0,n}, Y_{0,n}, H_n \rbrace)$.
	Since $Y_{0,n} \in \lie g^{\alpha_{n,0}}$ any commutator of $Y_{0,n}$
	with elements of $\lie g^{\alpha_{0,1}}, \dots, \lie g^{\alpha_{0,n-1}}$
	has degree $d$ equal to $L_n - \sum_{i=0}^{n-1} c_i L_i$ for some $c_i \in \mathbb Z$,
	so it must either be $0$ or in a negative root space.
	Therefore $(X^{I_-} Z Y^{I-})_0 = 0$, 
	and the claim follows by applying the induction hypothesis to the first summand
	in the equation above.
\end{proof}

\begin{corollary}\label{theo:example:oppositeOrdering}
	Let $G = \group{SL}_{1+n}(\mathbb C)$ and $\lambda$ be as above,
	but choose the opposite ordering,
	for which $\alpha_{i,j}$ with $i > j$ is positive.
	Then
	\begin{equation} \label{eq:example:oppositeOrdering}
	F_\hbar = \sum_{\ell \in \mathbb N_0} \frac {1} {\ell! 
		\frac{r}{\hbar} 
		\left(\frac{r}{\hbar} + 1\right) \dots \left( 
		\frac{r}{\hbar} + (\ell-1)\right)} (Y_{0,1} \tensor X_{0,1} 
	+ \dots + 
	Y_{0,n} \tensor X_{0,n})^\ell \punkt
	\end{equation}
\end{corollary}
\begin{proof}
	The only change in the computation above is that the roles of 
	$X_{0,i}$ and $Y_{0,i}$ are swapped. Now $[Y_{0,i}, X_{0,i}] = 
	E_{i,i}-E_{0,0}$, so $\frac \I \hbar \lambda([Y_{0,i}, X_{0,i}]) = 
	-\frac r\hbar$, which means that 
	$r$ changes sign.
\end{proof} 
\section{Continuity}
\label{sec:continuity}
In this section, we will extend the product
$*_\hbar \colon \Pol(\orb\lambda) \times \Pol(\orb\lambda) \to \Pol(\orb\lambda)$ 
obtained in \autoref{theo:starProduct:strict:associative} to a product 
$*_\hbar \colon \Hol(\orb\lambda) \times \Hol(\orb\lambda) \to \Hol(\orb\lambda)$
on all holomorphic functions on the coadjoint orbit,
that is continuous with respect to the topology of locally uniform convergence.
More precisely, we prove the following theorem.

\begin{theorem} \label{theo:mainContinuityTheorem}
	Let $\orb \lambda$ be a complex semisimple coadjoint orbit
	of a complex semisimple connected Lie group $G$.
Then for any $\hbar \in \mathbb C \setminus P_\lambda$
	the product $*_\hbar$ on $\Pol(\orb\lambda)$ 
	is continuous with respect to the topology of locally uniform convergence 
	and extends to a continuous and $G$-invariant product 
	$*_\hbar \colon \Hol(\orb\lambda) \times \Hol(\orb\lambda) \to \Hol(\orb\lambda)$ 
	on the space of all holomorphic functions on $\orb\lambda$.
\end{theorem}
The proof of this theorem proceeds as follows:
In \autoref{subsec:continuityEstimates} we prove the continuity of $*_\hbar$ 
with respect to a topology that we call the reduction-topology and in 
\autoref{subsec:reductionTopIsLocallyUniformConvergence} we prove that the 
reduction-topology coincides with the topology of locally uniform convergence.
Consequently $*_\hbar$ extends to the completion of the space
of polynomials on $\orb\lambda$.
Using the results of \autoref{subsec:stein} we prove in 
\autoref{subsec:reductionTopIsLocallyUniformConvergence} that this completion 
is the space $\Hol(\orb\lambda)$ of all holomorphic functions on $\orb\lambda$.

In the whole section we assume that the complex connected semisimple Lie group $G$
is concretely realized as a complex Lie subgroup of $\GL[N]$ for some $N \in \mathbb N$,
as explained in \autoref{subsec:generalities}.
In particular, since $G$ is semisimple,
it is a closed submanifold of $\mathbb C^{N \times N}$.

\subsection{Continuity in the reduction-topology}
\label{subsec:continuityEstimates}

In this subsection we prove the continuity of the star product $*_\hbar$
with respect to a topology that we call the reduction-topology,
and which is defined below.
Recall that a sequence of functions $f_i \colon X \to \mathbb C$ on a
topological space $X$ is said to be locally uniformly convergent
if for every $x \in X$ there is a neighbourhood $U \subseteq X$
such that $f_i$ converges uniformly to $f$ on $U$,
i.e.\ $\lim_{i \to \infty} \sup_{y \in U} \abs{f_i(y) - f(y)} = 0$.
In this work, $X$ will always be a manifold. Then the topology of 
locally uniform convergence coincides with the topology of 
compact convergence (for every compact subset $K \subseteq X$,
$f_i$ converges uniformly on $K$), and is therefore a locally convex topology,
defined by the seminorms $\norm f_K \coloneqq \sup_K \abs f$.

Denote the ideal of polynomials in $\Pol(\mathbb C^{N \times N})$
whose restriction to $G$ vanishes by $\vanideal{G}$.

\begin{definition}[Reduction-topology] \label{def:reductionTopology}
	The topology $\mathcal T_{\mathrm{lc}}$ of locally uniform convergence
	on the space $\Pol(\mathbb C^{N\times N})$ of polynomials
	on $\mathbb C^{N \times N}$ induces a quotient topology on the space
	$\Pol(G) \cong \Pol(\mathbb C^{N\times N}) / \vanideal{G}$
	of polynomials on $G$,
	and we call the subspace topology on the space
	$\Pol(\orb\lambda) \cong \Pol(G)^{G_\lambda}$ 
	of polynomials on the coadjoint orbit $\orb\lambda$
	the reduction-topology.
\end{definition}
In \autoref{subsec:reductionTopIsLocallyUniformConvergence}
we will prove that the reduction-topology coincides with the 
topology of locally uniform convergence on $\orb\lambda$.

This topology is convenient for obtaining continuity estimates for $\starhbar$,
since we gave a description of $\Psi(F_\hbar)$ via bidifferential operators
on $G$ in \autoref{subsec:diffopsOnHomogeneousSpaces}. 
Since we assume that the Lie group $G$ is concretely realized 
as a complex Lie subgroup of $\GL[N]$,
its Lie algebra $\lie g$ is realized as a Lie subalgebra of
$\liegl[N]$.
Considering the element $F'_{\hbar,\ell}$ defined in \eqref{eq:fhbarprime}
as an element of $\univ(\liegl[N]) \tensor \univ(\liegl[N])$,
we let
\begin{equation} \label{eq:defStarProduct:Strict:onCn}
*'_\hbar \colon 
\Pol(\mathbb C^{N\times N}) \times \Pol(\mathbb C^{N\times N}) 
\to \Pol(\mathbb C^{N\times N}) \komma \quad
(p, q) \mapsto p *'_\hbar q 
\coloneqq 
\sum_{\ell=0}^\infty \leftinvholo{(F'_{\hbar,\ell})}(p, q) \punkt
\end{equation}
This is well-defined since the sum over $\ell$ is finite by \autoref{theo:starFiniteOnPolys}
and $\leftinvholo{(F'_{\hbar,\ell})}(p,q)$ is again a polynomial.
Note that $*'_\hbar$ is (in general) not associative
since $\sum_{\ell=0}^\infty \smash{F'_{\hbar, \ell}}$ satisfies \eqref{eq:associativity:general} 
only after passing to the quotient.
However, since $\smash{F'_{\hbar, \ell}}$ lies in the subspace
$\univ\lie g \tensor \univ\lie g$ it induces a product on 
$\Pol(G) \cong \Pol(\mathbb C^{N\times N}) / \vanideal{G}$.
As in \autoref{remark:leaveOutProjections} it follows that
the restriction of this product to $\Pol(G)^{G_\lambda} \cong \Pol(\orb\lambda)$
coincides with $\starhbar$.

\begin{theorem} \label{theo:continuity:onCn}
	For $\hbar \in \mathbb C \setminus P_\lambda$ the product $*'_\hbar$ on
	$\Pol(\mathbb C^{N\times N})$ is continuous with respect to	the
	topology of locally uniform convergence $\mathcal T_{\mathrm{lc}}$.
\end{theorem}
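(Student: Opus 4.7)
The plan is to establish, for each compact $K \subseteq \mathbb C^{N\times N}$, a compact superset $K' \supseteq K$ and a constant $C_\hbar(K) > 0$ independent of $p$ and $q$ such that $\norm{p *'_\hbar q}_K \leq C_\hbar(K) \norm{p}_{K'} \norm{q}_{K'}$. By \autoref{theo:canonicalelement:general} and \autoref{theo:starFiniteOnPolys},
$$
p *'_\hbar q
= \sum_{w \in \tilde\wordset}
\prod_{i=1}^{\abs w} p_{\I\lambda/\hbar}\bigl(\alpha_{w_{i \dots \abs w}}\bigr)^{-1}
\cdot \leftinv[,\holo]{\pi^+(X_w)}(p) \cdot \leftinv[,\holo]{\pi^-(Y_w)}(q)
$$
is a finite sum for any fixed polynomials $p, q$, so the task is to dominate each summand by a quantity that is summable in $w$ with a constant depending only on $K, K'$, and $\hbar$.

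First I would bound the iterated holomorphic left-invariant differential operators via Cauchy's estimate. From the computation $\leftinv[,\holo]{E_{ab}} = \sum_k g_{ka}\, \partial/\partial g_{kb}$ (as in the proof of \autoref{theo:starFiniteOnPolys}), the operator $\leftinv[,\holo]{X}$ for $X \in \plus{\tilde{\lie n}}$ has coefficients bounded on $K$ by $N R \norm{X}_\infty$, where $R = \sup_{g \in K} \norm{g}_\infty$. Taking $K' = \lbrace h : d_\infty(h, K) \leq r \rbrace$ for some $r > 0$ and inserting $\abs w - 1$ nested compacts between $K$ and $K'$, each at distance $r/\abs w$ from the next, Cauchy's estimate applied at every step yields
$$
\norm{\leftinv[,\holo]{\pi^+(X_w)} p}_K \leq \left(A \abs w / r\right)^{\abs w} \norm{p}_{K'}
$$
for a constant $A$ depending only on $R$, $N$, and $\max_\alpha \norm{X_\alpha}_\infty$, and analogously for $\pi^-(Y_w)$ applied to $q$.

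Next I would produce a matching lower bound on the product of rational coefficients. For a standard ordering, $(\rho, \alpha) \geq c_1 > 0$ uniformly for $\alpha \in \hat\Delta^+$, so if $\mu \in \mathbb N_0 \hat\Delta^+$ is a sum of $k$ positive roots then $(2\rho,\mu) \geq 2c_1 k$ and Cauchy--Schwarz forces $(\mu,\mu) \geq (c_1 k/\norm{\rho})^2$; together with $\hbar \notin P_\lambda$ this gives $\abs{p_{\I\lambda/\hbar}(\mu)} \geq c(\hbar) k^2$ uniformly in $\mu$. Since $\alpha_{w_{i \dots \abs w}}$ is a sum of $\abs w - i + 1$ positive roots, iterating over $i$ produces $\prod_i \abs{p_{\I\lambda/\hbar}(\alpha_{w_{i \dots \abs w}})} \geq D^{\abs w}(\abs w!)^2$ for some $D = D(\hbar) > 0$. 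Combining with the preceding operator estimate and invoking Stirling $\abs w^{\abs w} \leq e^{\abs w} \abs w!$, the $w$-summand of $\norm{p *'_\hbar q}_K$ is dominated by $((Ae)^2/(r^2 D))^{\abs w} \norm{p}_{K'}\norm{q}_{K'}$. Since there are at most $k^n$ words in $\tilde\wordset$ of length $n$ (where $k = \#\hat\Delta^+$), the resulting series converges geometrically once $r$ is chosen large enough to make $k(Ae)^2/(r^2 D) < 1$, producing the required $C_\hbar(K)$. The main obstacle is precisely this balance: the iterated-Cauchy estimate produces $\abs w^{\abs w}$-growth that is absorbed by the $(\abs w!)^2$-decay coming from $F_\hbar$, and the polydisc radius $r$ is exactly the free parameter needed to swallow the remaining combinatorial constants.
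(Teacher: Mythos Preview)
Your coefficient estimate $\prod_i \lvert p_{\I\lambda/\hbar}(\alpha_{w_{i\dots\abs w}})\rvert \ge D^{\abs w}(\abs w!)^2$ is correct and is essentially the content of \autoref{theo:estimates:pLambdaW}. The problem lies in the Cauchy step. In the iterated estimate you need the coefficient bound for $\leftinv[,\holo]{X}$ not on $K$ but on each intermediate compact $K_j$, whose ``radius'' is $R + j r/\abs w$; hence the constant you call $A$ is really of size $N(R+r)\max_\alpha\lVert X_\alpha\rVert_\infty$ and depends on $r$. Consequently $A/r \to N\max_\alpha\lVert X_\alpha\rVert_\infty$ as $r\to\infty$ rather than to $0$, and after Stirling the summand is bounded only by a fixed geometric ratio $\sim k\,(eN\max_\alpha\lVert X_\alpha\rVert_\infty)^2/D$ that you have no freedom left to make smaller than $1$. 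So the sentence ``the polydisc radius $r$ is exactly the free parameter needed to swallow the remaining combinatorial constants'' is where the argument breaks: $r$ cancels out of the leading behaviour and swallows nothing.

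The paper gets around this by exploiting an algebraic feature your analytic estimate cannot see: $\leftinv[,\holo]{X_\alpha}$ maps linear polynomials to linear polynomials (cf.\ the computation $\leftinv{E_{ij}}P_{k\ell}=\delta_{j\ell}P_{ki}$ in \autoref{theo:starFiniteOnPolys}). Working in the equivalent $T_0$-norms $\mynorm{R}{\cdot}$, one writes $b_I^*$ as a product of $\abs I$ linear factors and applies Leibniz: $\leftinv[,\holo]{X_w}b_I^*$ splits into $\abs I^{\abs w}$ terms (one for each distribution of the letters of $w$ among the $\abs I$ factors), each of $\mynorm{R}{\cdot}$-size at most $C^{\abs w}R^{\abs I}$. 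The resulting growth is $(\abs I\,\abs J)^{\abs w}$ rather than $\abs w^{2\abs w}$; combined with the $(\abs w!)^{-2}$ decay this sums to $e^{c\abs I}e^{c\abs J}$, which is absorbed by passing from $\mynorm{R}{\cdot}$ to $\mynorm{RM}{\cdot}$. In short, the missing idea is that the differential operators in $F_\hbar$ preserve polynomial degree, which converts the dangerous $\abs w$-factorial into a harmless exponential in the polynomial degree.
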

Before proving this theorem in the rest of this section,
we would like to note the following consequence,
which motivates the definition of the reduction-topology given above.

\begin{corollary} \label{theo:continuity:onOrbits}
	For $\hbar \in\mathbb C \setminus P_\lambda$ the product $\starhbar$ on 
	$\Pol(\orb\lambda)$ is continuous with respect to the re\-duc\-ti\-on-to\-po\-lo\-gy.
\end{corollary}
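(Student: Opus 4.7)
The plan is to propagate the continuity statement of \autoref{theo:continuity:onCn} through the two constructions defining the reduction-topology: the quotient $\Pol(G) \cong \Pol(\mathbb C^{N\times N}) / \vanideal{G}$ and the subspace $\Pol(\orbext\lambda) \cong \Pol(G)^{G_\lambda} \subseteq \Pol(G)$. The corollary is essentially formal given the theorem; there is no new analytic input.

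First I would verify that $\vanideal{G}$ is a two-sided ideal for $*'_\hbar$, which amounts to showing the compatibility $(p *'_\hbar q)\at{G} = (p\at{G}) \starhat (q\at{G})$ for all $p, q \in \Pol(\mathbb C^{N\times N})$. This follows by expanding $F_\hbar$ and applying \autoref{theo:leftInvDiffOps:relation:real} termwise, since the holomorphic left-invariant bidifferential operator $\leftinv[,\holo]{F_\hbar}$ used to define $*'_\hbar$ on $\Pol(\mathbb C^{N \times N}) \subseteq \Hol(\group{GL}(N,\mathbb C))$ restricts on $G$ to the operator used to define $\starhat$ on $\Pol(G)^{G_\lambda}$. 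Consequently $*'_\hbar$ descends to a bilinear product on $\Pol(G)$ which, after further restriction to the $G_\lambda$-invariant subspace $\Pol(G)^{G_\lambda}$, coincides with $\starhat$ under the isomorphism $\pi^*$ of \autoref{theo:hatIsAnIso}.

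Next I would invoke the standard descent principle for open quotients of locally convex spaces. The subspace $\vanideal{G} = \bigcap_{g \in G} \ker \ev_g$ is the intersection of kernels of continuous point evaluations and is therefore closed in $(\Pol(\mathbb C^{N\times N}), \mathcal T_{\mathrm{lc}})$, so the quotient map $q : \Pol(\mathbb C^{N\times N}) \to \Pol(G)$ is continuous and open. Since open surjections are preserved under taking products, $q \times q$ is also an open quotient map. The composition $q \circ *'_\hbar$ is continuous by \autoref{theo:continuity:onCn}, and it factors as $\starhat \circ (q \times q)$; the universal property of the (open) quotient $q \times q$ then yields continuity of $\starhat : \Pol(G) \times \Pol(G) \to \Pol(G)$ in the quotient topology.

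Finally, the reduction-topology on $\Pol(\orbext\lambda)$ is by definition the subspace topology inherited from $\Pol(G)$ via the identification $\pi^* : \Pol(\orbext\lambda) \to \Pol(G)^{G_\lambda}$, and $\starhat$ preserves $\Pol(G)^{G_\lambda}$ by its $G$-invariance, so its restriction is continuous as well. The only slightly nontrivial ingredient is the openness of $q$, which is where the closedness of $\vanideal G$ enters; everything else is automatic from the way the reduction-topology and $\starhat$ are defined from the corresponding structures on $\Pol(\mathbb C^{N \times N})$.
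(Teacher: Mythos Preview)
Your proof is correct and is precisely the unpacking of the paper's one-line argument (``This follows immediately from the previous theorem and the construction of the reduction-topology''): you pass continuity of $*'_\hbar$ through the quotient by the closed two-sided ideal $\vanideal{G}$ and then restrict to the invariant subspace. Two small remarks: the reference to \autoref{theo:leftInvDiffOps:relation:real} is not quite the right citation, since that lemma compares $\leftinv[,\holo]{u}$ on $\CLG$ with $\leftinv{u}$ on a real form $\RLG$, whereas here you need the (more elementary) fact that $\leftinv[,\holo]{u}$ on $\group{GL}(N,\mathbb C)$ restricts to $\leftinv[,\holo]{u}$ on the complex subgroup $G$; and openness of $q$ is automatic for any linear quotient map---closedness of $\vanideal{G}$ is what makes the quotient Hausdorff, not what makes $q$ open.
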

\begin{proof}
	This follows immediately from the previous theorem and the construction of 
	the re\-duc\-ti\-on-to\-po\-lo\-gy.
\end{proof}
\begin{remark}
	It is interesting to point out that the proof of 
	\autoref{theo:continuity:onCn} will not use anything 
	about the actual Lie algebra structure but semisimplicity 
	and the form of the element 
	$F_\hbar$. In fact, we only need that the coefficients of $F_\hbar$ behave 
	like 
	$p^w_\lambda(\alpha_w) \approx \abs w^2$ for large $\abs w$.
	The rest of the proof consists in counting terms and
	checking that there are not too many.	
\end{remark}
The strategy to prove \autoref{theo:continuity:onCn} is as follows.
We first introduce a different locally convex topology
that is better suited for obtaining continuity estimates.
Then we prove that this topology is equivalent
to the topology of locally uniform convergence
and we prove the continuity of $*'_\hbar$ with respect to this topology.

Set $m = N^2$.
Let $B = \lbrace b_1, \dots, b_m \rbrace$ be the standard basis
of $\mathbb C^m$ and denote the dual basis of $(\mathbb C^m)^*$
by $B^* = \lbrace b_1^*, \dots, b_m^* \rbrace$.
Elements of $\Pol(\mathbb C^m) \cong \Sym((\mathbb C^m)^*)$
(where $\Sym$ denotes the symmetric tensor algebra) 
can be written uniquely in the form $\sum_{I \in \mathbb N_0^m} a_I b_I^*$.
Here $I \in \mathbb N_0^m$ is a multiindex, 
$\smash{b_I^*} = \smash{(b_1^*)^{\vee I_1}} \vee \dots \vee \smash{(b_m^*)^{\vee I_m}}$
and only finitely many of the coefficients $a_I \in \mathbb C$ are non-zero.  
For any $R \in \mathbb R^+$ define a norm
\begin{equation}
\mynorm R {{\sum}_{I \in \mathbb N_0^m} a_I b_I^* } 
\coloneqq
{\sum}_{I \in \mathbb N_0^m} \abs{a_I} R^{\abs I} \punkt
\end{equation}
Note that these norms coincide with the $T_0$-norms with respect to the basis $B^*$, 
studied for example in \cite{waldmann:2014a}.
We denote the locally convex topology given by 
endowing $\Pol(\mathbb C^m) \cong \Sym((\mathbb C^m)^*)$ with the 
seminorms $\mynorm R {{}\cdot{}}$ by $\mathcal T_{\mynorm{}{{}\cdot{}}}$.
This topology can equivalently be defined by the countable set
of norms $\mynorm R {{}\cdot{}}$ with $R \in \mathbb N$.

Note that $\mynorm R {{}\cdot{}}$ is submultiplicative with respect to the 
classical 
product:
\begin{multline*}
\mynorm R {
	\left( {\sum}_{I \in \mathbb N_0^m} a_I b_I^* \right)
	\vee 
	\left( {\sum}_{J \in \mathbb N_0^m} a'_J b_J^* \right)
} 
=
\mynorm R {
	{\sum}_{I,J \in \mathbb N_0^m} a_I a'_J b_I^* \vee b_J^* 
} 
\leq 
\\
\leq 
{\sum}_{I,J\in \mathbb N_0^m} \abs{a_I} \abs{a'_J} R^{\abs I + \abs J}
= 
\left( {\sum}_{I\in \mathbb N_0^m} \abs{a_I} R^{\abs I} \right)
\left( {\sum}_{J\in \mathbb N_0^m} \abs{a'_J} R^{\abs J} \right) 
=
\\
=
\mynorm R {{\sum}_{I \in \mathbb N_0^m} a_I b_I^*} 
\mynorm R {{\sum}_{J \in \mathbb N_0^m} a'_J b_J^*} \punkt
\end{multline*}

\begin{proposition}\label{theo:topologies:agree}
	The topologies $\mathcal T_{\mynorm{}{{}\cdot{}}}$ and $\mathcal 
	T_{\mathrm{lc}}$ 
	coincide.
\end{proposition}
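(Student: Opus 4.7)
The plan is to show the two topologies generate the same collection of seminorms (up to equivalence) by establishing mutual bounds. In each case this reduces to a classical estimate on polynomials/holomorphic functions; the main technical input for the non-trivial direction is Cauchy's integral formula applied to a polytorus.

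First, I would show that $\mathcal T_{\mathrm{lc}}$ is coarser than $\mathcal T_{\mynorm{}{{}\cdot{}}}$, which is the easy direction. Given a compact set $K \subseteq \mathbb C^m$, choose $R := \max_{z \in K, 1 \le i \le m} \abs{z_i}$. Then for $p = \sum_{I \in \mathbb N_0^m} a_I b_I^* \in \Pol(\mathbb C^m)$ and any $z \in K$ the triangle inequality gives
\begin{equation*}
\abs{p(z)} \le \sum_{I \in \mathbb N_0^m} \abs{a_I}\abs{z^I} \le \sum_{I \in \mathbb N_0^m} \abs{a_I} R^{\abs I} = \mynorm R p \komma
\end{equation*}
so $\norm{p}_K \le \mynorm R p$, proving continuity of the identity map from $(\Pol(\mathbb C^m), \mathcal T_{\mynorm{}{{}\cdot{}}})$ to $(\Pol(\mathbb C^m), \mathcal T_{\mathrm{lc}})$.

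For the reverse inclusion, fix $R > 0$ and pick any $R' > R$. Let $K_{R'} := \lbrace z \in \mathbb C^m \mid \abs{z_i} = R' \text{ for all } i \rbrace$ be the polytorus of radius $R'$, which is compact. Since any $p \in \Pol(\mathbb C^m)$ is holomorphic, Cauchy's integral formula (applied iteratively in each variable along the circles $\abs{z_i} = R'$) recovers the coefficients as
\begin{equation*}
a_I = \frac 1 {(2\pi\I)^m} \oint_{\abs{z_1}=R'} \cdots \oint_{\abs{z_m}=R'} \frac{p(z)}{z_1^{I_1+1}\cdots z_m^{I_m+1}} \D z_1 \cdots \D z_m \komma
\end{equation*}
whence $\abs{a_I} \le \norm p_{K_{R'}} / (R')^{\abs I}$. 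Summing over $I$ and using that $\sum_{I \in \mathbb N_0^m} x^{\abs I} = (1-x)^{-m}$ for $0 \le x < 1$ yields
\begin{equation*}
\mynorm R p \le \norm p_{K_{R'}} \sum_{I \in \mathbb N_0^m} \bigg(\frac R {R'}\bigg)^{\abs I} = \bigg(\frac{R'}{R'-R}\bigg)^m \norm p_{K_{R'}} \komma
\end{equation*}
so $\mynorm R{{}\cdot{}}$ is controlled by $\norm{{}\cdot{}}_{K_{R'}}$. This shows $\mathcal T_{\mathrm{lc}}$ is finer than $\mathcal T_{\mynorm{}{{}\cdot{}}}$, completing the proof. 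The only non-routine step is the Cauchy estimate bounding $\abs{a_I}$ in terms of the sup norm on a polytorus; the summability of the geometric series in $m$ variables is what forces us to choose $R' > R$ strictly.
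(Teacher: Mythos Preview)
Your proof is correct and follows essentially the same route as the paper: the triangle inequality for the easy direction, and Cauchy's integral formula on a polytorus followed by a geometric-series summation for the other. The only cosmetic difference is that the paper fixes the specific choice $R' = 2mR$ to obtain the concrete bound $\mynorm R p \le 2\norm p_{D_{2mR}}$, whereas you leave $R' > R$ arbitrary and get the constant $(R'/(R'-R))^m$; both are equivalent for establishing the topological statement.
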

\begin{proof}
	Assume $p = \sum_{I \in \mathbb N_0^m} a_I b_I^* \in \Pol(\mathbb C^m)$
	is a polynomial.
Given a compact subset $K \subseteq \mathbb C^m$,
	choose $R \in \mathbb R$ such that 
	$\abs z \leq R$ holds for all $z \in K$.
Then on the one hand we have
	\begin{equation*}
	\norm p_K 
	= \max_{z \in K} \abs{p(z)} 
	\leq \sum_{I \in \mathbb N_0^m} \abs{a_I} R^{\abs I}
	= \mynorm R p \punkt
	\end{equation*}
	On the other hand, if 
	$D_R = \lbrace (z_1, \dots, z_m) \in \mathbb C^m \mid \abs{z_i} \leq R 
	\text{ for all $i = 1, \dots, m$} \rbrace\subseteq \mathbb C^m$ 
	denotes a closed polydisc of radius $R$, 
	then Cauchy's integral formula yields
	\begin{equation*}
	\abs {a_I} = \frac 1 {I!} \abs{\partial_I p(0)} = \frac 1 {(2\pi)^m}\left| 
	\int_{\abs{z_i} = 
		R} 
	\frac{p(z)}{z^{I+(1, \dots, 1)}} \D z^{I} \right|
	\leq \max_{z \in D_R} \abs{p(z)}\frac{R^m}{R^{\abs{I+(1, \dots, 
				1)}}} 
	= \frac{1}{R^{\abs{I}}} \max_{z \in D_R} \abs{p(z)} 
	\punkt
	\end{equation*}
	Applying this estimate for a polydisc of radius $2 m R$ yields
	\begin{multline*}
	\mynorm R p = \sum_{I \in \mathbb N_0^m} \abs{a_I} R^{\abs I}
	\leq \sum_{I \in \mathbb N_0^m} \frac{1}{(2 m R)^{\abs{I}}} R^{\abs 
		I}\max_{z \in D_{2 m 
			R}} 
	\abs{p(z)} \leq \\
	\leq  \max_{z \in D_{2 m R}} 
	\abs{p(z)}\sum_{I \in \mathbb N_0^m} (2m)^{-\abs{I}}
	\leq 2 \max_{z \in D_{2 m R}} 
	\abs{p(z)} = 2 \norm p_{D_{2 m R}}\punkt
	\end{multline*}
	Consequently we can estimate any norm of $\mathcal T_{\mynorm{}{{}\cdot{}}}$
	by a seminorm of $\mathcal T_{\mathrm{lc}}$ and vice versa,
	so the topologies $\mathcal T_{\mynorm{}{{}\cdot{}}}$ and 
	$\mathcal T_{\mathrm{lc}}$ coincide.
\end{proof}
Because of the previous proposition we can and will work with the norms 
$\mynorm R \argumentdot$ instead of the seminorms $\norm \argumentdot_K$ in the 
following.
To obtain continuity estimates, we need to estimate
the coefficients $p_\lambda(\mu)$ defined in \eqref{eq:def:plambda}.

\begin{lemma}[Estimates for $p_\lambda$] \label{theo:estimates:pLambda}
	For any fixed compact set $K \subseteq \lie h^*$ there are constants $C > 
	0$ and $M$ such that 
	$p_\lambda(\alpha_w)$ defined in 
	\eqref{eq:def:plambda} satisfies
	\begin{equation}
	\abs{p_\lambda(\alpha_w)} \geq C \abs w^2
	\end{equation}
	for all words $w \in \wordset$ of length $\abs w \geq M$ and all $\lambda 
	\in K$.
\end{lemma}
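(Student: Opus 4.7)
The plan is to exploit that $p_\lambda(\alpha_w) = \frac 1 2 (\alpha_w, \alpha_w) - (\rho, \alpha_w) - (\lambda, \alpha_w)$ is quadratic in $\alpha_w$ while the remaining terms grow only linearly in $\norm{\alpha_w}$. Since $\alpha_w$ is a non-negative integer combination of positive roots whose coefficients sum to $\abs w$, the key point will be to show that $(\alpha_w, \alpha_w) \geq c_0^2 \abs w^2$ for some constant $c_0 > 0$ that is independent of $w$.

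To prove this lower bound I would recall that $(\argumentdot, \argumentdot)$ is positive definite on the real span $\lie h_{\mathbb R}^*$ of the roots and denote the induced norm by $\norm{\argumentdot}$. Writing $\alpha_w = \sum_{i=1}^k c_i(w) \alpha_i$ with $c_i(w) \in \mathbb N_0$ satisfying $\sum_i c_i(w) = \abs w$, the continuous map $(c_1, \dots, c_k) \mapsto \norm{\sum_i c_i \alpha_i}$ on the compact simplex $\lbrace c \in \mathbb R_{\geq 0}^k \mid \sum_i c_i = 1\rbrace$ attains its minimum $c_0$, which must be strictly positive: a non-trivial non-negative combination of positive roots cannot vanish, since expanding each $\alpha_i$ in simple roots yields a non-negative combination of simple roots in which at least one coefficient is strictly positive, and the simple roots are linearly independent. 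By homogeneity $\norm{\alpha_w} \geq c_0 \abs w$ for all words $w$, while the triangle inequality gives the complementary bound $\norm{\alpha_w} \leq c_1 \abs w$ with $c_1 = \max_i \norm{\alpha_i}$.

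For the linear correction terms, any $\lambda \in \lie h^*$ decomposes uniquely as $\lambda = \lambda_1 + \I \lambda_2$ with $\lambda_i \in \lie h_{\mathbb R}^*$, so Cauchy--Schwarz in the real inner product space $\lie h_{\mathbb R}^*$ yields $\abs{(\lambda, \alpha_w)} \leq (\norm{\lambda_1} + \norm{\lambda_2}) \norm{\alpha_w}$, and similarly $\abs{(\rho, \alpha_w)} \leq \norm\rho \norm{\alpha_w}$. Combining with the upper bound $\norm{\alpha_w} \leq c_1 \abs w$ and using that $\norm{\lambda_1} + \norm{\lambda_2}$ is uniformly bounded on the compact set $K$, one obtains a constant $C'$ with $\abs{(\rho, \alpha_w)} + \abs{(\lambda, \alpha_w)} \leq C' \abs w$ for all $\lambda \in K$ and all $w \in \wordset$. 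The reverse triangle inequality together with $(\alpha_w, \alpha_w) \geq 0$ then gives
\begin{equation*}
\abs{p_\lambda(\alpha_w)}
\geq \tfrac 1 2 (\alpha_w, \alpha_w) - \abs{(\rho, \alpha_w)} - \abs{(\lambda, \alpha_w)}
\geq \tfrac{c_0^2}{2} \abs w^2 - C' \abs w,
\end{equation*}
which is bounded below by $\tfrac{c_0^2}{4} \abs w^2$ as soon as $\abs w \geq M := \lceil 4 C'/c_0^2 \rceil$, proving the lemma with $C = c_0^2 / 4$. The only conceptual ingredient beyond routine Cauchy--Schwarz and compactness estimates is the non-vanishing of non-trivial non-negative combinations of positive roots, which is where the semisimplicity of $\lie g$ enters.
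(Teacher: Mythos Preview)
Your proof is correct and follows the same overall structure as the paper: bound $(\alpha_w,\alpha_w)$ below quadratically in $\abs w$, bound the linear terms $(\rho+\lambda,\alpha_w)$ by a constant times $\abs w$ uniformly for $\lambda\in K$, and combine via the reverse triangle inequality. The one point of divergence is the quadratic lower bound: you obtain $\norm{\alpha_w}\geq c_0\abs w$ by a compactness argument on the simplex of non-negative coefficient vectors, using that a non-trivial non-negative combination of positive roots cannot vanish; the paper instead expands $\alpha_w$ in \emph{simple} roots and pairs with $\rho$, using Cauchy--Schwarz in the form $(\alpha_w,\alpha_w)\geq (\alpha_w,\rho)^2/(\rho,\rho)$ together with $(\rho,\alpha_i)>0$ for each simple $\alpha_i$, which yields the explicit constant $C=c_\rho^2/(4(\rho,\rho))$. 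Your route is arguably cleaner and avoids singling out $\rho$; the paper's route gives a concrete constant in terms of root-system data. Either way the estimate is the same.
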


\begin{proof}
	Assume that the positive roots $\alpha_1, \dots, \alpha_k \in \Delta^+$ are 
	ordered in such a way that $\alpha_1, \dots, \alpha_r$ are the simple roots.
Write $\alpha_w = \sum_{i=1}^r c_{w,i} \alpha_i$ as a linear 
	combination of simple roots, where $c_{w,i} \in \mathbb N_0$ satisfy 
	$\abs w \leq \sum_{i=1}^r c_{w,i} \leq c \abs w$ with $c$ depending only on 
	the 
	root system.
	Since $(\rho, \alpha_i) > 0$ for all $1 \leq i \leq r$ we can choose 
	$c_\rho, C_\rho \in \mathbb R^+$ such that $c_\rho \leq (\rho, \alpha_i) 
	\leq C_\rho$ holds for all $1 \leq i \leq r$. Similarly, there is 
	$C' \in \mathbb R^+$ with $\abs{(\lambda, \alpha_i)} \leq C'$ for all 
	$\lambda \in K$ and $ 1 \leq i \leq r$. Then
	\begin{multline*}
	(\alpha_w, \alpha_w)
	\geq
	\frac{1}{(\rho, \rho)} (\alpha_w, 
	\rho)^2 
	=
	\frac 1 {(\rho, \rho)} \left(\sum_{i=1}^r (c_{w,i} \alpha_i , \rho) 
	\right)^2
	\geq
	\frac {c_\rho^2} {(\rho, \rho)} \left(\sum_{i=1}^r c_{w,i} \right)^2
	\geq
	\frac {c_\rho^2} {(\rho, \rho)} \abs{w}^2
	\end{multline*}
	and for all $\lambda \in K$ we obtain
	\begin{equation*}
	\abs{(\rho+\lambda, \alpha_w)} 
	\leq 
	\sum_{i=1}^r c_{w,i} (\abs{(\rho,\alpha_i)} + \abs{(\lambda, \alpha_i)}) 
	\leq
	(C_\rho + C') \sum_{i=1}^r c_{w,i}
	\leq
	c (C_\rho + C') \abs w \punkt
	\end{equation*}
	Setting $C \coloneqq \frac 1 {4 (\rho, \rho)}c_\rho^2$, 
	$C_1 \coloneqq c(C_\rho + C')$, and 
	$M \coloneqq \frac{C_1}{C}$,
	and assuming $\abs w \geq M$ we obtain
	\begin{equation*}
	\abs{p_\lambda(\alpha_w)}
	\geq
	\frac 1 2 (\alpha_w, \alpha_w) - \abs{(\rho+\lambda, \alpha_w)}
	\geq
	2 C \abs{w}^2 - C_1 \abs w
	\geq
	2 C \abs{w}^2 - C \abs w^2 
	=
	C \abs w^2 \punkt
	\end{equation*}
\end{proof}

\begin{corollary}[Estimates for $p_\lambda^w$] 
	\label{theo:estimates:pLambdaW}
	Fix $\lambda \in \lie h^*$.
	For any compact set
	$K \subseteq \mathbb C \setminus P_\lambda$
	there is a constant $C_p > 0$ such that 
	$p^w_{\I \lambda/\hbar}(\alpha_w)$ defined in \eqref{eq:def:plambda} 
	satisfies
	\begin{equation}
	\abs{p^w_{\I \lambda/\hbar}(\alpha_w)^{-1}} 
	\leq \frac{C_p^{\abs w}}{(\abs 	w!)^2}
	\end{equation}
	for all words $w \in \tilde\wordset$ and all $\hbar \in K$.
\end{corollary}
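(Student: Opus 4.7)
The plan is to estimate $p^w_{\I\lambda/\hbar}(\alpha_w) = \prod_{j=1}^{\abs w} p_{\I\lambda/\hbar}(\alpha_{w_{j\dots\abs w}})$ factor by factor, grouping the suffixes $w_{j\dots\abs w}$ by their length. Long suffixes (length at least some threshold $M$) will be handled by the previous lemma, while the finitely many possible short suffixes will be treated by a compactness argument over $K$.

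First, I would observe that the set $K' := \lbrace \I\lambda/\hbar \mid \hbar \in K \rbrace$ is a compact subset of $\lie h^*$: since $0 \in P_\lambda$ and $K \subseteq \mathbb C \setminus P_\lambda$ is compact, $K$ stays bounded away from $0$, and $\hbar \mapsto \I\lambda/\hbar$ is continuous there. Applying \autoref{theo:estimates:pLambda} to $K'$ yields constants $C > 0$ and $M \in \mathbb N$ such that $\abs{p_{\I\lambda/\hbar}(\alpha_u)} \geq C \abs u^2$ for every $\hbar \in K$ and every word $u$ with $\abs u \geq M$. For suffixes $u$ of length strictly less than $M$, the possible values of $\alpha_u$ form a finite set, being sums of fewer than $M$ elements of the finite set $\Delta^+$. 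By \autoref{theo:canonicalelement:general}, for $\hbar \notin P_\lambda$ each factor $p_{\I\lambda/\hbar}(\alpha_{w_{j\dots\abs w}})$ that actually contributes to the formula for $F_\hbar$ is nonzero, and depends continuously (indeed rationally) on $\hbar$; hence compactness of $K$ produces a uniform lower bound $c > 0$ on $\abs{p_{\I\lambda/\hbar}(\alpha_u)}$ for all short suffixes and all $\hbar \in K$ simultaneously.

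Combining the two estimates, for $\abs w \geq M$ I obtain
\begin{equation*}
\abs{p^w_{\I\lambda/\hbar}(\alpha_w)^{-1}} \leq c^{-(M-1)} \prod_{\ell=M}^{\abs w} \frac{1}{C \ell^2} = \frac{((M-1)!)^2}{c^{M-1} \, C^{\abs w - M + 1}} \cdot \frac{1}{(\abs w!)^2} \komma
\end{equation*}
while for $\abs w < M$ the trivial product bound already gives $\abs{p^w_{\I\lambda/\hbar}(\alpha_w)^{-1}} \leq c^{-\abs w}$. Choosing $C_p$ to be at least $\max\lbrace 1, ((M-1)!)^2 C^{M-1} c^{-(M-1)} \rbrace / C$ handles all sufficiently long words, while further enlarging $C_p$ to dominate the finitely many values $(c^{-L} (L!)^2)^{1/L}$ for $1 \leq L < M$ takes care of the short ones. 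The main subtlety lies in the uniform lower bound $c$ on the small-suffix factors: this is the one place where the hypothesis $K \cap P_\lambda = \emptyset$ is used beyond merely ensuring compactness of $K'$, and it relies on the explicit formula for $F_\hbar$ from \autoref{theo:canonicalelement:general} together with the rational dependence on $\hbar$.
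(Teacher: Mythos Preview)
Your argument is correct and follows the same strategy as the paper: pass to the compact set $K' = \lbrace \I\lambda/\hbar \mid \hbar \in K\rbrace$, apply \autoref{theo:estimates:pLambda} for long suffixes, and use compactness plus nonvanishing to handle the finitely many short ones. The only cosmetic difference is that the paper, after obtaining the uniform positive lower bound on the short factors, simply \emph{decreases} the constant $C$ so that the single inequality $\abs{p_{\I\lambda/\hbar}(\alpha_u)} \geq C\abs u^2$ holds for \emph{all} nonzero words $u$; the product then collapses directly to $\prod_{\ell=1}^{\abs w}(C\ell^2)^{-1} = C^{-\abs w}/(\abs w!)^2$, so one can take $C_p = C^{-1}$ without any further case distinction or bookkeeping.
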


\begin{proof}
	Note that $K' = \lbrace \I \lambda / \hbar \mid \hbar \in K\rbrace$
	is a compact subset of $\tilde \Lambda$.
Let $M$ and $C$ be the constants obtained by applying the previous lemma to $K'$,
	so $\abs{p_{\lambda'}(\alpha_w)} \geq C \abs w^2$ 
	for all $w \in W$ with $\abs w \geq M$ and all $\lambda' \in K'$.
Since $\I \lambda / \hbar \in \tilde \Lambda$, we have
	$\min_{w \in \tilde \wordset, \abs w < M}
	\abs{p_{\I \lambda / \hbar}(\alpha_w)} > 0$ for all $\hbar \in K$.
Since this quantity depends continuously on $\hbar$
	the minimum for $\hbar \in K$ exists and must also be positive. 
Hence we may decrease the constant $C$ such that 
	$\abs{p_{\I \lambda/\hbar}(\alpha_{w})} \geq C \abs {w}^2$
	also holds for the finitely many words $\smash{w \in \tilde W}$ with $\abs w < M$.
	Consequently $\abs{p_{\I \lambda/\hbar}(\alpha_{w})} \geq C \abs {w}^2$
	holds for all words $\smash{w \in \tilde W}$.
	Setting $C_p \coloneqq 1/C$, the corollary follows by rearranging.
\end{proof}
We have now collected all the results needed to prove 
\autoref{theo:continuity:onCn}.

\begin{proofof}[Proof of \autoref{theo:continuity:onCn}:] First, we note that 
	it 
	suffices to prove the existence of a constant $M$ such that for any 
	multiindices $I, J \in \mathbb N_0^m$ we have $\mynorm R {b_I^* *'_\hbar 
	b_J^*} 
	\leq  
	(RM)^{\abs I + \abs J}$. Indeed, this statement 
	implies the continuity of $*'_\hbar$ since for $p = \sum_{I \in \mathbb 
	N_0^m} 
	p_I b_I^*$ and $q = \sum_{I \in \mathbb N_0^m} q_I b_I^*$ we estimate
	\begin{align*}
	\mynorm R {p *'_\hbar q} 
	&= \mynorm R {
		{\sum}_{I \in \mathbb N_0^m} p_I b_I^* *'_\hbar {\sum}_{J \in \mathbb 
			N_0^m} q_J b_J^* } \\
	&\leq \sum_{I \in \mathbb N_0^m} \sum_{J \in \mathbb N_0^m}
	\abs{p_I} \abs{q_J} \mynorm R {b_I^* *'_\hbar  b_J^*} \\
	&\leq \sum_{I \in \mathbb N_0^m} \sum_{J \in \mathbb N_0^m}
	\abs{p_I} \abs{q_J} (RM)^{\abs I + \abs J} \\
	&= \mynorm{RM}{{\sum}_{I \in \mathbb N_0^m} p_I b_I^*} 
	\mynorm{RM}{{\sum}_{J \in \mathbb N_0^m}
		q_J b_J^*} \\
	&= \mynorm{RM}{p} 
	\mynorm{RM}{q}\punkt
	\end{align*}
	Using the notation $I_{\lbrace j \rbrace}$ introduced in the proof of 
	\autoref{theo:example:standardOrdering} we estimate
	\newcommand{\somespace}{\phantom{XX}}
	\begin{align*}
	&\mynorm R {b_I^* *'_\hbar b_J^*} 
	= \mynorm R {{\sum}_{\ell=0}^\infty \leftinvholo{(F'_{\hbar,\ell})}(b_I^*, b_J^*)} 
	\\
	&\somespace
	\leq  
	\mynorm R {
		{\sum}_{w \in \tilde\wordset} 
		p^w_{\I\lambda/\hbar}(\alpha_{w})^{-1}
		\leftinvholo{(X_w \tensor Y_w)} (b_I^*, b_J^*)
	}
	\\
	&\somespace
	\textuebersign{\leq}{(1)}  
	\sum_{w \in\tilde \wordset} 
	\abs{p^w_{\I\lambda/\hbar}(\alpha_w)^{-1}} \cdot{} \\ 
	&\phantom{SPACE} {}\cdot
	\sum_{w_{(1)}, \dots, w_{(\abs I)}}  
	\sum_{w'_{(1)}, \dots, w'_{(\abs J)}} 
	\mynorm R{\leftinvholo{X_{w_{(1)}}} {b^*_{I_{\lbrace 1\rbrace}}}} \dots
	\mynorm R{\leftinvholo{X_{w_{(\abs I)}}} {b^*_{I_{\lbrace\abs I\rbrace}}}} 
	\cdot{} \\
	&\phantom{SOMEVERYVELARGESPACE} {}\cdot
	\mynorm R{\leftinvholo{Y_{w'_{(1)}}} {b^*_{J_{\lbrace 1 \rbrace}}}} \dots
	\mynorm R{\leftinvholo{Y_{w'_{(\abs J)}}} {b^*_{J_{\lbrace\abs J\rbrace}}}} 
	\\
	&\somespace
	\textuebersign{\leq}{(2)}  
	\sum_{w\in\tilde\wordset} 
	\frac{C_p^{\abs w}}{(\abs w!)^2} 
	\abs I^{\abs w} \abs J^{\abs w} C^{2 \abs w} R^{\abs I + \abs J}
	\\
	&\somespace
	\textuebersign{\leq}{(3)} R^{\abs I + \abs J}
	\sum_{\ell = 0}^\infty (k C_p C^2)^{\ell} 
	\frac{\abs I^{\ell} \abs J^{\ell}}{(\ell!)^2}
	\\
	&\somespace
	\textuebersign{\leq}{(4)} R^{\abs I + \abs J} 
	\sum_{\ell = 0}^\infty 
	\frac{(k^{1/2} C_p^{1/2} C \abs I)^{\ell}}{\ell!}
	\sum_{\ell' = 0}^\infty 
	\frac{(k^{1/2} C_p^{1/2} C \abs J)^{\ell'}}{\ell'!} 
	\\
	&\somespace
	\leq R^{\abs I + \abs J} 
	\E^{k^{1/2} C_p^{1/2} C \abs I} 
	\E^{k^{1/2} C_p^{1/2} C \abs J}  
	\\
	&\somespace
	= (R \E^{k^{1/2} C_p^{1/2} C})^{\abs I + \abs J} \punkt
	\end{align*}
	The sum $\sum_{w_{(1)}, \dots, w_{(\abs I)}}$ introduced in (1) is
	over all partitions of $w \in \tilde \wordset$ into words $w_{(1)}, \dots, w_{(\abs I)}$.
To be more precise, consider a partition $P_1, \dots, P_{\abs I}$
	of $\lbrace 1, \dots, \abs w \rbrace$ into $\abs I$ many subsets.
If $P_i = \lbrace p_{i,1}, \dots, p_{i,j_i}\rbrace$ 
	with $p_{i,1} < \dots < p_{i,j_i}$, 
	then associate the word $w_{(i)} = w_{p_{i,1}} w_{p_{i,2}} \dots w_{p_{i, j_i}}$.
Then we sum over all partitions.
The other sum is defined similarly.
We also used submultiplicativity of $\mynorm R \argumentdot$ in this step.
To justify (2), we note that for any $Z \in \lie{gl}_N(\mathbb C)$, 
	$\leftinvholo{Z} b^*_i$ is of degree 1,
	so that $\leftinvholo{X_{w_{(\ell)}}} b^*_{I_{\lbrace\ell\rbrace}}$ is of degree 1. 
Defining $C \coloneqq \max_{i \in \lbrace 1, \dots, m\rbrace,\alpha \in \Delta} 
	\mynorm 1 {\leftinvholo X_{\alpha} b^*_i}$
	we obtain
	\begin{equation*}
	\mynorm R {\leftinvholo{X_{w_{(\ell)}}} b^*_{I_{\lbrace\ell\rbrace}}}
	\leq C^{\abs{w_{(\ell)}}} R \punkt
	\end{equation*}
The sum over $w_{(1)}, \dots, w_{(\abs I)}$ has $\abs I^{\abs w}$ many terms, 
	since for each letter of $w$ we can choose in which of the $\abs I$ many sets
	we want to have it.
The same holds true for the other sum.
In (3) we used that there are at most $\smash{k^{\abs w}}$ many words
	of a given length $\abs w$ in $\smash{\tilde \wordset}$
	and (4) holds, because we just added some positive extra terms.
\end{proofof}

\begin{remark}\label{theo:continuity:estimatesAreLocallyUniform}
	For a fixed compact set $K \subseteq \mathbb C \setminus P_\lambda$ 
	the proof above shows that there is a constant $M \in \mathbb R^+$
	such that for any $\hbar \in K$ we have
	\begin{equation}
	\mynorm R {p *'_\hbar q} \leq \mynorm {RM} p \mynorm {RM} q
	\end{equation}
	since \autoref{theo:estimates:pLambdaW} 
	gives uniform estimates for all $\hbar \in K$.
\end{remark}
 
\subsection{Stein manifolds and extension of holomorphic functions}
\label{subsec:stein}

In this subsection, we discuss extension properties of holomorphic
functions on closed complex submanifolds of Stein manifolds
or, more generally, on analytic subsets of Stein manifolds.
We will use the results in the next subsection to identify
the reduction-topology with the topology of locally uniform convergence
and to determine the completion of the space of 
polynomials with respect to this topology.

Since analytic subsets in a Stein manifold are a 
very natural setting to prove the extendability results,
we formulate them in this generality
(even though we only need the case of closed submanifolds most of the time). 
The content of this subsection has been known for long and can be 
found e.g.\ in the textbook \cite{hoermander:1990b}.

Recall that for a complex manifold $M$, we denote the vector space
of holomorphic functions on $M$ by $\Hol(M)$.

\begin{definition}[Holomorphic convex hull]
	For a compact subset $K$ of a complex manifold $M$ we define its 
	\emph{holomorphic convex hull} to be the set
	\begin{equation}
	\hat K_M = \lbrace z \in M \mid \abs{f(z)} \leq \sup_{K} \abs f \text{ for 
		all $f \in \Hol(M)$} \rbrace \punkt
	\end{equation}
\end{definition}

\begin{definition}[Stein manifold]
	A complex manifold $M$ of dimension $n$ is said to be \emph{Stein} if
	\begin{definitionlist}
		\item for any compact subset $K \subseteq M$ its holomorphic convex 
		hull $\hat K_M$ is compact,
		\item for every $z \in M$ there are functions $f_1, \dots, f_n \in 
		\Hol(M)$ that form a coordinate system around $z$.
	\end{definitionlist}
\end{definition}
Stein manifolds should be thought of as domains of holomorphicity for holomorphic 
functions of several complex variables. Clearly $\mathbb C^n$ is Stein.

\begin{definition} \label{def:analyticSubset}
	A subset $V \subseteq M$ of a complex manifold is called \emph{analytic}, 
	if for every point $z \in M$ there is a neighbourhood $U \subseteq M$ of $z$
	such that there is a family of holomorphic functions $f_j \in \Hol(U)$
	indexed by $j$ in some index set $J$, such that
	\begin{equation}
	V \cap U = \lbrace z \in U \mid f_j(z) = 0 \text{ for all $j \in 
		J$}\rbrace\punkt
	\end{equation} 
\end{definition}

\begin{example}\label{ex:closedSubmanifoldsAreAnalytic}
	Any closed complex submanifold $M$ of $\mathbb C^n$ is an analytic subset. 
	Indeed, around any $z \in M$ we can find a submanifold chart,
	that is a neighbourhood $U$ and coordinates $z = (z_1, \dots, z_n)$
	such that $M \cap U$ is given by the vanishing
	of the first $n - \dim M$ coordinates.
	Therefore we can take $f_j = z_j$ for $j = 1, \dots, n-\dim M$ 
	in \autoref{def:analyticSubset}.
	Around any $z \notin M$ there is a neighbourhood $U$ such that $U \cap M = 
	\emptyset$ and we may pick $f_1 = 1$ in \autoref{def:analyticSubset}.
\end{example}

\begin{definition}
	A function $f \colon V \to \mathbb C$ on an analytic subset $V \subseteq M$ of a 
	complex manifold is called holomorphic, if for every point $z \in V$ there 
	is a neighbourhood $U \subseteq M$ of $z$ and a holomorphic function $g \in 
	\Hol(U)$ such that $g \at{U \cap V} = f \at{U \cap V}$.
\end{definition}

\begin{example}
	If $V$ is a closed complex submanifold of $\mathbb C^n$ as in 
	\autoref{ex:closedSubmanifoldsAreAnalytic},
	then this definition of a holomorphic function coincides with the 
	usual definition.
	Indeed, in any submanifold chart $(U, z)$ as in 
	\autoref{ex:closedSubmanifoldsAreAnalytic},
	a holomorphic function on $U \cap V$ can be extended constantly
	along the first $n - \dim M$ variables to a holomorphic function on $U$.
	The reverse implication is clear.
\end{example}

\begin{proposition}\label{theo:holextendability:HolVIsFrechet}
	Let $V$ be an analytic subset of a Stein manifold $M$. Then $\Hol(V)$ 
	endowed with the topology of locally uniform convergence is a Fr\'echet 
	space.
\end{proposition}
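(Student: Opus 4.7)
The plan is to verify the three defining properties of a Fréchet space for $\Hol(V)$ equipped with the topology generated by the seminorms $\|f\|_K := \sup_{z \in K} |f(z)|$ for compact $K \subseteq V$. Local convexity and the Hausdorff property are immediate from this definition. For metrizability, I would observe that any analytic subset $V$ is closed in $M$, and since $M$ is a second-countable complex manifold, so is $V$; hence $V$ admits an exhaustion $K_1 \subseteq K_2 \subseteq \cdots$ by compact subsets with $K_n \subseteq \mathrm{int}(K_{n+1})$ and $\bigcup_n K_n = V$, and the countable subfamily $\{\|\cdot\|_{K_n}\}_{n \in \mathbb N}$ generates the topology via the standard metric $d(f,g) = \sum_n 2^{-n} \min(1, \|f-g\|_{K_n})$.

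The substantive task is completeness. Given a Cauchy sequence $(f_n) \subseteq \Hol(V)$, uniform Cauchyness on each $K_n$ produces a continuous pointwise limit $f \colon V \to \mathbb C$. What remains is to show that $f$ admits a local holomorphic extension at every point $z_0 \in V$. Fix such $z_0$. I would shrink to a Stein open neighbourhood $U \subseteq M$ of $z_0$---for instance a small polydisc in the coordinates provided by the second condition of the Stein property---so that $V \cap U$ is an analytic subset of $U$. Cartan's Theorem~B then applies: the restriction map $r \colon \Hol(U) \to \Hol(V \cap U)$ is surjective, and its kernel $\mathcal I := \{g \in \Hol(U) \mid g|_{V \cap U} = 0\}$ is closed in the Fréchet space $\Hol(U)$ because point evaluations are continuous.

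The hard part is to lift the $f_n$ to extensions $\tilde f_n \in \Hol(U)$ forming a Cauchy sequence there; the limit would then automatically be holomorphic on $U$ (uniform limits of holomorphic functions on a domain in $\mathbb C^N$ being holomorphic) and restrict to $f$ on $V \cap U$, producing the required local extension of $f$. An arbitrary choice of Cartan~B extensions need not be Cauchy, so I would seek a continuous linear right inverse, or at least a bounded linear lifting, of $r$. This amounts to showing that the induced continuous linear bijection $\Hol(U)/\mathcal I \to \Hol(V \cap U)$ is a topological isomorphism, which by the open mapping theorem would follow once the target is itself Fréchet---a mild circularity. To break it, I would either argue by induction on $\dim V$ using Weierstrass preparation to reduce to the case where $V$ is a smooth submanifold (there a local holomorphic retraction furnishes an explicit bounded extension operator), or invoke the corresponding quantitative extension theorem directly from \cite{hoermander:1990b}. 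Either way, once bounded extensions exist, the Cauchy property of $(f_n)$ on $V \cap K$ transfers to a Cauchy sequence in $\Hol(U')$ on a slightly shrunk $U' \Subset U$, and its limit supplies the desired extension of $f$.
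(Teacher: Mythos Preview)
Your argument for metrizability is essentially identical to the paper's: analytic subsets are closed, hence a compact exhaustion of $M$ restricts to one of $V$, and the associated countable family of seminorms generates the topology.

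For completeness, however, the paper takes a much shorter route: it simply declares the result non-trivial and cites \cite[Theorem~7.4.9]{hoermander:1990b}. Your attempt to sketch an actual proof is more ambitious, and you correctly identify the real difficulty---namely that lifting a Cauchy sequence from $\Hol(V\cap U)$ to $\Hol(U)$ via Cartan's Theorem~B requires some control on the extension, which via the open mapping theorem would need $\Hol(V\cap U)$ to be Fr\'echet already. Your two proposed ways out are both reasonable: the induction on $\dim V$ using Weierstrass preparation is genuinely the approach H\"ormander takes (reducing to the smooth case where one has explicit bounded extension operators), while ``invoking the quantitative extension theorem directly from \cite{hoermander:1990b}'' is precisely what the paper does. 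So your proposal is correct but expends effort rediscovering the outline of a proof that the paper is content to cite; the payoff is a clearer picture of why completeness is the non-trivial part.
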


\begin{proof}
	It follows from the definition of analytic subsets that $V$ is closed. 
Therefore the restriction of any compact exhaustion of $M$ to $V$
	gives a compact exhaustion $K_i$ of $V$.
The seminorms $\norm f_{K_i} = \sup_{K_i} \abs f$ define a countable system 
	of seminorms inducing the topology of locally uniform convergence.
The completeness of $\Hol(V)$ with respect to this topology is a 
	non-trivial result and proved in \cite[Theorem 7.4.9]{hoermander:1990b}.
\end{proof}
The crucial property of an analytic subset $V$ of a Stein manifold is the following 
extendability property for any holomorphic function on $V$, see \cite[Theorem 7.4.8]{hoermander:1990b}:

\begin{theorem}[Extendability of holomorphic functions]\label{theo:holextendability:extension}
	Let $V$ be an analytic subset of a Stein manifold $M$.
	Any holomorphic function $f \in \Hol(V)$ can be extended
	to a holomorphic function $f \in \Hol(M)$. 
	In other words, the restriction map $\Hol(M) \to \Hol(V)$ is surjective.
\end{theorem}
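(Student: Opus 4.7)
The plan is to deduce this from Cartan's Theorem B on the vanishing of higher cohomology of coherent analytic sheaves on Stein manifolds. The argument is entirely sheaf-theoretic and is the standard route to extension theorems in several complex variables.

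First I would introduce the ideal sheaf $\mathscr I_V$ on $M$, whose stalk at a point $z \in M$ consists of germs of holomorphic functions that vanish on a neighbourhood of $z$ intersected with $V$. This sits in a short exact sequence of sheaves
\begin{equation*}
0 \to \mathscr I_V \to \mathscr O_M \to \mathscr O_M / \mathscr I_V \to 0
\end{equation*}
on $M$. By the very definition of holomorphic functions on an analytic subset (every such function is locally the restriction of a function in $\Hol(U)$ for some open $U \subseteq M$, and two such extensions differ by a section of $\mathscr I_V$), the quotient sheaf $\mathscr O_M / \mathscr I_V$ is supported on $V$ and has global sections canonically isomorphic to $\Hol(V)$. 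Passing to the long exact cohomology sequence then gives
\begin{equation*}
\Hol(M) \to \Hol(V) \to H^1(M, \mathscr I_V),
\end{equation*}
so it suffices to show that $H^1(M, \mathscr I_V) = 0$.

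This is where the two deep inputs enter. By the Oka--Cartan coherence theorem, the ideal sheaf of an analytic subset is a coherent analytic sheaf; this identifies $\mathscr I_V$ as such. Then Cartan's Theorem B applies: for any coherent analytic sheaf $\mathscr F$ on a Stein manifold $M$, one has $H^q(M, \mathscr F) = 0$ for all $q \geq 1$. In particular $H^1(M, \mathscr I_V) = 0$, and surjectivity of the restriction map $\Hol(M) \to \Hol(V)$ follows.

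The main obstacle is of course the proof of Cartan's Theorem B together with the coherence of $\mathscr I_V$; these are highly non-trivial cornerstones of Oka--Cartan theory rather than anything one would prove from scratch in this paper. Since the result is classical and available in standard references such as \cite{hoermander:1990b}, the statement is quoted without a self-contained proof, and the argument above is exactly how one would justify it by citation.
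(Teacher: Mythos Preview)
Your proposal is correct and matches the paper exactly: the paper simply cites \cite[Theorem 7.4.8]{hoermander:1990b} without giving any argument, and the sketch you provide via the ideal sheaf, Oka's coherence theorem, and Cartan's Theorem B is precisely the standard proof found there. You have also correctly anticipated that this is quoted rather than proved in the paper.
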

For an analytic subset $V$ of a complex manifold $M$ we denote the
subspace of $\Hol(M)$ consisting of functions that vanish on $V$ by $\vanideal V$.
Note that the restriction map $\Hol(M) \to \Hol(V)$ descends
to a map on the quotient, $r \colon \Hol(M)/\vanideal V \to \Hol(V)$.
This map is clearly injective by definition of $\vanideal V$,
and if $M$ is Stein it is surjective by the previous theorem.

\begin{corollary}
	\label{theo:topologies:quotientTopIsLocallyUniformConvergence}
	Assume that $M$ is Stein and that $V \subseteq M$ is an analytic subset.
	If $\Hol(M)/\vanideal V$ is endowed with the quotient topology
	of the topology of locally uniform convergence and 
	$\Hol(V)$ is endowed with the topology of locally uniform convergence
	then the map $r \colon \Hol(M)/\vanideal V \to \Hol(V)$ is a homeomorphism.
\end{corollary}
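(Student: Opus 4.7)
The plan is to combine the extension theorem (which gives the bijection) with the open mapping theorem for Fr\'echet spaces (which upgrades the bijection to a homeomorphism). The work splits into three small steps: continuity of the induced map, verifying the Fr\'echet space hypotheses, and then invoking open mapping.

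First, I would observe that the restriction map $R : \Hol(M) \to \Hol(V)$, $f \mapsto f\at V$ is continuous with respect to the topologies of locally uniform convergence. Indeed, if $K \subseteq V$ is compact, then $K$ is also compact in $M$ (since $V$ is closed in $M$), and by definition $\norm{R(f)}_K = \sup_{z \in K} \abs{f(z)} = \norm{f}_K$. So $R$ pulls back each defining seminorm of $\Hol(V)$ to a defining seminorm of $\Hol(M)$. Since $R$ vanishes on $\vanideal V$ by definition, it descends to a continuous map $r : \Hol(M)/\vanideal V \to \Hol(V)$, which is the map in the statement.

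Next, I would verify that the domain $\Hol(M)/\vanideal V$ is a Fr\'echet space. The space $\Hol(M)$ is Fr\'echet by \autoref{theo:holextendability:HolVIsFrechet} applied with $V = M$ (or by the classical fact). The subspace $\vanideal V$ is closed in $\Hol(M)$: if $f_n \in \vanideal V$ converges locally uniformly to some $f \in \Hol(M)$, then in particular $f_n(z) \to f(z)$ pointwise for every $z \in V$, forcing $f(z) = 0$. Hence the quotient $\Hol(M)/\vanideal V$ is again a Fr\'echet space with the quotient topology. The codomain $\Hol(V)$ is Fr\'echet by \autoref{theo:holextendability:HolVIsFrechet}.

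Now $r$ is a continuous linear bijection: injectivity holds by the very definition of $\vanideal V$, and surjectivity is exactly the content of \autoref{theo:holextendability:extension}. Since both source and target are Fr\'echet spaces, the open mapping theorem applies and $r$ is a homeomorphism. The main non-trivial input is \autoref{theo:holextendability:extension} (already proved), together with the open mapping theorem; the rest is essentially bookkeeping, and I do not foresee any real obstacle.
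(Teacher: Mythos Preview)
Your proposal is correct and follows essentially the same approach as the paper: continuity of the restriction, closedness of $\vanideal V$ so that the quotient is Fr\'echet, bijectivity via \autoref{theo:holextendability:extension}, and then the open mapping theorem. Your version is slightly more explicit about why $\vanideal V$ is closed and why the restriction is continuous, but the structure is identical.
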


\begin{proof}
	We know that $r$ is bijective, so it only remains to prove the continuity 
	of $r$ and $r^{-1}$. Both $\Hol(M)$ and $\Hol(V)$ are Fr\'echet spaces (for
	$\Hol(M)$ this is well-known, for $\Hol(V)$ it is 
	the statement of \autoref{theo:holextendability:HolVIsFrechet}). Since 
	$\vanideal V$ is closed, $\Hol(M)/ \vanideal V$ is also a Fr\'echet space. 
	Clearly the locally uniform convergence of a sequence $f_i \in \Hol(M)$ 
	implies the locally uniform convergence of the sequence of restrictions 
	$f_i \at V \in \Hol(V)$, so the map $r$ is continuous. The statement then 
	follows from the open mapping theorem for Fr\'echet spaces.
\end{proof}
 
\subsection{Characterizing the reduction-topology}
\label{subsec:reductionTopIsLocallyUniformConvergence}

In this subsection, we show that the reduction-topology on $\orb\lambda$
as defined in \autoref{subsec:continuityEstimates} is the topology
of locally uniform convergence and that the completion
of the space of polynomials $\Pol(\orb\lambda)$ on $\orb\lambda$
with respect to this topology is exactly the space of 
holomorphic functions $\Hol(\orb\lambda)$ on $\orb\lambda$.

\begin{proposition}\label{theo:topologies:reductionIsLocallyUniformConvergence}
	The reduction topology $\mathcal T_{\mathrm{red}}$ on $\orb\lambda$
	coincides with the topology of locally uniform convergence.
\end{proposition}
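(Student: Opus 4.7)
The plan is to identify both topologies with the subspace topology that $\Pol(\orbext\lambda)$ inherits from $\Hol(\orbext\lambda)$, by routing the comparison through $\Pol(G)$ and invoking the Stein-theoretic results of \autoref{subsec:stein}.

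\emph{Transfer to $G$.} Since $\pi\colon G\to\orbext\lambda\cong G/G_\lambda$ is a holomorphic principal $G_\lambda$-bundle, I would first show that $\pi^*\colon\Hol(\orbext\lambda)\to\Hol(G)^{G_\lambda}$ is a topological isomorphism for the topologies of locally uniform convergence: given a compact $K\subseteq\orbext\lambda$, covering $K$ by finitely many trivializing open sets and piecing together local holomorphic sections produces a compact lift $\tilde K\subseteq G$ with $\pi(\tilde K)=K$, while conversely $\pi$ maps compacts to compacts, so the seminorms $\|\,\cdot\,\|_K$ and $\|\pi^*(\,\cdot\,)\|_{\tilde K}$ mutually dominate. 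Under $\pi^*$ the topology of locally uniform convergence on $\Pol(\orbext\lambda)$ is thereby identified with the subspace topology that $\Pol(G)^{G_\lambda}$ inherits from $\Hol(G)$.

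\emph{Compare the two topologies on $\Pol(G)$.} It now suffices to show that on $\Pol(G)$ the quotient topology $\mathcal{T}_{\mathrm{qu}}$ coming from $\Pol(\mathbb{C}^{N\times N})$ coincides with the subspace topology $\mathcal{T}_{\mathrm{sub}}$ from $\Hol(G)$. One direction is immediate: for any $h\in\vanideal{G}$ vanishing on $G$ and any compact $K\subseteq\mathbb{C}^{N\times N}$ one has $\sup_{K\cap G}|f|=\sup_{K\cap G}|f+h|\leq\|f+h\|_K$, so $\mathcal{T}_{\mathrm{sub}}\subseteq\mathcal{T}_{\mathrm{qu}}$. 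For the reverse inclusion my plan is to show that both topologies have the same Hausdorff completion, namely $\Hol(G)$ with the topology of locally uniform convergence; since a Hausdorff locally convex space canonically embeds as a topological subspace of its completion, this forces $\mathcal{T}_{\mathrm{qu}}=\mathcal{T}_{\mathrm{sub}}$. For $\mathcal{T}_{\mathrm{sub}}$ the completion claim is the Oka--Weil density of polynomials in $\Hol(G)$ on the Stein affine variety $G$ (\autoref{theo:coadjointOrbitsAreClosed}). For $\mathcal{T}_{\mathrm{qu}}$ one identifies the completion of the quotient with the quotient of the completions: by \autoref{theo:topologies:agree} the completion of $\Pol(\mathbb{C}^{N\times N})$ is $\Hol(\mathbb{C}^{N\times N})$, the closure of the polynomial ideal $\vanideal{G}$ in this completion equals the holomorphic vanishing ideal by Cartan's theorem A applied to the coherent ideal sheaf of the affine variety $G$, and \autoref{theo:topologies:quotientTopIsLocallyUniformConvergence} identifies the resulting quotient topologically with $\Hol(G)$.

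\emph{Conclude.} Restricting the equality $\mathcal{T}_{\mathrm{qu}}=\mathcal{T}_{\mathrm{sub}}$ on $\Pol(G)$ to the subspace $\Pol(G)^{G_\lambda}$ and transporting back to $\Pol(\orbext\lambda)$ via $\pi^*$ yields the claim. The hard part will be the density statement used above, namely that the polynomial vanishing ideal $\vanideal{G}$ is dense in the holomorphic vanishing ideal of $G$ inside $\Hol(\mathbb{C}^{N\times N})$: while standard in complex analytic geometry, a self-contained proof must combine the coherence of the ideal sheaf with the global generation of the holomorphic ideal by the polynomial equations defining $G$ as an affine subvariety, followed by polynomial approximation of the holomorphic coefficients via Oka--Weil.
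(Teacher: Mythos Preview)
Your argument is correct and follows the same essential route as the paper --- both ultimately rest on \autoref{theo:topologies:quotientTopIsLocallyUniformConvergence} and the Stein property of $G\subseteq\mathbb C^{N\times N}$. The paper's proof is much terser: it simply cites \autoref{theo:topologies:quotientTopIsLocallyUniformConvergence} (which is stated for $\Hol$) and declares that $\mathcal T_{\mathrm{qu}}$ on $\Pol(G)$ equals locally uniform convergence, then passes to $G_\lambda$-invariants. You are right to notice that this step deserves justification, since the open mapping theorem behind that corollary applies only at the Fr\'echet (i.e.\ $\Hol$) level, and restricting an open map to the non-complete subspace $\Pol(\mathbb C^{N\times N})$ is not automatic. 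Your completion argument is a clean way to bridge this: both topologies on $\Pol(G)$ are Hausdorff, metrizable, and complete to the same Fr\'echet space $\Hol(G)$, hence coincide.

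Two small points. First, your invocation of Cartan's theorem~A for the density of $\vanideal{G}^{\mathrm{pol}}$ in the holomorphic vanishing ideal is not quite the right citation: what you actually need is that every $f\in\vanideal{G}^{\mathrm{hol}}$ can be written \emph{globally} as $\sum g_ip_i$ with $p_i$ the polynomial generators of $\vanideal{G}^{\mathrm{pol}}$ and $g_i\in\Hol(\mathbb C^{N\times N})$, after which one approximates the $g_i$ by Taylor polynomials. This global generation is Cartan~B (vanishing of $H^1$ for the kernel sheaf of $\mathcal O^m\to\mathcal I_G$), not~A. Second, once you have that density, the completion detour can be shortened to a direct seminorm comparison: $\inf_{q\in\vanideal{G}^{\mathrm{pol}}}\|p+q\|_K=\inf_{h\in\vanideal{G}^{\mathrm{hol}}}\|p+h\|_K$, and the right-hand side is controlled by a seminorm on $G$ via the openness in \autoref{theo:topologies:quotientTopIsLocallyUniformConvergence}. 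Either way the substance is the same.
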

\begin{proof}
	By the assumption at the beginning of this section
	(see also \autoref{subsec:generalities}),
	$G$ is a closed complex submanifold of $\mathbb C^{N\times N}$,
	hence an analytic subset by \autoref{ex:closedSubmanifoldsAreAnalytic}. 
Applying \autoref{theo:topologies:quotientTopIsLocallyUniformConvergence} 
	yields that the quotient topology on $\Hol(G)$ induced by the topology
	of locally uniform convergence on $\mathbb C^{N \times N}$
	is precisely the topology of locally uniform convergence on $G$.
	
	By \autoref{def:reductionTopology} the reduction-topology
	is the restriction of this topology to the subspace of
	right $G_\lambda$-invariant holomorphic functions.
	Using that this subspace is closed, and that a sequence
	$f_i \in \Hol(\orb\lambda)$ converges locally uniformly
	if and only if the sequence $\pi^*(f_i) \in \Hol(G)^{G_\lambda}$
	converges locally uniformly, one can easily check that
	the reduction-topology coincides with the topology of
	locally uniform convergence on $\Hol(\orb\lambda)$.
\end{proof}
Finally, we would like to determine the completion 
$\widehat{\Pol}(\orb\lambda)$ of $\Pol(\orb\lambda)$ with respect to 
the topology of locally uniform convergence.

\begin{proposition} \label{theo:topologies:polCompletesToHol}
	We have $\widehat{\Pol}(\orb\lambda) = \Hol(\orb\lambda)$.
\end{proposition}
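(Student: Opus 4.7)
The plan is to package the statement as a density-plus-completeness argument using the Stein-manifold machinery of Subsection 4.2. By Remark theo:coadjointOrbitsAreClosed, the semisimple coadjoint orbit $\orbext\lambda$ is a closed affine subvariety of $(\CLA)^*\cong\mathbb{C}^n$, hence in particular a closed complex submanifold, and therefore an analytic subset of the Stein manifold $(\CLA)^*$ by Example theo:closedSubmanifoldsAreAnalytic. Consequently Proposition theo:holextendability:HolVIsFrechet applies and $\Hol(\orbext\lambda)$, equipped with the topology of locally uniform convergence, is a Fréchet space; in particular it is complete and Hausdorff. Moreover, by Proposition theo:topologies:reductionIsLocallyUniformConvergence, this topology restricts on $\Pol(\orbext\lambda)$ to the reduction-topology used earlier, so it suffices to prove that $\Pol(\orbext\lambda)$ is dense in $\Hol(\orbext\lambda)$.

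To prove density, I would take any $f\in\Hol(\orbext\lambda)$ and proceed in two steps. First, I would invoke the extendability result Theorem theo:holextendability:extension to obtain a function $\hat f\in\Hol((\CLA)^*)$ with $\hat f\big|_{\orbext\lambda}=f$. Second, because $(\CLA)^*\cong\mathbb C^n$, the Taylor expansion of $\hat f$ around the origin produces a sequence of polynomials $p_k\in\Pol((\CLA)^*)$ converging to $\hat f$ locally uniformly on $(\CLA)^*$.

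Finally I would use that every compact subset $K\subseteq\orbext\lambda$ is also a compact subset of $(\CLA)^*$, since $\orbext\lambda$ is closed in $(\CLA)^*$. Hence the restriction map $\Hol((\CLA)^*)\to\Hol(\orbext\lambda)$ is continuous with respect to locally uniform convergence, and the restricted sequence $p_k\big|_{\orbext\lambda}\in\Pol(\orbext\lambda)$ converges to $f$ locally uniformly on $\orbext\lambda$. This establishes density, and together with completeness of $\Hol(\orbext\lambda)$ yields $\widehat{\Pol}(\orbext\lambda)=\Hol(\orbext\lambda)$.

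The only non-trivial ingredient is the holomorphic extension theorem for analytic subsets of Stein manifolds (Theorem theo:holextendability:extension); everything else is a routine Taylor-series approximation combined with the fact that the restriction map respects compact exhaustions. So the main conceptual step has effectively been outsourced to the Stein-manifold results collected in Subsection 4.2, and the proof becomes short once those are in place.
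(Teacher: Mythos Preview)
Your proposal is correct and follows essentially the same route as the paper's proof: extend $f\in\Hol(\orbext\lambda)$ to $\Hol((\CLA)^*)$ via \autoref{theo:holextendability:extension}, approximate by Taylor polynomials, and restrict back, using that $\orbext\lambda$ is a closed affine subvariety (\autoref{theo:coadjointOrbitsAreClosed}). The paper additionally sketches an alternative argument working on $\CLG$ and averaging over a compact subgroup of $\CLG_\lambda$, but your version is the primary one given there; if anything, you are slightly more explicit about why $\Hol(\orbext\lambda)$ is complete.
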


\begin{proof}
	The inclusion $\widehat{\Pol}(\orb\lambda) \subseteq \Hol(\orb\lambda)$
	is trivial, since $\Pol(\orb\lambda) \subseteq \Hol(\orb\lambda)$ and 
	the limit of a locally uniformly convergent sequence
	of holomorphic functions is again holomorphic.
	
	The other inclusion is easy to see if one uses 
	that semisimple coadjoint orbits are affine algebraic varieties, 
	see \autoref{theo:coadjointOrbitsAreClosed}:
In particular they are analytic subsets of the Stein manifold $\lie g^*$
	and therefore we can use \autoref{theo:holextendability:extension}
	to extend any $f \in \Hol(\orb\lambda)$ to a holomorphic function 
	$\tilde f \in \Hol(\lie g^*)$, which can be approximated by polynomials.
Restricting these approximating polynomials to $\orb\lambda$
	gives a sequence of polynomials in $\Pol(\orb\lambda)$
	converging locally uniformly to $f$.
	
	Alternatively, we know that $G$ is a closed submanifold of the Stein manifold
	$\mathbb C^{N \times N}$, so the same argument yields
	that any $f \in \Hol(G)$ can be approximated by some $p_n \in \Pol(G)$.
Assume that $f \in \Hol(G)^{G_\lambda}$.
	Let $K_\lambda$ be a maximal compact subgroup of $G_\lambda$.
Averaging $p_n$ over $K_\lambda$ gives a sequence
	$p'_n \in \Pol(G)^{K_\lambda}$ that converges locally uniformly to $f$.
Now $p'_n$ is even $G_\lambda$-invariant
	since the action of $G$ is holomorphic, 
	so $\pi_* p'_n \in \Pol(\orb\lambda)$ converges to
	$\pi_* f \in \Hol(\orb\lambda)$. 
\end{proof}
We are now able to prove the main theorem stated in the introduction to this section.

\begin{proofof}[Proof of \autoref{theo:mainContinuityTheorem}:]
	From \autoref{subsec:continuityEstimates} we know that the
	product $*_\hbar$ is continuous with respect to the reduction-topology.
	We showed in \autoref{theo:topologies:reductionIsLocallyUniformConvergence} 
	that the reduction-topology coincides with the topology
	of locally uniform convergence on $\orb\lambda$.
	The previous proposition shows that the completion 
	of $\Pol(\orb\lambda)$ in this topology is $\Hol(\orb\lambda)$.
	Finally $G$-invariance of the product on the completion is clear 
	since the action of $G$ on $\Pol(\orb\lambda)$ is continuous
	with respect to the topology of locally uniform convergence.
\end{proofof}
We close this section by the following proposition,
which asserts that the dependence of $*_\hbar$ on $\hbar$ is holomorphic.

\begin{proposition}[Holomorphic dependence on \boldmath$\hbar$]
	\label{theo:holomorphicDependenceOnHbar:complex}
	For two fixed holomorphic functions $p, q \in \Hol(\orb\lambda)$
	and $x \in \orb\lambda$ the map 
	$\mathbb C \setminus P_\lambda \to \mathbb C$,
	$\hbar \mapsto p \starhbar q(x)$ is holomorphic.
\end{proposition}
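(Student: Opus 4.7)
The plan is to reduce to the case of polynomials, where the dependence is manifestly rational, and then pass to holomorphic arguments by a density/uniform-convergence argument, using the uniform (in $\hbar$) continuity estimates from \autoref{theo:continuity:estimatesAreLocallyUniform}.

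First, I would treat the polynomial case. For $p, q \in \Pol(\orbext\lambda)$ the element $F_\hbar$ only contributes through finitely many summands when applied to $(\pi^* p, \pi^* q)$, by \autoref{theo:starFiniteOnPolys}. Combining this with \autoref{theo:canonicalelement:general} and the fact that each coefficient $p^w_{\I\lambda/\hbar}(\alpha_w)^{-1}$ is a rational function of $\hbar$ with poles contained in $P_\lambda$, the map $\hbar \mapsto p *_\hbar q$ is a rational $\Pol(\orbext\lambda)$-valued function of $\hbar$, holomorphic on $\mathbb C \setminus P_\lambda$. In particular $\hbar \mapsto p *_\hbar q(x)$ is holomorphic on $\mathbb C \setminus P_\lambda$.

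For the general case, fix $p, q \in \Hol(\orbext\lambda)$ and a compact set $K \subseteq \mathbb C \setminus P_\lambda$. By \autoref{theo:topologies:polCompletesToHol} we can choose sequences $p_n, q_n \in \Pol(\orbext\lambda)$ with $p_n \to p$ and $q_n \to q$ locally uniformly on $\orbext\lambda$. By \autoref{theo:continuity:estimatesAreLocallyUniform}, combined with \autoref{theo:topologies:reductionIsLocallyUniformConvergence} identifying the reduction-topology with the topology of locally uniform convergence on $\orbext\lambda$, the maps $\starhat$ admit continuity estimates that are uniform for $\hbar \in K$. Concretely, this means that $p_n \starhat[\hbar] q_n \to p \starhat[\hbar] q$ in $\Hol(\orbext\lambda)$ uniformly in $\hbar \in K$; evaluating at the point $x$ yields a sequence of holomorphic functions $\hbar \mapsto p_n *_\hbar q_n(x)$ converging uniformly on $K$ to $\hbar \mapsto p *_\hbar q(x)$. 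Since each approximant is holomorphic on $K$ by the polynomial case, the uniform limit is also holomorphic on $K$. As $K$ was an arbitrary compact subset of $\mathbb C \setminus P_\lambda$, the desired holomorphy follows.

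The main obstacle is ensuring that the uniform-in-$\hbar$ estimate of \autoref{theo:continuity:estimatesAreLocallyUniform}, which is stated on $\Pol(\mathbb C^{N\times N})$ in the norms $\mynorm R{\argumentdot}$, transports cleanly to a uniform-in-$\hbar$ continuity statement for $\starhat$ on $\Hol(\orbext\lambda)$ with respect to locally uniform convergence. This uses that the reduction-topology is the topology of locally uniform convergence (\autoref{theo:topologies:reductionIsLocallyUniformConvergence}), that $\Pol(\mathbb C^{N\times N}) \to \Pol(G) \to \Pol(\orbext\lambda)$ is a topological quotient followed by a topological embedding, and that passing to the completion preserves equicontinuous families of bilinear maps. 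Given these identifications the extension and the uniform convergence are routine, so the argument is essentially a packaging of results already established in Sections \ref{sec:starProduct} and \ref{sec:continuity}.
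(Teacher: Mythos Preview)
Your proposal is correct and follows essentially the same approach as the paper: reduce to the polynomial case where the dependence on $\hbar$ is rational, then approximate general holomorphic functions by polynomials and invoke the uniform-in-$\hbar$ continuity estimate from \autoref{theo:continuity:estimatesAreLocallyUniform} to conclude that $\hbar \mapsto p \starhat q(x)$ is a locally uniform limit of holomorphic functions. The paper's proof is terser and does not spell out the passage from the estimates on $\Pol(\mathbb C^{N\times N})$ to $\Hol(\orbext\lambda)$ as carefully as you do, but the argument is the same.
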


\begin{proof}
	By construction of $\starhbar$ in \autoref{sec:starProduct}, the map 
	$\mathbb C \setminus P_\lambda \to \mathbb C$,
	$\hbar \mapsto p' \starhbar q'(x)$ is rational 
	for $p', q' \in \Pol(\orb\lambda)$.
Assume that $p_n$, $q_n$ are sequences of polynomials on $\orb\lambda$ 
	such that $p_n \to p$ and $q_n \to q$ locally uniformly.
Since the estimates of \autoref{subsec:continuityEstimates}
	are locally uniform in $\hbar$,
	see \autoref{theo:continuity:estimatesAreLocallyUniform},
	it follows that $p_n \starhbar q_n \to p \starhbar q$
	locally uniformly in $\hbar$.
But clearly the evaluation at $x$ is continuous,
	so that $\hbar \mapsto p \starhbar q(x)$ is a locally uniform limit
	of rational functions and therefore holomorphic.
\end{proof} 
\section{Quantizing real coadjoint orbits}
\label{sec:properties}
We have seen in the previous sections how to construct (formal and strict) quantizations
of complex coadjoint orbits.
In this section, we will use these results to obtain (formal and strict) quantizations
of real coadjoint orbits.

In \autoref{subsec:complexification} and \autoref{subsec:analytic}
we collect some preliminary results on the complexification
of a real coadjoint orbit $\orb\lambda$ and a real Lie group $G$.
We define a certain class of analytic functions
that we denote by $\analytics(\orb\lambda)$ and $\analytics(G)$.
In \autoref{subsec:realOrbits} we construct a quantization of real orbits
by restricting the quantization of a complexification.
We discuss the examples of complex projective spaces and hyperbolic discs in
\autoref{subsec:realExamples}.
Finally, we show that point evaluation functionals are positive
for certain coadjoint orbits in \autoref{subsec:positiveLinearFunctionals}
and compare the quantum algebras obtained for coadjoint orbits of real Lie groups
with the same complexification in \autoref{subsec:wickrotation}.
Most results in the later subsections follow almost directly
from the results in the complex case.

From now on, all complex Lie groups and Lie algebras will be denoted with a hat
and letters without decoration will be used to denote real objects.
We will also use hats for maps between complex objects, e.g.\ we rename the map
defined in \eqref{eq:diffops:bijection:real} to $\hat\Psi$.

\subsection{Complexification}
\label{subsec:complexification}

In this subsection we define the complexification of a real coadjoint orbit $\orb\lambda$ 
and a real Lie group $G$, and show how they are related.

For a real Lie algebra $\lie g$, denote the space of 
real-valued real-linear functionals on $\lie g$ by $\lie g^*$.
As before, $\hat{\lie g}^*$ denotes the space of complex-valued
complex-linear functionals on a complex Lie algebra $\hat{\lie g}$.
In the following, we will always assume 
that $\hat{\lie g} = \lie g \tensor \mathbb C$ is the complexification of $\lie g$.
In this case, any element of $\lie g^*$ has a unique extension to an element of $\hat {\lie g}^*$.
We will perform this extension implicitly whenever necessary, without mentioning it.
For example, in the following proposition,
the coadjoint orbit $\orbext{\lambda}$ is really the coadjoint orbit
through the extension of $\lambda \in \lie g^*$ to an element of $\hat{\lie g}^*$.

\begin{proposition}
	Let $\orb\lambda \subseteq \lie g^*$ be a coadjoint orbit of a real connected Lie group,
	and assume that $\hat{\lie g}$ is the complexification of $\lie g$.
	Then $\orb\lambda$ is a submanifold of a unique complex coadjoint orbit
	$\orbext\lambda \subseteq \hat{\lie g}^*$ of a complex connected Lie group
	with Lie algebra $\hat{\lie g}$.
	The tangent space $\Tangent_\xi \orbext\lambda$ of this orbit $\smash{\orbext\lambda}$
	is the complexification of $\Tangent_\xi \orb\lambda$ for every $\xi \in \lie g^*$.
\end{proposition}
\begin{proof}
	By \autoref{theo:coadjointOrbitsAreSymplecticLeaves}
	the coadjoint orbit $\orb\lambda$ is the symplectic leaf through $\lambda$
	of the linear Poisson structure on $\lie g^*$ defined just
	before \autoref{theo:coadjointOrbitsAreSymplecticLeaves}.
	Similarly the coadjoint orbits in $\smash{\hat{\lie g}^*}$ are symplectic leaves
	of the linear Poisson structure on $\smash{\hat{\lie g}^*}$, 
	and the symplectic leaf containing $\smash{\lambda \in \hat{\lie g}^*}$
	contains the whole orbit $\orb\lambda$.
	This proves existence and uniqueness of $\orbext\lambda$.
	
	As in \autoref{subsec:generalities},
	we can identify $\Tangent_\xi \orb\lambda$ with $\lie g / \lie g_\xi$
	(as real vector spaces)
	and $\smash{\Tangent_\xi \orbext\lambda}$ with $\smash{\hat {\lie g} / \hat {\lie g}_\xi}$
	(as complex vector spaces)
	for all $\xi \in \orb\lambda$.
	Therefore $\smash{\Tangent_\xi\orbext\lambda}$ is indeed the complexification of 
	$\Tangent_\xi\orb\lambda$.
\end{proof}
We refer to the complex coadjoint orbit $\orbext\lambda$ of the previous proposition
as the \emph{complexification} of $\orb\lambda$.
We will show how to realize it explicitly as the coadjoint orbit of some Lie group $\hat G$.

\begin{definition}
	Let $G$ be a real Lie group. A \emph{complexification} of $G$
	is a complex connected Lie group $\hat G$
	together with an embedding $\iota \colon G \to \hat G$,
	such that the corresponding Lie algebra $\hat{\lie g}$ is isomorphic
	to the complexification $\lie g \tensor \mathbb C$ of $\lie g$
	and such that the map $\Tangent_e \iota \colon \lie g \to \hat{\lie g}$ corresponds
	to the injection $X \mapsto X \tensor 1$ under this isomorphism.
\end{definition}
Note that a complexification according to this definition may fail to exist
or may not be unique, if it exists. See the paragraph after 
\autoref{theo:polynomials:restrictionIsIso:group} for an example of a Lie group
with non-unique complexification.
For a connected semisimple Lie group $G$ a complexification exists if and only if
the group can be realized as a linear group: Existence for linear Lie groups is shown below,
and the reverse implication follows since semisimplicity of $G$ implies semisimplicity 
of the complexification and complex connected semisimple Lie groups are always
matrix Lie groups, see \autoref{theo:coadjointOrbitsAreClosed}.
There is a different notion of a universal complexification
that does always exist,
but that does not enjoy the property that $\hat{\lie g} \cong \lie g \tensor \mathbb C$.
We will not use universal complexifications in this paper.
\begin{proposition}\label{theo:complexificationExists}
	If $G$ is a real connected closed linear Lie group,
	then it admits a complexification $\hat G$.
\end{proposition}
\begin{proof}
	We may assume that both $G$ and its Lie algebra $\lie g$ are realized by real matrices.
Then the complexification $\hat{\lie g} = \lie g \tensor \mathbb C$
	is a Lie subalgebra of $\liegl[N]$.
	We can use the exponential map to construct an immersed complex Lie subgroup
	$\hat G$ of $\GL[N]$ containing $G$ as a subgroup and 
	having $\hat{\lie g}$ as Lie algebra,
	see e.g.~\cite[Chapter 5.9]{hall:2003a}.
Since $G$ is a closed subgroup of $\GL[N]$, 
	it is also a closed subgroup of $\hat G$.
\end{proof}
Note that we did not claim that $\hat G$ is a closed subgroup of $\GL[N]$.
For semisimple Lie groups this follows automatically from \autoref{theo:coadjointOrbitsAreClosed}.

\begin{lemma} \label{theo:complexifications:embeddingOfRealLieGroup}
	Let $G$ be a real connected Lie group with complexification $\hat G$ and let $\orb\lambda$
	be a coadjoint orbit of $G$ with complexification $\orbext\lambda$.
	Then $\orbext\lambda$ is a coadjoint orbit of $\hat G$ and the embedding 
	$\iota \colon G \to \hat G$ descends to an embedding 
	$\orb\lambda \cong G / G_\lambda \to \hat G / \hat G_\lambda \cong \orbext\lambda$.
\end{lemma}

\begin{proof}
	Since $\hat G$ is connected and has the Lie algebra $\hat{\lie g}$,
	it follows from \autoref{theo:coadjointOrbitsAreSymplecticLeaves}
	that its coadjoint orbit through $\lambda$ is $\smash{\orbext\lambda}$.
	We identify $G$ with a subgroup of $\hat G$.
	Since the coadjoint action of $\hat G$ on $\hat{\lie g}$ is holomorphic,
	$\hat G_\lambda$ is a complexification of $G_\lambda = \hat G_\lambda \cap G$.
	So the map $\iota$ descends to a map 
	$\orb\lambda 
	\cong 
	G/G_\lambda \to \hat G /\hat G_\lambda 
	\cong
	\smash{\orbext\lambda}$ that is still injective.
	To see that it is an embedding, note that the actions of $G_\lambda$
	and $\hat G_\lambda$ on $\hat G$ are proper and free,
	so $\hat G$ is a principal $G_\lambda$ resp.\ $\hat G_\lambda$ bundle
	over $\hat G / G_\lambda$ resp.\ $\hat G / \hat G_\lambda$.
	This implies first that $G / G_\lambda \to \hat G / G_\lambda$ is still an embedding,
	and then that $G / G_\lambda \to \hat G / \hat G_\lambda$ also is.
\end{proof} 
\subsection{Polynomials and analytic functions}
\label{subsec:analytic}

In this subsection we introduce polynomials $\Pol(\orb\lambda)$
and a certain class of analytic functions $\analytics(\orb\lambda)$
on a real coadjoint orbit $\orb\lambda$.
$\analytics(\orb\lambda)$ consists of restrictions of holomorphic functions
on the complexification.
In analogy to the complex case, $\analytics(\orb\lambda)$ is the completion of $\Pol(\orb\lambda)$ 
with respect to some locally convex topology.

All our polynomials are complex-valued. 
So for a real finite dimensional vector space $V$
we define $\Pol(V)$ to be the unital complex subalgebra of $\Cinfty(V)$
generated by the linear maps. (Remember that $\Cinfty(V)$ consists of
smooth functions $V \to \mathbb C$.)
So $\Pol(V) \cong \Sym(V^*_\mathbb{C})$ where $V^*_{\mathbb C}$
is the complexification of 
$V^* = \lbrace \phi \colon V \to \mathbb R, \text{ $\phi$ linear}\rbrace$.

\begin{definition}[Polynomials] 
	\label{def:polynomials:real}
	Let $\orb\lambda$ be a coadjoint orbit of a real connected Lie group $G$
	with Lie algebra $\lie g$.
	Then 
	\begin{equation}
	\Pol(\orb\lambda) 
	= \lbrace 
	p \colon \orb\lambda \to \mathbb C 
	\mid 
	p = P \at {\orb\lambda}\text{ for some polynomial $P$ on $\lie g^*$}
	\rbrace
	\end{equation}
	is called the algebra of polynomials on $\orb\lambda$.
\end{definition}
Note that polynomials on a complex orbit $\orbext\lambda$ were assumed to be holomorphic
and do therefore not coincide with polynomials on the underlying real orbit.
We will always use holomorphic polynomials on complexifications, so this will hopefully
not cause any confusion.

Denote the ideal of polynomials on $\lie g^*$ resp.\ $\hat{\lie g}^*$ 
vanishing on $\orb\lambda$ resp.\ $\orbext\lambda$ 
by $\vanideal{\orb\lambda}$ resp.\ $\vanideal{\orbext\lambda}$.
It is clear that the maps 
$\Pol(\lie g^*) / \vanideal{\orb\lambda} \to \Pol(\orb\lambda)$ 
and 
$\Pol(\hat{\lie g}^*) / \vanideal{\orbext\lambda} \to \Pol(\orbext\lambda)$
are isomorphisms.
We would now like to relate polynomials on
$\orb\lambda$ and $\orbext\lambda$.

\begin{proposition}\label{theo:polynomials:restrictionIsIso:orbit}
	Let $\orb\lambda \subseteq\lie g^*$ be a real coadjoint orbit 
	with complexification $\orbext\lambda \subseteq \hat{\lie g}^*$.
	Then the restriction map 
	$\smash{(\argumentdot)\at{\orb\lambda}} 
	\colon 
	\Cinfty(\orbext\lambda) \to \Cinfty(\orb\lambda)$ 
	restricts to an isomorphism
	$\smash{(\argumentdot)\at{\orb\lambda}} 
	\colon
	\Pol(\orbext\lambda) \to \Pol(\orb\lambda)$.
\end{proposition}

\begin{proof}
	Since restriction to $V$ is a bijection between complex linear maps
	$V \tensor \mathbb C \to \mathbb C$ and real linear maps $V \to \mathbb C$
	for any finite dimensional real vector space $V$,
	it follows that the restriction map $\Pol(\hat{\lie g}^*) \to \Pol(\lie g^*)$ is an isomorphism.
	If we can prove that the restriction map 
	$\vanideal{\orbext\lambda} \to \vanideal{\orb\lambda}$
	is also an isomorphism, then we are done since 
	$\Pol(\orbext\lambda)
	\cong \Pol(\hat {\lie g}^*) / \vanideal{\orbext\lambda}
	\to \Pol(\lie g^*) / \vanideal{\orb\lambda}
	\cong \Pol(\orb\lambda)$
	would be an isomorphism.
	
	Since any map vanishing on $\orbext\lambda$ vanishes
	in particular on $\orb\lambda \subseteq \orbext\lambda$,
	the restriction map $\vanideal{\orbext\lambda} \to \vanideal{\orb\lambda}$
	is well-defined and it is injective since it is the
	restriction of the injective map $\Pol (\hat{\lie g}^*) \to \Pol(\lie g^*)$.
So we only need to prove surjectivity, meaning that
	if a polynomial $p$ on $\lie g^*$ vanishes on $\orb\lambda$,
	then its unique extension to a polynomial
	$\hat p$ on $\hat{\lie g}^*$ vanishes on $\orbext\lambda$.
Since $\orbext\lambda$ is a complex submanifold of $\hat{\lie g}^*$,
	the restriction of $\hat p$ to $\orbext\lambda$ is holomorphic.
	As such it is determined by its derivatives	(of all orders)	at $\lambda$.
	It is even determined by its derivatives 
	in the direction of $\Tangent_\lambda \orb\lambda$ since
	$\Tangent_\lambda \orbext\lambda$ is the complexification of
	$\Tangent_\lambda \orb\lambda$.
	But all these derivatives vanish since the restriction of
	$\hat p$ to $\orb\lambda$ vanishes.
\end{proof}

\begin{definition} \label{def:realPolynomials}
	Let $G$ be a linear real Lie group. 
	Its algebra of polynomials $\Pol(G)$ is the unital
	complex	subalgebra of $\Cinfty(G)$ generated by the functions 
	$P_{ij} \colon G \to \mathbb C$, $g \mapsto g_{ij}$.
\end{definition}
In contrast to the complex case, the algebra of polynomials $\Pol(G)$ may depend on the
way in which $G$ is realized as a linear group, even in the semisimple case.
We will give an instructive example after stating the following proposition,
which can be proven in much the same way as \autoref{theo:polynomials:restrictionIsIso:orbit}.

\begin{proposition}\label{theo:polynomials:restrictionIsIso:group}
	Let $G \subseteq \GLR[N]$ be a linear connected Lie group 
	with complexification $\hat G  \subseteq \GL[N]$.
	Then the restriction map 
	$\smash{(\argumentdot) \at{G}} \colon \Cinfty(\hat G) \to \Cinfty(G)$
	restricts to an isomorphism 
	$\smash{(\argumentdot) \at{G}} \colon \Pol(\hat G) \to \Pol(G)$. 
\end{proposition}
The reason why the algebra of polynomials $\Pol(G)$ may depend on the linear structure of $G$,
is essentially that $G$ may not have a unique complexification.
Consider the linear semisimple Lie group $\SLR[3] \subseteq \GLR[3]$,
which has $\SL[3]$ as a complexification.
The images of $\SLR[3]$ and $\SL[3]$ under $\Ad$ are again semisimple Lie groups.
Furthermore, $\Ad(\SLR[3]) \cong \SLR[3]$
since $\mathrm{SL}(3, \mathbb R)$ has trivial center, and 
$\Ad(\SL[3]) \cong \SL[3] / \lbrace 1, \E^{2\pi \I/3}, \E^{4\pi \I/3} \rbrace$
is a complexification of $\Ad(\SLR[3])$.
By the previous proposition
$\Pol(\Ad(\SLR[3])) 
\cong 
\Pol(\SL[3] / \lbrace 1, \E^{2\pi \I/3}, \E^{4\pi \I/3} \rbrace)
\to
\Pol(\SL[3])
\cong \Pol(\SLR[3])$
where the map in the middle is not surjective, 
since there are polynomials on $\SL[3]$ 
that are not constant on $\lbrace 1, \E^{2\pi \I/3}, \E^{4\pi \I/3}\rbrace$.

We denote the inverses of the isomorphisms in \autoref{theo:polynomials:restrictionIsIso:orbit}
and \autoref{theo:polynomials:restrictionIsIso:group} by
\begin{equation}
\hat{\argumentdot} \colon \Pol(\orb\lambda) \to \Pol(\orbext\lambda) 
\quad\text{and}\quad
\hat{\argumentdot} \colon \Pol(G) \to \Pol(\hat G) \punkt
\end{equation}

\begin{lemma} \label{theo:realInvarianceImpliesComplexInvariance}
	Let $G$ be a real connected linear Lie group with complexification $\hat G$,
	and let $\lambda \in \lie g^*$ be such that $\hat G_\lambda$ is connected.
	If $f \in \Pol(\hat G)$ satisfies $f \at{G} \in \Pol(G)^{G_\lambda}$ 
	then $f \in \smash{\Pol(\hat G)^{\hat G_\lambda}}$.
\end{lemma}

\begin{proof}
	Let $f$ be as in the statement of the lemma.
	Since $f \at {G} = (g \acts f) \at{G}$ holds for all $g\in G_\lambda$ 
	it follows from the injectivity of $\smash{(\argumentdot)\at{G}}$ 
	that $f = g \acts f$, 
	so $f\in \smash{\Pol(\hat G)^{G_\lambda}}$. 
	Therefore $f$ is in particular invariant under $\lie g_\lambda$, 
	thus also under $\hat {\lie g}_\lambda$ since the action is holomorphic.
	Since $\hat G_\lambda$ is connected 
	we obtain that $f$ is $\hat G_\lambda$-invariant.
\end{proof}

\begin{corollary}\label{theo:polynomials:restrictionIsIso:groupInv}
	Let $G$ be a real connected semisimple linear Lie group
	with complexification $\hat G$,
	and assume that $\lambda \in \lie g^*$ is semisimple.
	In this case the restriction map
	$\smash{(\argumentdot) \at{G}} 
	\colon
	\smash{\Pol(\hat G)^{\hat G_\lambda}} \to \Pol(G)^{G_\lambda}$
	is an isomorphism.
\end{corollary}

\begin{proof}
	Note that $G_\lambda$ is connected by \autoref{theo:stabilizerConnected}.
	So this is an immediate consequence of
	\autoref{theo:polynomials:restrictionIsIso:group}
	and \autoref{theo:realInvarianceImpliesComplexInvariance}.
\end{proof}

\begin{corollary}\label{theo:hatIsAnIso:real}
	Let $G$ be a real connected semisimple linear Lie group with complexification $\hat G$,
	and assume that $\lambda \in \lie g^*$ is semisimple.
	Then the map 
	$\pi^* \colon \Pol(\orb\lambda) \to \Pol(G)^{G_\lambda}$
	is an isomorphism.
\end{corollary}

\begin{proof}
	The composition
	$\Pol(\orb\lambda)    
	\xrightarrow{\hat{\argumentdot}}
	\Pol(\orbext\lambda) 
	\xrightarrow{\hat\pi^*} 
	\Pol(\hat G)^{\hat G_\lambda} 
	\xrightarrow{(\argumentdot) \mid_{G}} 
	\Pol(G)^{G_\lambda}$
	equals 
	$\pi^*$ and is an isomorphism because of 
	\autoref{theo:polynomials:restrictionIsIso:orbit},
	\autoref{theo:pullbackOfPolynomials}, and
	\autoref{theo:polynomials:restrictionIsIso:groupInv}.
\end{proof}

\begin{corollary}\label{theo:extAndHatIsomorphisms:polynomials}
	Let $G$ be a real connected semisimple linear Lie group with complexification $\hat G$,
	and assume that $\lambda \in \lie g^*$ is semisimple.
	Then the following diagram commutes and all arrows are isomorphisms:
	\begin{equation}
	\begin{tikzcd}
	\Pol(\hat G)^{\hat G_\lambda}
	\arrow[shift left]{r}{\invpihat} 
	\arrow[shift left]{d}{(\cdot)\mid_{G}} &
	\Pol(\orbext\lambda) 
	\arrow[shift left]{d}{(\cdot)\mid_{\orb\lambda}}
	\arrow[shift left]{l}{\hat\pi^*}
	\\
	\Pol(G)^{G_\lambda}
	\arrow[shift left]{u}{\hat{\argumentdot}}
	\arrow[shift left]{r}{\invpi} &
	\Pol(\orb\lambda) \,\,\,\,\,\,\,.\!\!\!\!\!\!\!\!\!
	\arrow[shift left]{l}{\pi^*}
	\arrow[shift left]{u}{\hat{\argumentdot}}
	\end{tikzcd}
	\end{equation}
\end{corollary}
Next, we want to introduce a class of analytic functions,
that becomes the closure of the polynomials
with respect to a certain locally convex topology.
To this end, assume that $\orb\lambda$ is a coadjoint orbit
with complexification $\orbext\lambda$,
and that $G$ is a real connected Lie group
with complexification $\hat G$.
Then define
\begin{align}
\analytics(\orb\lambda) &= \mathrm{im}\left( (\argumentdot) \at{\orb\lambda} 
\colon 
\Hol(\orbext\lambda) 
\to \Cinfty(\orb\lambda) \right)
\shortintertext{and}
\analytics(G) &= \mathrm{im}\left( (\argumentdot) \at{G} 
\colon
\Hol(\hat G) 
\to \Cinfty(G) \right)\punkt
\end{align}
Note that an element $f \in \analytics(\orb\lambda)$
determines a unique element $\hat f \in \Hol(\orbext\lambda)$:
Existence follows by definition of $\analytics(\orb\lambda)$ and 
$\hat f$ is determined by all its derivatives at $\lambda$.
Since the complexification of $\Tangent_\lambda \orb\lambda$ is just
$\Tangent_\lambda \orbext\lambda$,
see \autoref{theo:complexifications:embeddingOfRealLieGroup},
it suffices to take derivatives in the direction of $\Tangent_\lambda \orb\lambda$.
But these derivatives are determined by $f$.
A similar reasoning holds for $G$ and $\hat G$.
We obtain a commuting square that is similar to the square for polynomials
obtained in \autoref{theo:extAndHatIsomorphisms:polynomials}.

\begin{proposition} \label{theo:commutativeSquare:holomorphic}
	The following diagram is commutative and all arrows are isomorphisms:
	\begin{equation}
	\begin{tikzcd}
	\Hol(\hat G)^{\hat G_\lambda}
	\arrow[shift left]{r}{\invpihat} 
	\arrow[shift left]{d}{(\cdot)\at{G}} &
	\Hol(\orbext\lambda) 
	\arrow[shift left]{d}{(\cdot)\at{\orb\lambda}}
	\arrow[shift left]{l}{\hat\pi^*}
	\\
	\analytics(G)^{G_\lambda}
	\arrow[shift left]{u}{\hat\argumentdot}
	\arrow[shift left]{r}{\invpi} &
	\analytics(\orb\lambda) \,\,\,\,\,\,\,.\!\!\!\!\!\!\!\!\!
	\arrow[shift left]{l}{\pi^*}
	\arrow[shift left]{u}{\hat\argumentdot}
	\end{tikzcd}
	\end{equation}
\end{proposition}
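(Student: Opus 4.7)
The plan is to establish the top horizontal arrow and the two vertical arrows as isomorphisms, after which the bottom horizontal arrow is forced to be an isomorphism by composition and commutativity of the square reduces to an elementary check at the level of representatives. Every ingredient is a direct analogue of a statement proved for polynomials in Section~\ref{subsec:polynomials}, lifted to the holomorphic/analytic setting via the identity theorem; no genuinely new idea is required.

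The top arrow $\pi^* : \Hol(\orbext\lambda) \to \Hol(\CLG)^{\CLG_\lambda}$ is an isomorphism because $\CLG \to \CLG/\CLG_\lambda$ is a holomorphic principal $\CLG_\lambda$-bundle, so pullback identifies holomorphic functions on $\orbext\lambda$ with $\CLG_\lambda$-invariant holomorphic functions on $\CLG$. This was already recorded (without further proof) just after the introduction of $\pi^*$ in Section~\ref{subsec:generalities}, so I would simply invoke it.

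For the right vertical arrow $(\argumentdot)\at{\orb\lambda} : \Hol(\orbext\lambda) \to \analytics(\orb\lambda)$, surjectivity holds by the very definition of $\analytics(\orb\lambda)$, while injectivity is precisely the uniqueness statement recorded just before the proposition: if $\hat f \in \Hol(\orbext\lambda)$ vanishes on $\orb\lambda$, then by \autoref{theo:complexifications:embeddingOfRealLieGroup} every complex derivative of $\hat f$ at $\lambda$ is determined by real derivatives along $\orb\lambda$, so $\hat f$ vanishes to infinite order at $\lambda$ and hence on the connected complex manifold $\orbext\lambda$. The inverse is $\hat{\argumentdot}$. An identical argument applied to $\RLG \subseteq \CLG$ makes $(\argumentdot)\at{\RLG} : \Hol(\CLG) \to \analytics(\RLG)$ an isomorphism with inverse $\hat{\argumentdot}$. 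To promote this to the invariant subspaces on the left vertical arrow, I would repeat the argument of \autoref{theo:realInvarianceImpliesComplexInvariance} verbatim: $\RLG_\lambda$-invariance of $f \in \analytics(\RLG)^{\RLG_\lambda}$ forces $\RLG_\lambda$-invariance of $\hat f$ by injectivity of the restriction, which yields $\RCA_\lambda$- and then $\CCA_\lambda$-invariance (the action on $\Hol(\CLG)$ being holomorphic), and finally $\CLG_\lambda$-invariance by \autoref{theo:stabilizerConnected}.

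With three sides of the square established, the bottom arrow $\pi^* : \analytics(\orb\lambda) \to \analytics(\RLG)^{\RLG_\lambda}$ is an isomorphism by composition, since $\pi^* = (\argumentdot)\at{\RLG} \circ \pi^* \circ \hat{\argumentdot}$ on $\analytics(\orb\lambda)$. Commutativity of the square reduces to the set-theoretic fact that $\pi : \CLG \to \orbext\lambda$ restricts to $\pi : \RLG \to \orb\lambda$: for $f \in \Hol(\CLG)^{\CLG_\lambda}$ and $g \in \RLG$ one computes $(\invpi f)\at{\orb\lambda}(\pi(g)) = f(g) = (f\at{\RLG})(g) = \invpi(f\at{\RLG})(\pi(g))$, proving $(\argumentdot)\at{\orb\lambda} \circ \invpi = \invpi \circ (\argumentdot)\at{\RLG}$; the remaining compatibility then follows by inverting the horizontal or vertical arrows already known to be isomorphisms. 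The main potential obstacle, namely the identity-theorem step, is essentially disposed of already in the discussion preceding the proposition and in the proofs of \autoref{theo:polynomials:restrictionIsIso:orbit} and \autoref{theo:polynomials:restrictionIsIso:group}, so the proof should be short.
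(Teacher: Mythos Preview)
Your proposal is correct and follows essentially the same route as the paper: establish the top $\pi^*$ from \autoref{subsec:generalities}, the vertical restriction/extension maps via the identity-theorem argument recorded just before the proposition together with the invariance upgrade \`a la \autoref{theo:realInvarianceImpliesComplexInvariance}, and then obtain the bottom $\pi^*$ by composition. The paper's proof is terser (it does not spell out the commutativity check or the injectivity argument, treating both as already settled), but the logical structure is identical.
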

\begin{proof}
	We know from \autoref{subsec:generalities} that 
	$\hat\pi^* \colon \Hol(\orbext\lambda) \to \Hol(\hat G)^{\hat G_\lambda}$
	is an isomorphism. In the previous paragraph we explained that
	$\hat\argumentdot \colon \analytics(\orb\lambda) \to \smash{\Hol(\orbext\lambda)}$
	and	$\hat\argumentdot \colon \analytics(G) \to \Hol(\hat G)$
	are isomorphisms and as in 
	\autoref{theo:realInvarianceImpliesComplexInvariance}
	it follows that the same is true for 
	$\hat\argumentdot \colon
	\analytics(G)^{G_\lambda} \to \smash{\Hol(\hat G)^{\hat G_\lambda}}$.
	Composing these isomorphisms we obtain that
	$\pi^* \colon \analytics(\orb\lambda) \to \analytics(G)^{G_\lambda}$
	is an isomorphism.
\end{proof}
Since $\Pol(\orbext\lambda) \subseteq \Hol(\orbext\lambda)$ it follows
that $\Pol(\orb\lambda) \subseteq \analytics(\orb\lambda)$.
We can define a topology $\smash{\mathcal T_{\widehat{\mathrm{lu}}}}$ of
\emph{extended locally uniform convergence} on $\analytics(\orb\lambda)$
as follows:
A sequence $f_n \in \analytics(\orb\lambda)$ converges to some
$f \in \analytics(\orb\lambda)$ if and only if the sequence
$\smash{\hat f_n} \in \smash{\Hol(\orbext\lambda)}$ converges locally uniformly
to $\hat f \in \Hol(\orbext\lambda)$.
Clearly the maps 
$\hat\argumentdot \colon \analytics(\orb\lambda) \to \Hol(\orbext\lambda)$ and 
$(\argumentdot)\at{\orb\lambda} \colon
\Hol(\orbext\lambda) \to \analytics (\orb\lambda)$
are both homeomorphisms. From \autoref{theo:topologies:polCompletesToHol}
it follows that the closure of $\Pol(\orb\lambda)$ with respect to the topology 
of extended locally uniform convergence is $\analytics(\orb\lambda)$.  
 
\subsection{Formal and strict star products on real coadjoint orbits}
\label{subsec:realOrbits}

In a sense all constructions in \autoref{sec:preliminaries}, 
\autoref{sec:starProduct}, and \autoref{sec:continuity}
are compatible with the restriction to real forms.
In this subsection we want to make this statement precise.
In particular, we will show that we can restrict formal and strict 
products from a complexification $\orbext{\lambda}$ of a semisimple
coadjoint orbit $\orb\lambda$ of a real connected semisimple Lie group $G$
to formal and strict star products on $\orb\lambda$. These star products
can---as before---be computed by applying fundamental vector fields or
by passing to the Lie group by using the maps $\pi^*$ and $\pi_*$.
We will determine when the star products on $\orb\lambda$ are
of (pseudo) Wick type or of standard ordered type.

\begin{proposition} \label{theo:starProduct:real:construction}
	Let $\orb \lambda$ be a semisimple coadjoint orbit
	of a semisimple connected real Lie group $G$.
	By \autoref{theo:complexifications:embeddingOfRealLieGroup}
	it has a complexification $\orbext\lambda$,
	and for $\hbar \in \mathbb C \setminus P_\lambda$
	there are strict products 
	$\hat*_\hbar\colon 
	\Pol(\orbext\lambda) \times 
	\Pol(\orbext\lambda) \to 
	\Pol(\orbext\lambda)$ 
	with extensions
	$\hat*_\hbar \colon 
	\Hol(\orbext\lambda) \times 
	\Hol(\orbext\lambda) \to 
	\Hol(\orbext\lambda)$
	constructed in \autoref{theo:starProduct:strict:associative}
	and \autoref{theo:mainContinuityTheorem}.
	These products restrict to $G$-invariant strict products
	\begin{equation}
	*_\hbar\colon 
	\Pol(\orb\lambda) \times 
	\Pol(\orb\lambda) \to 
	\Pol(\orb\lambda)
	\quad\text{ and }\quad
	*_\hbar\colon
	\analytics(\orb\lambda) \times 
	\analytics(\orb\lambda) \to 
	\analytics(\orb\lambda)
	\end{equation}
	for all $\hbar \in \mathbb C \setminus P_\lambda$.
	For fixed $p, q \in \Pol(\orb\lambda)$, 
	the dependence of $p *_\hbar q$ on $\hbar$ is rational with no pole at zero,
	and for fixed $f, g \in \analytics(\orb\lambda)$ and $x \in \orb\lambda$,
	the dependence of $f *_\hbar g(x)$ on $\hbar$ is holomorphic.
	Both products are continuous with respect to the topology of 
	extended locally uniform convergence defined at the end of
	\autoref{subsec:analytic}.
\end{proposition}

\begin{proof}
	Since the restriction maps $\Pol(\orbext\lambda) \to \Pol(\orb\lambda)$
	and $\Hol(\orbext\lambda) \to \analytics(\orb\lambda)$ are both homeomorphisms 
	(with respect to the topology of locally uniform convergence on the domains
	and the topology of extended locally uniform convergence on the codomains),
	the statement follows trivially from the corresponding statements for $\hat*_\hbar$,
	obtained in \autoref{theo:starProduct:strict:associative},
	\autoref{theo:mainContinuityTheorem}, and
	\autoref{theo:holomorphicDependenceOnHbar:complex}.
\end{proof}
We would like to compute these star products without passing to the complexification.
The construction of bidifferential operators from \autoref{subsec:diffopsOnHomogeneousSpaces}
works completely similarly in the real setting.
Recall that our differential operators act on complex-valued functions,
and therefore any complex vector field $\Secinfty(\Tangent^{\mathbb C}M)$
defines a first order differential operator on $M$.

\begin{proposition}
	Let $G$ be a real Lie group with Lie algebra $\lie g$, 
	and let $\hat{\lie g}$ be the complexification of $\lie g$.
	The map 
	\begin{equation*}
	\leftinv{(\argumentdot)} \colon (\univ \hat{\lie g})^{\tensor k} \to \nDiffop^G(G)
	\end{equation*}
	obtained by extending 
	$\hat{\lie g} \ni X \mapsto \leftinv X \in \Secinfty(\Tangent^{\mathbb C} G)$
	to an algebra homomorphism $\univ \hat{\lie g} \to \Diffop^G(G)$ and further
	to tensor products as in \eqref{eq:extendingToTensors} is an isomorphism.
	If $H$ is a closed Lie subgroup of $G$, then the map 
	\begin{equation}
	\Psi 
	\colon 
	((\univ \hat {\lie g} / \univ \hat{\lie g} \cdot \hat{\lie h})^{\tensor k})^H
	\to 
	\nDiffop^G(G/H) 
	\komma \quad 
	\Psi([\vec u])(\vec f) 
	=
	\pi_*(\leftinv{\vec u}(\pi^* \vec f))
	\end{equation}
	is also an isomorphism.
\end{proposition}
\begin{proof}
	With the obvious modifications the proofs of \autoref{theo:leftInvDiffOps:complex}
	and \autoref{theo:diffops:bijection:real} given in \appendixref{appendix:diffops}
	apply also to the real situation.
\end{proof}
To be consistent with the notation of this chapter, we denote the map defined in 
\eqref{eq:diffops:bijection:real} by $\hat\Psi$.

\begin{lemma} \label{theo:relationOfDiffops}
	Let $G$ be a real Lie group with closed subgroup $H$ 
	and assume that the complex Lie group $\hat G$ is a complexification of $G$
	and contains a complex closed subgroup $\hat H$ that is a complexification of $H$.
	The maps $\leftinv{(\argumentdot)}$ and $\Psi$ are compatible with the maps
	$\leftinvholo{(\argumentdot)}$ and $\hat\Psi$ in the sense that the diagrams
	\begin{equation}\label{eq:restrictionscommute:real}
	\begin{tikzcd}
	\Hol(\hat U)^k \arrow[r, "\left.(\cdot)\right|_{U}^{\times k}"] 
	\arrow[d, "{\leftinvholo {\vec u}}", swap] 
	& 
	\Cinfty(U)^k \arrow[d, "\leftinv {\vec u}"]\\
	\Hol(\hat U) \arrow[r, "\left.(\cdot)\right|_{U}", swap] & 
	\Cinfty(U)
	\end{tikzcd}\qquad\text{and}\qquad
	\begin{tikzcd}
	\Hol(\hat V)^k \arrow[r, "\left.(\cdot)\right|_{V}^{\times k}"] 
	\arrow[swap]{d}{\hat\Psi ([\vec v])} 
	& 
	\Cinfty(V)^k \arrow{d}{\Psi([\vec v])}\\
	\Hol(\hat V) \arrow[r, "\left.(\cdot)\right|_{V}", swap] & 
	\Cinfty(V)
	\end{tikzcd}
	\end{equation}
	commute for all open subsets $\hat U \subseteq \hat G$ and $\hat V \subseteq \hat G / \hat H$,
	with $U \coloneqq \hat U \cap G$ and $V \coloneqq \hat V \cap G / H$,
	and all elements
	$\vec u \in (\univ \hat{\lie g})^{\tensor k}$ and
	$\vec v \in ((\univ \hat {\lie g} / \univ \hat{\lie g} \cdot \hat{\lie h})^{\tensor k})^{\hat H}$.
\end{lemma}
\begin{proof}
	The commutativity of the second diagram follows easily from commutativity of the first,
	since the restrictions are compatible with $\pi^*$ and $\invpi$.
	To prove commutativity of the first diagram, 
	assume that $k=1$ and $\vec u = X \in \hat{\lie g} \subseteq \univ\hat{\lie g}$.
	The tangent map of a holomorphic function commutes with 
	the multiplication by $\I$.
	We compute 
	\begin{multline*}
	\leftinvholo X f (g) 
	= \frac 1 2 \left( \Tangent_g f \circ \Tangent_e \leftact_g (X) 
	- \I \Tangent_g f \circ \Tangent_e \leftact_g (\I X) \right) = \\
	= \Tangent_g f \circ \Tangent_e \leftact_g (X)
	= \leftinv X f \at{U}(g) 
	\end{multline*}
	for $f \in \Hol(\hat U)$ and $g \in U$. 
	The general case follows from this computation
	by the way in which $\leftinvholo{(\argumentdot)}$ and $\leftinv{(\argumentdot)}$ 
	are extended to $(\univ\hat{\lie g})^{\tensor k}$.
\end{proof}
\begin{corollary}
	Let $\orb \lambda$ be a semisimple coadjoint orbit
	of a semisimple connected real Lie group $G$.
	For $\hbar \in \mathbb C \setminus P_\lambda$ and $p,q\in \Pol(\orb\lambda)$, 
	the product $*_\hbar \colon \Pol(\orb\lambda) \times \Pol(\orb\lambda) \to \Pol(\orb\lambda)$
	defined in \autoref{theo:starProduct:real:construction} can be computed by
	\begin{equation}
	p *_\hbar q = \sum_{\ell = 0}^\infty \Psi(F_{\hbar,\ell})(p,q) \punkt
	\end{equation}
\end{corollary}
\begin{proof}
	The previous lemma implies
	\begin{equation*}
	p *_\hbar q 
	= 
	(\hat p \starhat \hat q) \at{\orb\lambda} 
	=
	\sum_{\ell = 0}^\infty \hat\Psi(F_{\hbar,\ell})(\hat p, \hat q) \at{\orb\lambda}
	=
	\sum_{\ell = 0}^\infty \Psi(F_{\hbar,\ell})(p,q) \punkt
	\end{equation*}
	Note that the sum over $\ell$ is finite by 
	\autoref{theo:starWellDefOnPolynomials}.
\end{proof}

\begin{theorem}
	Let $\orb \lambda$ be a semisimple coadjoint orbit
	of a semisimple connected real Lie group $G$.
	The product $\star \colon 
	\Cinfty(\orb\lambda)\formal\formParam \times 
	\Cinfty(\orb\lambda)\formal\formParam 
	\to
	\Cinfty(\orb\lambda)\formal\formParam$
	defined by $f \star g = \Psi(F)(f,g)$ where $F$ was obtained
	in \autoref{theo:starproduct:alekseev} is a $G$-invariant formal star product.
	In particular, it is associative and 
	deforms the KKS symplectic form on $\orb\lambda$.
	Furthermore, $p \star q$ coincides with the formal power series expansion
	of $p *_\hbar q$ around $\hbar = 0$ for $p, q \in \Pol(\orb\lambda)$,
	and $f \star g = \hat f \formalstarhat \hat g \at{\orb\lambda}$
	for $f, g \in \analytics(\orb\lambda)$.
\end{theorem}
\begin{proof}
	It is immediate from the definition of $F$ and $\Psi$ that every order of $\star$
	is given by a $G$-invariant bidifferential operator. 
Since $F$ is the formal power series expansion of $F_\hbar$ around $\hbar = 0$
	and $p *_\hbar q$ is rational with no pole at $0$ for $p, q \in \Pol(\orb\lambda)$,
	it follows that $p \star q$ coincides 
	with the formal power series expansion of $p *_\hbar q$.
	The compatibility with $\formalstarhat$ is immediate from 
	\autoref{theo:relationOfDiffops}.
Since bidifferential operators are uniquely determined by their
	behaviour on $\Pol(\orb\lambda) \subseteq \analytics(\orb\lambda)$,
	the compatibility with $\hat \star$ implies that $\star$ is associative	and,
	using \autoref{theo:firstOrderIsKKS}, that it deforms the KKS symplectic form.
\end{proof}
Recall that we proved in \autoref{theo:distributions:complex} that the product
$\hat*_\hbar$ separates variables with respect to the distributions $L_+$ and $L_-$,
which we call $\hat L_+$ and $\hat L_-$ in this section.
In the real case,
those distributions may have further properties.
They can be real,
or the holomorphic and antiholomorphic tangent spaces with respect to a complex structure.
Before giving further details let us make the following definitions.

\begin{definition}[Star products of standard ordered type]
	\label{def:starProduct:standardOrderedType} A star product 
	$*_\hbar$ on a symplectic manifold $M$ is said to be of 
	\emph{standard 	ordered type} 
	if there are two Lagrangian distributions $L_1, L_2 \subseteq \Tangent M$ 
	spanning the real tangent bundle $\Tangent M$ of $M$ 
	such that 
	the first argument of the star product is derived only in directions of $L_1$ 
	and the second argument only in directions of $L_2$.
\end{definition}

\begin{definition}[Star products of (pseudo) Wick type] A star product 
	$*_\hbar$ on a complex manifold $M$ that is also symplectic is said 
	to be of \emph{pseudo Wick type} if the first argument is derived only in 
	holomorphic directions and the second argument only in antiholomorphic 
	directions. A star product of pseudo Wick type on a 
	Kähler manifold is said to be of \emph{Wick type}.
\end{definition}
For formal star products of Wick type and with respect to the usual ${}^*$-involution
given by complex conjugation, point evaluations are positive linear functionals,
which is not necessarily the case for formal star products of pseudo Wick type. 
Note that the situation is more complicated for strict star products,
as we shall see in \autoref{subsec:positiveLinearFunctionals}.

Let us briefly recall some results 
on the existence of invariant complex structures on coadjoint orbits.
See \appendixref{appendix:complexstructures} for more details.
Let $\orb\lambda$ be a semisimple coadjoint orbit 
of a real connected semisimple Lie group $G$
with Lie algebra $\lie g$, and assume that $G_\lambda$ is compact.
Choose a real Cartan subalgebra $\lie h$ containing $\lambda^\sharp$.
Since $\lie h \subseteq \lie g_\lambda$, it follows that $\lie h$ is compact
(meaning that it integrates to a subgroup of $G$ with compact closure).
Then there are $G$-invariant complex structures on $\orb\lambda$,
and these structures are in bijection to invariant orderings of $\hat\Delta$ 
(we say an ordering on $\hat\Delta$ is invariant if it is the restriction of 
an invariant ordering of $\Delta$ as defined in \autoref{def:invariantOrdering})
as follows.
Recall that 
$\Tangent_\lambda^{\mathbb C} \orb\lambda 
\cong 
\hat{\lie g} / \hat{\lie g}_\lambda 
\cong
\bigoplus_{\alpha \in \hat\Delta} \lie g^\alpha$.
So given an invariant ordering we can define a map
$I_\lambda \colon 
\Tangent_\lambda^{\mathbb C} \orb\lambda 
\to 
\Tangent_\lambda^{\mathbb C} \orb\lambda$
by letting 
$I_\lambda X_\alpha = \I X_\alpha$ if $\alpha \in \smash{\hat\Delta^+}$, and 
$I_\lambda X_\alpha = -\I X_\alpha$ if $\alpha \in \smash{\hat\Delta^-}$.
The map $I_\lambda$ extends $G$-invariantly to an endomorphism $I$
of the complexified tangent bundle $\Tangent^{\mathbb C} \orb\lambda$ 
and restricts to an endomorphism of the real 
tangent bundle $\Tangent\orb\lambda$, thus it defines a complex structure.

If $G$ is compact, there is a unique ordering that makes $\orb\lambda$ with the 
complex structure $I$ and the KKS symplectic form $\omega_{\mathrm{KKS}}$ a 
K\"ahler manifold.
This ordering is characterized by $\alpha \in \smash{\hat\Delta}$ being positive
iff $(\alpha, \I\lambda) > 0$. In particular it is standard.
See \appendixref{appendix:complexstructures} for more details.
\begin{proposition} \label{theo:starProduct:types}
	For a semisimple coadjoint orbit $\orb\lambda$ of a real connected
	semisimple linear Lie group $G$, the product $*_\hbar$ obtained in 
	\autoref{theo:starProduct:real:construction}
	\begin{propositionlist}
		\item
		has poles $P_\lambda \subseteq \mathbb R$ if $\lie h$ is compact,
		\item\label{item:starProduct:types:i} 
		is of pseudo Wick type if $G_\lambda$ is compact 
		and the same ordering is used in the construction of the star product
		and the definition of the complex structure,
		\item\label{item:starProduct:types:ii} 
		is of standard ordered type with poles $P_\lambda \subseteq \I \mathbb R$
		if $\I \lie h \subseteq \hat{\lie g}$ is compact.
	\end{propositionlist}
	In particular, if $G$ is compact and, in the construction of $*_\hbar$,
	one chooses the ordering that makes $\orb\lambda$ with the induced complex structure $I$
	a K\"ahler manifold, then $*_\hbar$ is of Wick type.
\end{proposition}
\begin{proof}
	Roots take purely imaginary values on a compact Lie subalgebra of $\lie h$.
	Since $\lambda \in \lie g^*$ is by definition 
	real on $\lie h \subseteq \smash{\hat{\lie h}}$,
	it follows that 
	$(\lambda, \mu) \in \I \mathbb R$ if $\lie h$ is compact and 
	$(\lambda, \mu) \in \mathbb R$ if $\I \lie h$ is compact.
	Since $\frac 1 2 (\mu,\mu) - (\rho,\mu) \in \mathbb R$,
	this implies that the roots (with respect to $\hbar$) of
	$p_{\I\lambda/\hbar}(\mu) 
	=
	\frac 1 2 (\mu,\mu) - (\rho,\mu) - \frac \I \hbar (\lambda,\mu)$ are
	real if $\lie h$ is compact and 
	purely imaginary if $\I\lie h$ is compact.
	
	Recall the definition of the distributions $L_+$ and $L_-$,
	which we denote by $\hat L_+$ and $\hat L_-$ in this section,
	made just after \autoref{theo:distributionSpacesWellDefd}.
	Restricting them to $\orb\lambda \subseteq \orbext\lambda$
	gives two distributions $L_+, L_- \subseteq \Tangent^{\mathbb C} \orb\lambda$
	of the complexified tangent bundle.
	An analogue of \autoref{theo:calculationOfLeftInvDiffOpsOnOrbit}
	in the real case and the explicit formula for $F_\hbar$ from
	\autoref{theo:canonicalelement:general} together with 
	\autoref{remark:leaveOutProjections} show that $\star$ 
	derives the first argument only in directions of $L_+$,
	and the second argument only in directions of $L_-$.
	
	Assume that $\lie g_\lambda$ is compact.
	The holomorphic tangent space $\smash{\Tangent_\lambda^{(1,0)} \orb\lambda}$ is,
	under the isomorphism 
	$\smash{\Tangent_\lambda^{\mathbb C} \orb\lambda} 
	\cong 
	\smash{\hat{\lie g}/\hat{\lie g}_\lambda}$,
	spanned by 
	$X_\alpha - \I I_\lambda X_\alpha$
	for $\alpha \in \hat\Delta$.
	If $I_\lambda$ is defined using the ordering chosen in the construction of $*_\hbar$
	as described above,
	then $X_\alpha - \I I_\lambda X_\alpha = X_\alpha - \I \cdot \I X_\alpha = 2 X_\alpha$ 
	if $\alpha \in \hat\Delta^+$,
	and $X_\alpha - \I I_\lambda X_\alpha = X_\alpha - \I \cdot (- \I) X_\alpha = 0$
	if $\alpha \in \hat\Delta^-$,
	so 
	$\smash{\Tangent_\lambda^{(1,0)} \orb\lambda} 
	=
	\smash{\spann\lbrace 
		(X_\alpha)_{\orb\lambda} \at{} {}_\lambda,
		\alpha \in \hat\Delta^+ 
	\rbrace}$.
	This coincides exactly with $\smash{L_+ \at\lambda}$, and by $G$-invariance
	it follows that $L_+$ coincides with $\Tangent^{(1,0)} \orb\lambda$.
	Similarly, $L_-$ coincides with $\Tangent^{(0,1)} \orb\lambda$.
	Therefore $\star$ is of pseudo Wick type.	
	
	If $\I \lie h$ is compact, then every $\ad_H$ for $H \in \lie h$ is self-adjoint.
	Since they are all commuting we can find simultaneous eigenvectors in $\lie g$ of all $\ad_H$ 
	(without complexifying $\lie g$). But then we can pick $X_\alpha$ and $Y_\alpha$ to lie 
	in $\lie g$ so that $L_1 = L_+ \cap \lie g$ and $L_2 = L_- \cap \lie g$ are 
	Lagrangian distributions satisfying 
	\autoref{def:starProduct:standardOrderedType}.
\end{proof}

\begin{remark}
	Assume that $\lie g_\lambda$ is compact as in part 
	\refitem{item:starProduct:types:i} of the previous proposition.
	If one uses different invariant orderings in the construction of 
	the star product and in the definition of a complex structure,
	then the distributions $L_+$ and $L_-$ may both contain 
	holomorphic and antiholomorphic directions.
	Since we are mainly interested in star products of (pseudo) Wick type
	(these are the ones for which we would hope to 
	find positive linear functionals on the star product algebra,
	see \autoref{subsec:positiveLinearFunctionals}),
	we will usually assume that the two orderings agree. 
\end{remark} 
\subsection{Examples: complex projective spaces and hyperbolic discs}
\label{subsec:realExamples}

Recall that we have computed the canonical element of the Shapovalov pairing for 
$\SL[1+n]$ and a certain choice of $\lambda$ in \autoref{subsec:examples}.
Let us now specialize this result to the real forms $\group{SU}(1+n)$ and $\group{SU}(1,n)$.

\begin{example}[\boldmath$\mathbb{CP}^n$]\label{ex:CPn}
	The coadjoint orbit of $\group{SU}(1+n)$ through 
	$\lambda \colon \lie{su}_{1+n} \to \mathbb R$, $X \mapsto -\I r X_{0,0}$
	with $r \in \mathbb R^+$ is the complex projective space $\mathbb{CP}^n$.
	$\SL[1+n]$ is a complexification of $\group{SU}(1+n)$.
Using the notation $\hat{\lie h}$ for the Cartan subalgebra
	of $\liesl[1+n]$ introduced in \autoref{subsec:examples},
	we obtain a compact Cartan subalgebra  
	$\lie h \coloneqq \lie{su}_{1+n} \cap \hat{\lie h}$
	of $\lie{su}_{1+n}$.
\autoref{theo:complexStructure:Kaehler} tells us that the 
	K\"ahler complex structure is defined by the ordering of $\hat\Delta$
	for which $\alpha \in \hat\Delta^+$ iff $(\I\lambda, \alpha) > 0$.
	This ordering is the restriction of the ordering on $\Delta$ 
	for which all $\alpha_{i,j}$ with $i < j$ are positive.
Therefore the element $F_\hbar$ from \autoref{theo:example:standardOrdering}
	induces a Wick type star product on $\mathbb{CP}^n$.
This product has poles at $\lbrace \frac 1 n r \mid n \in \mathbb N\rbrace$.
\end{example}
\begin{example}[\boldmath$\mathbb D^n$]\label{ex:Dn}
	Denote the complex hyperbolic disc in $n$ dimensions by $\mathbb D^n$.
	Recall that $\group{SU}(1,n)$ denotes the group of isometries of the indefinite
	scalar product $g(v,w) = - v_0 w_0 + \sum_{i=1}^n v_i w_i$ on $\mathbb R^{1+n}$.
	The coadjoint orbit of $\group{SU}(1,n)$ through
	$\lambda \colon \lie{su}_{1,n} \to \mathbb R$, $X \mapsto -\I r X_{0,0}$
	with $r \in \mathbb R^+$ is the hyperbolic disc $\mathbb D^n$. 
	$\SL[1+n]$ is a complexification of $\group{SU}(1,n)$. 
	Again, $\lie h \coloneqq \lie{su}_{1,n} \cap \hat{\lie h}$ defines 
	a compact Cartan subalgebra of $\lie{su}_{1,n}$.
Now all roots are non-compact,
	so that according to \autoref{theo:complexStructure:Kaehler:nonCompact}
	the K\"ahler complex structure is defined by the ordering on $\hat\Delta$
	for which $\alpha \in \hat\Delta^+$ iff $(\I\lambda, \alpha) < 0$.
	This ordering is the restriction of the ordering on $\Delta$
	for which all $\alpha_{i,j}$ with $i > j$ are positive.
Therefore the element $F_\hbar$ from \autoref{theo:example:oppositeOrdering}
	induces a Wick type star product on $\mathbb D^n$.
This product has poles at $\lbrace -\frac 1 n r \mid n \in \mathbb N\rbrace$.
\end{example}
\begin{remark}
	A star product of Wick type on the hyperbolic disc was also studied in 
	\cite{kraus.roth.schoetz.waldmann:2019a}, where it was obtained from 
	a star product of Wick type on $\mathbb C^{1+n}$ using phase space reduction.
	This product coincides with the star product obtained in 
	\autoref{ex:Dn}.
	To see this, one checks that monomials of degree 1 generate the star product algebra,
	so that it suffices to compare the two formulas for a degree 1 monomial
	and an arbitrary monomial. 
	But for a degree 1 monomial only very few summands are non-zero in both 
	constructions and one can explicitly check that the expressions agree.
\end{remark} 
\subsection{Positive linear functionals}
\label{subsec:positiveLinearFunctionals}

In this subsection we prove that for certain coadjoint orbits
and certain values of $\hbar$ the point evaluation functionals
of the star product algebras constructed in \autoref{subsec:realOrbits}
are positive.
In order to have a meaningful notion of positivity we need a star involution
on $(\analytics(\orb\lambda), *_\hbar)$.
Of course, this star involution should be the restriction
of the complex conjugation of $\Cinfty(\orb\lambda)$,
but we need to prove that this restriction is well-defined.

Assume that $\hat{\lie g} = \lie g \tensor \mathbb C$ is the complexification
of a Lie algebra $\lie g$. The complex conjugation 
$\overline {\argumentdot\vphantom{\lambda}} \colon \hat{\lie g} \to \hat{\lie g}$, 
$X \tensor z \mapsto X \tensor \overline z$
is an antilinear involution on $\hat{\lie g}$.
Then 
$\overline{\argumentdot\vphantom{\lambda}} \colon \hat {\lie g}^* \to \hat{\lie g}^*$,
$\phi \mapsto \overline \phi \coloneqq
\overline{\argumentdot\vphantom{\lambda}} {}\circ \phi \circ{} 
\overline{\argumentdot\vphantom{\lambda}}$
defines an antilinear involution on $\hat{\lie g}^*$.
Note that on the right hand side, we first apply the involution of $\hat{\lie g}$,
then $\phi$, and then the complex conjugation of $\mathbb C$.
Therefore the right hand side defines a 
complex linear functional $\overline\phi \in \hat{\lie g}^*$. 
The map $\phi \mapsto \overline \phi$ is antilinear.

\begin{lemma}
	\label{theo:preliminaries:orbitsFixedByComplexConjugation}
	Let $G \subseteq \GLR[N]$ be a real linear Lie group with complexification $\hat G \subseteq \GL[N]$,
	assume $\lambda \in \lie g^*$, and let $\orbext\lambda$ be the coadjoint
	orbit of $\hat G$ through $\lambda$. 
	Then the map $\overline{\argumentdot\vphantom{\lambda}} \colon \hat{\lie g}^* \to \hat{\lie g}^*$
	restricts to an antilinear involution 
	$\overline{\argumentdot\vphantom{\lambda}} \colon \orbext\lambda \to \orbext\lambda$.
\end{lemma}
\begin{proof}
	Note that since $\lambda \in \lie g^*$ we have $\overline \lambda = \lambda$.
	Therefore we compute
	\begin{equation*}
	\overline{\Ad^*_g \lambda} 
	= 
	\overline{\lambda \circ \Ad_{g^{-1}}}
	=
	\overline \lambda \circ \overline{\argumentdot\vphantom{\lambda}} 
	\circ {\Ad_{g^{-1}}} \circ {} \overline{\argumentdot\vphantom{\lambda}}
	=
	\lambda \circ \Ad_{\overline g^{-1}}
	=
	\Ad^*_{\overline g} \lambda \punkt
	\end{equation*}
	Here $\overline g$ denotes the entrywise complex conjugate of $g \in \hat G$.
	Since the exponential map $\hat{\lie g} \to \hat G$ commutes with the complex conjugation,
	it follows that $\hat G$ is closed under entrywise complex conjugation, and therefore
	$\overline{g} \in \hat G$ and $\smash{\Ad_{\overline g}^* \lambda} \in \orbext\lambda$.
	This proves that $\overline{\argumentdot\vphantom{\lambda}}$ restricts to $\orbext\lambda$,
	and the restriction is clearly still an antilinear involution.
\end{proof}
Note that 
$\Tangent_{\xi} \overline{\argumentdot\vphantom{\lambda}} \circ I_{\xi} 
=
(I_{\overline \xi})^{-1} \circ \Tangent_{\xi} \overline{\argumentdot\vphantom{\lambda}}$
holds for $\xi \in \hat{\lie g}^*$, 
where $\Tangent_\xi \overline{\argumentdot\vphantom{\lambda}} \colon
\Tangent_\xi \hat{\lie g}^* \to \Tangent_{\overline{\xi}} \hat{\lie g}^*$ is the tangent map to the complex 
conjugation of $\hat{\lie g}^*$ and $I_\xi \colon \Tangent_\xi \hat{\lie g}^* \to \Tangent_{\xi} \hat{\lie 
g}^*$ is the complex structure at $\xi$.
Since the complex structure $I$ 
and the complex conjugation $\overline{\argumentdot\vphantom{\lambda}}$
of $\orb\lambda$ are both obtained by restriction from $\hat{\lie g}^*$,
they satisfy the same relation.

For any $f \in \Hol(\orbext\lambda)$ consider the function
$f^* \coloneqq 
\overline{\argumentdot\vphantom{\lambda}} \circ f \circ \overline{\argumentdot\vphantom{\lambda}}$, 
where the left $\overline{\argumentdot\vphantom{\lambda}}$ 
is the complex conjugation of $\mathbb C$
and the right $\overline{\argumentdot\vphantom{\lambda}}$ 
is the antilinear involution obtained in the previous lemma.
Denote the complex structure of $\mathbb C$ by $J$, 
and identify the tangent space of $\mathbb C$ with $\mathbb C$. 
Then
\begin{multline*}
\Tangent_\xi f^* \circ I_{\xi} 
= 
\overline{\argumentdot\vphantom{\lambda}} \circ \Tangent_{\overline\xi} f 
	\circ \Tangent_\xi\overline{\argumentdot\vphantom{\lambda}} \circ I_\xi
= 
\overline{\argumentdot\vphantom{\lambda}} \circ \Tangent_{\overline\xi} f 
	\circ I_{\overline\xi}^{-1} \circ \Tangent_\xi \overline{\argumentdot\vphantom{\lambda}}
= \\ =
\overline{\argumentdot\vphantom{\lambda}} \circ J^{-1} \circ \Tangent_{\overline \xi} f 
	\circ \Tangent_\xi \overline{\argumentdot\vphantom{\lambda}} 
=
J \circ \overline{\argumentdot\vphantom{\lambda}} \circ \Tangent_{\overline\xi} f 
	\circ \Tangent_\xi \overline{\argumentdot\vphantom{\lambda}} 
= J \circ \Tangent_\xi f^*
\end{multline*}
shows that $f^*$ is holomorphic.
Since $\overline{\argumentdot\vphantom{\lambda}}$ restricts to the identity
on $\orb\lambda \subseteq \lie g^*$,
it follows that $f^* |_{\orb\lambda} = \overline {f |_{\orb\lambda}}$.
Consequently, the restriction of ${}^* \colon \Hol(\orbext\lambda) \to \Hol(\orbext\lambda)$ 
to $\analytics(\orb\lambda)$ is just the complex conjugation
$\smash{\overline{\argumentdot\vphantom{\lambda}}} 
\colon
\analytics(\orb\lambda) \to \analytics(\orb\lambda)$.
In other words, the complex conjugation is well-defined on $\analytics(\orb\lambda)$.

\begin{proposition}
	Let $\orb\lambda$ be a semisimple coadjoint orbit of a
	connected semisimple real Lie group $G$.
	Assume that the Cartan subalgebra $\lie h$ used in the
	construction of a star product $*_\hbar$ is compact.
Then $\overline{f *_\hbar g} = \overline g *_{\overline\hbar} \overline f$
	holds for all $f, g \in \analytics(\orb\lambda)$.
\end{proposition}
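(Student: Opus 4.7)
The plan is to establish the twist-level identity
\begin{equation*}
\overline{F_\hbar} = \sigma(F_{\overline\hbar})
\end{equation*}
in $(\univ\CLA/\univ\CLA\cdot\CLA_\lambda)^{\hat\tensor 2}$, where $\overline{\argumentdot}$ is the antilinear algebra involution on $\univ\CLA$ extending the complex conjugation of $\CLA$ with respect to $\RLA$, and $\sigma$ denotes the flip of the two tensor factors. Once this is available, the proposition will follow by combining it with two elementary facts: $\overline{\Psi(T)(f,g)} = \Psi(\overline T)(f^*,g^*)$ (which is immediate from $\overline{u_{\orb\lambda} h} = (\overline u)_{\orb\lambda} \overline h$ for $u \in \univ\CLA$ together with \autoref{theo:calculationOfLeftInvDiffOpsOnOrbit}), and $\Psi(\sigma T)(f,g) = \Psi(T)(g,f)$. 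These yield $(f*_\hbar g)^* = \Psi(\overline{F_\hbar})(f^*,g^*) = \Psi(F_{\overline\hbar})(g^*,f^*) = g^* *_{\overline\hbar} f^*$, first on polynomials and then on all of $\analytics(\orb\lambda)$ by continuity of both sides. One also needs $\overline{F_\hbar}$ to be $\RLG_\lambda$-invariant in order to apply $\Psi$ to it, but this is immediate since $\Ad_g$ commutes with complex conjugation for $g \in \RLG_\lambda$.

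For the scalar coefficients in the explicit formula of \autoref{theo:canonicalelement:general}, I will exploit the compactness of $\RCA$: the roots take purely imaginary values on a compact Cartan, so $\alpha^\sharp \in \I\RCA$ for every root $\alpha$, while $\lambda^\sharp \in \RCA$. Consequently $(\mu,\mu)$ and $(\rho,\mu)$ are real while $(\lambda,\mu) \in \I\mathbb R$ for all $\mu \in \mathbb N_0\hat\Delta^+$. Writing $(\lambda,\mu) = \I r_\mu$ with $r_\mu \in \mathbb R$ gives
\begin{equation*}
p_{\I\lambda/\hbar}(\mu) = \tfrac 1 2(\mu,\mu) - (\rho,\mu) + r_\mu/\hbar,
\end{equation*}
so $\overline{p_{\I\lambda/\hbar}(\mu)} = p_{\I\lambda/\overline\hbar}(\mu)$ and hence $\overline{p^w_{\I\lambda/\hbar}(\alpha_w)^{-1}} = p^w_{\I\lambda/\overline\hbar}(\alpha_w)^{-1}$.

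For the tensor factors, the key observation is that conjugation sends $\lie g^\alpha$ to $\lie g^{-\alpha}$, which follows from $\overline{\alpha(H)} = -\alpha(\overline H)$ and the weight characterization of root spaces. Thus $\overline{X_\alpha} = c_\alpha Y_\alpha$ for some scalar $c_\alpha$; applying conjugation twice shows $\overline{Y_\alpha} = \overline{c_\alpha}^{-1} X_\alpha$, and using $B(X_\alpha, Y_\alpha) = 1$ together with the reality of $B$ on $\RLA$ forces $c_\alpha \in \mathbb R$. Since conjugation is an antilinear algebra homomorphism, this propagates to $\overline{X_w} = c_w Y_w$ and $\overline{Y_w} = c_w^{-1} X_w$ with $c_w = \prod_i c_{\alpha_{w_i}}$, giving
\begin{equation*}
\overline{X_w \tensor Y_w} = Y_w \tensor X_w.
\end{equation*}
Since $\lie g_\lambda^\pm \subseteq \CLA_\lambda$, the projections $\pi^\pm$ in the formula for $F_\hbar$ become the identity after passing to $\univ\CLA/\univ\CLA\cdot\CLA_\lambda$, so summing the above term-by-term yields $\overline{F_\hbar} = \sigma(F_{\overline\hbar})$. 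The main technical obstacle I anticipate is precisely this bookkeeping around $\pi^\pm$ and the two-sided quotient, since the subspaces $\univ(\plus{\tilde{\lie n}}) \tensorhat \univ(\minus{\tilde{\lie n}})$ and $\univ(\minus{\tilde{\lie n}}) \tensorhat \univ(\plus{\tilde{\lie n}})$ are interchanged by conjugation but must be compared inside a common ambient space.
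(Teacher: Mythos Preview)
Your proposal is correct and follows essentially the same approach as the paper: establish that the coefficients satisfy $\overline{p^w_{\I\lambda/\hbar}(\alpha_w)} = p^w_{\I\lambda/\overline\hbar}(\alpha_w)$ via compactness of $\RCA$, that $\overline{X_\alpha \tensor Y_\alpha} = Y_\alpha \tensor X_\alpha$, and combine these with $\Psi(\sigma T)(f,g) = \Psi(T)(g,f)$. The paper's proof is terser---it cites the appendix identity \eqref{eq:tangentVectorsToOrbit} directly to obtain $\overline{X_\alpha} = \pm Y_\alpha$ (with a specific normalization) rather than arguing via a real scalar $c_\alpha$---but the content is the same.
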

\begin{proof}
	As in the proof of \autoref{theo:starProduct:types} one argues that 
	since $\lie h$ is compact the coefficients $\smash{p_{\I\lambda}^w(\alpha_w)}$ 
	are real and more generally 
	$\smash{\overline{p_{\I\lambda/\hbar}^w(\alpha_w)}} 
	= 
	\smash{p_{\I\lambda/\smash{\overline \hbar}}^w(\alpha_w)}$.
	From \eqref{eq:tangentVectorsToOrbit} we obtain that 
	$\smash{\overline{X_\alpha \tensor Y_\alpha}} 
	= 
	Y_\alpha \tensor X_\alpha
	=
	\tau(X_\alpha \tensor Y_\alpha)$ 
	for both a compact and a non-compact root $\smash{\alpha \in \hat\Delta^+}$,
	and the same formula holds when $\alpha$ is replaced by a word $w \in \tilde\wordset$.
	Here $\smash{\overline{\argumentdot\vphantom{\lambda}}}$ is the complex conjugation
	of $\hat{\lie g}$ with respect to $\lie g$, extended to $(\univ\hat{\lie g})^{\tensor 2}$,
	and $\tau \colon (\univ\hat{\lie g})^{\tensor 2} \to (\univ\hat{\lie g})^{\tensor 2}$ 
	is the flip of the two tensor factors. Note that $\tau$ stays well-defined on 
	$(\univ\hat{\lie g} / \univ \hat{\lie g}\cdot \hat{\lie g}_\lambda)^{\tensor 2}$,
	and therefore the formula for $F_\hbar$ 
	obtained in \autoref{theo:canonicalelement:general}, \autoref{remark:leaveOutProjections},
	and the computations above imply $\overline{F_{\hbar,\ell}} = \tau(F_{\overline \hbar,\ell})$.
	Consequently
	\begin{multline*}
	\overline{f *_\hbar g} 
	= \sum_{\ell=0}^\infty \overline{\Psi(F_{\hbar, \ell})(f,g)} 
	= \sum_{\ell=0}^\infty \Psi(\overline{F_{\hbar, \ell}})(\overline f, \overline g)
	= \\ 
	= \sum_{\ell=0}^\infty \Psi(\tau(F_{\overline \hbar, \ell}))(\overline f, \overline g)
	= \sum_{\ell=0}^\infty \Psi(F_{\overline \hbar, \ell})(\overline g, \overline f)
	= \overline g *_{\overline \hbar} \overline f
	\end{multline*}
	holds for all $f, g \in \Pol(\orb\lambda)$ 
	and extends to $\analytics(\orb\lambda)$ by continuity.
\end{proof}
A linear functional $\phi$ on a $^*$-algebra $\algebra A$ is said to 
be positive if 
$\phi(a^* a) \geq 0$ for all $a \in \algebra A$.
In the following we formulate our results for the star algebra 
$\algebra A_\hbar \coloneqq (\analytics(\orb\lambda), *_\hbar, \overline{\argumentdot\vphantom{\lambda}})$,
but would like to point out that they also hold for 
$(\Pol(\orb\lambda), *_\hbar, \overline{\argumentdot\vphantom{\lambda}})$. 

\begin{theorem}\label{theo:positiveLinearFunctionals}
	Assume that $\orb\lambda$ is a semisimple coadjoint orbit of 
	a real connected semisimple Lie group $G$.
	Assume further that $\lie h$ is a compact Cartan subalgebra,
	and that all roots 
	(with respect to the complexification $\smash{\hat{\lie h}}$ of $\lie h$)
	in $\smash{\hat\Delta}$ are non-compact.
	Let $*_\hbar$ be the star product constructed with respect
	to the ordering for which $\alpha \in \hat\Delta$ is positive
	if and only if $(\alpha, \I \lambda) < 0$.
Then there is a constant $M>0$ such that 
	for all $\xi \in \orb\lambda$ and $\hbar \in (0, M) \setminus P_\lambda$
	the point evaluation at $\xi$
	is a positive linear functional 
	$\mathrm{ev}_\xi \colon \algebra A_\hbar \to \mathbb C$.
\end{theorem}

\begin{proof}
	Since $(\alpha, \I \lambda) < 0$ for all $\alpha \in \hat\Delta^+$,
	it follows that $- \I (\lambda,\mu) > 0$ holds for all 
	$\mu \in \mathbb N_0 \hat\Delta^+ \setminus \lbrace 0 \rbrace$.
	There are only finitely many $\mu \in \mathbb N_0 \hat\Delta^+$
	with $(\rho, \mu) - \frac 1 2 (\mu,\mu) > 0$,
 	thus we can choose $M > 0$ such that
	$- \smash{\frac \I \hbar} (\lambda,\mu) > (\rho, \mu) - \smash{\frac 1 2 (\mu,\mu)}$
	holds for all $\mu \in \smash{\mathbb N_0 \hat\Delta^+} \setminus\lbrace 0 \rbrace$
	and $\hbar \in (0, M) \setminus P_\lambda$.
	But this says precisely that $p_{\I \lambda/\hbar}(\mu) > 0$,
	and therefore $\smash{p_{\I\lambda/\hbar}^w(\alpha_w)} > 0$ 
	for all $\smash{w \in \tilde \wordset}$.
For a non-compact root we have $\smash{\overline{X_\alpha}} = Y_\alpha$ according 
	to \eqref{eq:tangentVectorsToOrbit:nonCompact}. Consequently, if $g \in G$ is
	such that $\xi = \Ad^*_g(\lambda)$, then
	\begin{align*}
	\ev_\xi(f \starhbar \overline f) 
	&=
	\sum_{\ell=0}^\infty \Psi\bigg(\sum_{w \in \tilde \wordset_\ell} p_{\I\lambda/\hbar}^w(\alpha_w)^{-1}
	\tilde \pi^+(X_w) \tensor \tilde \pi^-(Y_w) \bigg)(f,\overline f)(\xi) \\
	&= 
    \sum_{\ell=0}^\infty \sum_{w \in \tilde\wordset_\ell} p_{\I\lambda/\hbar}^w(\alpha_w)^{-1}
    \leftinv{X_w} (\pi^* f)(g) \cdot \leftinv{Y_w} (\pi^* \overline f)(g)
	\\ 
	&= 
	\sum_{\ell=0}^\infty \sum_{w \in \tilde\wordset_\ell} p_{\I\lambda/\hbar}^w(\alpha_w)^{-1}
	\leftinv{X_w} (\pi^* f)(g) \cdot \overline {\leftinv{X_w} 
		(\pi^* f)(g)} \\
	&\geq 0 
	\end{align*}
	holds for all $f \in \analytics(\orb\lambda)$. 
\end{proof}

\begin{example}[\boldmath$\mathbb D^n$]
	It is straightforward to check that the choices made to quantize
	the hyperbolic disc in \autoref{ex:Dn}
	are such that $\lie h$ is compact, such that every root in $\hat\Delta$ is non-compact,
	and such that $\alpha \in \hat\Delta$ is positive iff $(\alpha, \I \lambda) < 0$.
Therefore the previous theorem implies the existence of a constant $M > 0$ 
	such that all point evaluation functionals are positive if $\hbar \in (0, M)$.
	
	We can prove a stronger result by using the formula for $F_\hbar$ derived in 
	\autoref{theo:example:oppositeOrdering}.
	If $\hbar \in (0,\infty)$ then all the coefficients
	appearing in this formula are positive, and so point evaluations are positive
	for all $\hbar \in (0,\infty)$.
\end{example}
Note that a similar proof does not work for $\mathbb{CP}^n$
since some of the coefficients in \eqref{eq:example:standardOrdering} are negative.
Indeed, one can use the appearing negative coefficients to show
that no point evaluation functional is positive on $\mathbb{CP}^n$
for $\hbar \in (0, \infty) \setminus P_\lambda$.
 
\subsection{A generalized Wick rotation}
\label{subsec:wickrotation}

In this subsection we want to state an immediate corollary
of the construction in the previous sections.
Let $\lie g_1$, $\lie g_2$ be two real semisimple
Lie algebras with the same complexification $\smash{\hat{\lie g}}$.
Assume $\lambda \in \lie g_1^* \cap \lie g_2^*$ 
where we view $\lie g_1^*$ and $\lie g_2^*$ as subspaces of $\hat{\lie g}^*$.
Denote the coadjoint orbits in $\lie g_1^*$ and $\lie g_2^*$ through $\lambda$
by $\orb\lambda^1$ and $\orb\lambda^2$, respectively.
There is an isomorphism $\Pol(\orb\lambda^1) \to \Pol(\orb\lambda^2)$
given by composing the map
$\Pol(\orb\lambda^1) \ni p \mapsto \hat p \in \smash{\Pol(\orbext\lambda)}$
with the restriction to $\orb\lambda^2$.
Here $\orbext\lambda$ is the complex extension of $\orb\lambda$.
It turns out that this isomorphism is still an isomorphism
of both the uncompleted and completed quantum algebras.

\begin{theorem}\label{theo:wickrotation}
	Let $\lie g_1$ and $\lie g_2$ be two real semisimple Lie algebras
	with a common complexification $\hat{\lie g}$ and assume that
	$\lambda \in \lie g_1^* \cap \lie g_2^*$ is semisimple.
Then the algebras 
	$(\Pol(\orb\lambda^1), *^1_\hbar)$ and 
	$(\Pol(\orb\lambda^2), *^2_\hbar)$,
	and also the algebras
	$(\analytics(\orb\lambda^1), *^1_\hbar)$ and 
	$(\analytics(\orb\lambda^2), *^2_\hbar)$,
	constructed with respect to the same Cartan subalgebra
	$\lie h \subseteq \lie g_1 \cap \lie g_2$
	and the same ordering, are isomorphic.
\end{theorem}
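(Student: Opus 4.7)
The plan is to factor the desired isomorphism through the common complex coadjoint orbit $\orbext\lambda \subseteq (\CLA)^*$, exploiting the fact that the twist $F_\hbar$ constructed in \autoref{subsec:alekseevLachowskaConstruction} depends only on the complexified data $(\CLG, \CLA, \lie h, \text{ordering}, \lambda)$ and not on the choice of real form. Since $\CLG$ is a complexification of both $\real{G_1}$ and $\real{G_2}$ and $\lambda \in (\RLA_1)^* \cap (\RLA_2)^*$, \autoref{theo:complexifications:embeddingOfRealLieGroup} gives embeddings $\orb\lambda^i \hookrightarrow \orbext\lambda$ for $i = 1, 2$, and by the discussion at the start of \autoref{subsec:realOrbits}, the restriction maps $(\argumentdot)\at{\orb\lambda^i} : \Hol(\orbext\lambda) \to \analytics(\orb\lambda^i)$ are bijective, with inverses denoted $\hat{\argumentdot}$.

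First I would define
\begin{equation*}
\Phi : \analytics(\orb\lambda^1) \to \analytics(\orb\lambda^2) \komma \quad
f \mapsto \hat f\at{\orb\lambda^2} \komma
\end{equation*}
that is $\Phi = (\argumentdot)\at{\orb\lambda^2} \circ \hat{\argumentdot}$. This is a bijection as a composition of bijections, and it is a homeomorphism with respect to the topology of extended locally uniform convergence on each side, since by definition of this topology both $\hat{\argumentdot}$ and the restriction map are homeomorphisms onto $\Hol(\orbext\lambda)$ (equipped with the topology of locally uniform convergence).

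Next I would verify that $\Phi$ is multiplicative. The key identity is the extension of \eqref{eq:starProduct:restriction} from polynomials to $\Hol(\orbext\lambda)$: for any $\tilde f, \tilde g \in \Hol(\orbext\lambda)$ and either real form,
\begin{equation*}
(\tilde f \starhat \tilde g)\at{\orb\lambda^i}
= (\tilde f\at{\orb\lambda^i}) *^i_\hbar (\tilde g\at{\orb\lambda^i}) \punkt
\end{equation*}
For polynomials this is \eqref{eq:starProduct:restriction}, a direct consequence of \autoref{theo:diffOps:relation:orbit} applied to $F_\hbar$. The extension to $\Hol(\orbext\lambda)$ follows from joint continuity of $\starhat$ (proven in \autoref{sec:continuity}) and $*^i_\hbar$ (shown in \autoref{subsec:realOrbits}), together with the density of $\Pol(\orbext\lambda)$ in $\Hol(\orbext\lambda)$ from \autoref{theo:topologies:polCompletesToHol}. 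Given this identity, for $f, g \in \analytics(\orb\lambda^1)$ we have $\widehat{f *^1_\hbar g} = \hat f \starhat \hat g$ (by uniqueness of holomorphic extensions), and therefore
\begin{equation*}
\Phi(f *^1_\hbar g)
= (\hat f \starhat \hat g)\at{\orb\lambda^2}
= (\hat f\at{\orb\lambda^2}) *^2_\hbar (\hat g\at{\orb\lambda^2})
= \Phi(f) *^2_\hbar \Phi(g) \punkt
\end{equation*}

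The only mildly non-routine step is verifying the crucial restriction identity for holomorphic (rather than polynomial) arguments, but this is immediate from the continuity of all three products and the density statement cited above. Everything else is formal bookkeeping, using that the element $F_\hbar$ lives in a completed tensor algebra of $\CLA$ and acts via $\Psi^\holo$ on the complex orbit, so that both $*^1_\hbar$ and $*^2_\hbar$ are genuinely obtained as restrictions of one and the same strict product $\starhat$ on $\Hol(\orbext\lambda)$. I would finish by noting that the isomorphism $\Phi$ generalises the classical Wick rotation (as already mentioned after the Main Theorem in the introduction), and that it need not intertwine the natural complex conjugations on $\analytics(\orb\lambda^1)$ and $\analytics(\orb\lambda^2)$.
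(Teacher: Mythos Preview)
Your proof is correct and follows exactly the same idea as the paper's one-line argument, which simply states that both algebras are isomorphic to $(\Hol(\orbext\lambda), \starhat)$. You have merely made explicit the details underlying that assertion---in particular the extension of \eqref{eq:starProduct:restriction} to holomorphic functions by continuity and density---which the paper leaves implicit since they are already established in \autoref{subsec:realOrbits}.
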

\begin{proof}
	Both algebras are isomorphic to 
	$(\Pol(\orbext\lambda), \starhat)$ or
	$(\Hol(\orbext\lambda), \starhat)$.
\end{proof}

\begin{example}[\boldmath$\mathbb{CP}^n$ and $\mathbb D^n$] \label{ex:wickrotation:coadjorb}
	We know from \autoref{ex:CPn} and \autoref{ex:Dn} that $\mathbb{CP}^n$ and 
	$\mathbb D^n$ are coadjoint orbits of the Lie groups $\group{SU}(1+n)$ and 
	$\group{SU}(1,n)$ through the same element, and that $\SL[1+n]$
	is a common complexification. So the previous proposition 
	implies that the star product algebras on $\mathbb{CP}^n$ and $\mathbb D^n$ are 
	isomorphic if we choose the same ordering in the construction of the star 
	products.
	
	The ordering that induces a K\"ahler complex structure on $\mathbb{CP}^n$,
	induces the complex structure on $\mathbb D^n$
	that is the opposite of the K\"ahler complex structure.
	Therefore the associated star product on $\mathbb D^n$ is of pseudo Wick type
	with respect to this opposite complex structure,
	and therefore of anti-Wick type for the K\"ahler complex structure.
	(A star product is of anti-Wick type 
	if the first argument is derived in antiholomorphic directions and the 
	second argument is derived in holomorphic ones.)
	Consequently, the algebra $\analytics(\mathbb{CP}^n)$ with the
	Wick type star product is isomorphic to the algebra $\analytics(\mathbb D^n)$ 
	with the anti-Wick type star product. 
	Similarly, the algebra $\analytics(\mathbb{CP}^n)$ with the anti-Wick type star product
	is isomorphic to the algebra $\analytics(\mathbb D^n)$ with the Wick type star product.
	
	One can also construct an isomorphism between the Wick type star product for $\hbar$
	and the anti-Wick type star product for $-\hbar$,
	both on the hyperbolic disc and the complex projective space.
	Composing with these isomorphisms shows that
	the Wick type star product for $\hbar$ on $\mathbb{CP}^n$ is isomorphic to
	the Wick type star product for $-\hbar$ on $\mathbb D^n$.
\end{example}
Note that \autoref{theo:wickrotation} only gives an algebra homomorphism 
between $\Pol(\orb\lambda^1)$ and $\Pol(\orb\lambda^2)$,
or between $\analytics(\orb\lambda^1)$ and $\analytics(\orb\lambda^2)$.
If we view these algebras as $^*$-algebras with the star involution
considered in \autoref{subsec:positiveLinearFunctionals}
then they are in general not $^*$-isomorphic!
One can see this for example by proving
that the point evaluation functionals on $\mathbb{CP}^n$ are not positive
for $\hbar \in (0, \infty) \setminus P_\lambda$. 	
\begin{appendices}
\section{\texorpdfstring{Proofs, \boldmath$G$-finite functions, and complex structures}{Proofs, G-finite functions, and complex structures}}
\label{appendices}
In \appendixref{appendix:diffops} we prove 
\autoref{theo:leftInvDiffOps:complex} and
\autoref{theo:diffops:bijection:real}.
In \appendixref{appendix:gfinites} we prove 
\autoref{theo:pullbackOfPolynomials} using the concept of $G$-finite functions.
Finally we recall some facts about complex structures
on coadjoint orbits in \appendixref{appendix:complexstructures}.

\subsection{Proofs of \autoref{theo:leftInvDiffOps:complex} and 
	\autoref{theo:diffops:bijection:real}} 
\label{appendix:diffops}

Let $M$ be a manifold.
For $f \in \Cinfty(M)$ we define 
$M_f \colon \Cinfty(M) \to \Cinfty(M)$, $f' \mapsto f f'$ and 
$\smash{M_f^{i}} = \id^{\times (i-1)} \times M_f \times \id^{\times(k-i)} \colon
\Cinfty(M)^k \to \Cinfty(M)^k$.

\begin{definition}\label{def:nDifferentialOperators}
	Let $M$ be a manifold.
	For a multiindex $K = (K_1, \dots, K_k) \in \mathbb Z^k$
	we define $\nDiffop_{K}(M) = \lbrace 0 \rbrace$ if some $K_i <0$
	and otherwise we define inductively
	\begin{multline}
	\nDiffop_{K}(M) = \lbrace D \colon \Cinfty(M)^{k} \to \Cinfty(M) 
	\mid M_f \circ D - D \circ M_f^{i} \in \nDiffop_{K - E_i}(M) \\
	\text{ for all $f \in \Cinfty(M)$ and $1 \leq i \leq k$}\rbrace 
	\punkt
	\end{multline}
	Here $(K-E_i)_j = K_j - \delta_{ij}$
	where $\delta_{ij}$ is $1$ if $i=j$ and $0$ otherwise.
	Elements of $\nDiffop_K(M)$ are called \emph{$k$-differential operators
	of degree $K$}.
	A map $D\colon \Cinfty(M)^k \to \Cinfty(M)$ is said to be a \emph{$k$-differential 
	operator} if it is a $k$-differential operator of some degree $K$.
	The space of $k$-differential operators is denoted by $\nDiffop(M)$.
\end{definition}
It follows that a $k$-differential operator is local in every argument,
so that it can be restricted to any open subset.
In a chart $U \subseteq M$ with local coordinates $(x^1, \dots, x^n)$,
a $k$-differential operator $D$ of degree $K$ can be written as
\begin{equation} \label{eq:nDifferentialOperators:charts}
D (f_1, \dots, f_k) =
{\sum}_{I_1, \dots, I_k \in \mathbb N_0^n}
c_{I_1, \dots,I_k}
\partial_x^{I_1} f_1 \cdot \ldots \cdot \partial_x^{I_k} f_k
\end{equation}
where $c_{I_1, \dots, I_k} \in \Cinfty(M)$ and $c_{I_1, \dots, I_k} = 0$ if 
$\abs{I_i} > K_i$ for some $1 \leq i \leq k$.
For a multiindex $J\in\mathbb N_0^n$ we used 
$\partial_x^J \coloneqq \partial_{x^1}^{J_1} \dots \partial_{x^n}^{J_n}$ and
$\partial_{x^i} \coloneqq \frac{\partial}{\partial x^i}$.
Conversely, an operator $D \colon \Cinfty(M)^k \to \Cinfty(M)$
that has this form in any chart is $k$-differential of order $K$.
A $k$-differential operator $D$ on a complex manifold $M$ is 
holomorphic if, in local holomorphic coordinates $(z^1, \dots, 
z^n)$, we have 
\begin{equation*}
D(f_1, \dots, f_k) = {\sum}_{I_1, \dots, I_k \in \mathbb N_0^n} c_{I_1, \dots, 
	I_k}
\partial_z^{I_1} f_1 \cdot \ldots \cdot \partial_z^{I_k} f_k
\end{equation*}
with all $c_{I_1, \dots, I_k}$ being holomorphic. 
Here $\partial_z^J = \partial_{z^1}^{J_1} \dots \partial_{z^n}^{J_n}$
and $\partial_{z^i} = \frac{\partial}{\partial z^i}$.
Equivalently, $D$ is holomorphic if 
$D$ maps $\Hol(U)^k$ into $\Hol(U)$ and 
$D \at U \circ M_f^{i} - M_f \circ D \at U = 0$ 
for all open subsets $U \subseteq M$ and all antiholomorphic functions $f$ on $U$.
We write $\nDiffop_\holo(M)$ for the space
of holomorphic $k$-differential operators.

We say a $k$-differential operator is of order
$K \in \mathbb Z^k$ at a point $p \in M$ if, when written in a local
chart $U$ around $p$ as in \eqref{eq:nDifferentialOperators:charts},
we have $c_{I_1,\dots, I_k}(p) = 0$ whenever $\abs{I_j} > K_j$ for some $1 \leq 
j \leq k$.

If $I_1 ,\dots, I_k, J, K \in \mathbb N_0^n$ are all multiindices, 
we write $J \leq K$
if $J_i \leq K_i$ for all $1 \leq i \leq n$.
If $X_1, \dots, X_n \in \lie g$,
then we use $X^J$ as a shorthand for $X_1^{J_1} \dots X_n^{J_n} \in \univ\lie g$
and $X^{I_1 \tensor \dots \tensor I_k}$ as a shorthand for 
$X^{I_1} \tensor \dots \tensor X^{I_k} \in (\univ \lie g)^{\tensor k}$.

\begin{proofof}[Proof of \autoref{theo:leftInvDiffOps:complex}:]
	Choose a basis $\lbrace X_1, \dots, X_n\rbrace$ of $\lie g$.
	It follows from the Poincar\'e--Birkhoff--Witt theorem that 
	$\lbrace X^{I_1 \tensor \dots \tensor I_k} \mid I_1, \dots, I_k \in \mathbb 
	N_0^n \rbrace$ is a basis of $(\univ \lie g)^{\tensor k}$.
	Moreover, $\lbrace \leftinvholo{X_1} \at e, \dots,\allowbreak \leftinvholo{X_n} \at e \rbrace$
	is a basis of the tangent space $\smash{\Tangent_e^{(1,0)} G}$ and we can choose
	a complex chart $U$ around $e$ with local coordinates $(z^1, \dots, z^n)$
	such that $\partial_{z^i} \at e = \leftinvholo{X_i} \at e$.
	
	Assume
	$\vec u = \sum_{I_1, \dots, I_k \in \mathbb N_0^n} c_{I_1, \dots, I_k} 
	X^{I_1 \tensor \dots \tensor I_k} \neq 0$
	with only finitely many $c_{I_1, \dots, I_k} \neq 0$.
	Choose $I_1, \dots, I_k$ in such a way that 
	$c_{I_1, \dots, I_k} \neq 0$ and 
	$c_{J_1, \dots, J_k} = 0$ whenever $I_i \leq J_i$ 
	and $(I_1, \dots, I_k) \neq (J_1, \dots J_k)$.
For $\vec f = (z^{I_1}, \dots, z^{I_k}) \in \Cinfty(U)^{\times k}$ we compute
	$\leftinvholo{\vec u} \vec f(e) = I_1! \dots I_k! c_{I_1, \dots, I_k} \neq 0$.
So $\leftinvholo{\vec u} \neq 0$ and $\leftinvholo{(\argumentdot)}$ is injective.
	
	Note that 
	$\leftinvholo{(X^{I_1})} f_1 \cdot \ldots \cdot \leftinvholo{(X^{I_k})} f_k
	= \partial_z^{I_1} f_1 \cdot \ldots \cdot \partial_z^{I_k} f_k 
	+ D'(f_1, \dots, f_k)$
	where $D'$ is a holomorphic $k$-differential operator whose order at $e$
	is strictly smaller than $(\abs{I_1}, \dots, \abs{I_k})$.
	For any holomorphic $k$-differential operator $D$ we can therefore, by induction,
	find coefficients $c_{I_1, \dots, I_k} \in \mathbb C$,
	only finitely many of which are non-zero,
	such that 
	\begin{equation*}
	D (f_1, \dots, f_k) (e) 
	= \sum_{I_1, \dots, I_k \in \mathbb N_0^n} c_{I_1, \dots, I_k} 
	\leftinvholo{(X^{I_1})} f_1(e) \cdot\ldots\cdot \leftinvholo{(X^{I_k})} f_k(e)
	\end{equation*}
	holds for all $f_1, \dots, f_k \in \Cinfty(G)$.
	In other words, $D$ and the left-invariant differential operator
	$\sum_{I_1, \dots, I_k \in \mathbb N_0^n}
	\leftinvholo{(c_{I_1, \dots, I_k} X^{I_1 \tensor \dots \tensor I_k})}$ 
	agree at $e$.
	So if $D$ is also left-invariant, then these operators agree everywhere on $G$,
	proving surjectivity.
\end{proofof}
The proof of \autoref{theo:diffops:bijection:real} is similar.
We need the following lemma to simplify the local calculations.

\begin{lemma}
	\label{theo:diffops:specialCoordinates}
	Let $G$ be a complex Lie group with Lie algebra $\lie g$, 
	and assume that $H$ is a closed complex Lie subgroup of $G$
	with Lie algebra $\lie h$. 
	Given a basis $B = \lbrace X_1, \dots, X_n \rbrace$ of $\lie g$ 
	such that $B' = \lbrace X_{n-r+1}, \dots, X_n \rbrace$
	is a basis of $\lie h$
	one can choose a neighbourhood $U$ of $e$ in $G$ and 
	complex coordinates $z=(z^1, \dots, z^n)$ on $U$ such that
	\begin{lemmalist}
		\item\label{theo:diffops:specialCoordinates:i}
		for any $g \in U$ its fiber $g H \cap U$ is given locally as 
		$(\lbrace z(g)\rbrace + \lbrace 0 \rbrace \times \mathbb C^{r}) \cap z(U)$,
\item\label{theo:diffops:specialCoordinates:ii}
		the left-invariant holomorphic vector fields agree with
		coordinate vector fields at $e \in G$,
		that is $\leftinvholo{X_i} \at e = \partial_{z^i}\at e$.
	\end{lemmalist}
\end{lemma}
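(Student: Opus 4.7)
The plan is to build the chart explicitly as a product of two exponentials, one sweeping out a transverse slice to $H$ and one sweeping out $H$ itself. Concretely, let $\lie h' = \spann\lbrace X_1, \dots, X_{n-r}\rbrace$ so that $\lie g = \lie h' \oplus \lie h$, and define
\begin{equation*}
\phi : U \to G \komma \quad
\phi(x^1, \dots, x^n)
= \exp\!\Big(\sum_{i=1}^{n-r} x^i X_i\Big) \cdot
  \exp\!\Big(\sum_{i=n-r+1}^{n} x^i X_i\Big)
\end{equation*}
on a suitable open neighbourhood $U$ of $0 \in \mathbb R^n$. The chart is then taken to be $x = \phi^{-1}$ on $\phi(U)$.

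Next, I would verify \refitem{theo:diffops:specialCoordinates:ii} together with the claim that $\phi$ is indeed a local diffeomorphism. Computing $T_0\phi(\partial_{x^i})$ by the Leibniz rule on the product of the two exponentials gives $X_i$ for every $i = 1, \dots, n$, because differentiating in the $i$-th direction at $0$ kills one of the two exponentials (whichever evaluates to $e$). Hence $T_0\phi: T_0 \mathbb R^n \to T_e G$ is a linear isomorphism, so by the inverse function theorem $\phi$ is a local diffeomorphism near $0$, and $\partial_{x^i}\at{e} = T_0\phi(\partial_{x^i}\at 0) = X_i = \leftinv{X_i}\at e$.

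For \refitem{theo:diffops:specialCoordinates:i}, observe that the second factor in the definition of $\phi$ lies in $H$ by construction. Hence for any $g = \phi(x^1, \dots, x^n) \in \phi(U)$, the coset $gH$ equals $\exp\!\big(\sum_{i \le n-r} x^i X_i\big) \cdot H$, which depends only on $(x^1, \dots, x^{n-r})$. In particular, varying only the last $r$ coordinates traces out exactly the intersection of $gH$ with $\phi(U)$, which in the coordinates $x$ is the affine slice $\lbrace x(g)\rbrace + \lbrace 0 \rbrace \times \mathbb R^{r}$; shrinking $U$ if necessary ensures this is a full local trivialization of the fibration.

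The only subtle point is to ensure that the slice produced by the second factor really does fill out a neighbourhood of $\exp(Y)$ inside $H$ for $Y \in \lie h$ close to $0$, which is immediate because $\exp\at\lie h : \lie h \to H$ is a local diffeomorphism at $0$, and that $\phi(U) \cap gH$ coincides with this slice (not some larger piece of the coset), which follows from the local injectivity of $\phi$ together with the transversality of $\lie h'$ to $\lie h$ in $\lie g$. After shrinking $U$ to a product neighbourhood on which both exponentials are diffeomorphisms onto their images, these verifications are routine; no deep obstacle arises.
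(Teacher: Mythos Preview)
Your argument is correct, but it follows a different route from the paper. The paper first invokes the principal-bundle structure of $\pi : G \to G/H$ to get \emph{some} chart $x'$ satisfying condition \refitem{theo:diffops:specialCoordinates:i}, then observes that since each $\leftinv{X_i}$ with $i > n-r$ is tangent to $H$, the change-of-basis matrix $A$ defined by $\leftinv{X_i}\at e = A_{ij}\,\partial_{(x')^j}\at e$ is block upper triangular; the linear change $x = (A^{-1})^{\mathrm T} x'$ then fixes \refitem{theo:diffops:specialCoordinates:ii} while preserving \refitem{theo:diffops:specialCoordinates:i}. Your construction instead writes down a single explicit chart $\phi(x) = \exp(\sum_{i\le n-r} x^i X_i)\exp(\sum_{i>n-r} x^i X_i)$ that satisfies both conditions at once. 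Your approach is more concrete and self-contained (it does not appeal to principal-bundle trivializations), while the paper's approach cleanly separates the two requirements and makes transparent why a mere linear change of coordinates suffices once the fibration structure is in place. The one point you leave implicit---that $\phi(U)\cap gH$ is no larger than the slice $\{x'(g)\}\times\mathbb R^r$---is exactly the statement that $y'\mapsto \exp(\sum_{i\le n-r} y^i X_i)H$ is locally injective, which follows from the inverse function theorem since its differential at $0$ is the isomorphism $\lie h' \to \lie g/\lie h$; you do invoke transversality for this, so the argument is complete.
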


\begin{proof}
	It is well known that $\pi \colon G \to G / H$ is a principal bundle. 
	Therefore we can choose a local trivialization $\chi\colon\pi^{-1}(V) \to V \times H$
	on a small neighbourhood $V$ of $e H$ in $G / H$.
Choosing coordinates on $V$
	(after possibly shrinking $V$ first)
	and on a neighbourhood $W$ of the identity in $H$, 
	we obtain coordinates $z'$ on $U \coloneqq \chi^{-1}(V \times W) \subseteq G$
	satisfying property \refitem{theo:diffops:specialCoordinates:i}.
Since all $\leftinvholo{X_i}$ are linearly independent we can write 
	$\leftinvholo{X_i} \at e = A_{ij} \partial_{(z')^j} \at e$
	for some invertible matrix $A$ and since $\leftinvholo{X_i}$
	is tangential to $H \subseteq G$ for $i > n-r$,
	it follows that $A_{ij} = 0$ for $i > n-r$, $j \leq n-r$.
Then the coordinates $z \coloneqq (A^{-1})^{\mathrm{T}}	z'$
	satisfy both properties of the lemma.
\end{proof}
Let $\pi \colon G \to G / H$.
Given coordinates as in the previous lemma we may identify $\pi(U)$ locally with 
$\lbrace (z^1(g), \dots, z^{n-r}(g), 0, \dots, 0) \mid g \in U \rbrace$.
Then $(z^1, \dots, z^{n-r})$ descend to coordinates on $\pi(U)$
and $\pi$ is, with respect to these coordinates, given by the projection to the first $n-r$ coordinates.

\begin{lemma} The map $\Psi$ from \autoref{theo:diffops:bijection:real} is 
	injective.
\end{lemma}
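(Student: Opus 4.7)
The plan is to adapt the injectivity argument used in the proof of \autoref{theo:leftInvDiffOps:real} to the quotient setting. Suppose $\Psi([\vec u]) = 0$; since $\pi_*: \Cinfty(\RLG)^{\RLH} \to \Cinfty(\RLG/\RLH)$ is a bijection, this is equivalent to $\leftinv{\vec u}(\pi^*\vec f) = 0$ for every $\vec f \in \Cinfty(\RLG/\RLH)^k$, and we want to conclude $[\vec u] = 0$. First I would fix a basis $\lbrace X_1, \dots, X_n\rbrace$ of $\RLA$ with $\lbrace X_{n-r+1}, \dots, X_n\rbrace$ a basis of $\RCA$, so that its complexification gives a basis of $\CLA$ whose last $r$ elements span $\CCA$. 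By the Poincar\'e--Birkhoff--Witt theorem, the classes $[X^{J_1} \tensor \cdots \tensor X^{J_k}]$ with each $J_j \in \mathbb N_0^n$ having its last $r$ entries zero form a basis of $(\univ\CLA)^{\tensor k}/I$, so a nonzero class $[\vec u]$ admits an expansion $\sum c_{J_1, \dots, J_k} [X^{J_1} \tensor \cdots \tensor X^{J_k}]$ in which some coefficient is nonzero.

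Next I would choose a test tuple that isolates a maximal term. Pick $(I_1, \dots, I_k)$ with $c_{I_1, \dots, I_k} \neq 0$ and the total degree $\sum_j \abs{I_j}$ as large as possible. Using the coordinates from \autoref{theo:diffops:specialCoordinates} around $e$, the projection $\pi$ reads locally as $(x^1, \dots, x^n) \mapsto (x^1, \dots, x^{n-r})$ and $\leftinv{X_i}\at{e} = \partial_{x^i}\at{e}$. I would take $f_j \in \Cinfty(\RLG/\RLH)$ coinciding with the polynomial $x^{I_j}$ near $eH$, using a smooth cutoff; this is legitimate because the $k$-differential operator $\Psi([\vec u])$ is local. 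The pullback $\pi^* f_j$ then agrees with $x^{I_j}$ near $e$ in $\RLG$, since in these coordinates the polynomial is already constant along the $\RLH$-fibres.

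The explicit computation at $e$ should then proceed just as in the proof of \autoref{theo:leftInvDiffOps:real}: by induction on monomial length one obtains $\leftinv{(X^{J_j})} = \partial_x^{J_j} + D'_{J_j}$ with $D'_{J_j}$ of order at $e$ strictly less than $\abs{J_j}$, and since $\partial_x^K(x^{I_j})(0) = I_j!\,\delta_{K, I_j}$ this forces $\leftinv{(X^{J_j})}(x^{I_j})(e) = I_j!\,\delta_{J_j, I_j}$ unless $\abs{J_j} > \abs{I_j}$. But the latter possibility would give $\sum_\ell \abs{J_\ell} > \sum_\ell \abs{I_\ell}$, contradicting the maximal choice of $(I_1, \dots, I_k)$, so only the diagonal term survives and
\[
\Psi([\vec u])\vec f(eH) = \bigl(\leftinv{\vec u}(\pi^* \vec f)\bigr)(e) = I_1!\cdots I_k!\, c_{I_1, \dots, I_k} \neq 0\komma
\]
contradicting the hypothesis $\Psi([\vec u]) = 0$. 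The delicate step is the local expansion $\leftinv{(X^{J_j})} = \partial_x^{J_j} + D'_{J_j}$ together with the precise meaning of ``lower order at $e$'', which rests on $\leftinv{X_i}\at{e} = \partial_{x^i}\at{e}$ provided by \autoref{theo:diffops:specialCoordinates}; everything else is a formal consequence of the PBW basis choice and the locality of differential operators on $\RLG/\RLH$.
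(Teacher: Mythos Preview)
Your argument is correct and follows the same strategy as the paper: pick a representative of $[\vec u]$ in the PBW complement to $I$, choose a ``maximal'' multiindex tuple $(I_1,\dots,I_k)$ with nonzero coefficient, and test with the monomials $y^{I_j}$ in the special coordinates of \autoref{theo:diffops:specialCoordinates}. The only difference is your maximality criterion: you take $(I_1,\dots,I_k)$ of largest \emph{total} degree $\sum_j\abs{I_j}$, whereas the paper takes a tuple maximal for the componentwise partial order; both work, and yours makes the counting argument slightly more transparent.

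One sentence deserves tightening. From ``$\leftinv{(X^{J_j})}(x^{I_j})(e)=I_j!\,\delta_{J_j,I_j}$ unless $\abs{J_j}>\abs{I_j}$'' you jump to ``the latter possibility would give $\sum_\ell\abs{J_\ell}>\sum_\ell\abs{I_\ell}$'', but a single index $j$ with $\abs{J_j}>\abs{I_j}$ does not by itself force the sum inequality. What you are really using is that the \emph{product} $\prod_j\leftinv{(X^{J_j})}(x^{I_j})(e)$ is nonzero only if \emph{every} factor is nonzero, hence for every $\ell$ either $J_\ell=I_\ell$ or $\abs{J_\ell}>\abs{I_\ell}$; if in addition $(J_1,\dots,J_k)\neq(I_1,\dots,I_k)$ then indeed $\sum_\ell\abs{J_\ell}>\sum_\ell\abs{I_\ell}$, so $c_{J_1,\dots,J_k}=0$ by your maximality choice. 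Making this explicit closes the only gap.
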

\begin{proof}
	Let $r = \dim \lie h$ and $n = \dim \lie g \geq r$.
	We can choose a basis $B  = \lbrace X_1, \dots, X_n \rbrace$
	of $\lie g$ such that $B' = \lbrace X_{n-r+1}, \dots, X_n \rbrace$
	is a basis of $\lie h$.
Recall from the proof of \autoref{theo:leftInvDiffOps:complex} 
	that 
	$\lbrace 
	X^{I_1 \tensor \dots \tensor I_k} \mid I_1, \dots, I_k \in \mathbb N_0^n 
	\rbrace$
	is a basis of $(\univ \lie g)^{\tensor k}$. Furthermore,
	\begin{equation*}
	\lbrace X^{I_1 \tensor \dots \tensor I_k} \mid I_1, \dots, I_k \in \mathbb 
	N_0^n, (I_i)_j > 0 \text{ for some $1 \leq i \leq k$ and some $j > n-r$} 
	\rbrace
	\end{equation*}
	is a basis of the ideal $I$ defined just before 
	\autoref{theo:diffops:Hinvariance} and
	\begin{multline*}
	\lbrace X^{I_1 \tensor \dots \tensor I_k} \mid I_1, \dots, I_k \in \mathbb 
	N_0^n, (I_i)_j =  0 \text{ for all $1 \leq i \leq k$, $j > n-r$}\rbrace 
	= \\ =
	\lbrace X^{I_1 \tensor \dots \tensor I_k} \mid I_1, \dots, I_k \in \mathbb 
	N_0^{n-r}\rbrace
	\end{multline*}
	is a basis of a complement $C$ of $I$ in $(\univ \lie g)^{\tensor k}$.
	Injectivity of $\Psi$ means that $0$ is the only element of $C$ on which 
	$\Psi$ vanishes.
	
	So to prove that $\Psi$ is injective, it suffices 
	to find, for any non-zero  
	\begin{equation*}
	\vec u = \sum_{I_1, \dots, I_k \in \mathbb N_0^{n-r}} c_{I_1, \dots, I_k}
	X^{I_1 \tensor \dots \tensor I_k} \in C \komma
	\end{equation*}
	some open subset $U \subseteq G / H$ and some $k$-tuple of functions $\vec f \in \Cinfty(U)^{k}$
	such that $\Psi([\vec u]) (\vec f) \neq 0$.
Fix $\vec u \in C \setminus \lbrace 0 \rbrace$ and
	assume that $I_1, \dots, I_k \in \mathbb N_0^{n-r}$ are chosen 
	such that $c_{I_1, \dots, I_k} \neq 0$	and 
	such that for any multiindices $J_1, \dots, J_k \in \mathbb N_0^{n-r}$
	satisfying $I_i \leq J_i$ and $(I_1, \dots, I_k) \neq (J_1, \dots, J_k)$
	we have $c_{J_1, \dots, J_k} = 0$.
Choose coordinates $z = (z^1, \dots, z^n)$ around $e$ on $G$ 
	as in the previous lemma,
	and note that, as described just after this lemma, $(z^1, \dots, z^{n-r})$
	descend to coordinates $(y^1, \dots, y^{n-r})$ on $G / H$.	
	Set $\vec f = (y^{I_1}, \dots, y^{I_k})$,
	so that $\pi^* \vec f = (z^{I_1}, \dots, z^{I_k})$.
	This implies that
	\begin{equation*}
	\Psi([\vec u])(\vec f)(eH) 
	= 
	\leftinvholo {\vec u} (\pi^* \vec f)(e) 
	= 
	I_1! \dots I_k! c_{I_1, \dots, I_k} \neq 0 \punkt
	\end{equation*}\end{proof}

\begin{lemma} The map $\Psi$ from \autoref{theo:diffops:bijection:real} is 
	surjective.
\end{lemma}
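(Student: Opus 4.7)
The plan is to explicitly construct an inverse of $\Psi$ by reading off the Taylor coefficients of a $\RLG$-invariant $k$-differential operator at the base point $eH$ in a basis coming from a complement of $\lie h$ in $\lie g$. The key observation is that $\Psi([\vec u])$ is automatically $\RLG$-invariant by construction of $\tilde\Psi$, and a $\RLG$-invariant $k$-differential operator on $\RLG/\RLH$ is uniquely determined by its value at the single point $eH$, so it will suffice to match $D$ and $\Psi([\vec u])$ there.

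First I would choose a basis $\lbrace X_1, \dots, X_n \rbrace$ of $\lie g$ with $\lbrace X_{n-r+1}, \dots, X_n \rbrace$ a basis of $\lie h$, together with coordinates $x = (x^1, \dots, x^n)$ around $e \in \RLG$ as in \autoref{theo:diffops:specialCoordinates}; the restriction $y = (x^1, \dots, x^{n-r})$ provides coordinates on $\RLG/\RLH$ near $eH$. Given $D \in \nDiffop^{\RLG}(\RLG/\RLH)$, expand it in these coordinates at $eH$ as
\begin{equation*}
D\vec f(eH)
= \sum_{I_1, \dots, I_k \in \mathbb N_0^{n-r}} c_{I_1, \dots, I_k}
\partial_y^{I_1} f_1 (eH) \cdot\ldots\cdot \partial_y^{I_k} f_k(eH) \komma
\end{equation*}
with only finitely many coefficients non-zero, since $D$ has finite order at $eH$, and define the candidate preimage
\begin{equation*}
\vec u_D = \sum_{I_1, \dots, I_k \in \mathbb N_0^{n-r}}
c_{I_1, \dots, I_k}
X^{I_1} \tensor \dots \tensor X^{I_k} \in (\univ\CLA)^{\tensor k} \punkt
\end{equation*}

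Next I would verify two things: that $\vec u_D \in U_{\mathrm{inv}}$ so that $\Psi([\vec u_D])$ is well-defined, and that $\Psi([\vec u_D]) = D$. The second claim is the easier one: extending the coordinate monomials $y^{I_j}$ to globally defined functions via cut-offs (which does not affect their germ at $eH$, by locality of $k$-differential operators), the computation performed in the injectivity proof gives $\Psi([\vec u_D])(y^{I_1}, \dots, y^{I_k})(eH) = I_1! \dots I_k!\, c_{I_1, \dots, I_k} = D(y^{I_1}, \dots, y^{I_k})(eH)$. Since a $k$-differential operator at a point is determined by its action on polynomial germs up to its order, both operators agree at $eH$, and $\RLG$-invariance propagates this equality to every point of $\RLG/\RLH$.

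The main obstacle is the $\RLH$-invariance $[\vec u_D] \in ((\univ\CLA/\univ\CLA\cdot\CCA)^{\tensor k})^{\RLH}$. Conceptually, the PBW symmetrization yields an $\RLH$-equivariant isomorphism $\univ\CLA/\univ\CLA\cdot\CCA \cong \Sym(\CLA/\CCA)$ for the adjoint action, so that $((\univ\CLA/\univ\CLA\cdot\CCA)^{\tensor k})^{\RLH}$ is identified with $\RLH$-invariant elements of $\Sym(\CLA/\CCA)^{\tensor k}$, which in turn correspond by duality to the $\RLH$-invariant multilinear forms on the space of $k$-jets at $eH$. The $\RLH$-invariance of $[\vec u_D]$ then reduces to the statement that the coefficients $c_{I_1, \dots, I_k}$ encode an $\RLH$-invariant form on $(\CLA/\CCA)^{\tensor k}$, which is exactly what the $\RLG$-invariance of $D$ asserts at $eH$, since $\RLH$ fixes $eH$ and acts on $T_{eH}(\RLG/\RLH) \cong \CLA/\CCA$ by $\Ad$. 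Unwinding these identifications carefully, and ensuring the compatibility between the $\Ad$-action on the basis elements $X_i$ of $\lie g / \lie h$ and on the coordinate derivatives $\partial_{y^i}$ at $eH$ in the chosen chart, is the technical heart of the argument.
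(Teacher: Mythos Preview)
Your overall strategy---reduce to the point $eH$ and use $\RLG$-invariance to propagate---matches the paper's, but the crucial step fails: your candidate $\vec u_D$ does \emph{not} satisfy $\Psi([\vec u_D])\vec f(eH) = D\vec f(eH)$ in general. The left-invariant operator $\leftinv{(X^J)}$, evaluated at $e$, equals $\partial_x^J$ only up to lower-order terms, i.e.\ $\leftinv{(X^J)}|_e = \partial_x^J|_e + \sum_{|J'|<|J|} b^{J'}_J\,\partial_x^{J'}|_e$ for certain constants $b^{J'}_J$. Hence $\leftinv{(X^J)} x^I(e)$ can be nonzero when $|J|>|I|$, and the higher-order terms of $\vec u_D$ contribute to $\Psi([\vec u_D])(y^{I_1},\dots,y^{I_k})(eH)$, spoiling your claimed equality with $I_1!\cdots I_k!\,c_{I_1,\dots,I_k}$. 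The injectivity computation you invoke worked only because a \emph{maximal} multi-index was chosen there, killing exactly these extra contributions; for an arbitrary $(I_1,\dots,I_k)$ it does not apply. The paper repairs this by induction on the order of $D$ at $eH$: one first matches the top-order coefficients with $\vec u_1 = \sum c_{I_1,\dots,I_k}(eH)\,X^{I_1}\tensor\cdots\tensor X^{I_k}$, observes that $D - \Psi([\vec u_1])$ has strictly smaller order at $eH$, and then appeals to the induction hypothesis to handle the remainder.

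Your proposed route to $\RLH$-invariance via the symmetrization isomorphism $\univ\CLA/\univ\CLA\cdot\CCA \cong \Sym(\CLA/\CCA)$ is conceptually reasonable, but note that the ordered PBW monomials $X^I$ you actually use are \emph{not} the images of the symmetric tensors under symmetrization, so the $\Ad$-action does not act on your coefficients $c_{I_1,\dots,I_k}$ in the naive linear way you need. The paper sidesteps this entirely: once one has \emph{some} $\vec u$ with $\leftinv{\vec u}(\pi^*\vec f) = \pi^*(D\vec f)$, the $\RLG$-invariance of $D$ together with the identity $\leftinv{\vec u}(\pi^*\vec f)(gh) = \leftinv{(\Ad_h\vec u)}(\pi^*\vec f)(g)$ gives $\Psi([\Ad_h\vec u]) = \Psi([\vec u])$ for all $h\in\RLH$, and then the already-established \emph{injectivity} of $\Psi$ forces $[\Ad_h\vec u]=[\vec u]$, i.e.\ $\vec u\in U_{\mathrm{inv}}$.
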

\begin{proof}
	We claim that for any holomorphic $k$-differential operator $D$ on $G/H$
	we can find $\vec u \in (\univ \lie g)^{\tensor k}$ such that 
	\begin{equation*}
	\leftinvholo{\vec u} (\pi^* \vec f)(e) = \pi^*(D \vec f)(e)
	\end{equation*}
	holds for all $\vec f\in \Cinfty(G/H)^k$.
We prove this claim by induction on the order $K \in \mathbb Z^k$ of $D$ at $e H$.
	If $K_i < 0$ for some $1 \leq i \leq k$, 
	then $D=0$ and we can use $\vec u = 0$. 
	For the induction step, assume that the claim is already proven 
	for every holomorphic $k$-differential operator
	of order strictly smaller than $K$ at $eH$.
	Choose coordinates $z = (z^1, \dots, z^n)$ around $e$ on $G$ 
	as in \autoref{theo:diffops:specialCoordinates}
	and denote the coordinates on $G / H$ induced by $(z^1, \dots, z^{n-r})$
	by $y \coloneqq (y^1, \dots, y^{n-r})$.
	Locally we can write 
	\begin{equation*}
	D(f_1, \dots, f_k) = {\sum}_{I_1, \dots, I_k \in \mathbb N_0^{n-r}} c_{I_1, 
		\dots, I_k} \cdot 
	\partial_{y}^{I_1} f_1 \cdot \ldots \cdot \partial_{y}^{I_k} f_k
	\end{equation*}
	with $c_{I_1, \dots, I_k} \in \Cinfty(G / H)$ satisfying
	$c_{I_1, \dots, I_k}(e H) = 0$ whenever $\abs{I_i} > K_i$\
	for some $1 \leq i \leq k$.
Define a holomorphic $k$-differential operator $D_G$ on $G$ by
	\begin{equation*}
	D_G(f'_1, \dots, f'_k) 
	= 
	{\sum}_{I_1, \dots, I_k \in \mathbb N_0^{n-r}} 
	(c_{I_1, \dots, I_k} \circ \pi) \cdot  
	\partial_z^{I_1} f'_1 \cdot \ldots \cdot \partial_z^{I_k} f'_k \punkt
	\end{equation*} 
	Then $D_G (\pi^* \vec f) (e) = \pi^*(D \vec f)(e)$.
Set
	$\vec u_1 \coloneqq 
	\sum_{I_1, \dots, I_k \in \mathbb N_0^{n-r}}
	c_{I_1,	\dots, I_k}(\pi(e)) X^{I_1} \tensor \dots \tensor X^{I_k} 
	\in (\univ\lie g)^{\tensor k}$.
Note that $D'_G \coloneqq D_G - \leftinvholo{\vec u_1}$ 
	has a strictly smaller order than $D_G$ at $e$ 
	since $\leftinvholo{X_i} \at e = \partial_{z^i} \at e$.
There are functions $c'_{I_1, \dots, I_k} \in \Cinfty(G)$ such that we can 
	express $D'_G$ in local coordinates as 
	\begin{equation*}
	D'_G(f'_1, \dots, f'_k) = {\sum}_{I_1, \dots, I_k \in \mathbb N_0^n} 
	{c'_{I_1, \dots, I_k}} 
	\cdot \partial_z^{I_1} f'_1
	\cdot \ldots \cdot 
	\partial_z^{I_k} f'_k \punkt
	\end{equation*} 
	We obtain a $k$-differential operator $D'$ on $G / H$ of strictly 
	smaller order than $D$ at $eH$ by letting
	\begin{equation*}
	D'(f_1, \dots, f_k) 
	= {\sum}_{I_1, \dots, I_k \in \mathbb N_0^{n-r}} 
	{c'_{I_1, \dots, I_k}}(\argumentdot, 0) 
	\partial_y^{I_1} f_1
	\cdot \ldots \cdot 
	\partial_y^{I_k} f_k \punkt
	\end{equation*} 
	It fulfils $D'_G (\pi^* \vec f) (e) = \pi^*(D' \vec f)(e)$.
Using the induction hypothesis we find
	$\vec u' \in (\univ \lie g)^{\tensor k}$ such that
	$\leftinvholo{\vec u'} (\pi^* \vec f) (e) = \pi^*(D' \vec f)(e)$.
Now 
	\begin{multline*}
	\leftinvholo{(\vec u_1 + \vec u')} (\pi^* \vec f) (e)
	= (D_G - D'_G) (\pi^* \vec f)(e) + \pi^*(D' \vec f)(e) 
	= \\
	= \pi^*(D \vec f)(e) - \pi^*(D' \vec f)(e) + \pi^*(D' \vec f)(e)
	= \pi^*(D \vec f)(e)\komma
	\end{multline*}
	proving the claim.
	
	Assume that $D$ is in addition left-invariant.
	Writing $\leftact_g \colon G / H \to G / H$ 
	also for the action of $g \in G$ on $G/H$ we compute 
	\begin{multline*}
	\leftinvholo{\vec u} (\pi^* \vec f)(g) 
	= \leftact_g^* \leftinvholo{\vec u} (\pi^* \vec f)(e) 
	= \leftinvholo{\vec u} (\leftact_g^{\times k})^* (\pi^* \vec f)(e) 
	= \\ 
	= \leftinvholo{\vec u} \pi^*((\leftact_g^{\times k})^* \vec f)(e) 
	= \pi^*(D (\leftact_g^{\times k})^* \vec f)(e) 
	= \\
	= \pi^*(\leftact_g^* D \vec f)(e) 
	= \leftact_g^* \pi^*(D \vec f)(e) 
	= \pi^*(D \vec f)(g) \punkt
	\end{multline*}
	Thus $\leftinvholo{\vec u} (\pi^* \vec f) = \pi^*(D \vec f)$ holds
	for all $\vec f \in \Cinfty(G/H)^k$.
Finally, we need to show that $\vec u$ has the correct invariance 
	properties under the adjoint action of $H$. 
	Define $\rightact_g \colon G \to G$, $\rightact_{g'}(g) \coloneqq g g'$. 
	Since 
	$\rightact_h^* \pi^*(D \vec f) = \pi^*(D \vec f)$ for all $h \in H$ we obtain 
	$\rightact_h^* \leftinvholo{\vec u} \pi^* \vec f
	= 
	\leftinvholo{\vec u} \pi^* \vec f$
	and therefore
	\begin{equation*}
	\leftinvholo{(\Ad_h \vec u)} (\pi^* \vec f) (g)
	= (\leftinvholo{\vec u} \pi^* \vec f)(gh) 
	= \rightact_h^* \leftinvholo{\vec u} \pi^* \vec f(g) 
	= \leftinvholo{\vec u} \pi^* \vec f(g) 
	\end{equation*}
	for all $\vec f \in \Cinfty(G/H)^k$ and all $g \in G$,
	where the first equality follows as in the proof of \autoref{theo:diffops:Hinvariance}.
	This means that $\leftinvholo{(\Ad_h \vec u - \vec u)} (\pi^* \vec f) = 0$ 
	for all $\vec f \in \Cinfty(G/H)^k$,
	and therefore the proof of injectivity implies $\Ad_h \vec u - \vec u \in I$,
	or in other words $\vec u \in U_{\mathrm{inv}}$.
\end{proof}
 
\subsection[\texorpdfstring{$G$-finite functions}{G-finite 
	functions}]{\texorpdfstring{\boldmath$G$-finite functions}{G-finite 
		functions}}
\label{appendix:gfinites}

In this subsection we introduce $G$-finite functions on a Lie group $G$
and use them to prove \autoref{theo:pullbackOfPolynomials}.
The definition of $G$-finite functions uses only abstract properties of the Lie group $G$,
and is therefore independent of whether $G$ is explicitly realized by matrices or not.
For complex semisimple connected Lie groups a function is $G$-finite if and
only if it is a polynomial, and therefore $G$-finite functions give a characterization
of polynomials that is independent of the representation.
\begin{definition}[\boldmath$G$-finite functions]
	Let $M$ be a manifold with an action of a Lie group $G$. 
	Then $f \in \Cinfty(M)$ is said to be $G$-finite 
	if the vector space $\spann \lbrace g \acts f \mid g \in G \rbrace$ 
	is finite dimensional.
	We denote the space of $G$-finite functions on $M$ by $\finites{G}(M)$
	or just by $\finites{}(M)$ if $G$ is clear from the context.
\end{definition}
Here $g \acts f$ denotes the smooth function on $M$ defined by 
$(g \acts f)(m) = f(g^{-1} \acts m)$. 
Below, we use this definition only for $M = G$ and the action $\leftact$ 
or for $M = \orbext\lambda$ and the coadjoint action,
and will therefore not mention these actions explicitly.

\begin{lemma}\label{theo:polynomials:polynomialsAreGFinite}
	Let $G$ be a real or complex matrix Lie group and let $\orb\lambda$ be a coadjoint orbit of $G$.
	Then polynomials on $G$ are $G$-finite, 
	and polynomials on $\orb\lambda$ are also $G$-finite.
\end{lemma}
\begin{proof}
	Let $P_{ij} \colon G \to \mathbb C$, $X \mapsto X_{ij}$,
	and call such polynomials elementary in this proof.
	We compute 
	$(g \acts P_{ij})(h) 
	= 
	P_{ij}(g^{-1} h) 
	= 
	\sum_k (g^{-1})_{ik} h_{kj} 
	=
	\sum_k (g^{-1})_{ik} P_{kj}(h)$ for $g \in G$,
	so $g \acts P_{ij}$ is a linear combination of some elementary polynomials.
	If $p = P_{i_1 j_1} \dots P_{i_n j_n} \in \Pol(G)$ is a product of $n$ 
	elementary polynomials,
	then $g \acts p$ is in the linear span of products of $n$ many elementary 
	polynomials, which is a finite dimensional space.
	The statement for arbitrary polynomials follows by taking linear combinations.
	
	The action of $G$ on $\Pol(\orb\lambda)$
	is obtained by restricting the adjoint action of $G$ on $\Sym\lie g \cong \Pol(\lie g^*)$.
	The adjoint action preserves the degree of a symmetric tensor,
	so $\spann \lbrace \Ad_g X \mid g \in G \rbrace$ is finite dimensional for any $X \in \Sym\lie g$,
	and therefore $\spann \lbrace g \acts p \mid g \in G \rbrace$ is finite dimensional 
	for any $p \in \Pol(\orb\lambda)$.
\end{proof}

\begin{proposition} \label{theo:Gfinites}
	Let $G$ be a complex semisimple connected Lie group with coadjoint orbit $\orb\lambda$.
	Then $G$-finite holomorphic functions on $\orb\lambda$ are polynomials.
\end{proposition}
\begin{proof}
	$\Hol(\orb\lambda)$ is isomorphic to $\Hol(G)^{G_\lambda}$
	as a $G$-module.
	The restriction to a maximal compact Lie subgroup $K \subseteq G$
	is an injective $K$-module homomorphism to $\Lzwei(K)$, the square-integrable 
	functions on $K$ with respect to the left-invariant Haar measure,
	so that we may view 
	$\Hol(\orb\lambda)$ as a $K$-submodule of $\Lzwei(K)$.
	In particular, it is completely reducible as a $K$-module and 
	therefore also as a $G$-module. Each irreducible module of highest weight 
	$\nu$ appears only finitely many times in $\Lzwei(K)$ and thus also in 
	$\Hol(\orb\lambda)$.
	
	The scalar product of $\Lzwei(K)$ is $K$-invariant and therefore any 
	irreducible modules of different highest weights are orthogonal.
	Restricting the scalar product to $\Hol(\orb\lambda)$ gives that 
	$\Hol(\orb\lambda)^\nu$ is orthogonal to $\Hol(\orb\lambda)^{\nu'}$ 
	if $\nu \neq \nu'$.
	
	Assume $f \in \finites{}(\orb\lambda)$ is holomorphic and not in 
	$\Pol(\orb\lambda)$. 
	We can without loss of generality assume that
	$f \in \finites{}(\orb\lambda)^\nu$ for some weight $\nu$.
	(Indeed, we can write $f = \smash{\sum_\mu f^\mu}$ with
	$f^\mu \in \finites{}(\orb\lambda)^\mu$ and
	only finitely many $f^\mu$ are non-zero because $f$ is $G$-finite.
	One of these $f^\mu$ is not in $\Pol(\orb\lambda)$.)
	We can choose $f$ orthogonal to 
	$\Pol(\orb\lambda)^\nu$ (which is finite dimensional) and therefore 
	orthogonal to $\Pol(\orb\lambda)$. 
	However, this space is dense in $\Hol(\orb\lambda)$ because polynomials 
	on $K$ are dense in $\Lzwei(K)$. So $f = 0$, a contradiction.
\end{proof}
\begin{corollary}
	Let $G$ be a complex semisimple connected Lie group.
	Then the pullback map
	$\pi^* \colon \Pol(\orb\lambda) \to \Pol(G)^{G_\lambda}$
	is an isomorphism.
\end{corollary}
\begin{proof}
	We have seen in the proof of  \autoref{theo:pullbackOfPolynomials} that $\pi^*$ 
	is well-defined and injective, 
	so it only remains to show that $\pi^*$ is surjective.
	Any element $f \in \Pol(G)^{G_\lambda}$ is $G$-finite by 
	\autoref{theo:polynomials:polynomialsAreGFinite}.
	Then its image under the $G$-equivariant isomorphism 
	$\pi_*\colon \Hol(G)^{G_\lambda} \to \Hol(\orb\lambda)$ 
	is also $G$-finite because finite dimensionality of 
	$\spann\lbrace g \acts f \mid g \in G\rbrace$
	implies finite dimensionality of
	$\spann\lbrace g \acts \pi_* f\mid g \in G\rbrace 
	= \spann\lbrace\pi_* (g \acts f)\mid g \in G\rbrace$.
	The previous proposition implies that the $G$-finite element
	$\pi_* f \in \Pol(\orb\lambda)$ is a polynomial.
	It is mapped to $f$ by $\pi^*$.
\end{proof}
With similar methods as in this subsection one can prove that 
$G$-finite functions on a complex semisimple connected Lie group $G$ 
coincide with polynomials on $G$.
Since the definition of $G$-finite functions does not depend on a representation 
of $G$ as a 
linear group, it follows that our definition of polynomials in 
\autoref{def:polynomials:lieGroup} is indeed 
independent of the representation.
The same result is true for a compact semisimple connected Lie group $K$. 
\subsection{Complex structures on real coadjoint orbits}
\label{appendix:complexstructures}

We have seen in \autoref{subsec:generalities} that a coadjoint orbit of a real Lie group $G$
always admits a $G$-invariant symplectic structure, in particular its dimension is even. 
In this subsection, we will see that a semisimple coadjoint orbit $\orb\lambda$ of a connected
semisimple real Lie group $G$ admits a $G$-invariant complex structure
if $G_\lambda$ is compact,
and that the set of such complex structures is in bijection to invariant orderings.
If $G$ is compact, then there is a unique $G$-invariant complex structure
that makes $\orb\lambda$ a Kähler manifold.
If $G$ is not compact, then $\orb\lambda$ might or might not admit a K\"ahler structure.
All results of this subsection are classical and well-known,
see for example \cite{bordemann.forger.roemer:1986a} for a summary.

Let $G$ be a real connected semisimple Lie group.
Assume that $\lambda \in \lie g^*$ is semisimple 
and that $G_\lambda$ is compact.
Then any Cartan subalgebra $\lie h \subseteq \lie g$
containing $\lambda^\sharp$ is contained in $\lie g_\lambda$ and therefore compact.
As usual, we denote the complexification of $\lie g$ by $\hat{\lie g}$ and 
let $\overline{\argumentdot\vphantom{\lambda}}$ be the complex conjugation 
of $\hat{\lie g}$ with respect to $\lie g$.
 
Recall that a root $\alpha \in \lie h^*$ is called \emph{compact} if the Killing form $B$
is negative definite on $\lie g \cap (\lie g^\alpha \oplus \lie g^{-\alpha})$,
and \emph{non-compact} if it is positive definite. (The root spaces 
$\lie g^\alpha$ are subspaces of the complexification $\hat{\lie g}$ of $\lie g$.)
We can always choose $X_\alpha \in \lie g^\alpha$ 
such that $B(X_\alpha, X_{-\alpha}) = 1$ 
and if $[X_\alpha, X_\beta] = N_{\alpha, \beta} X_{\alpha+\beta}$,
then $N_{-\alpha, -\beta} = - N_{\alpha, \beta}$ 
(see \cite[Section 3]{bordemann.forger.roemer:1986a}).
In this case, 
\begin{subequations}\label{eq:tangentVectorsToOrbit}
	\begin{align}\label{eq:tangentVectorsToOrbit:compact}
	-X_{-\alpha} &= \overline X_\alpha & &\text{and} & 
	\I(X_\alpha + X_{-\alpha}), X_\alpha - X_{-\alpha} &\in \lie g &
	&\text{if $\alpha$ is compact,} \\
	X_{-\alpha} &= \overline X_\alpha & &\text{and} &
	\I(X_\alpha - X_{-\alpha}), X_\alpha + X_{-\alpha} &\in \lie g &
	&\text{if $\alpha$ is 
	non-compact.}\label{eq:tangentVectorsToOrbit:nonCompact}
	\end{align}
\end{subequations}
Recall that $\hat\Delta$ is the set of roots that are not orthogonal to $\lambda$.

\begin{theorem}\label{theo:complexStructure:classification}
	Let $\orb\lambda$ be a coadjoint orbit
	of a real connected semisimple Lie group $G$.
	Assume that $G_\lambda$ is compact,
	and let $\lie h$ be a Cartan subalgebra of $\lie g$
	containing $\lambda^\sharp$.
Then $G$-invariant complex structures on $\orb\lambda$
	are in bijection with invariant orderings of $\hat\Delta$
	(i.e.\ choices of positive roots $\hat\Delta^+$ 
	that arise as $\hat\Delta^+ = \hat\Delta \cap \Delta^+$
	from an invariant ordering of $\Delta$ as defined in \autoref{def:invariantOrdering}).
\end{theorem}

\begin{proof}[Sketch]
	Introduce 
	$\lie m 
	=
	\bigoplus_{\alpha \in \hat \Delta} \lie g^\alpha 
	\cong
	\hat{\lie g}/\hat{\lie g}_\lambda$.
	Since taking fundamental vector fields (see \autoref{subsec:generalities})
	gives an isomorphism $\lie g / \lie g_\lambda\to \Tangent_\lambda \orb\lambda$,
	$\lie m$ is isomorphic to the complexified tangent space
	$\Tangent_\lambda^{\mathbb C} \orb\lambda$ and $\lie g \cap \lie m$
	is isomorphic to $\Tangent_\lambda \orb\lambda$.

	Given an invariant ordering of $\hat\Delta$, see \autoref{def:invariantOrdering}, 
	define $I \colon \lie m \to \lie m$ 
	by extending $X_\alpha \mapsto \I X_\alpha$ if $\alpha \in \hat\Delta^+$, 
	$X_\alpha \mapsto -\I X_\alpha$ if $\alpha \in \hat\Delta^-$ linearly. 
	Clearly $I^2 = - \id$.
	For both a compact and a non-compact root $\alpha$,
	$I$ restricts to an endomorphism of
	$\lie g \cap (\lie g^\alpha \oplus \lie g^{-\alpha})$,
	from which it follows that $I$ restricts to 
	a map $\lie g \cap \lie m \to \lie g \cap \lie m$, squaring to $- \id$.
	To prove that it extends to a $G$-invariant almost complex structure on $\orb\lambda$,
	it suffices to prove that $I$ is $G_\lambda$-invariant.
	By applying the analogue of \autoref{theo:stabilizerConnected} for compact connected
	semisimple Lie groups to a maximally compact subgroup of $G$ containing $G_\lambda$,
	it follows that $G_\lambda$ is connected, and it suffices to prove that
	$I$ is $\lie g_\lambda$-invariant, in the sense that $I([A, B]) = [A, I(B)]$ 
	holds for all $A \in \lie g_\lambda$ and $B \in \lie m$.
	This identity holds for $A \in \lie h$ since $I$ preserves the root spaces.
	So we only need to check it for $A = X_\alpha$ 
	and $B = X_\beta$ with $\alpha \in \Delta'$ and $\beta \in \hat\Delta$, 
	which is equivalent to the invariance of the ordering.
	Finally, one uses that $\alpha + \beta$ is positive if $\alpha, \beta \in \hat\Delta$
	are positive to compute that the Nijenhuis torsion of $I$ vanishes,
	so $I$ is indeed a complex structure.

	Vice versa, a $G$-invariant complex structure $I$ on $\orb\lambda$
	determines a $\lie g_\lambda$-invariant map $I \colon \lie m \to \lie m$ with $I^2 = -\id$
	by restricting to the tangent space at $\lambda$ and complexifying.
	In particular $I$ is $\lie h$-invariant, and therefore preserves
	the root spaces, so $X_\alpha \mapsto \I c_\alpha X_\alpha$ with $c_\alpha = \pm 1$.
	Since $I$ preserves the real tangent space, we must have $c_\alpha = - c_{-\alpha}$.
	The Nijenhuis torsion of the complex structure vanishes,
	which implies that 
	$\hat\Delta^+ = \lbrace \alpha \in \hat\Delta \mid c_\alpha = 1 \rbrace$
	defines an ordering.
	Finally invariance under the whole Lie algebra $\lie g_\lambda$ gives
	that this ordering is invariant.
\end{proof}

\begin{proposition}\label{theo:complexStructure:Kaehler}
	If $\orb\lambda$ is a coadjoint orbit of a compact connected semisimple Lie group $K$, 
	then $\orb\lambda$ has a unique $K$-invariant complex structure $I$
	that makes $(\orb\lambda, I, \omega_{\mathrm{KKS}})$ a K\"ahler manifold,
	and this complex structure corresponds to an ordering for which 
	$\alpha \in \hat\Delta$ is positive if and only if $(\alpha, \I\lambda) > 0$.
\end{proposition}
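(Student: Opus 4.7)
The plan is to use Theorem \ref{theo:complexStructure:classification} to reduce the question to selecting the unique invariant ordering of $\hat\Delta$ for which $g(\argumentdot, \argumentdot) := \omega_{\mathrm{KKS}}(\argumentdot, I\,\argumentdot)$ is a positive definite (symmetric) Riemannian metric. Closedness of $\omega_{\mathrm{KKS}}$ and integrability of $I$ are already built into that classification, and by $K$-invariance it suffices to check positive definiteness at the single point $\lambda$. So the Kähler question is reduced to a finite-dimensional linear-algebra problem on $T_\lambda \orb\lambda$.

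First I would compute $\omega_{\mathrm{KKS}}|_\lambda$ in the root basis. Since $\lambda^\sharp \in \lie h$ and $B(\lie h, \lie g^\gamma) = 0$ for $\gamma \in \Delta$, the functional $\lambda$ vanishes on every nonzero root space. Consequently $\omega_{\mathrm{KKS}}(X_\alpha, X_\beta)|_\lambda = \lambda([X_\alpha, X_\beta]) = 0$ unless $\beta = -\alpha$, and using $[X_\alpha, X_{-\alpha}] = \alpha^\sharp$ one gets $\omega_{\mathrm{KKS}}(X_\alpha, X_{-\alpha})|_\lambda = (\alpha, \lambda)$. Because $K$ is compact all roots are compact, so for $\alpha \in \hat\Delta^+$ the real vectors $V_\alpha = X_\alpha - X_{-\alpha}$ and $W_\alpha = \I(X_\alpha + X_{-\alpha})$ from \eqref{eq:tangentVectorsToOrbit:compact} form a real basis of $T_\lambda \orb\lambda$. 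A direct calculation yields
\begin{equation*}
\omega_{\mathrm{KKS}}(V_\alpha, W_\alpha)|_\lambda = 2\I(\alpha, \lambda) = 2(\alpha, \I\lambda),
\end{equation*}
the other pairings in a single $(V_\alpha, W_\alpha)$-block vanish by antisymmetry, and the cross-blocks $\alpha \neq \beta$ vanish because the relevant root sums are nonzero. Moreover, the complex structure attached to the ordering $\hat\Delta^+$ acts as $I V_\alpha = W_\alpha$ and $I W_\alpha = -V_\alpha$.

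Next, since $\lie h \subseteq \lie k$ is a compact Cartan, every root takes imaginary values on $\lie h$, so $\alpha^\sharp \in \I \lie k$ and therefore $(\alpha, \lambda) = B(\alpha^\sharp, \lambda^\sharp) \in \I \mathbb R$; equivalently $(\alpha, \I\lambda) \in \mathbb R$, and it is nonzero iff $\alpha \in \hat\Delta$. The candidate metric $g$ is then block-diagonal in the $(V_\alpha, W_\alpha)$-planes with $g(V_\alpha, V_\alpha) = g(W_\alpha, W_\alpha) = 2(\alpha, \I\lambda)$, so positive definiteness is equivalent to $(\alpha, \I\lambda) > 0$ for every $\alpha \in \hat\Delta^+$. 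Since exactly one of $\pm\alpha$ satisfies this inequality, at most one ordering works; a quick block-by-block check also gives $\omega_{\mathrm{KKS}}(IX, IY) = \omega_{\mathrm{KKS}}(X, Y)$, so $g$ is automatically symmetric.

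It remains to show that the candidate ordering $\hat\Delta^+ = \lbrace \alpha \in \hat\Delta \mid (\alpha, \I\lambda) > 0\rbrace$ is invariant in the sense of \autoref{def:invariantOrdering}, so that Theorem \ref{theo:complexStructure:classification} actually supplies a complex structure. Given $\alpha \in \hat\Delta^+$ and $\beta \in \Delta'$ with $\alpha + \beta \in \Delta$, linearity gives $(\alpha + \beta, \I\lambda) = (\alpha, \I\lambda) + \I(\beta, \lambda) = (\alpha, \I\lambda) > 0$, so $\alpha + \beta \in \hat\Delta^+$; extending by any ordering of $\Delta'$ yields a full invariant ordering of $\Delta$ inducing the prescribed one on $\hat\Delta$. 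The only real subtlety is the reality bookkeeping that forces $(\alpha, \I\lambda) \in \mathbb R$ (and hence nonzero of a definite sign), which pins down the correct positivity condition; once that is in hand, existence, uniqueness, and invariance all fall out of the positive definiteness criterion for $g$.
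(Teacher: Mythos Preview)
Your proof is correct and follows essentially the same approach as the paper: reduce to a computation of $g=\omega_{\mathrm{KKS}}(\argumentdot,I\argumentdot)$ at $\lambda$ in the real basis $X_\alpha-X_{-\alpha}$, $\I(X_\alpha+X_{-\alpha})$, obtain the block-diagonal value $2(\alpha,\I\lambda)$, and read off that positive definiteness forces $\hat\Delta^+=\lbrace\alpha\mid(\alpha,\I\lambda)>0\rbrace$. You are a bit more explicit than the paper in two places---you verify that this candidate ordering is invariant (the paper just asserts it) and you check $\omega_{\mathrm{KKS}}(IX,IY)=\omega_{\mathrm{KKS}}(X,Y)$ so that $g$ is genuinely symmetric---but these are exactly the details the paper's proof elides, not a different route.
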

Note that $\alpha$ attains purely imaginary values on $\lie k$, whereas 
$\lambda$ attains real values, so $(\alpha, \I \lambda) \in \mathbb R$.
The ordering for which $\alpha \in \hat\Delta$ is positive if $(\alpha, \I \lambda) > 0$ 
is standard (see \autoref{subsec:twist:general}).

\begin{proof}
	Since $K$ is compact, it follows that any root is compact.
	Given a $K$-invariant complex structure $I$,
	we associate the (not necessarily positive definite) metric
	$g(v,w) = \omega_{\mathrm{KKS}}(v, I w)$ and $\orb\lambda$ is a Kähler manifold
	if $g$ is positive definite.
	Since $I$ and $\omega_{\mathrm{KKS}}$ are $K$-invariant, so is $g$ and we may
	check positive definiteness on $\Tangent_\lambda \orb\lambda$.
	Identifying $\Tangent^{\mathbb C}_\lambda \orb\lambda$ with $\lie m$
	as in the proof of the previous proposition and extending $g$ complex linearly,
	we compute that
	$g(X_\alpha, X_\beta) 
	= 
	\omega_{\mathrm{KKS}}(X_\alpha, I X_\beta) 
	=
	c_\beta \lambda([X_\alpha, X_\beta])$
	for all $\alpha, \beta \in \hat \Delta$.
	This expression is non-zero only if $\alpha = -\beta$,
	and in this case
	$g(X_\alpha, X_{-\alpha}) = -\I c_\alpha \lambda(\alpha^\sharp) = -\I c_\alpha 
	\cdot(\alpha, 
	\lambda)$. Then 
	\begin{align*}
	g(\I(X_\alpha+X_{-\alpha}), \I(X_\alpha+X_{-\alpha})) &= 
	2\I 
	c_\alpha \cdot(\alpha, \lambda) 
	\shortintertext{and}
	g(X_\alpha-X_{-\alpha}, 
	X_\alpha-X_{-\alpha}) &= 2\I c_\alpha \cdot(\alpha, \lambda) \punkt
	\end{align*}
	So $g$ is positive definite if and only if $c_\alpha = 1$
	for all $\alpha\in\hat\Delta$ with $(\alpha, \I\lambda) > 0$.
\end{proof}
Note that the situation is more complicated if $G$ is non-compact, but $G_\lambda$ is compact, 
since we may then have both compact and non-compact roots. 
The condition for $g$ being positive definite then becomes $c_\alpha =1$ 
if either $\alpha$ is a compact root and $(\alpha, \I \lambda) > 0$ or 
if $\alpha$ is a non-compact root and $(\alpha, \I\lambda) < 0$. 
If these conditions define an invariant ordering, 
then $\orb\lambda$ has a $G$-invariant K\"ahler structure (which is automatically unique).
One can give more explicit criteria for when the conditions above define an 
invariant ordering, see \cite{bordemann.forger.roemer:1986a}, 
but we only need the following easy case.

\begin{corollary}\label{theo:complexStructure:Kaehler:nonCompact}
	Let $\orb\lambda$ be a coadjoint orbit of a connected semisimple Lie group $G$.
	Assume that $G_\lambda$ is compact, and that $\lie h$ is a Cartan subalgebra 
	containing $\lambda^\sharp$.
	If all roots in $\hat \Delta$ are non-compact,
	then $(\orb\lambda, I, \omega_{\mathrm{KKS}})$ is a K\"ahler manifold,
	where $I$ is the complex structure corresponding to the ordering
	for which $\alpha \in \hat\Delta$ is positive
	if and only if $(\alpha, \I \lambda) < 0$.
\end{corollary}
 \end{appendices}

\section*{Acknowledgements}
The author would like to thank Matthias Sch\"otz for many valuable discussions 
and helpful comments on an earlier version of this article.
He is extremely grateful to his advisor Ryszard Nest for many helpful and inspiring 
discussions on the content of this paper and related topics. 

\section*{Funding}
The author was supported by the Danish National Research Foundation 
through the Centre of Symmetry and Deformation (DNRF92).

{\footnotesize
	\renewcommand{\arraystretch}{0.5}

} 
\end{document}